\def\CatchFBT@Fin@l#1[#2]{%
   \begingroup
      \makeatletter #2%
      \scantokens\expandafter{%
         \expandafter\CatchFBT@tok\expandafter{\the\CatchFBT@tok}}%
      \CatchFBT@IsAToken{#1}
         {\global#1\expandafter{\the\CatchFBT@tok}}
         {\xdef#1{\the\CatchFBT@tok}}%
      \ifx\CatchFBT@tok#1\else\global\CatchFBT@tok{}\fi
   \endgroup
}
\def\B{\mathscr{B}}
\def\BP{BP}
\def\breadth{<}
\def\breadthinv{>}
\def\breadthl{\leq}
\def \build#1#2#3{\mathrel{\mathop{\kern 0pt#1}\limits_{#2}^{#3}}}
\def\Cmax{C_{\max}}
\def\D{\mathscr{D}}
\def\E{\mathbb{E}}
\def\eg{\emph{e.g.}}
\newcommand{\espx}[1]{\mathbb{E}\left[#1\right]}
\def\F{\mathscr{F}}
\def\gen{\text{generation}}
\def\Gmod{G^\text{mod}}
\def\Gmodk{G^\text{modk}_\text{end}}
\def\Gmodke{G^{\text{modk},\epsilon_1}_\text{end}}
\def\Gmodm{G^{\text{mod}-}}
\def\Gmodme{G^{\text{mod}-}_\text{end}}
\def\ie{\emph{i.e.}}
\def\N{\mathbb{N}}
\def\P{\mathbb{P}}
\renewcommand{\phi}{\varphi}
\def\Poi{\mathrm{Poi}}
\newcommand{\poisson} [1] {\Pi_{#1}}
\def\poissondummy{\mu^{dummy}}
\def\R{\mathbb{R}}
\def\T{\mathscr{T}}
\def\V{\mathcal{V}}
\def\X{\mathscr{X}}
\def\Xset{\mathbb{X}}
\newcommand{\1} [1] {\mathbf{1_{#1}}}
\title{Dynamic Erd\H{o}s-R\'enyi random graph with forbidden degree}
\author{Lucas \textsc{Mercier}}
\date{}
\begin{document}

\selectlanguage{english}

\newtheorem{thm}{Theorem}[section]
\newtheorem{lem}[thm]{Lemma}
\newtheorem{prp}[thm]{Proposition}
\newtheorem{prop}[thm]{Proposition}
\newtheorem{cor}[thm]{Corollary}
\newtheorem{thmf}[thm]{Th\'eor\`eme}
\newtheorem{lemf}[thm]{Lemme}
\newtheorem{prpf}[thm]{Proposition}
\newtheorem{propf}[thm]{Proposition}
\newtheorem{corf}[thm]{Corollaire}

\newtheorem{innercustomthm}{Theorem}
\newenvironment{customthm}[1]
  {\renewcommand\theinnercustomthm{#1}\innercustomthm}
  {\endinnercustomthm}

\theoremstyle{definition}
\newtheorem{defi}{Definition}[section]
\newtheorem*{nota}{Notation}
\newtheorem*{exa}{Example}
\newtheorem{defif}[defi]{D\'efinition}
\newtheorem*{notaf}{Notation}
\newtheorem*{exaf}{Exemple}

\theoremstyle{remark}
\newtheorem*{rem} {Remark}
\newtheorem*{remf} {Remarque}
\psset{unit=0.1}
\maketitle
\tableofcontents{}
%
\bigskip{}

As suggested by Itai Benjamini, we introduced a variant of the Erd\H{o}s-R\'enyi random graph process with a forbidden degree $k$, in which every edge adjacent to a vertex $v$ is removed when the degree of $v$ reaches $k$ (but the removed edges may very well be added again later). We study the existence of a giant component, depending on the forbidden degree $k$ and the time parameter $t$. We prove that for $k>4$ a giant component appears at some point, while for $k<4$, a giant component never occurs. The main tool of our study is the local limit of the random graph process: it provides useful information about the cases $k>4$, but also the  threshold case k=4.


\section{Introduction}
\subsection{The model}
We consider a sequence of  multigraph-valued stochastic processes  $G^k_{n}=(G^k_{n,t})_{t\geq 0}$, in which $n$ and $k$, two positive integers, stand for the number of vertices and for the forbidden degree, respectively. The vertices are labelled from $1$ to $n$, and $E$ denotes the set of their $\binom n 2$ eventual edges. 

The multiplicity of the edges depends on a  Poisson point process  $\poisson{}$ with  intensity $\frac 1 n\1{t\ge 0}dt\otimes \mu$ on $\R_+\times E$, where $\mu$ is the counting measure on $E$. $\poisson{}$ will be seen as a marked point process, with elements in $\R_+$, marked by an edge in $E$. For any $e\in E$, $\poisson{e}$  denotes the point process on $\R_+$ of the elements of $\poisson{}$ marked by $e$, and is the set of times when the multiplicity of the edge $e$ is increased by 1. More precisely, initially, $G^k_{n,0}$ does not contain any edge. At each time $t\in \poisson{e}$,  the multiplicity of the edge $e$ is increased by 1, and, for any endpoint $w$ of $e$ that reaches degree $k$ at this step,  every edge incident to $w$, including $e$, is removed (when both endpoints reach degree $k$ simultaneously, the destruction of edges takes place on both sides)\footnote{Technically, the endpoints will never reach degree $k$, going directly from degree $k-1$ to $0$, but we will describe this situation as reaching degree $k$ and immediately going to degree $0$.\label{conventionk}}. A given edge can be added several times to the process, therefore an edge can be removed, and added again at a later time; multiple edges can also occur. A vertex of $G^k_{n,t}$ is said \emph{saturated} if its degree is $k-1$, the maximum possible degree.

At some point, a discrete time version of the continuous time process   $G^k_{n}$ will be needed: the unmarked version of $\poisson{}$ is a  Poisson point process with  intensity $\frac{n-1}2$ on $\R_+$, so it is a.s. possible to order the points of $\poisson{}$ in increasing order. For any integer $i$, let $\tau_i=\inf\{t\ge 0:\poisson{}([0,t]\times E)\geq i\}$. By the memoryless property of the Poisson point process, the times $\tau_i$ are independent of the discrete process $(G^k_{n,\tau_i})_i$. This discrete process can be described as follows:

\begin{itemize}
\item At step $i=0$, there is no edge.
\item At each step $i\ge 1$, choose an edge uniformly at random among the $\binom n 2$ elements of $E$ and increase its multiplicity by one.
\item For each endpoint reaching degree $k$, remove every edge incident to this endpoint.
\end{itemize}

The multigraph-valued stochastic process in which the edges appear according to $(\poisson{e})_{e\in E}$, but are never erased, is denoted $G^\infty_{n}=(G^\infty_{n,t})_{t\ge0}$. We shall call it the \emph{Erd\H{o}s-R\'enyi multigraph process}, or the  \emph{Erd\H{o}s-R\'enyi multigraph}. For any graph $G$, $\Cmax(G)$ will denote the size (the number of vertices) of the largest connected component of $G$.

\paragraph{Possible generalisations}
Let $G$ be a locally finite simple graph. For every edge $e$, let $\poisson{e}$ be a locally finite subset of $\R_+$. Define the forbidden degree version of $G$ as the multigraph process with the same set of vertices as $G$ evolving in the following way:
\begin{itemize}
\item Initially no edge is present.
\item For every point $t$ of $\poisson{e}$,  increase the multiplicity of $e$ by one at time $t$.
\item If a vertex reaches the degree $k$, remove every adjacent edges.
\end{itemize}
This model is not always well-defined when the graph is infinite. A sufficient condition will be described in Section \ref{partlongrange}.

It is possible to allow the forbidden degree to depend on the vertex by using the following generalisation:
\begin{itemize}
\item Let $(k_v)_{v\in V}$ be a sequence of integers, indexed by the set of vertices. The integer $k_v$ will be called \emph{the forbidden degree of $v$}.
\item The edges are added as previously. Whenever a vertex $v$ reaches degree $k_v$, remove every edge adjacent to $v$.
\end{itemize}

This generalization allows to interpolate between two forbidden degrees, by setting the proportion of vertices of each forbidden degree, \eg{} by having a proportion $\lambda$ of the vertices with forbidden degree $k+1$ and a proportion $1-\lambda$ with forbidden degree $k+1$.

Heuristically, we expect that the resulting random graph process is stochastically increasing with the forbidden degree, even if it is not deterministically increasing, as explained in part \ref{issuewehavetodealwith}.

\subsection{Context}
In \cite{erdosrenyi1960}, Erd\H{o}s and R\'enyi obtained a striking result: the largest component of the Erd\H{o}s-R\'enyi graph with $t n$ edges and $n$ vertices has two radically different behaviors  depending on $t$. If $t\leq \tfrac 1 2$, $\Cmax(G^\infty_{n,m})=o(n)$ a.a.s whereas as soon as $t> \tfrac 1 2$ $\Cmax(G)=\Theta(n)$. For this reason we will study the evolution of $\Cmax(G^k_{n,t})$, depending on $(k,t,n)$.  

Another model with a degree constraint is the graph process with degree restriction \cite{rucinskiwormald}. There also exists several models with local constraints, \eg{} the triangle-free process\cite{bohman2009} and  $H$-free processes\cite{erdos1995,bollobas2001,osthus2001} where $H$ is a given subgraph. These models differ with the forbidden degree on many aspects, but in our eye  the essential difference is that the graph processes are increasing in the former models, while in the forbidden degree model the graph process is not, as edges are routinely removed. Another model with edge removal is the Drossel-Schwab forest-fire model\cite{drossel1992}, where full components are removed with a rate proportional to their size.

\subsection{Issues we have to deal with}
\label{issuewehavetodealwith}
Several points need to be considered when studying this process:
\begin{enumerate}
\item\emph{Chronology.} For a fixed $t$, the Erd\H{o}s-R\'enyi multigraph $G^\infty_{n,t}$ does not depend on the chronology of the apparition of edges, but only on the set of added edges, while due the forbidden degree constraint, chronology suddenly matters for $G^k_{n,t}$.

\item\emph{Monotony.} $G^k_{n}$ is not an increasing process, and the existence of a giant component at a given time does not imply the existence  of a giant component at a later time. Actually, having more edges  at a given time  can lead to a smaller graph at a later time.

\item\emph{Locality.} The degree of a given vertex depends on which adjacent edges have been removed. In turn, this depends on the degrees of the other endpoints of these edges, and these degrees depend on the neighbors of these endpoints, and so on... As a consequence, the existence of a local limit is  questionable, much more than  for the Erd\H{o}s-R\'enyi random multigraph.

\end{enumerate}

\subsection{Results}
The main results obtained in this article are the following:

\begin{thm}
\label{k3}For every $k\leq 3$ and for every sequence of non negative real numbers $(t_n)_{n\geq 0}$, \[\Cmax(G^k_{n,t_n})=o(n).\]
\end{thm}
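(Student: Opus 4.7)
For $k=1$ no edge is ever present, so $\Cmax(G^1_{n,t})=1$. For $k=2$ the maximum degree is at most $k-1=1$, so every component of $G^2_{n,t}$ has size at most $2$. These two cases are trivial, and I focus on $k=3$.

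For $k=3$ the maximum degree of $G^3_{n,t}$ is at most $2$, so each connected component is an isolated vertex, a (multi-)path or a (multi-)cycle. This structural constraint is not by itself sufficient (a $2$-regular configuration model typically has a cyclic giant component), so the proof must genuinely exploit the dynamics, specifically the fact that any saturated vertex is destroyed at rate $(n-1)/n\simeq 1$ uniformly in $t$. The strategy is a first-moment bound on ordered simple paths: fixing $\varepsilon>0$ and setting $L:=\lceil\varepsilon n/2\rceil$, a vertex lying in a component of size at least $\varepsilon n$ is the starting vertex of some simple path $(v_0,v_1,\ldots,v_L)$, so
\begin{equation*}
\prob{\Cmax(G^3_{n,t_n})\ge\varepsilon n}\ \le\ \sum_{v_0,\ldots,v_L\text{ distinct}}\prob{(v_{i-1},v_i)\in G^3_{n,t_n}\ \forall\,1\le i\le L}.
\end{equation*}

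Decomposing each path-probability by the chain rule and using the vertex-exchangeability of the model, the conditional probability $\prob{v_i\sim v_{i+1}\mid v_0\sim v_1,\ldots,v_{i-1}\sim v_i}$ equals the uniform average over the $n-i-1$ vertices not yet used of the probability that $v_i$ has a second neighbour outside the prescribed set, which is bounded by $\alpha/(n-i-1)$, where
\begin{equation*}
\alpha\ :=\ \sup_{n,t,v,\mathcal F}\prob{\deg_{G^3_{n,t}}(v)=2\mid\mathcal F}
\end{equation*}
and the supremum is over conditionings $\mathcal F$ of the form ``a prescribed edge is incident to $v$''. A direct telescoping computation then gives an expected count of at most $2n\alpha^{L-1}$, which tends to $0$ as soon as $\alpha<1$ (and $L\gg\log n$, which is automatic for $L=\varepsilon n$); Markov's inequality yields $\Cmax(G^3_{n,t_n})<\varepsilon n$ with probability tending to $1$, i.e.\ $\Cmax(G^3_{n,t_n})=o(n)$.

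The main obstacle is precisely the strict inequality $\alpha<1$, uniformly in $n$ and $t$. Equivalently, one needs a uniform positive lower bound on the conditional probability that a non-isolated vertex of $G^3_{n,t}$ has degree exactly $1$. This is plausible from the destructive mechanism -- each saturated vertex is destroyed at rate $\simeq 1$ and then has to spend a positive amount of time at degree $1$ before becoming saturated again -- but a rigorous proof requires controlling the rate equations on the conditional degree distribution of a typical vertex, and justifying that the history-dependence created by past resets does not spoil the exchangeability argument above. The local limit of the random graph process, developed in the following sections for $k\ge 4$, provides a natural framework to make this lower bound precise.
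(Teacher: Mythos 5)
Your reduction to $k=3$ and the observation that components are paths or cycles match the paper, but your argument for $k=3$ is not a proof: it is a strategy with the decisive estimate left open, as you acknowledge. The gap is real on two counts. First, the chain-rule step is unjustified: when you peel off the $i$-th edge, the conditioning that actually appears is the event that the entire initial segment $(v_0,v_1),\dots,(v_{i-1},v_i)$ survives in $G^3_{n,t_n}$ — a complicated, history-dependent event — not a conditioning of the form ``a prescribed edge is incident to $v_i$''; vertex-exchangeability alone does not reduce the former to the latter, so the bound $\alpha/(n-i-1)$ on the conditional probability is unsubstantiated. Second, even granting that reduction, the uniform bound $\alpha<1$ is exactly the hard part: conditionally on being an internal vertex of a surviving long path, a vertex has degree $2$ by definition, so the supremum must range over a carefully restricted family of conditionings, and you identify no such family, let alone analyse it. Deferring this to the local limit of the later sections does not close the gap either — the local limit is a fixed-$t$ statement about the neighbourhood of a uniform vertex and does not yield a conditional degree bound uniform in $n$, in $t_n$, and over the conditioning events arising along a path of macroscopic length.

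The paper avoids all of this with a self-contained second-moment argument. It passes to the discrete-time chain $(G_{n,\tau_\ell})_\ell$ and tracks $Z_\ell=\sum_{a\in A_\ell}|a|^2+2\sum_{b\in C_\ell}|b|^2$ (acyclic components and cycles, cycles counted twice), proving the one-step drift bound $\E(Z_{\ell+1}-Z_\ell\mid\F_\ell)\le\alpha+\beta Z_\ell/n-\tfrac14 Z_\ell^2/n^2$. The negative quadratic term comes precisely from the destruction mechanism you invoke heuristically: an endpoint of the new edge lands in a component $c$ with probability $|c|/n$ and, when it is an interior vertex of a path or any vertex of a cycle, splits that component and decreases $Z_\ell$ by order $|c|^2$; a Cauchy--Schwarz step then converts $\sum_c|c|^3$ into $Z_\ell^2/n$. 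This gives $\E(Z_\ell)\le An$ uniformly in $\ell$, hence $\E(\Cmax(G_{n,\tau_\ell})^2)\le An$, and the independence of $(\tau_i)_i$ from the discrete chain transfers the bound to arbitrary deterministic times $t_n$. If you wish to salvage the path-counting route you must formulate and prove the uniform conditional bound $\alpha<1$, which is essentially as hard as the theorem itself; the drift argument is the shorter path.
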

Theorem \ref{k3} means that there is no giant component for $k\leq 3$.

\begin{thm}
\label{k5}
If $k\ge 5$, there exists an interval $I$ such that for any time  $t\in I$, $\Cmax(G^k_{n,t })=\Theta (n)$.
\end{thm}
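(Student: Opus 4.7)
My approach is the one advertised in the introduction: identify a local limit of $G^k_{n,t}$, show that it has positive probability of an infinite cluster at some time $t$ when $k\ge 5$, and conclude by a standard local-to-global argument.

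The candidate limit is a random rooted marked tree $\mathfrak{T}_{k,t}$ obtained by equipping every potential vertex $v$ with its own independent rate-$1$ Poisson clock; starting from the root only, at each tick of the clock at an already-present vertex $v$, a fresh child is attached to $v$ if the current degree of $v$ is less than $k-1$, and otherwise every edge incident to $v$ is removed (so that the previously hanging subtrees are severed and $v$ resets to degree $0$). I claim that the ball of radius $r$ around a uniformly chosen vertex of $G^k_{n,t}$ converges in distribution to the ball of radius $r$ at the root of $\mathfrak{T}_{k,t}$. This is carried out by a standard breadth-first coupling: each point of $\poisson{}$ incident to an already-explored vertex $u$ is identified with a ring of the clock at $u$ in $\mathfrak{T}_{k,t}$, and a birthday-type estimate shows that the other endpoint is fresh with probability $1-O(r/n)$. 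The locality concern of Section~\ref{issuewehavetodealwith} is harmless because the forbidden-degree rule at $u$ only depends on the history of $u$ and of its already-explored neighbors, data that are by construction available.

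Next, fix $t$ slightly above the Erd\H{o}s-R\'enyi critical value $1$ and couple $\mathfrak{T}_{k,t}$ with the unconstrained analog obtained by ignoring the reset rule. This unconstrained object yields a Poisson($t$) Galton-Watson tree, supercritical with survival probability $1-q_t>0$. The two trees differ only at vertices whose clock fires $k$ or more times on $[0,t]$, an event of probability $\prob{\Poi(t)\ge k}$; for $k\ge 5$ and $t$ close enough to $1$ this can be made smaller than any prescribed $\varepsilon>0$. Comparing with a sub-Poisson Galton-Watson tree of offspring mean $(1-C\varepsilon)t>1$ -- with the finite constant $C$ absorbing the potential loss of an entire subtree hanging below a reset child -- shows $\mathfrak{T}_{k,t}$ still branches supercritically, so $\theta_{k,t}:=\prob{|\text{root cluster of }\mathfrak{T}_{k,t}|=\infty}>0$. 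The interval $I$ of the statement is any open neighborhood of this $t$ on which $\theta_{k,t}$ stays positive, and exists by continuity.

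Finally, a joint-pair version of the convergence above, combined with the positivity of $\theta_{k,t}$, gives $\espx{\Cmax(G^k_{n,t})}\ge(\theta_{k,t}-o(1))n$ via the classical two-point argument, and an edge-revealing Doob martingale upgrades this to $\Cmax(G^k_{n,t})\ge(\theta_{k,t}/2)n$ with high probability. The delicate point throughout is the supercriticality of $\mathfrak{T}_{k,t}$: the naive Poisson-GW coupling loses an \emph{entire} subtree each time a descendant resets, not just a single edge, so the coupling error has to be tracked along a geometric series of lost descendants; the condition $k\ge 5$ is precisely what keeps this series uniformly small, matching the paper's flag that $k=4$ is the genuine borderline.
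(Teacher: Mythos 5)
Your route is genuinely different from the paper's proof of Theorem \ref{k5}, and as written it has real gaps. The paper does not prove Theorem \ref{k5} through the local limit at all: it compares $G^k_{n,t}$ with the chronology-free minorant $g^k_{n,t}$ obtained by deleting every edge of $G^\infty_{n,t}$ incident to a vertex of $G^\infty$-degree at least $k$, shows that $g^k_{n,t}$ given its degree sequence is a configuration model conditioned to be loopless, and applies the Molloy--Reed criterion, which reduces to $te^{-t}\sum_{i=0}^{k-3}t^i/i!>1$ (satisfied at $t=2$ for $k\ge 5$). What you propose is essentially Theorems \ref{local} and \ref{thmequivalence} plus a direct supercriticality estimate for the limit tree; the paper proves those two theorems with heavy machinery (Sections \ref{sectlocal} and \ref{sectequivalence}) and then still derives Theorem \ref{k5} by the elementary comparison.

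The concrete gaps are these. First, your dismissal of the locality issue is wrong: the forbidden-degree status of a frontier edge $(u,v)$ at time $t$ depends on whether $v$ ever reaches degree $k$, which depends on which of $v$'s \emph{other} edges are currently present, hence on the unexplored neighbours of $v$, and so on recursively outward. This is exactly the obstruction of Section \ref{issuewehavetodealwith}, item 3, and the paper needs the propagation-path argument (Lemmata \ref{lemBornel}, \ref{lemTinOmegaMoins}, \ref{range}) to show the influence radius is a.s. finite before the breadth-first coupling can be run; a "standard" exploration that only conditions on already-revealed history does not determine the state of the boundary edges. Second, the supercriticality step is not a proof: the per-vertex discrepancy probability for a non-root vertex is governed by $\P(\Poi(t)\ge k-1)$, not $\P(\Poi(t)\ge k)$, because the parent edge counts toward the degree; the loss events of sibling edges are not independent of one another nor of the subtrees below them (a reset severs a whole subtree whose law is conditioned by the reset), so the comparison with a Poisson GW tree of mean $(1-C\varepsilon)t$ requires an actual domination argument, and your unspecified constant $C$ is doing all the work precisely in the regime $t\downarrow 1$ where the supercriticality margin vanishes. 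Third, the local-to-global step is incomplete: the two-point (second-moment) argument only shows that the number of vertices in components of size $\ge L$ concentrates near $\theta_{k,t}n$; it does not show these vertices lie in a single component, and a Doob martingale bound on $\Cmax$ does not supply that either. Because the process is not monotone, the usual sprinkling shortcut is unavailable, and the paper's Lemma \ref{lemmelien} (two uniform vertices are connected with probability $\ge (a^k_t)^2$) requires the long exploration-and-sprinkling construction of Section \ref{sectequivalence}. If you want a self-contained proof of Theorem \ref{k5}, the configuration-model comparison is far shorter and avoids all three difficulties.
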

According to Theorem \ref{k5}, there exists a giant component at some point if $k\geq 5$, but this does not entail that a giant component still exists at a later time.

\begin{thm}
\label{local}
Let $v$ be a random vertex of $G_{n,t}$, chosen uniformly among the vertices of $G_{n,t}$ independently of $(G^\infty_{n,t})_{t\geq 0}$. Then for any $k\in \N \cup\{\infty\}$, $(G^k_{n,t},v)$ converges in distribution, for the local limit topology, to $T^k_t$, when $n$ tends to $\infty$.
\end{thm}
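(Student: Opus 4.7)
My plan is to first construct the candidate limit $T^k_t$ intrinsically and then couple it with the neighborhood of a uniform vertex of $G^k_{n,t}$. Build $T^k_t$ as follows: take the time-marked Poisson Galton--Watson skeleton in which the root is born at time $0$ and each vertex $v$, once born at time $b_v$, carries an independent rate-$1$ Poisson process $N_v$ on $[b_v,\infty)$ whose points on $[b_v,t]$ are the birth times of the fresh children of $v$. This skeleton is the classical local limit of $G^\infty_{n,t}$. Superimpose on it the forbidden-degree dynamics: process the countably many birth events in chronological order, increment the running degree of both endpoints at each event, and whenever the running degree of a vertex reaches $k$ remove every incident edge (decrementing its neighbours' degrees accordingly). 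The rooted graph of edges still present at time $t$ defines $T^k_t$. For $k=\infty$ this is literally the classical Poisson Galton--Watson tree, so the theorem is already known in that case.

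The core difficulty, already flagged in Section \ref{issuewehavetodealwith}, is locality: the state near the root depends in principle on events arbitrarily deep in the tree, since a late saturation may propagate back to the root through a chain of edge removals. I will therefore establish a tail bound showing that the subset of \emph{influential} vertices, i.e.\ those whose events can alter the ball of radius $r$ around the root by time $t$, is a subcritical branching structure: each extra generation of influence requires a specific edge to be present and its far endpoint to saturate at a specific instant, a coincidence costing a factor of order $t$ per generation. Iterating this bound will show that for every $r,\varepsilon>0$ there exists $R$ such that, with probability at least $1-\varepsilon$, the ball of radius $r$ in $T^k_t$ is already determined by the Poisson events carried by vertices of depth at most $R$.

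The second half is a standard coupling. Pick a uniform vertex $v$ of $G^k_{n,t}$ and perform a breadth-first exploration using the underlying Poisson processes $\Pi_e$. Classical Erd\H{o}s--R\'enyi estimates show that, for fixed $R$, the $R$-neighbourhood of $v$ in $G^\infty_{n,t}$ is a tree with probability tending to $1$, and that the joint law of the Poisson arrival times of edges of this neighbourhood converges to the joint law of the processes $N_u$ in $T^k_t$. On this high-probability tree-like event the forbidden-degree dynamics restricted to the $R$-neighbourhood of $v$ in $G^k_{n,t}$ are governed by exactly the same combinatorial rules as the tree dynamics on the depth-$R$ truncation of $T^k_t$, and by the locality estimate of the preceding paragraph they correctly determine the ball of radius $r$. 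Sending $n\to\infty$ after choosing $R$ large enough for $\varepsilon$ yields convergence in the local topology. The genuinely hard step is the locality estimate; once that is in hand, the coupling and the tree-likeness argument are routine in this setting.
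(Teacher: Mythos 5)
Your overall strategy is the same as the paper's: prove that the forbidden-degree map is local (the status of the ball of radius $r$ around the root is determined, with high probability, by a ball of some finite radius $R$), then push the known local limit of the labelled $G^\infty_{n,t}$ through this map. The second half of your argument (tree-likeness of the $R$-neighbourhood and convergence of the arrival times) is indeed routine and corresponds to Lemma \ref{localconvergenceER} combined with the continuity statement of Lemma \ref{Psicontinous}.

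The gap is in the locality estimate, which you rightly single out as the hard step but whose sketch would fail as stated. You assert that the influential vertices form a \emph{subcritical} branching structure, each extra generation of influence ``costing a factor of order $t$'', and that iterating this bound gives the tail estimate. Iterating a per-generation factor of order $t$ yields a bound of order $t^l$ for influence chains of depth $l$, which does not tend to $0$ for $t\geq 1$ --- precisely the regime of interest. In fact the candidate influence chains (the possible propagation paths of Definition \ref{defpathpropagation}) form a \emph{supercritical} structure: each step requires both a path edge and an auxiliary edge incident to the same vertex, so their expected number at depth $l$ is $t(t+t^2)^{l-1}$, which grows geometrically for every $t>(\sqrt{5}-1)/2$. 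What actually kills long chains is not a small per-generation probability but the constraint that the triggering labels $t_{\tilde e_1}>t_{\tilde e_2}>\dots>t_{\tilde e_{l-1}}$ must decrease along the chain: conditionally on the shape of the tree, a given candidate chain satisfies this with probability $1/(l-1)!$, and the factorial beats the geometric growth, giving $p(l)\leq t(t+t^2)^{l-1}/(l-1)!\rightarrow 0$ (Lemma \ref{lemTinOmegaMoins}). Without isolating this monotonicity-of-times mechanism your tail bound does not close. Relatedly, defining $T^k_t$ by ``processing the countably many birth events in chronological order'' is not yet licit on an infinite tree, where there is no first event and infinitely many events may precede a given time; the paper defines the dynamics edge by edge through finite balls (Definition \ref{defiExtensionPhi}), and the well-definedness itself rests on the same propagation-path estimate.
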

$T^\infty_t$ is a Galton-Watson tree with Poissonian offspring distribution. $T^k_t$ is the forbidden degree version of $T^\infty_t$, and turns out to be a two-stages multitype branching process. The convergence is with respect to the local topology, as introduced by Benjamini and Schramm \cite{MR1873300}.

\begin{thm}
\label{thmequivalence}
For any $t\ge 0$ and $k\in \N$, 
$$\frac{\Cmax(G^k_{n,t})}n\xrightarrow[n\rightarrow \infty]{p}\P(|T^k_t|=\infty).$$
\end{thm}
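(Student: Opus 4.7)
The plan is to prove matching upper and lower bounds on $\Cmax(G^k_{n,t})/n$ in probability, both leveraging Theorem \ref{local} as the core input.

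\textbf{Upper bound via local neighbourhoods.} For any integer $K \geq 1$, any vertex $v$ in a component of size at least $K$ satisfies $|B_{K-1}(v)| \geq K$, because in a connected graph the ball of radius $r$ around $v$ must grow by at least one vertex per step until it exhausts the component. This gives the deterministic bound
$$\Cmax(G^k_{n,t}) \;\leq\; (K-1) \;\vee\; \#\{v : |B_{K-1}(v)| \geq K\}.$$
Since $\mathbf{1}\{|B_{K-1}(v)| \geq K\}$ is a bounded functional of the rooted ball of radius $K-1$, Theorem \ref{local} yields
$$\frac{1}{n} \#\{v : |B_{K-1}(v)| \geq K\} \xrightarrow[n\to\infty]{p} \P(|B_{K-1}(T^k_t)| \geq K).$$
Letting $n\to\infty$ and then $K\to\infty$ gives $\limsup_n \Cmax(G^k_{n,t})/n \leq \P(|T^k_t|=\infty)$ in probability, using that the events $\{|B_{K-1}(T^k_t)| \geq K\}$ squeeze down to $\{|T^k_t|=\infty\}$ as $K\to\infty$.

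\textbf{Lower bound via sprinkling.} Fix $\delta > 0$ small and couple $G^k_{n,t-\delta}$ with $G^k_{n,t}$ through the same Poisson marks $(\poisson{e})_{e \in E}$. Call a vertex \emph{heavy} if its ball of radius $R$ in $G^k_{n,t-\delta}$ has size at least $L$. By Theorem \ref{local} applied at time $t-\delta$, the fraction of heavy vertices converges in probability to $\P(|B_R(T^k_{t-\delta})| \geq L)$, which tends to $\P(|T^k_t|=\infty)$ as $R,L \to \infty$ and $\delta \to 0$ in suitable order — this uses the continuity of $s \mapsto \P(|T^k_s|=\infty)$, which follows from the explicit two-stage multitype branching description of $T^k_s$ alluded to in Theorem \ref{local}. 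The remaining step is to show that a.a.s.\ all but $o(n)$ heavy vertices lie in a single component of $G^k_{n,t}$. This is a sprinkling argument: the marks of $\poisson{}$ in $(t-\delta,t]$ produce $\Theta(\delta n)$ essentially uniform new edges, and each pair of large heavy clusters is merged w.h.p.\ by one of these edges, since each such cluster has boundary of macroscopic size.

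\textbf{Main obstacle.} The chief difficulty is performing sprinkling in the \emph{non-monotone} model: a sprinkled edge $e$ may saturate one of its endpoints, in which case every incident edge — \emph{including $e$ itself} — is erased, so the intended merge fails. One must therefore (i) show that large heavy clusters retain $\Omega(1)$ unsaturated boundary vertices with high probability, so that a positive fraction of sprinkled edges effect genuine merges, and (ii) control the cumulative destructive effect of the $O(k\delta n)$ saturation events in $(t-\delta,t]$, arguing that they fragment only $o(n)$ heavy vertices away from the giant cluster. Verifying continuity of $s \mapsto \P(|T^k_s|=\infty)$ through a quantitative analysis of $T^k_s$ is an auxiliary but essential ingredient that underpins the whole $\delta\to 0$ step.
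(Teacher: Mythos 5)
Your upper bound is essentially the paper's: the paper counts $N^i$, the number of vertices in components of size at least $i$, shows $N^i/n\to\P(|T^k_t|\ge i)$ in probability via a first and second moment computation (the second moment requiring the \emph{two}-rooted local convergence of Lemma \ref{localconvergenceER}, not just the single-root statement of Theorem \ref{local}), and then uses monotonicity of $N^i$ in $i$ together with $\Cmax\le i+N^i$ at $i=\sqrt n$. Your ball-counting version is the same argument; just be aware that passing from convergence of $\espx{N^i/n}$ to convergence in probability is exactly where the birooted limit is needed.

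The lower bound is where the proposal has a genuine gap, and in fact the plan as written would fail quantitatively. Your heavy vertices are defined by a ball of radius $R$ having size at least $L$ with $R,L$ fixed, so the "heavy clusters" you propose to merge have bounded size. A single sprinkled edge lands between two given vertex sets of size $O(L)$ with probability $O(L^2/n^2)$, so $\Theta(\delta n)$ sprinkled edges merge a given pair of bounded-size clusters with probability $O(\delta L^2/n)=o(1)$; sprinkling cannot connect them, and the assertion that "each such cluster has boundary of macroscopic size" is precisely the conclusion being sought, not a hypothesis you are entitled to. To make sprinkling work one must first grow the clusters to size $\gg\sqrt n$ \emph{inside the non-monotone dynamics}, and that is the entire content of the paper's Sections 5.3--5.7: the components of two uniform vertices $v_1,v_2$ are explored to depth $K_n=\frac{(1-\epsilon_2)\ln n}{\ln\rho_{t'}}$ via a coupling ($\Gmod$) with two independent copies of $T^\infty_{t'}$, an "included branching process" is extracted as a percolation of $T^k_{t'}$ with parameter $1-\epsilon_1+o(1)$ (this is how the paper handles your obstacles (i) and (ii), via the families $X^{parent,indep}$ and $X^{[t',t],indep}$), yielding with probability close to $(a^k_t)^2$ at least $n^{3/5}$ unsaturated generation-$K_n$ vertices in each component, to which exactly one connecting edge is then sprinkled. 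Moreover the paper does not attempt to merge \emph{all} heavy clusters into one: it proves only the two-point estimate $\liminf_n\P(v_1\leftrightarrow v_2)\ge(a^k_t)^2$ (Lemma \ref{lemmelien}) and converts it into the lower bound on $\Cmax$ through the identity $\P(v_1\leftrightarrow v_2\mid G)=\sum_iC_i^2/n^2$ combined with the already-established upper bound on $N^{\sqrt n}$; this bypasses the need to control all clusters simultaneously. Finally, the continuity in $(t',\epsilon_1)$ of the survival probability, which you correctly flag as essential, is itself a nontrivial multitype-branching-process argument (Lemmata \ref{lemmecontinuite1} and \ref{continuiteprocessus}) that the proposal leaves entirely unproved. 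In short: the upper bound stands, but the lower bound is a proof plan whose central difficulties are named rather than resolved, and one of its stated steps (pairwise merging of bounded-size clusters by sprinkling) is quantitatively false.
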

Expectedly, the behavior of the local limit predicts somehow the existence of the giant component, as the supercriticality of the local limit $T^k_t$ is equivalent to the existence of a giant component.

Section \ref{sectk3} will be about Theorem \ref{k3}, Theorem \ref{k5} will be discussed in Section \ref{sectk5}, the existence and properties of the local limit in Section \ref{sectlocal} and the Theorem \ref{thmequivalence} will be discussed in Section \ref{sectequivalence}.

\section{There is no giant component if $k\leq 3$}
\label{sectk3}

If $k\le2$, no component has more than  $2$ vertices. Thus, this section dealing only  with the case $k=3$, $G^{3}_{n,t}$ is denoted $G_{n,t}$. The largest allowed degree being  $2$,  the connected components are either paths or cycles, which limits their growth. Theorem \ref{k3}  follows from the next proposition.
\begin{prp}
\label{lemk3}  
There exists a constant $A$ such that for any $\ell\in\N$, and for $n\ge 10$, $$\espx{\Cmax(G_{n,\tau_{\ell}})^2}\leq An.$$
\end{prp}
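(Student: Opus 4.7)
Since $k=3$ forces every vertex to have degree at most $2$, every connected component of $G_{n,\tau_\ell}$ is either a path or a cycle. I propose to control $\Cmax$ through the Lyapunov potential
\[
Y_\ell := \sum_{C} |C|^2,
\]
where the sum runs over the components of $G_{n,\tau_\ell}$. Since $\Cmax(G_{n,\tau_\ell})^2 \le Y_\ell$, it is enough to establish $\E[Y_\ell] \le An$ uniformly in $\ell$.

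The core of the argument is a one-step drift analysis of $Y$. Let $e=\{u,v\}$ be the uniformly random edge added at step $\ell+1$. If neither $u$ nor $v$ has degree $2$ in $G_{n,\tau_\ell}$ then $e$ merely merges or extends components (or closes a path into a cycle): a case-by-case computation using $\sum_C |C|=n$ bounds the contribution of these events to $\E[\Delta Y \mid \mathcal F_\ell]$ by an absolute constant $C_1$. The delicate part is the destruction. If $u$ has degree $2$, every edge incident to $u$ is removed and its component is broken. For a cycle of size $c$ this gives $\Delta Y = -2c+2$ and happens at rate $\sim c/n$, producing an aggregate drift at most $-(8/3)\,Y_Q/n$, where $Y_Q := \sum_{\text{cycles}}|Q|^2$. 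For a path of size $c$ broken at its internal vertex of index $i\in\{2,\dots,c-1\}$, one has $\Delta Y = (i-1)^2+(c-i)^2+1-c^2$; summing over $i$ contributes $-\Theta(c^3/n)$ per path, and the Cauchy--Schwarz inequality $\bigl(\sum_{P}|P|^3\bigr)\bigl(\sum_{P}|P|\bigr)\ge\bigl(\sum_{P}|P|^2\bigr)^2$ converts this into an aggregate drift of at most $-c_2 Y_P^2/n^2$, with $Y_P := \sum_{\text{paths of size}\ge 3}|P|^2$.

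Combining yields $\E[\Delta Y \mid \mathcal F_\ell] \le C_1 - (8/3)\,Y_Q/n - c_2 Y_P^2/n^2$. Since $Y = Y_P + Y_Q + O(n)$, a short case split on whether $Y_P \ge Y/4$ or $Y_Q \ge Y/4$ shows that as soon as $Y_\ell \ge Kn$ for a sufficiently large constant $K$, the right-hand side is bounded by $-c_3 Y_\ell/n$. Absorbing the regime $Y_\ell < Kn$ into the additive constant gives $\E[\Delta Y \mid \mathcal F_\ell] \le C_4 - c_3 Y_\ell/n$ in every state, hence the linear recursion $\E[Y_{\ell+1}] \le (1-c_3/n)\E[Y_\ell] + C_4$. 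Iterating from $\E[Y_0]=n$ produces $\E[Y_\ell] \le An$ uniformly in $\ell$, as claimed.

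The main obstacle I anticipate is the cycle contribution: destroying a cycle of size $c$ drops $Y$ only linearly in $c$, so cycles alone cannot supply the quadratic drift $-Y^2/n^2$ enjoyed by paths, and one must check that the weaker linear drift $-Y_Q/n$ suffices to stabilize $Y$ at $O(n)$ in the worst case where $Y$ is concentrated in a single near-Hamiltonian cycle. A secondary technical point is the careful handling of small components (isolated vertices and paths of size $2$), which produce no destruction drift and must be absorbed into the constants $C_1$ and $K$.
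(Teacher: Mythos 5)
Your proposal is correct, and it follows the same core strategy as the paper: a one-step drift analysis of the sum of squared component sizes, with the positive merging contribution bounded by an absolute constant via $\sum_C |C| = n$, and the cubic destruction term coming from internal path vertices converted into a quadratic negative drift by Cauchy--Schwarz. The one point where you genuinely diverge is exactly the obstacle you single out, namely the cycles. The paper's resolution is to double the weight of cycles in the potential, $Z_\ell=\sum_{a\in A_\ell}|a|^2+2\sum_{b\in C_\ell}|b|^2$: with this weighting, breaking a cycle $c$ into a path of size $|c|-1$ plus an isolated vertex drops $Z$ by at least $|c|^2$ rather than by $2|c|-2$, so cycles also produce a cubic term $-\tfrac1n\sum_{c\in C_\ell}|c|^3$, and a single Cauchy--Schwarz over all components (with weights $1$ for paths and $4$ for cycles) yields the uniform inequality $\E(\Delta_{\ell+1}\mid\F_\ell)\le\alpha+\beta Z_\ell/n-Z_\ell^2/(4n^2)$, which is then iterated by a monotonicity argument around the positive root of the drift polynomial. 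Your route keeps the unweighted potential, accepts the merely linear drift $-cY_Q/n$ from cycles, and compensates with the dichotomy $Y_P\ge Y/4$ versus $Y_Q\ge Y/4$; this is sound --- in the cycle-dominated regime a linear drift of order $-Y/n$ already dominates the $O(1)$ positive term once $Y\ge Kn$ --- and it has the mild advantage of producing a linear recursion $\E[Y_{\ell+1}]\le(1-c_3/n)\E[Y_\ell]+C_4$ that iterates immediately without any Jensen or monotonicity step. (Your constant $8/3$ in the cycle drift is an overstatement: the exact computation gives at most $-2Y_Q/n+2$, but since any positive constant suffices for the case split, this is immaterial.)
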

\begin{proof}[Proof of Theorem \ref{k3}]
Let $|a|$ denote the cardinality of a finite set $a$, and set $N(t)=|\poisson{}\cap[0,t]\times E|$, resp. $N_e(t)=|\poisson{e}\cap[0,t]|$. Since the two families $(\tau_i)_{i\in \N}$ and $(G_{n,\tau_{i}})_{i\in \N}$ are independent, we obtain:
\begin{eqnarray*}
\espx{\Cmax(G_{n,t_n})^2}&\le&\sum_i\espx{\Cmax(G_{n,t_n})^2\left|N(t_n)=i\right.}\P(N(t_n)=i)
\\
&\le&\sum_i\espx{\Cmax(G_{n,\tau_{i}})^2}\P(N(t_n)=i)
\\
&\le& An.
\end{eqnarray*}
\end{proof}
\begin{proof}[Proof of Proposition \ref{lemk3}]
For any $n$ and $\ell $, let $B_{\ell}$ (resp. $A_{\ell}$, $C_{\ell}$) denote the set of connected components of $G_{n,\tau_{\ell}}$  (resp. the set  of  acyclic connected components of $G_{n,\tau_{\ell}}$, the set of cycles of $G_{n,\tau_{\ell}}$). Consider $Z_{\ell}$ defined by:
\begin{eqnarray}
\label{Zorro}
Z_{\ell}&=&\sum_{a\in A_{\ell}} |a|^2+2 \sum_{b\in C_{\ell}} |b|^2.
\end{eqnarray}
The reason for counting cycles twice will be clear later. Proposition \ref{lemk3} follows at once from the next Proposition.
\begin{prp}
There exists a positive constant $A$ such that for all integers $n$ and $\ell $, $\E(Z_{\ell})\leq An$.
\end{prp}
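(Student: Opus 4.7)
The plan is to establish a one-step drift inequality $\E[Z_{\ell+1}-Z_\ell\mid\F_{\tau_\ell}]\le C_1 - C_2 Z_\ell/n$ for universal constants $C_1,C_2>0$, and then conclude by the classical Gronwall iteration: $\E[Z_{\ell+1}]\le(1-C_2/n)\E[Z_\ell]+C_1$, hence $\E[Z_\ell]\le Z_0+C_1 n/C_2\le An$ since $Z_0=n$.

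The drift computation proceeds by case analysis. Because $k=3$, every component is a path or a cycle and every vertex has degree in $\{0,1,2\}$; call a vertex \emph{saturated} when its degree is $2$, and let $S,U$ denote the sets of saturated and non-saturated vertices. Writing $a_u$ for the component of $u$, the addition of an edge $\{u,v\}$ increases $Z$ by $2|a_u||a_v|$ if $u,v\in U$ lie in different components (two paths merge), increases $Z$ by $|a_u|^2$ if $u,v\in U$ are the two endpoints of a single path (which closes into a cycle), and otherwise decreases $Z$ or leaves it unchanged since every saturated endpoint loses all its incident edges. Summing the positive contributions over the $\binom n 2$ possible edges, with $s$ the number of singletons and $P_{\mathrm{tot}}=\sum_i p_i$ the total size of paths of length $\ge 2$, a direct computation yields $s(s-1)+4sP_{\mathrm{tot}}+4P_{\mathrm{tot}}^2-3\sum_i p_i^2 \le (s+2P_{\mathrm{tot}})^2\le 4n^2$, so after dividing by $\binom n 2$ the positive part of the drift is at most a universal constant.

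For the negative drift, a saturated $v$ at internal position $j$ of a path of size $p$ gives $\Delta_v=(j-1)^2+(p-j)^2+1-p^2$, whose sum over $j\in\{2,\dots,p-1\}$ equals $-\tfrac13(p-2)(p-1)(p+6)\le -c\,p^3$; similarly, for each cycle of size $c$, summing $\Delta_v$ over its vertices yields at most $-c'\,c^3$. The combined contribution of pairs $\{u,v\}$ with at least one saturated endpoint, with $u,v$ lying in distinct components, equals $\sum_{w\in S}\Delta_w(n-S_w)$ where $S_w$ counts the saturated vertices in $w$'s component; same-component pairs with both endpoints saturated contribute a non-positive amount. Using $p_i,c_j\ge 2$ gives $\sum_i p_i^3+\sum_j c_j^3\ge 2(\sum_i p_i^2+\sum_j c_j^2)\ge Z_\ell-s\ge Z_\ell-n$, which after absorbing the $(n-S_w)$ factor into an $\Omega(n)$ lower bound yields the required $-C_2(Z_\ell-n)\cdot n$ estimate and, after division by $\binom n 2$, the desired $-C_2 Z_\ell/n$ term (up to the additive $+C_1$).

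The principal obstacle is precisely the factor $(n-S_w)$ in the negative-drift sum: when a component is very large, the saturated vertices inside it have $S_w$ close to $n$, so the negative contribution weakens. One must either argue that the regime where $\Cmax$ is a non-vanishing fraction of $n$ is sufficiently rare to be absorbed separately (e.g.\ via a stopping-time argument on $\{\Cmax\le n/2\}$), or refine the negative-drift identity to retain a net negative drift there too by exploiting the fact that a near-full component destroys a huge $p^3$ contribution whenever any of its $\Omega(n)$ edges is triggered. All remaining steps are routine algebraic manipulations.
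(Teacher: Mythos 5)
Your overall strategy---a drift analysis of the same functional $Z_\ell$, with the positive contributions bounded by a constant after division by $\binom n2$ and the negative contributions coming from the cubic destruction terms of order $-p^3$ per split component---is the paper's strategy. Your closing step is a legitimate variant: you lower-bound $\sum_i p_i^3+\sum_j c_j^3\ge Z_\ell-O(n)$ using $p_i,c_j\ge 2$ and iterate a linear recursion, whereas the paper uses Cauchy--Schwarz to get $\sum_{c}|c|^3\gtrsim Z_\ell^2/n$ and hence a quadratic drift. Either works once the drift inequality itself is established.

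The genuine gap is the one you flag yourself: the factor $(n-S_w)$ in your negative-drift sum, together with the unexecuted promise to handle the regime where one component occupies a positive fraction of the vertices. Neither proposed fix is carried out, and the first (a stopping-time argument on $\{\Cmax\le n/2\}$) is circular in spirit, since controlling $\Cmax$ is exactly what the proposition is for. More importantly, the obstacle is an artifact of your accounting rather than of the problem. When the chosen pair $\{u,v\}$ has $u$ saturated, $u$'s edges are erased and its component is split no matter what $v$ is; if $v$ is also saturated, the additional erasure only refines the resulting partition, and since the sum of squares is monotone under refinement the total change is at most $\Delta_u$ and at most $\Delta_v$, hence at most $\tfrac12\bigl(\Delta_u+\Delta_v\bigr)$. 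So every pair satisfies the bound $\tfrac12\bigl(\Delta_u\1{u\in S}+\Delta_v\1{v\in S}\bigr)$, and summing over all $\binom n2$ pairs yields $\tfrac12(n-1)\sum_{w\in S}\Delta_w$, i.e.\ after normalisation the full $\tfrac1n\sum_{w\in S}\Delta_w$ with no $(n-S_w)$ attenuation. (The paper sidesteps the bookkeeping by ordering the pair and charging the negative contribution only to the first vertex $v_\ell$: the event $v_\ell=w$ has probability $1/n$ and forces a change at most $\Delta_w$ regardless of $w_\ell$.) With that repair your linear drift inequality holds and the rest of your argument goes through; as written, the proof is incomplete at its central estimate.
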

\begin{proof}
Set
$$u_\ell=\espx{\tfrac{Z_\ell}n}.$$
Let $\F_{\ell}$ be the $\sigma$-algebra generated by $(G_{n,t})_{t\leq \tau_{\ell}}$, and let  $\Delta$ denote the variation of $Z_{\ell}$:
$$\Delta_{\ell +1}=Z_{\ell +1}-Z_{\ell}.$$
We shall prove that, for suitable constants $\alpha$ and $\beta$, both larger than 1,
\begin{equation}
\label{slope}
\E(\Delta_{\ell +1}|\F_{\ell})\le \alpha + \beta\tfrac{Z_{\ell}}n- \tfrac{1}4 \tfrac{Z_{\ell}^2}{n^2}.
\end{equation}
As a consequence,
\begin{eqnarray*}
n(u_{\ell+1}-u_{\ell})&\le&\alpha +\beta u_{\ell}-\tfrac{1}{4} u_{\ell}^2,
\end{eqnarray*}
Note that $u_0=1$ and let $r$ denote the positive root of $\alpha +\beta X-\tfrac{1}{4} X^2$. Now,
\begin{itemize}
\item if $r\le u_{\ell}\le r+\alpha+\beta^2$, then \eqref{slope} entails that $u_{\ell+1}\le u_{\ell}$;
\item  if $0\le u_{\ell}\le r$, then \eqref{slope} entails that $u_{\ell+1}-u_{\ell}\le\tfrac1n(\alpha+\beta^2)$.
\end{itemize}
Thus $A= r+\alpha+\beta^2$ is a suitable choice since $u_0=1\le A$.
\end{proof}
Now, due to \eqref{Zorro},
$$Z_{\ell}\ge\Cmax(G_{n,\tau_{\ell}})^2,$$
which concludes the proof of Proposition \ref{lemk3}, assuming relation \eqref{slope}.
\end{proof}

\begin{proof}[Proof of  \eqref{slope}] Consider a graph process $\Gamma=(\Gamma_{\ell})_{\ell\ge 0}$ that starts from $n$ vertices and no edges. At each step $\ell\ge 1$  a first random vertex $v_{\ell}$ is picked uniformly and a second random vertex $w_{\ell}$ different from $v_{\ell}$ is picked uniformly too. Then the multiplicity of the edge $e_{\ell}=\{v_\ell,w_\ell\}$ is increased by 1, provided that the forbidden degree rule allows it. If either $v_{\ell}$ or  $w_{\ell}$ has degree 2 in  $\Gamma_{\ell-1}$ then its edges are erased and $e_{\ell}$ is not added. Since $\Gamma$ has the same distribution as  $(G_{n,\tau_{\ell}})_{\ell\ge0}$, we shall consider, in what follows, that the process $(Z_{\ell})$ is a functional of  $\Gamma$, rather than a functional of $(G_{n,\tau_{\ell}})_{\ell\ge0}$.

Let us decompose the variation $\Delta_{\ell}$ according to the connected components of $v_{\ell}$ and of $w_{\ell}$. For $c,d\in B_{\ell}$, set
$$X_{c,d}=\Delta_{\ell} \1{v_\ell \in c}\1{w_\ell\in d},$$
so that
$$\Delta_{\ell}=\sum_{(c,d)\in B_{\ell}^2 }X_{c,d}.$$
Since $A_{\ell}$, $B_{\ell}$, $C_{\ell}$ are measurable with respect to $\F_{\ell}$,
$$\espx{\Delta_{\ell}|\F_{\ell}}=\sum_{(c,d)\in B_{\ell}^2 }\espx{X_{c,d}|\F_{\ell}}.$$
Let us list the cases in which  $Z_{\ell}$ increases:
\begin{enumerate}
\item \textbf{The edge $e_{\ell}$ makes $c$ a cycle.}
This entails that $c =d\in A_{\ell}$ and  that $v_{\ell}$ and $w_{\ell}$ are the two endpoints of $c$, which happens with probability at most $\frac 2 {n(n-1)}$. In this case 
$$\Delta_{\ell}=|c|^2.$$
\item \textbf{The edge $e_{\ell}$ merges $c $ and $d $.} 
The components $c$ and $d$ merge only if $c,d\in A_{\ell}$, $c\neq d$ and $v_\ell $ and $w_\ell $ are endpoints of $c $ and $d $. There are at most $2$ endpoints per component, so, given $c$ and $d$, this happens with probability at most $\frac 4 {n(n-1)}$. In this case $$\Delta_{\ell}=(|c |+|d |)^2-|c |^2-|d |^2=2|c ||d |.$$
\end{enumerate}
Now let us list two of the cases in which  $Z_{\ell}$ decreases:
\begin{enumerate}
\setcounter{enumi}{2}
\item \textbf{$c $ is a cycle.} If $c \neq d $, the cycle $c $ is split into a path of length $|c |-1$ on one hand and an isolated vertex on the other hand. If $c =d $ the cycle can even be split in smaller parts, but in both cases $X_{c ,d }\leq -|c |^2$ with a probability $\frac{|c |}{n}$.
\item \textbf{$c $ has endpoints but $v_\ell$ is not one of them.} Let us say that $v_\ell$ is the  $m$th vertex of $c$. If $c \neq d$, the path $c $ is split into three paths, of length $m-1$, $|c|-m$ and $1$. If $c =d $, one of the three previous paths can be split again due to the effect of $w_\ell $. Thus, for any $m$ with $1<m<|c|-1$, with probability $\frac{1}{n}$, 
$$X_{c ,d }\leq (m-1)^2+(|c |-m)^2+1-|c |^2\leq 2m(m-|c |).$$
\end{enumerate}
These 4 cases cover the possible contributions of $(c, v_\ell)$ to $\Delta_\ell$. The two first cases, in which the two sides of $e_\ell$ play symmetric r\^oles,  also cover the positive contributions of  the other side of $e_\ell$, $(d, w_\ell)$, to $\Delta_\ell$. As we aim to provide an upper bound for   $\Delta_\ell$, we do not need to discuss the negative contributions of the other side, and considering the  4 cases, we obtain:
\begin{eqnarray*}
\sum_{(c ,d )\in  B_{\ell}^2}\espx{X_{c ,d }|\F_{\ell}}&\le&\sum_{1\le i\le 4}D_i,
\end{eqnarray*}
in which
\begin{eqnarray*}
D_1&=&\tfrac2{n(n-1)}\sum_{c \in A_{\ell}}|c |^2
\ \le\ \frac{2Z_\ell}{n(n-1)},
\\
D_2&=&\tfrac8{n(n-1)}\sum_{c ,d \in A_{\ell}}|c ||d |\ \le\ \frac{8n}{n-1},
\\
D_3&=&-\tfrac 1 n\sum_{c \in C_{\ell}}|c |^3
\end{eqnarray*}
and
\begin{eqnarray*}
D_4&=&\tfrac 1 n\sum_{c \in A_{\ell}}\sum_{m=2}^{|c |-1}2m(m-|c |)
\\
&=&\tfrac1{3n}\sum_{c \in A_{\ell}}(-{|c|^3}+7|c|-6)\1{|c|\ge 2}
\\
&\le&\tfrac73-\tfrac1{3n}\sum_{c \in A_{\ell}}|c|^3.\end{eqnarray*}
Thus, for $n\ge9$,
\begin{align*}
\espx{\Delta_{\ell}|\F_{\ell}}&\le 12+\frac{3Z_\ell}{n^2}-\frac1{4n} \left(\sum_{c \in A_{\ell}}|c |^3+\sum_{c \in C_{\ell}}4|c |^3\right).
\end{align*}
Let $(a_c)_{c\in B_{\ell}}$ and $(b_c)_{c\in B_{\ell}}$ denote two sequences defined as follows:
\begin{itemize}
\item if $c\in A_{\ell}$, $a_c=\sqrt{|c|}$ and $b_c=|c|^{\tfrac 3 2}$;
\item if $c\in C_{\ell}$, $a_c=\sqrt{|c|}$ and $b_c=2|c|^{\tfrac 3 2}$.
\end{itemize}
Then, by the Cauchy-Schwarz inequality, we have:
$$(Z_{\ell})^2=(\sum_{c\in B_{\ell}} a_cb_c)^2\leq \sum_{c\in B_{\ell}}a_c^2\sum_{c\in B_{\ell}}b_c^2=n(\sum_{c \in A_{\ell}}|c |^3+\sum_{c \in C_{\ell}}4|c |^3).$$
As a consequence, \eqref{slope} holds true for $(\alpha,\beta)=(12,3)$.\end{proof}

\section{At some point, there is a giant component if $k \ge 5$}
\label{sectk5}
In this part, rather than considering the process with forbidden degree, we shall consider a lower bound, i.e. a new process in which all the edges incident to a vertex to which at least $k$ edges have been added are removed. For $k\geq 5$, this new process is supercritical, allowing us to prove Theorem~\ref{k5}.
\subsection{Approximation by a simpler model}
Consider the graph $g^k_{n,t}$ obtained when one erases all the edges of  $G^\infty_{n,t}$ that are incident to a vertex with degree $k$ or more. As opposed to $G^k_{n,t}$, $g^k_{n,t}$ depends on $\poisson{}$  only through $(N_{e}(t))_{e\in E}$, thus it does not depend on the chronology.  Let $T_k$ denote the transformation that maps $(N_e(t))_{e\in E}$ to $g^k_{n,t}$:

$$g^k_{n,t}=T_k(N_\cdot(t)).$$

\begin{lem} For any $n$ and $t$,
$$g^{k}_{n,t}\le G^k_{n,t}\le G^\infty_{n,t}.$$
\end{lem}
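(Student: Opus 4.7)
The plan is to couple all three processes via the common Poisson point process $\poisson{}$ and to compare them edge by edge, showing that the multiplicity of each edge $e$ in $g^k_{n,t}$ is at most its multiplicity in $G^k_{n,t}$, which is at most its multiplicity $N_e(t)$ in $G^\infty_{n,t}$.

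The right inequality $G^k_{n,t}\le G^\infty_{n,t}$ is essentially immediate. Every increment to the multiplicity of an edge $e$ in the $G^k$ process is triggered by a point of $\poisson{e}\cap[0,t]$, and the same point also triggers an increment in $G^\infty$. Since $G^k$ may additionally erase edges whereas $G^\infty$ never does, the multiplicity of $e$ in $G^k_{n,t}$ is bounded by the number of points of $\poisson{e}$ in $[0,t]$, which equals $N_e(t)$.

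The left inequality $g^k_{n,t}\le G^k_{n,t}$ is the main content. First I would establish the key lemma: \emph{if a vertex $u$ satisfies $\deg_{G^\infty_{n,t}}(u)<k$, then the $G^k$ process never triggers an erasure at $u$ during $[0,t]$.} The idea is that each edge-addition event at $u$ in $G^k$, whether or not it is later erased, consumes a distinct point of $\poisson{}$ marked by some edge $\{u,\cdot\}$ in $[0,t]$. Hence the total number of such $G^k$-additions at $u$ over $[0,t]$ is bounded by $\deg_{G^\infty_{n,t}}(u)$, and an erasure at $u$ would force at least $k$ additions at $u$ in the current life of $u$ (and still more if previous erasures at $u$ have occurred), contradicting $\deg_{G^\infty_{n,t}}(u)<k$. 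Given the lemma, any edge $e=\{u,v\}$ present in $g^k_{n,t}$ has both endpoints with $G^\infty$-degree strictly less than $k$, so neither endpoint is ever erased in the $G^k$ process on $[0,t]$, and every point of $\poisson{e}\cap[0,t]$ contributes to its $G^k$-multiplicity at time $t$ without ever being removed. The multiplicities of $e$ in $g^k_{n,t}$ and in $G^k_{n,t}$ therefore coincide (both equal $N_e(t)$). For edges absent from $g^k_{n,t}$, the bound $0\le\text{anything}$ is trivial.

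I expect the principal subtlety to be the careful bookkeeping in the key lemma: because $G^k$ is not monotone, the same edge and even the same vertex can be charged and erased multiple times on $[0,t]$, so one must count each addition as a separate Poisson event and separately account for previous erasures at $u$. The argument is clean precisely because $\deg_{G^\infty_{n,t}}(u)$ counts every point of $\poisson{}$ marked at $u$ in $[0,t]$ regardless of subsequent behaviour of the $G^k$ process.
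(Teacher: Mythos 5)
Your proof is correct and follows essentially the same route as the paper: the paper's argument also rests on the observation that a vertex can only trigger an erasure in the $G^k$ process if its degree in $G^\infty_{n,t}$ has reached $k$ (you use the contrapositive), and then splits into the two complementary cases — the paper according to whether $G^k_{n,t}(e)<G^\infty_{n,t}(e)$, you according to whether $e$ survives in $g^k_{n,t}$. Nothing is missing; the bookkeeping you flag as the main subtlety is exactly the point the paper also relies on.
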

If $G(e)$ denotes the multiplicity of the edge $e$ in $G$,  the order we consider in this section is the product order for $(G(e))_{e\in E}$.
\begin{proof}The second inequality is clear. For the first one, note that if $0\le G^k_{n,t}(e)<G^\infty_{n,t}(e)$, then one of the endpoints of $e$ reaches the threshold $k$ in $G^\infty_{n}$ at some point $s$ before $t$, and, as a consequence, $g^k_{n,t}(e)=0$. Else none of the endpoints' degrees of $e$ reach the threshold $k$ in $G^\infty_{n,t}$, and as a consequence $g^{k}_{n,t}(e)= G^k_{n,t}(e)= G^\infty_{n,t}(e)$. 
\end{proof}

The graph $g^k_{n,t}$, conditionally on its degree sequence, is distributed as a configuration model conditioned on being loopless, as will be proven in	Corollary \ref{cordegreesequence}. The configuration model, introduced by Bender and Canfield \cite[page 297]{bendercanfield} is defined as follow:

\begin{defi}
Let $(c_v)_{1\leq v \leq n}$ be a finite sequence of non-negative integers, such that $\sum_v c_v$ is even. The configuration model associated to $(c_v)_{1\leq v\leq n}$ is the random graph constructed as follow:
\begin{itemize}
\item The set of vertices $V$ is  $\{1,\dots,n\}$.
\item Let $L$ be the multiset containing $c_v$ copies of each vertex $v$ of $V$.
\item A random uniform pairing $E$ of the elements of $L$ is chosen.
\item The set of edges is  defined by the pairing, with each pair $(x,y)$ corresponding to an edge between $x$ and $y$.
\end{itemize}
By construction, the vertex $v$ has degree $c_v$ in the resulting graph.
\end{defi}
\begin{rem}
The configuration model is sometimes defined from the sequence $(d_j)_{j\geq 0}$ where $d_j=\#\{v:c_v=j\}$ denotes the number of vertices of  degree $j$, instead of the degree sequence $c$.
\end{rem}

Let $c=(c_i)_{1\leq i \leq n}$ be a sequence of integers smaller than $k$ such that $\sum c_i$ is even and let $\mathscr G_c$ denote the set of loopless multigraphs with degree sequence $c$. An element of $\mathscr G_c$ is described by the sequence $(g_e)_{e\in E}$ of its edges' multiplicity.
\begin{lem}
\label{degreesequence}
For $g_1$, $g_2\in\mathscr G_c$, let $m^j_i$ denote the number of edges with multiplicity $i$ in $g_j$. Then for any $t\ge 0$,
$$\P(g^{k}_{n,t}=g_1)\prod_{i\geq 2}(i!)^{m_i^1}=\P(g^{k}_{n,t}=g_2)\prod_{i\geq 2}(i!)^{m_i^2}.$$
\end{lem}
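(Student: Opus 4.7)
My approach is to compute $\P(g^k_{n,t}=g)$ by summing the law of $G^\infty_{n,t}$ over the fiber $T_k^{-1}(g)$, and to check that the result factors as a quantity depending on $g$ only through the degree sequence $c$, times $\prod_e (g_e!)^{-1}$. Since the point processes $(\poisson{e})_{e \in E}$ are independent Poisson processes of rate $1/n$, the multiplicities $(N_e(t))_{e \in E}$ are i.i.d.\ Poisson$(t/n)$, so
\[\P(G^\infty_{n,t} = h) \;=\; \prod_{e \in E} e^{-t/n}\,\frac{(t/n)^{h_e}}{h_e!}\]
for any multigraph $h$.

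The next step is a combinatorial description of $T_k^{-1}(g)$. Set $V_+ = \{v : c_v > 0\}$, $V_0 = V \setminus V_+$, and let $E_+$ be the set of edges inside $V_+$. For any $h$ in the fiber, every $v \in V_+$ has $h$-degree strictly less than $k$ (otherwise its edges, in particular those of $g$, would be erased by $T_k$), so the saturated set $S_h = \{w : \deg_h(w) \geq k\}$ lies in $V_0$; applying the same reasoning to the other endpoint of each edge of $g$ forces every such edge to lie in $E_+$, and hence $h$ coincides with $g$ on $E_+$. The remaining freedom in $h$ is the choice of $S_h \subseteq V_0$ together with non-negative integer multiplicities on edges touching $V_0$, subject to: $h_e = 0$ on edges both of whose endpoints lie in $V \setminus S_h$ and at least one of which lies in $V_0$; $h_e \geq 0$ arbitrary on edges incident to $S_h$; and the degree bounds $\deg_h(v) < k$ for $v \notin S_h$ together with $\deg_h(v) \geq k$ for $v \in S_h$.

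Inserting this description into the explicit Poisson formula and using independence, one obtains a factorisation
\[\P(g^k_{n,t} = g) \;=\; \Bigl(\prod_{e \in E_+} e^{-t/n}\frac{(t/n)^{g_e}}{g_e!}\Bigr) \cdot R(c, n, t, k),\]
where $R$ bundles the Poisson weights of edges incident to $V_0$, summed over all admissible configurations. The crucial point is that both these weights and the admissibility constraints (which involve $\deg_h(v) = c_v + \sum_{w \in S_h} h_{vw} < k$ for $v \in V_+$, plus conditions on vertices of $V_0$) depend on $g$ only through $c$, so $R$ is a function of $c, n, t, k$ alone. Since $\sum_e g_e = \tfrac12 \sum_v c_v$ is determined by $c$, the first factor equals a $c$-dependent constant divided by $\prod_e g_e! = \prod_{i\geq 2}(i!)^{m_i^g}$ (using $0!=1!=1$), and multiplying $\P(g^k_{n,t}=g)$ by this product yields an expression depending on $g$ only through $c$; evaluating at $g_1$ and $g_2$ gives the lemma. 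The only delicate step is the combinatorial description of the fiber, in particular the fact that $h$ agrees with $g$ on $E_+$; once that is in hand the proof reduces to independence of edge-multiplicities.
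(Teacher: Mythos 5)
Your proof is correct and takes essentially the same route as the paper's: both characterize the fiber $T_k^{-1}(g)$ as $g$ itself (forced on the edges within $\{v:c_v>0\}$) together with a residual configuration supported on edges meeting the zero-degree vertices, whose admissibility constraints depend on $g$ only through $c$, and then factor the product of independent Poisson edge-multiplicity probabilities. The paper phrases the fiber description via the difference multigraph $r=s-g$ and the three conditions of Lemma \ref{antecendentcarac}, which is equivalent to your parametrization by the saturated set $S_h\subseteq V_0$ and the multiplicities incident to it.
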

\begin{cor}
\label{cordegreesequence}
The graph $g^k_{n,t}$, conditionally given its degree sequence, is a configuration model conditioned to be loopless. 
\end{cor}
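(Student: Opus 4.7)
The plan is to read Lemma \ref{degreesequence} as specifying the conditional law of $g^k_{n,t}$ given its degree sequence, to compute the analogous quantity for the configuration model by a direct half-edge enumeration, and to observe that the two laws coincide on $\mathscr G_c$.

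First, Lemma \ref{degreesequence} asserts that on the event that the degree sequence of $g^k_{n,t}$ equals $c$ the conditional distribution is proportional, for $g\in\mathscr G_c$, to $\prod_{i\ge 2}(i!)^{-m_i^g}$, where $m_i^g$ denotes the number of edges of multiplicity $i$ in $g$. I would next compute the analogous quantity for the configuration model with degree sequence $c$. Each of the $\left(\sum_v c_v-1\right)!!$ pairings of the half-edges is equiprobable; a pairing realises a fixed $g\in\mathscr G_c$ with edge multiplicities $(m_e)_{e\in E}$ iff, at every vertex $v$, the $c_v$ half-edges are partitioned into blocks of sizes $(m_e)_{e\ni v}$ according to which neighbour they are matched to, and then the blocks at the two endpoints of each edge $e$ are paired together. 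The number of such partitions at $v$ is $c_v!/\prod_{e\ni v}m_e!$, and each edge $e$ contributes a further factor $m_e!$ for the pairing of its two half-edge blocks.

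Using that each edge is incident to exactly two vertices,
$$\prod_v\prod_{e\ni v}m_e!=\prod_e(m_e!)^2,$$
so the number of pairings producing $g$ equals $\prod_v c_v!/\prod_e m_e!$, and the probability of obtaining $g$ in the configuration model is proportional to $\prod_e(m_e!)^{-1}=\prod_{i\ge 2}(i!)^{-m_i^g}$ on $\mathscr G_c$.

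Finally, conditioning the configuration model on being loopless restricts its law to $\mathscr G_c$ and preserves the above proportionality, so it coincides with the conditional law of $g^k_{n,t}$ given the degree sequence $c$ supplied by Lemma \ref{degreesequence}. I do not expect any real obstacle: the only genuine step is the half-edge enumeration, which is standard, and care needs only be taken to treat half-edges as distinguishable and to correctly aggregate the double counting coming from the two endpoints of each edge.
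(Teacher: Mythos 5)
Your argument is correct and is essentially the paper's: the paper likewise deduces the corollary from Lemma \ref{degreesequence} by quoting the count $\frac{\prod_v c_v!}{\prod_i (i!)^{m_i}}$ of configurations realising a given loopless multigraph, which is exactly the quantity your half-edge enumeration derives (note $\prod_i(i!)^{m_i}=\prod_e m_e!$). You merely supply the derivation of that count, which the paper states without proof.
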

In the configuration model, the number of configurations corresponding to a given multigraph is $\frac{\prod_v c_v!}{\prod_i (i!)^{m_i}}$, thus Lemma \ref{degreesequence} implies Corollary \ref{cordegreesequence}. 

\begin{proof}[Proof of Lemma \ref{degreesequence}]
Set $B=\{v\in V:c_v>0\}$. Let $g\in \mathscr G_c$. Let $s(e)_{e\in E}$ be a multigraph on the set of vertices $\{1,\dots,n\}$.

\begin{lem}
\label{antecendentcarac}
For all $e\in E$, let $r(e)=s(e)-g(e)$.
$T_k(s)=g$ if and only if the following three conditions hold:
\begin{enumerate}
\item For all $e$ in $E$, $r(e)\geq 0$.
\item For all $v\in B$, the degree of $v$ in the graph $r$ is strictly smaller than $k-c_v$.
\item For all $e\in E$ such that $r(e)>0$, one of the endpoints of $e$ is not in $B$ and has degree at least $k$ in $r$.
\end{enumerate}
\end{lem}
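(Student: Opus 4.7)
The plan is to unpack the definition of $T_k$, pin down what the vertex set $B$ means in terms of degrees in $s$, and then verify each direction by a straightforward case split on whether $r(e)=0$ or $r(e)>0$. Throughout I use the identity
\[
\deg_s(v) \;=\; \deg_g(v) + \deg_r(v) \;=\; c_v + \deg_r(v),
\]
which is immediate from $s = g + r$ edge-by-edge; this is the only arithmetic fact needed.

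First I would prove $(\Rightarrow)$. Assume $T_k(s)=g$. For each edge $e$, the definition of $T_k$ gives either $g(e)=s(e)$ (when both endpoints of $e$ have degree $<k$ in $s$) or $g(e)=0$ (when some endpoint has degree $\ge k$ in $s$); in both cases $r(e)\ge 0$, giving condition 1. For condition 2, a vertex $v\in B$ has some incident edge $e$ with $g(e)>0$, hence $v$ must have degree $<k$ in $s$, so by the identity $\deg_r(v) = \deg_s(v) - c_v < k - c_v$. For condition 3, if $r(e)>0$ then the edge was erased by $T_k$, so some endpoint $u$ of $e$ satisfies $\deg_s(u)\ge k$; that $u$ cannot lie in $B$, because the bound from condition 2 would give $\deg_s(u)=c_u+\deg_r(u)<k$. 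Hence $u\notin B$, and then $c_u=0$ yields $\deg_r(u)=\deg_s(u)\ge k$.

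Next I would prove $(\Leftarrow)$. Assume the three conditions. I split on the value of $g(e)$. If $g(e)>0$, both endpoints of $e$ lie in $B$, so condition 2 gives $\deg_s(v)=c_v+\deg_r(v)<k$ for each endpoint $v$; thus $T_k$ keeps $e$ with multiplicity $s(e)$. To match $g(e)$ I must also show $r(e)=0$, which follows from condition 3 by contraposition: $r(e)>0$ would force some endpoint outside $B$, contradicting $g(e)>0$. If instead $g(e)=0$, either $r(e)=0$ (so $s(e)=0$ and trivially $T_k(s)(e)=0=g(e)$), or $r(e)>0$, and then condition 3 provides an endpoint $u\notin B$ with $\deg_r(u)\ge k$; since $c_u=0$, this gives $\deg_s(u)\ge k$, so $T_k$ erases $e$, matching $g(e)=0$.

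There is no real obstacle here: the statement is essentially a bookkeeping lemma, and the only subtlety is recognizing that $B$ is exactly the set of vertices not saturated in $s$, i.e.\ $v\in B \iff \deg_s(v)<k$ (provided $T_k(s)=g$), which is what condition 2 encodes in the $(\Leftarrow)$ direction. Once this reformulation is visible, both directions reduce to the edge-by-edge dichotomy above.
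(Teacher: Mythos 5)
Your proof is correct and follows essentially the same route as the paper's: both directions are verified by the same degree bookkeeping ($\deg_s(v)=c_v+\deg_r(v)$) and the same characterization of $B$ as the vertices of degree $<k$ in $s$. The only difference is cosmetic — you make explicit in the converse that $g(e)>0$ forces $r(e)=0$ (disjointness of the supports of $g$ and $r$), which the paper uses implicitly and only records in a remark after the proof.
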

\begin{proof}
Let us first assume that $T_k(s)=g$. The graph $g$ is obtained from $s$ by removing every edge adjacent to a vertex of degree at least $k$ in $s$. Therefore $g$ is a subgraph of $s$, $r$ denotes the removed edges, and Condition~1 holds. If the degree of $v$ is at least $k$ in $s$, then every edge adjacent to $v$ is removed when building the graph $T_k(s)$. Therefore, if $v$ is in $B$ (\ie{} if the degree of $v$ is positive in $g$), the degree of $v$ in $s$ is strictly smaller than $k$. As $v$ has degree $c_v$ in $g$, $v$ has degree strictly smaller than $k-c_v$ in $r$, proving Condition 2. If $r(e)>0$, this means that the edge $e$ is removed when building $T_k(s)$, \ie{} that one of its endpoint $v$ has degree at least $k$ in $s$. As by Condition 2, $v$ cannot be in $B$, this means that $c_v=0$ and therefore that $v$ has degree at least $k$ in $r$, proving Condition 3.

Conversely, let us assume that $r$ satisfies conditions 1-3. Then by Condition 1, $g$ is a subgraph of $s=g+r$. The endpoints of the edges $e$ of $g$ have positive degree in $g$ and therefore they belong to $B$. By Condition~2, their degree in $s$ is strictly smaller than $k$, and therefore the edges of $g$ are not removed when building $T_k(s)$. The edges of $r$ are removed, as by Condition 3, one of their endpoints has degree at least $k$ in $r$ and therefore degree at least $k$ in $s=g+r$.

\end{proof}
Let $\mathcal{E}_c$ denote the set of graphs satisfying Conditions 1-3 of Lemma \ref{antecendentcarac}. It should be noted that $\mathcal{E}_c$ only depends on $c$, not on $g$. It should also be noted that for any $r\in \mathcal{E}_c$ and $g\in \mathscr G_c$, the set of edges of $r$ and $g$ are disjoint, as the endpoints of edges of $g$ are in $B$, whereas at least one endpoint of the edges of $r$ is not in $B$, by Condition 3.

Since $N_e(t)$ is a Poisson random variable with parameter $\lambda=\tfrac t n$, for any graph $s$:
\begin{eqnarray*}\P(G^{\infty}_{n,t}=s)&=&\prod_{e\in E}\P(N_e(t)=s(e))
\\
&=&e^{-(n-1)t/2}\lambda^{\sum_e s(e)}\prod_{e\in E}\frac 1 {s(e)!}
\end{eqnarray*}
Therefore, for $g\in \mathscr G_c$, 
\begin{eqnarray*}
\P(g^{k}_{n,t}=g)&=&\sum_{s:T_k(s)=g}\P(G^{\infty}_{n,t}=s)\\
&=&\sum_{r\in\mathcal {E}_c}\P(G^{\infty}_{n,t}=g+r)\\
&=&
e^{-(n-1)t/2}\sum_{r\in \mathcal{E}_c}\lambda^{\sum_e g(e)+r(e)}\prod_{e\in E}\frac 1 {(g(e)+r(e))!}
\\
&=&e^{-(n-1)t/2}\frac{\lambda^{\sum_e g(e)}}{\prod_{e\in E}g(e)!}\sum_{r\in \mathcal{E}_c}\lambda^{\sum_e r(e)}\prod_{e\in E}\frac 1 {r(e)!}.
\end{eqnarray*}
Note that for any edge $e$, $r(e)g(e)=0$, hence $(g(e)+r(e))!=g(e)!r(e)!$ holds. Thus
\begin{eqnarray*}
\P(g^{k}_{n,t}=g)\prod_{i\geq 1}(i!)^{m_i}&=&\P(g^{k}_{n,t}=g)\prod_{e\in E}g(e)!
\\
&=&e^{-(n-1)t/2}\lambda^{\sum_e g(e)}\Psi
\\
&=&e^{-(n-1)t/2}\lambda^{\sum_{i=1}^nc_i/2}\Psi,
\end{eqnarray*}
in which $\Psi$, defined by
$$\Psi=\sum_{r\in \mathcal{E}_c}\lambda^{\sum_e r(e)}\prod_{e\in E}\frac 1 {r(e)!}$$
depends only on $c$, as the set $\mathcal{E}_c$ does.
\end{proof}
In the rest of the section, $t$ is fixed, so $G^\infty_{n,t}$ and $g^{k}_{n,t}$ are abridged in $G^\infty_{n}$ and $g^{k}_{n}$ for readability. 
In \cite{molloyreed}, Molloy and Reed first study the configuration model associated to a degree sequence, and then extend the results to the configuration model conditioned on being simple (this conditional model is called random graph with given degree sequence). In our case, we will first study the configuration model associated to the degree sequence of $g^k_n$, using \cite[Theorem~1]{molloyreed} and then extend the result to $g^k_n$ by mirroring the proof used for \cite[Lemma 2]{molloyreed}. For a vertex $v$, the degree of $v$ in $G^\infty_{n}$ (resp. in $g^{k}_{n}$) is denoted  $C_n(v)$ (resp. $c_n(v)$). Set
$$d_{n}(i)=\#\{v:c_n(v)=i\}.$$

Let $G^{conf}_n$ be the configuration model associated to the degree sequence~$c_n$. We shall see that the sequence $d_n$ satisfies the assumptions of \cite[Theorem 1]{molloyreed}, i.e.  for each $i$, there exists a constant  $p_{t,i}$ such that  $$\lim_{n}\frac{d_{n}}n(i)=p_{t,i}.$$
 As the degree in the graph is bounded by $k-1$, the sequence $d_n$ is sparse and well-behaved (with the vocabulary of \cite{molloyreed}).

Let $\mathfrak{PGW}(t)$ denote a Galton-Watson tree whose offspring is  Poisson distributed  with parameter $t$.  By \cite[Proposition 2.6]{dembo2010}, for  any given vertex $v$,   the rooted graph  $(G^\infty_{n},v)$ converges in distribution to $\mathfrak{PGW}(t)$ when $n$ tends to infinity. Since $c_n(v)$ depends only on the vertices of the ball with radius 2 in $(G^\infty_{n,t},v)$ and on the edges between them,  the asymptotic distribution of $c_n(v)$ can be read on the first two levels of $\mathfrak{PGW}(t)$. Two cases arise:
\begin{itemize}
\item{if $C_n(v)\ge k$, the degree of $v$  in $G^\infty_{n}$ exceeds the threshold $k$, and $c_n(v)=0$;}
\item{if $C_n(v)<k$, an edge $\{v,w\}$ incident to $v$ in $G^\infty_{n}$ is erased in $g^k_{n}$ if and only if $C_n(w)$  exceeds the threshold $k$.}
\end{itemize}
Owing to \cite[	Proposition 2.6]{dembo2010},  $(C_n(v),C_n(w))$ weakly converges to $(C(v),C(w))$ such that $(C(v),C(w)-1)$ are i.i.d. and Poisson distributed with parameter $t$, for  both its offspring and its father $v$ contribute to the degree of $w$ in $\mathfrak{PGW}(t)$. Thus the degree of a neighbor of $v$ is less than $k-1$  with an asymptotic probability:
$$\pi_k(t)=  e^{-t}\sum_{i=0}^{k-2}\frac {t^i}{i!},$$
and the asymptotic distribution $c(v)$ of $c_n(v)$ is obtained by the following algorithm:
\begin{enumerate}

\item{draw a Poisson random variable $C(v)$ with parameter $t$;}
\item{build a  Poisson random variable $Y$ with parameter $t \pi_k(t)$ by  a thinning of $C(v)$ with parameter $\pi_k(t)$, so that the conditional distribution of   $Y$ given $C(v)$ is binomial  with parameters $C(v)$ and $\pi_k(t)$; }
\item{set $c(v)=Y\1{C(v)\le k-1}$.}
\end{enumerate}
Set $$p_{t,i}=\P(c(v)=i)=\lim_n\P(c_n(v)=i).$$
\begin{lem}
\label{smoothwb}
For $0\le i<k$,
$d_{n}(i)/n\xrightarrow[n\rightarrow \infty]{(P)}p_{t,i}$.
\end{lem}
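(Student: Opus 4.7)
My plan is to apply the classical second-moment method, upgrading pointwise convergence of $\P(c_n(v)=i)$ (already essentially established by the algorithm preceding the lemma) to convergence in probability of the empirical proportion $d_n(i)/n$.

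Writing $d_n(i) = \sum_v \1{c_n(v) = i}$ and using exchangeability, one has $\E[d_n(i)/n] = \P(c_n(v) = i)$ for a uniformly chosen vertex $v$, and the computation preceding Lemma~\ref{smoothwb} shows this quantity converges to $p_{t,i}$. It then suffices to show $\mathrm{Var}(d_n(i)/n) \to 0$ and apply Chebyshev's inequality. The variance expands as
\[
\mathrm{Var}\!\left(\tfrac{d_n(i)}{n}\right) = \tfrac{1}{n^2}\sum_{v,w \in V}\!\left[\P(c_n(v)=c_n(w)=i) - \P(c_n(v)=i)\P(c_n(w)=i)\right].
\]
The $n$ diagonal terms contribute at most $1/n$, and the task reduces to showing that the off-diagonal summands vanish in the limit.

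The key observation is that $c_n(v)$ is a measurable function of the ball of radius $2$ around $v$ in $G^\infty_n$: whether an edge $\{v,u\}$ of $G^\infty_n$ survives in $g^k_n$ depends on the $G^\infty_n$-degrees of $v$ and $u$, which can be read off from this $2$-ball (including edge multiplicities). I would then argue that for two independent uniform vertices $v,w$ of $V$, the joint law of the two $2$-balls in $G^\infty_n$ converges to the product of two independent $\mathfrak{PGW}(t)$ trees truncated at depth $2$; bounded convergence then yields $\P(c_n(v)=c_n(w)=i) \to p_{t,i}^2$, hence $\mathrm{Var}(d_n(i)/n) \to 0$.

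The main obstacle is this joint convergence for pairs of vertices, but it is a routine extension of Proposition~2.6 of \cite{dembo2010}: in a sparse \Erdos-R\'enyi multigraph the expected number of paths of any fixed length between two prescribed vertices is $O(1/n)$, so the balls of any fixed radius around two independent uniform vertices are disjoint with probability $1-o(1)$; on the disjointness event, applying the single-vertex version of the proposition twice shows that the two balls are asymptotically independent truncated $\mathfrak{PGW}(t)$ trees. Combining these ingredients with Chebyshev's inequality closes the argument.
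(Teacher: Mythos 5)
Your proposal is correct and follows essentially the same route as the paper: exchangeability for the first moment, a second-moment/Chebyshev argument, and the observation that $c_n(v)$ is determined by the $2$-ball around $v$, so that the off-diagonal term $\P(c_n(v)=c_n(w)=i)$ converges to $p_{t,i}^2$ by the birooted local limit. The paper simply invokes its multi-rooted version of Lemma~\ref{localconvergenceER} for this last step rather than re-deriving the asymptotic disjointness of the two balls as you sketch.
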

\begin{proof}
Since  $\P(c_n(1)=i)\xrightarrow[n\rightarrow\infty]{}p_{t,i}$, and the vertices play symmetric r\^oles,
$$\espx{d_{n}(i)}=\sum_{i=1}^n\P(c_n(v)=i)=n\P(c_n(1)=i) ,$$ and the limit holds for expectations. Let us show that it holds in probability.
\begin{eqnarray*}
\espx{d_{n}(i)^2}&=&\sum_{v,w\in [\![n]\!]}\P(c_n(v)=i=c_n(w))\\
&=&\sum_v\P(c_n(v)=i)+\sum_{v \ne w}\P(c_n(v)=i=c_n(w))\\
&=&n\P(c_n(1)=i)+n(n-1)\P(c_n(1)=i=c_n(2))
\end{eqnarray*}
As the birooted local limit of $G^\infty_{n,t}$ is a couple of independent $\mathfrak{PGW}(t)$,  $\P(c_n(1)=c_n(2)=i)$ converges to $p_{t,i}^2$, and:
$$\espx{d_{n}(i)^2}= n^2p_{t,i}^2+o(n^2).$$
Thus
$Var(d_{n}(i))=o(n^2)$, leading to the desired result.
\end{proof}
Let 
$$Q_t=\sum_i i(i-2)p_{t,i}.$$
According to \cite[Theorem 1]{molloyreed}, if $Q_t>0$, there exists a constant $\alpha$ 

$$\P(\Cmax(G^{conf}_n)\geq \alpha n|c_{n})\xrightarrow[n\rightarrow \infty]{(P)} 1.$$

Moreover, the law of $G^{conf}_n$ conditionally on $c_n$ and on $G^{conf}_n$ being loopless is equal to the law of $g^k_t$ conditionally on $c_n$. By the main result of \cite{MR790916}, there exists a constant $\beta>0$ such that $\P(G^{conf}_n\text{ is loopless}|c_n)>\beta$ a.a.s, and therefore, if $Q_t>0$:
$$\P(\Cmax(g^k_n)\geq \alpha n|c_{n})\xrightarrow[n\rightarrow \infty]{(P)} 1.$$

To conclude, we need to to compute the sign of
$$Q_t=\espx{c(v)(c(v)-2)}=\espx{c(v)(c(v)-1)}-\espx{c(v)}.$$
According to the points 2 and 3 of the description of the law of $c(v)$ above, $\espx{c(v)|C(v)}=\pi_k(t)C(v)\1{C(v)<k}$ and $\espx{c(v)}=\pi_k(t)\espx{C(v)\1{C(v)<k}}$. Similarly, $\espx{c(v)(c(v)-1)}=\pi_k(t)^2\espx{C(v)(C(v)-1)\1{C(v)<k}}$. Moreover, we have:
\begin{eqnarray*}
\E(C(v)\1{C(v)<k})&=&e^{-t}\sum_{i=1}^{k-1}i\frac{t^i}{i!}\\
&=&e^{-t}\sum_{i=1}^{k-1}\frac{t^{i}}{(i-1)!}\\
&=&e^{-t}\sum_{i=0}^{k-2}\frac{t^{i+1}}{i!}\\
&=&t \pi_k(t)
\end{eqnarray*}
Thus the equation for the existence of a giant component become successively:
\begin{eqnarray*}
\pi_k(t)^2\E(C(v)(C(v)-1)\1{C(v)<k})&>&\pi_k(t)\E(C(v)\1{C(v)<k})\\
\pi_k(t)^2\E(C(v)(C(v)-1)\1{C(v)<k})&>&t \pi_k(t)^2\\
e^{-t}\sum_{i=2}^{k-1}i(i-1)\frac {t^i}{i!}&>&t\\
te^{-t}\sum_{i=0}^{k-3}\frac {t^i}{i!}&>&1
\end{eqnarray*}

\begin{itemize}
\item{If $k\ge 5$, there exists a $t$ such that the condition is satisfied. (e.g. $t=2$).}
\item{If $k\le 4$, the condition does not hold for any $t$.}
\end{itemize}

This ends the proof of Theorem \ref{k5}.
\begin{rem}
  This part does not prove the existence (nor the non-existence) of a giant component for $k=4$, and only proves that there is a giant component for some bounded interval of $t$ (and not, as it could be expected, for every $t$ larger than some $t_0$) for $k\geq 5$.
\end{rem}

\section{The local limit}
\label{sectlocal}
The aim of this part is to prove the existence of a local limit, and to study the link between this local limit and the existence of a giant component.

\subsection{Local topology}
For the purpose of this study, the objects used are multigraphs with labelled edges, where the edges are labelled by the time of addition.  A root of a graph $G$ is either a vertex or an edge of $G$. We consider the local topology, as introduced by Benjamini and Schramm\cite{MR1873300}.

\begin{defi}

Given a graph $G$, an edge $e=(v_1,v_2)$ of $G$, a vertex $v$ of $G$, and a non-negative integer~$l$, $B_l(G,v)$ denotes the set of vertices of the ball of radius $l$ centered at $v$ in $G$, and $B_l(G,e)$ denotes $B_l(G,v_1)\cup B_l(G,v_2)$. The notation $B_l(G,v)$ (resp. $B_l(G,e)$) will also denote the subgraph of $G$ induced by the set of vertices $B_l(G,v)$ (resp. $B_l(G,e)$), and $S_l(G,v)$ (resp. $S_l(G,e)$) will denote the sphere of radius $l$ centered at $v$ (resp. $e$) \ie{} the set of vertices $B_l(G,v)\setminus B_{l-1}(G,v)$ (resp. $B_l(G,e)\setminus B_{l-1}(G,e)$), where by convention $B_{-1}=\emptyset$. The ball (resp. sphere) of radius $l$ will be called $l$-ball (resp. $l$-sphere).
\end{defi}

\begin{defi}
For an integer $l$, a $l$-rooted multigraph $(G,r)$ with labelled edges is the data of a multigraph $G$ with labelled edges equipped with a $l$-tuple $r=(r_1,\dots,r_l)$ of roots of $G$: the set of roots is ordered, and a given root can appear several times. 

For an integer $j$ and an $l$-rooted graph $(G,r)$, $B_j(G,(r))$ denotes the $l$-rooted graph $\cup_{1\leq i \leq l} B_j(G,r_i)$.
\end{defi}
\begin{defi}[Isomorphism and distance]
\ \\*[-0.6cm]
\begin{itemize}
\item Two $l$-rooted multigraphs $(G,r)$ and $(\tilde G,\tilde r)$ are said to be isomorphic if there is a graph isomorphism $\Phi$ from $G$ to $\tilde G$ preserving the edges' labels such that for all $i$, $\Phi(r_i)=\tilde r_i$.
\item For any integer $j$, $(G,r)$ and $(\tilde G,\tilde r)$ are said to be $j$-isomorphic if their $j$-balls are isomorphic.
\item The pseudo-distance between two $l$-rooted graphs $G$ and $\tilde G$ is defined as $2^{-j}$, with $j$ the largest integer such that $G$ and $\tilde G$ are $j$-isomorphic (as $l$-rooted labelled multigraphs). The convergence according to this distance is called the local convergence (this is a straightforward extension of the definition of the local convergence, according to Benjamini \emph{et al}). 
\end{itemize}
\end{defi}

If $(G,r)$ and $(\tilde G,\tilde r)$ are isomorphic, then they are $j$-isomorphic for any $j$. The converse holds if there is at least one root in each connected component of each graph, but can be false otherwise. 

\begin{defi}
Let $T^\infty_t$ be the labelled random graph defined by:
\begin{itemize}
\item The shape of $T^\infty_t$ is a Galton-Watson tree with Poisson offspring with mean $t$.
\item Conditionally on the shape of $T^\infty_t$, the edges' labels are uniform on $[0,t]$ and independent.
\end{itemize}
\end{defi}
\begin{lem}
\label{localconvergenceER}
\begin{enumerate}
For any integer $l$, the \emph{unlabelled} graph $G^\infty_{n,t}$, rooted at $l$ vertices chosen independently uniformly at random, converges toward $l$ independent copies of a Galton-Watson tree with Poisson offspring with mean $t$, while the \emph{labelled} graph $G^\infty_{n,t}$, rooted at $l$ vertices chosen independently uniformly at random, converges toward $l$ independent copies of $T^\infty_t$.
\end{enumerate}
\end{lem}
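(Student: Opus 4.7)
The plan is to extend the classical local convergence result for $G^\infty_{n,t}$ (Proposition 2.6 of Dembo--Montanari, cited above) from the unlabelled setting to the labelled setting, and from one root to $l$ independent roots. The structure of the proof is a standard breadth-first exploration, tracking edge multiplicities and edge labels along the way.

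First I would handle the case $l=1$ with labels. Pick a uniform root $v_1$ and reveal $B_l(G^\infty_{n,t},v_1)$ by generations. Recall that for any pair $\{x,y\}\in E$, the multiplicity $N_{\{x,y\}}(t)$ is a Poisson random variable with parameter $t/n$, and these random variables are independent across edges. Furthermore, conditionally on $N_{\{x,y\}}(t)=m$, the $m$ arrival times are i.i.d. uniform on $[0,t]$ (by the standard characterisation of the Poisson point process). So, after exposing a finite set $R$ of already-revealed vertices, the number of edge-copies incident to a given vertex $w$ with other endpoint in $V\setminus R$ is Poisson with parameter $t(n-|R|)/n$, and its labels are i.i.d. uniform on $[0,t]$, jointly independent of everything revealed before.

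Next I would argue that as $n\to\infty$ the exploration converges to the exploration of $T^\infty_t$. Two things need to be checked: (i) the offspring count at each step converges to $\mathrm{Poi}(t)$, and (ii) the probability of any ``collision'' (multi-edges, loops, or edges landing on already-revealed vertices) within the first $l$ generations tends to $0$. Both are routine since, conditionally on the ball of radius $l-1$ being a tree of finite size $K$, the probability that a given new Poisson offspring count exceeds $1$ on a fixed edge is $O(1/n^2)$, the probability that a new neighbor coincides with one of the $K$ already-revealed vertices is $O(K/n)$, and $K$ is tight as $n\to\infty$ because its law is dominated by the corresponding generation sizes of $\mathfrak{PGW}(t)$. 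Since the edge labels conditional on the exploration are i.i.d. uniform on $[0,t]$ (as observed above), the joint convergence in distribution of $(B_l(G^\infty_{n,t},v_1),\text{labels})$ to $B_l(T^\infty_t)$ with its uniform labels follows directly.

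For general $l$, pick $v_1,\dots,v_l$ independently and uniformly, and reveal the $l$ balls of radius $l$ in parallel. The key observation is that with the same estimate as above, the probability that these $l$ balls intersect goes to $0$: their total size is tight (still dominated by $l$ independent $\mathfrak{PGW}(t)$ balls), so the expected number of ``bridging'' edges between partial explorations started from different roots is $O(1/n)$. On the event that the balls are disjoint, the same computation as in the single-root case, carried out in parallel with the union of already-revealed sets, produces $l$ independent copies of the Poisson branching exploration with i.i.d.~uniform labels. This yields joint convergence in distribution toward $l$ independent copies of $T^\infty_t$ (and hence, forgetting the labels, toward $l$ independent Poisson Galton--Watson trees).

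The main obstacle is not a conceptual one but a bookkeeping one: making precise that the conditional law of the unexposed edge labels, at each stage of the breadth-first exploration, remains that of independent Poisson point processes on $[0,t]$, and that collisions between the $l$ parallel explorations are negligible. Both points rest on the fact that the revealed balls are tight in size, which follows from stochastic domination by $\mathfrak{PGW}(t)$ up to generation $l$.
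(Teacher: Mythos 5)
Your proposal is correct, but it follows a genuinely different route from the paper. You prove local convergence by a breadth-first exploration coupled to the branching process: at each stage the unexposed edge multiplicities remain independent Poisson variables of parameter $t/n$, so the offspring count of a newly explored vertex is $\mathrm{Poi}(t(n-|R|)/n)$, collisions (multi-edges, edges back into the revealed set, intersections between the $l$ explorations) have vanishing probability because the revealed set stays tight, and the labels come along for free since conditionally on the multiplicities they are i.i.d.\ uniform on $[0,t]$. The paper instead fixes a candidate finite $l$-rooted graph $(G,r)$ and computes the probability that the ball of radius $j$ is isomorphic to it: a first-moment bound over injections kills all non-forests (its Lemma~4.2), and for forests an exact enumeration of plane representations and of order-preserving injections yields the limit $\prod_i e^{-t} t^{d^{out}_{w_i}}/d^{out}_{w_i}!$, with the labels again added at the very end by the same conditional-uniformity observation. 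The two arguments establish the same weak convergence on the basis of cylinder events of the local topology; your exploration is arguably more robust (it produces an explicit asymptotic coupling and avoids the combinatorics of plane forests), while the paper's computation has the merit of giving closed-form finite-$n$ probabilities. The only points you should tighten are notational and quantitative: you use $l$ both for the number of roots and for the radius of the ball, whereas the convergence must be checked for balls of every fixed radius $j$ with $l$ fixed; and the step ``offspring count converges to $\mathrm{Poi}(t)$'' should be recorded as a total-variation bound $O(|R|/n)$ between $\mathrm{Poi}(t(n-|R|)/n)$ and $\mathrm{Poi}(t)$ so that the errors accumulated over the (tight) number of exploration steps still vanish. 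Neither is a gap in the idea, only in the bookkeeping you yourself flag.
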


The one-rooted version of the first part of Lemma \ref{localconvergenceER} is a well-known result, for various versions of the Erd\H{o}s-Rényi graph. One instance can be found in \cite[Proposition 2.6]{dembo2010}, for $G^\infty_{n,t}$ with no multiple edges. For completeness, a proof of this multi-roots, multigraph version of the result in \cite{dembo2010} can be found below:

\begin{proof}
For $l$ and $j$ some positive integers, consider $(G,r)$, $r=(r_i)_{1\leq i}$, a $l$-rooted finite graph with radius not larger than $j$ (such that ${\cup_{1\leq i\leq l}B_j(G,r_i)=G}$). Let us compute $p_n(G)$, the probability that $B_j(G^\infty_{n,t},v)$ is isomorphic to $(G,r)$, in which $(v_i)_{1\leq i\leq l}$ is a sequence of $l$ vertices chosen independently at random. We need a few notations:
\begin{itemize}
\item $V_G$ is the vertex set of $G$, and $N_V=|V_G|$;
\item $E_G$ is the edge set of $G$ and $N_E=|E_G|$;
\item for any edge $e\in E_G$, let $m_e$ denote the multiplicity of $e$ in $G$.
\item $N_V^b$ is the number of vertices of $G$ that are in $B_{j-1}(G,r)$;
\item $(w_i)_{1\leq i \leq N_V}$ is a specific ordering of $V_G$, such that $(w_i)_{1\leq i \leq N^b_V}$ are the elements of $B_{j-1}(G,r)$, and  $(w_i)_{N^b_V+1\leq i \leq N_V}$ are the remaining vertices of $G$. 
\end{itemize} 
There are at most $l$ components of $G=\cup_{1\leq i \leq l}B_j(G,r_i)$, therefore $l+N_E\geq N_V$. If $l+N_E=N_V$, then $G$ is a forest of $l$ simple trees.
\begin{lem}
\label{lemsimple}
If $l+N_E>N_V$, then $p_n(G)\rightarrow 0$.
\end{lem}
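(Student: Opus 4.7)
The plan is a first-moment union bound over the possible embeddings of $G$ into $G^\infty_{n,t}$. If the rooted ball $B_j(G^\infty_{n,t}, v)$ is isomorphic to $(G, r)$, then in particular there exists a map $\phi: V_G \to \{1,\ldots,n\}$ sending each $r_i$ to $v_i$ and such that every edge $e = \{x,y\}$ of $G$ satisfies $N_{\phi(e)}(t) \geq m_e$. Union-bounding over such $\phi$ reduces the problem to a standard Poisson subgraph-count computation.

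Quantitatively, since the $v_i$'s are independent uniform on $\{1,\ldots,n\}$, the event $\{\phi(r_i)=v_i \text{ for all } i\}$ has probability $n^{-l}$ and it is independent of the edge variables; since $(N_e(t))_{e\in E}$ are independent Poisson$(t/n)$ random variables, the event that $N_{\phi(e)}(t)\ge m_e$ for every $e\in E_G$ has probability at most $\prod_{e\in E_G}(t/n)^{m_e}/m_e!$. Bounding the number of maps $\phi: V_G \to \{1,\ldots,n\}$ by $n^{N_V}$, we obtain
\[
p_n(G) \;\leq\; \frac{n^{N_V}}{n^{l}} \prod_{e\in E_G} \frac{(t/n)^{m_e}}{m_e!} \;=\; C_G \, n^{N_V - l - \sum_{e} m_e},
\]
where $C_G = \prod_{e\in E_G} t^{m_e}/m_e!$ depends only on $G$.

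Since $\sum_{e\in E_G} m_e \geq N_E = |E_G|$ and, by assumption, $N_V < l + N_E$, the exponent of $n$ above is strictly negative, whence $p_n(G) \to 0$. There is no real obstacle here: this is the familiar ``non-tree neighbourhoods are negligible'' computation from sparse Erd\H{o}s--R\'enyi analysis. The only mild subtlety is the bookkeeping between the number $N_E$ of distinct edges of $G$ and the total edge count with multiplicity $\sum_e m_e$, but this is resolved by the trivial inequality $\sum_e m_e \geq N_E$, so either an extra cycle (increasing $N_E$) or an extra edge multiplicity (increasing some $m_e$) beyond the tree case forces the exponent to be negative.
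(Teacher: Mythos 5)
Your proposal is correct and follows essentially the same route as the paper: a union bound over the at most $n^{N_V}$ embeddings of $V_G$ into $\{1,\dots,n\}$, with probability $n^{-l}$ for matching the roots and at most $(t/n)^{N_E}$ for realising the edges, yielding $p_n(G)\le C_G\, n^{N_V-l-N_E}\to 0$. The only cosmetic difference is that you use the Poisson tail bound $\P(N_e(t)\ge m_e)\le (t/n)^{m_e}/m_e!$ where the paper uses the exact point probability; both give the same exponent.
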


\begin{proof}[Proof of Lemma \ref{lemsimple}]

Let $\Phi$ be an injection from $V_G$ to $\{1,\dots,n\}$. If $\Phi$ induces an isomorphism between $(G,r)$ and $B_j(G^\infty_{n,t},v)$, then the following conditions holds:
\begin{enumerate}
\item For every $1\leq i \leq l$, $v_i=\Phi(r_i)$.
\item For every edge $(w_i,w_j)$ of $G$, there is an edge, with the same multiplicity, between $\Phi(w_i)$ et $\Phi(w_j)$, in $G^\infty_{n,t}$.
\end{enumerate} 
These conditions are only necessary, not sufficient. For any given $\Phi$, the probability of the first condition is $\frac 1 {n^l}$. The probability of the second condition, being of the following form:
$$\prod_{e\in E_G}\frac{t^{m_e}}{n^{m^e}m^e!}$$
\noindent{}is smaller than $(\frac t n)^{N_E}$. Therefore, by independence between $G^\infty_{n,t}$ and $v$, the probability that $\Phi$ induces an isomorphism between $(G,r)$ and $G(^\infty_{n,t},v)$ is smaller than $\frac{t^{N_E}}{n^{l+N_E}}$. By union bounds, as there are less than $n^{N_V}$ injections between $(w_i)_{1\leq i\leq N_V}$ to $\{1,\dots,n\}$, we obtain:
$$p_n(G)\leq t^{N_E}n^{N_V-l-N_E}$$
\noindent proving Lemma \ref{lemsimple}.
\end{proof}

A Galton-Watson is usually described as a genealogy tree, \ie{} the planer embedding of a rooted tree, or "\emph{plane tree}": a plane tree is a rooted tree with an order relation between the children of the same node (see \cite{flajoletsedgewick}[Annex A.9]). An isomorphism of plane trees (resp. of a sequence of plane trees) is an isomorphism preserving the root (resp. the sequence of roots, with order) and the children's order.  Assuming that $B_j(G^{\infty}_{n,t},v)$ is a forest with $l$ components, let $B_j^*(G^{\infty}_{n,t},v)$ be the plane representation of $B_j(G^{\infty}_{n,t},v)$, where the children are ordered according to  their original label, in $\{1,\dots,n\}$. The rooted graph $B_j(G^\infty_{n,t},v)$ and $G$ are isomorphic as rooted graphs, if and only if the plane representation of $B_j(G^{\infty}_{n,t},v)$ is isomorphic to one of the plane representations of $(G,r)$, as a plane tree or forest. Therefore
$$\P(B_j(G^{\infty}_{n,t},v)\sim (G,r))=\sum_{(G^*,r)}\P(B_j^*(G^{\infty}_{n,t},v)\underset{plane}{\sim}(G^*,r))$$
\noindent where the summation is taken over the plane representations of $(G,r)$.

Let us compute the number of injections $\Phi$ from $V_G$ to $\{1,\dots,n\}$ preserving the order among the children of each node of $G$. First we choose the $N_V$ elements of $\Phi(V_G)$, without ordering (there are $\binom n {N_V}$ possible choices). From there, choosing $\Phi$ is equivalent to choosing a plane forest isomorphic to $G^*$ on $N_V$ vertices. The number of such plane forests is:
$$\frac{N_V!}{\displaystyle\prod_{i=1}^{N^b_V}d^{out}_{w_i}!}$$
\noindent where $d^{out}_{w_i}$ is the outdegree of $w_i$, \ie{} the number of children of $w_i$ (cf. \cite{spencer}[p.2], \cite{chassaingmarckert}[p.5] or \cite{chassaingflajolet}[p.12]). Therefore the number of injections  $\Phi$ from $V_G$ to $\{1,\dots,n\}$ preserving the order among the children of each node of $G$ is
\begin{equation}
\label{denombrementinjection}
\frac{n!}{\displaystyle(n-N_V)!\prod_{i=1}^{N^b_V}d^{out}_{w_i}!}.
\end{equation}
It should be noted that
\begin{equation}
\label{outdegree}
N_E=\sum_{i=1}^{N^b_V}d^{out}_{w_i}
\end{equation}

Such an injection $\Phi$ induces an isomorphism of plane forests between $(G^*,r)$ and $(B_j^*(G^{\infty}_{n,t},v)$ if and only if:
\begin{enumerate}
\item For each $i$, the random uniform root $v_i$ is equal to $\Phi(r_i)$;
\item For any $1\leq i \leq N^b_V$ and $1\leq j \leq N_V$, there is the same number of edges (either $0$ or $1$) between $w_i$ and $w_j$ in $G$ and between $\Phi(w_i)$ and $\Phi(w_j)$ in $G^\infty_{n,t}$;
\item For any $i\leq N^b_V$ and vertex $j\in \{1,\dots, n\}\setminus \Phi(V_G)$, there is no edge between $\Phi(w_i)$ and $j$ in $G^\infty_{n,t}$.
\end{enumerate}
For a given $\Phi$, the probability of the first property is $\frac{1}{n^l}$. As the multiplicity of each edge of $G^\infty_{n,t}$ is a Poisson random variable of parameter $\frac t n$, the probability of the last two properties is:
\begin{equation}
\left(e^{-\frac t n}\frac{(\frac t n)^1}{1!}\right)^{N_E}\left(e^{-\frac t n}\frac{(\frac t n)^0}{0!}\right)^{(n-1)N^b_V-N_E}=e^{-t\frac{n-1}{n}N^b_V}\left(\frac{t}{n}\right)^{N_E}.
\label{probaprop23}
\end{equation}
Combining (\ref{denombrementinjection}), (\ref{outdegree}), (\ref{probaprop23}), the fact that the probability of the first property is $\frac{1}{n^l}$ and $N_v=l+N_e$, we obtain:
\begin{eqnarray*}
\P\left(B_j^*(G^{\infty}_{n,t},v)\underset{plane}{\sim}(G^*,r)\right)&=&\frac{n!}{\displaystyle(n-N_V)!\prod_{i=1}^{N^b_V}d^{out}_{w_i}!}e^{-t\frac{n-1}{n}N^b_V}\left(\frac{t}{n}\right)^{N_E}\\
&\xrightarrow[n\rightarrow \infty]{}&\frac{e^{-tN_V^b}t^{N_E}}{\prod_{i=1}^{N^b_V}d^{out}_{w_i}!}\\
&=&\prod_{i=1}^{N^b_V}\left(e^{-t}\frac{t^{d^{out}_{w_i}}}{d^{out}_{w_i}!}\right)\\
&=&\P(GW^l_j\underset{plane}{\sim}(G^*,r))
\end{eqnarray*}
\noindent where $GW^l_j$ denotes $l$ independent copies of a Galton-Watson tree with Poisson offspring with mean $t$. By summing over all the plane representations of $G$, one obtains that:
$$\P\left(B_j(G^{\infty}_{n,t},v)\sim(G,r)\right)\rightarrow \P\left(GW^l_j\sim(G,r)\right)$$
\noindent for all $G$ and $j$, ending the proof of the first part of Lemma \ref{localconvergenceER}.

By the properties of Poisson point processes, conditionally given its unlabelled version, the labels of the edges of $G^\infty_{n,t}$ are independent and uniform on $[0,t]$. Therefore, the labelled graph $G^\infty_{n,t}$ $l$-rooted at $l$ independent uniform random roots converges weakly toward $l$ independent copies of $T^\infty_t$.

\end{proof}
By the standard properties of Poisson processes, $T^\infty_t$ can be described as a branching tree such that the law of the set of labels of outgoing edges is a Poisson point process of intensity $1$ on $[0,t]$. 

Let $T^\infty_{\infty}$ denote the \emph{Poisson-Weighted Infinite Tree} in dimension $1$, or \emph{PWIT}, as defined by Aldous and Steele \cite{aldoussteele}. The distribution of $T^\infty_t$ can also be described as follows:
\begin{itemize}
\item Remove every edge of $T^\infty_\infty$ whose label is larger than $t$.
\item Let $T^\infty_t$ be the connected component of the root in the resulting subgraph.
\end{itemize}
It should be noted that the notion of convergence used here is not exactly the same as the one used in \cite{aldoussteele}, as the latter allows the edges' labels to  converge toward their limit, whereas the former requires the edges' labels to be eventually constant.

\subsection{The forbidden degree version of infinite graphs.}

\label{partlongrange}
\subsubsection{Neighborhood with boundaries}
\begin{defi}
Let $\Omega$ be the set of locally finite graphs with labelled edges such that no two edges have the same label.  Let $\Omega_{<\infty}$ be the set of finite graphs in $\Omega$. For any non-negative $t$ and graph $G$ , let $G_t$ be the subgraph of $G$ restricted to its edges with label smaller than $t$.
\end{defi}

For any  graph $G\in\Omega_{<\infty}$,  the forbidden degree version $G^k=\Phi(G)$ of $G$ is defined by adding the edges of $G$ in their labelling order and by removing the edges adjacent to any vertex reaching degree $k$. If $G\in \Omega\setminus \Omega_{<\infty}$, the labels are not necessarily well-ordered, so some care is needed to extend $\Phi$ to a larger class of graphs of $\Omega$. This is the aim of this section.
\begin{defi}
Let $e$ be an edge of $G$. $(G_B,B,S)$ is called a \emph{neighborhood with boundary} of $e$ in $G$ if:
\begin{itemize}
\item $S$ and $B$ are subset of the set of vertices of $G$.
\item $G_B$ is the induced subgraph of $G$ restricted to $B$.
\item The edge $e$ is in $G_B$.
\item $S$ is a subset of $B$.
\item $S$ contains the boundary of $B$ in $G$ (\ie{} the sets of vertices of $B$ with a neighbor not in $B$ in the graph $G$)
\end{itemize}

\end{defi}
\begin{rem}

The last condition means that elements of $B\setminus S$ in $G$ are never endpoints of edges between $B$ and $G\setminus G_B$, while vertices in $S$ may be so.
Sometimes we shorten $(G_B,B,S)$ in $(G_B,S)$ as $B$ can be retrieved from $G_B$.

\end{rem}
\begin{exa}

For any graph $G$, integer $l$, and vertex or edge $x$ of $G$, $(B_l(G,x),S_l(G,x))$ is a neighborhood with boundary.

\end{exa}
\begin{defi}
\label{deficertain}
	Let $e$  be an edge of $G$, $\V=(G_B,B,S)$ a neighborhood with boundary of $e$ in $ G$. Let us consider the set $\mathcal S_\V$ of finite graphs $\tilde G$ with an edge $\tilde e\in \tilde G$ and a neighborhood with boundary $(\tilde G_{\tilde B},\tilde B,\tilde S)$ of $\tilde e$ in $\tilde G$ such that $(G_B,B,S,e)$ is isomorphic to $(\tilde G_{\tilde B},\tilde B, \tilde S,\tilde e)$. If either,
\begin{enumerate}
\item $\tilde e$  is present in $\Phi(\tilde G)$ for every $\tilde G\in \mathcal S_\V$,
\item or $\tilde e$ is removed in $\Phi(\tilde G)$ for every $\tilde G\in\mathcal S_\V$,
\end{enumerate}
then we say that the knowledge of $\V$ is sufficient to know whether $e$ is removed in the forbidden degree version  of $G$.
\end{defi}
The set $\mathcal S_\V$ depends both on $(G_B,B)$ and on $S$: the subgraph of $\tilde G$ induced by $\tilde B$ is isomorphic to $G_B$, but one can obtain $\tilde G$ only by growing new edges on the vertices of $\tilde S$. That is, only the vertices of $\tilde S$ may have a different degree in $\tilde G_{\tilde B}$ and in $\tilde G$. Thus $S$ matters in the definition above, in a somehow hidden way.  If finally we are able to extend $\Phi$ to some infinite graph $G$, we expect, obviously, that the status of $e$ in $\Phi(G)$ (present or absent) is the same as the status of its counterpart $\tilde e$ in $\Phi(\tilde G)$.
\subsubsection{Locality of the forbidden version}
For each edge $e$ of $G$, let $l(e)$ denote the smallest integer $l$ such that, for all $t$, the knowledge of $(B_l(G,e),S_l(G,e))$ is sufficient to ascertain whether $e$ is removed in the forbidden degree version of $G_t$. If there is no such $l$, let $l(e)=\infty$.

For any vertex $v$ of $G$, let $l(v)$ be the smallest integer such that the knowledge of $(B_l(G,v),S_l(G,v))$ is sufficient to know which edges adjacent to $v$ are removed in the forbidden degree version of $G_t$ for all $t$.

In this section, we shall prove that for any $T\geq 0$, a.s. $l(v)<\infty$ (resp. $l(e)<\infty$) for any vertex $v$ (resp. any edge $e$) of $T^\infty_T$.
\begin{defi}
\label{defiExtensionPhi}
Let $\Omega^+\subset \Omega$ be the set of labelled graphs $G$ such that for all edges $e$ of $G$, $l(e)<\infty$.

For any graph $G\in \Omega^+$,  $\Phi(G)$ is defined as follows:
\begin{itemize}
\item the set of vertices of $\Phi(G)$ is $V$;
\item the set of edges of $\Phi(G)$ is a subset of $E$;
\item for each $e\in E$, $e$ is an edge of $\Phi(G)$ if and only if $e$ is present in $\Phi(B_{l(e)}(G,e))$.
\end{itemize}
\end{defi}

The main result of this part is the following lemma:
\begin{lem}
\label{longrange}
Almost surely, for all $t\geq 0$, $T^\infty_t\in \Omega^+$.
\end{lem}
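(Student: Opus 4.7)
My plan is to show, for each fixed $t \geq 0$ and each fixed edge $e$ of the underlying PWIT, that $l(e) < \infty$ in $T^\infty_t$ almost surely. The full lemma will then follow by a countable union over edges combined with a monotonicity argument in $t$. The approach is to construct an exploration rooted at $e$ which identifies a finite neighborhood whose knowledge certifies the status of $e$ at every time $s \in [0,t]$.

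The key structural observation is a monotonicity property of the dynamics: since blowups only \emph{remove} edges, a cascade of blowups at far-away vertices can only \emph{delay} (never advance) the blowup of a nearer vertex. In particular, the first blowup time $B_w$ of any vertex $w$ is bounded below by its intrinsic blowup time $\lambda_w^{(k)}$, the $k$-th smallest label among the edges adjacent to $w$ (with $\lambda_w^{(k)} = +\infty$ if $w$ has fewer than $k$ adjacent edges). Hence to certify that $w$ does not blow up before a threshold $\tau$ it suffices to verify $\lambda_w^{(k)} > \tau$, which is a purely local condition. The exploration then starts from $\{u,v\}$ with thresholds $\tau_u = \tau_v = t$; at each vertex $w$ with threshold $\tau_w$ it either declares $w$ \emph{safe} (and stops) if $w$ has fewer than $k$ adjacent edges with label $\leq \tau_w$, or else recurses into each of the $k$ neighbors along $w$'s $k$ smallest-label adjacent edges (excluding the one $w$ was reached from), with new threshold $\tau_{w'} = \lambda_w^{(k)} < \tau_w$. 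A short induction over the (finitely many) potential blowup rounds at each vertex shows that the labels observed inside the explored region determine the full dynamics up to time $t$ of every explored vertex, and hence the status of $e$; so $l(e)$ is bounded by the depth of the exploration.

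The main technical step is to show this exploration terminates almost surely. Along each branch, thresholds strictly decrease, and by analysing the one-step transition $\tau_w \mapsto \lambda_w^{(k)}$ --- essentially a $k$-th order statistic of a Poisson point process on $[0,\tau_w]$ --- one verifies that thresholds drop into a sub-critical regime $\tau \leq \tau^\ast$, where $(k-1)\,\P(\Poi(\tau^\ast) \geq k-1) < 1$, within finitely many steps almost surely. Before reaching the sub-critical regime the branching at each node is deterministically bounded by $k-1$, so only a bounded (in depth) number of vertices are visited; inside the sub-critical regime the exploration is stochastically dominated by a sub-critical Galton--Watson tree, hence almost surely finite. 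The main obstacle is the threshold-decrease analysis: \emph{a priori} thresholds could stabilize at a super-critical value, and ruling this out requires an explicit computation showing that the one-step map $\tau \mapsto \lambda^{(k)}$ is strictly contracting in expectation until $\tau$ falls below $\tau^\ast$.

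Finally, once $l(e) < \infty$ is proved for each fixed pair $(e,t)$, the extension to all $t$ uses a monotonicity of the exploration: the condition ``$w$ has fewer than $k$ adjacent edges with label $\leq \tau_w$'' becomes easier to satisfy as $\tau_w$ decreases, so for each edge $e$ of the PWIT the exploration at threshold $T$ contains the exploration at every threshold $t \leq T$, and almost-sure termination at $t = T$ implies almost-sure termination at all $t \leq T$. A countable union over the edges of the PWIT and over integer values of $T$ then yields the almost-sure statement for all $t \geq 0$.
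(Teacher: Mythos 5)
Your overall strategy is essentially the paper's in disguise: your exploration, followed along any single branch, produces a self-avoiding path $v_0,v_1,\dots$ together with a strictly decreasing sequence of labels $\tau_1>\tau_2>\cdots$, where $\tau_{j+1}$ is the label of an edge adjacent to $v_j$ --- exactly what the paper calls a \emph{propagation path} (Definition \ref{defpathpropagation}). The paper proves termination by a first-moment count: the expected number of candidate paths of length $l$ is $t(t+t^2)^{l-1}$, and the probability that the $l-1$ auxiliary labels are in decreasing order is $1/(l-1)!$; the factorial kills the exponential. Your termination argument replaces this by ``contraction in expectation until subcritical, then subcritical Galton--Watson'', and this is where there is a genuine gap. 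Conditionally on a vertex with small threshold $\tau$ being non-safe, $\lambda^{(k)}$ is essentially the maximum of $k$ uniform points on $[0,\tau]$, with conditional mean about $\frac{k}{k+1}\tau$, while each non-safe vertex spawns up to $k-1$ (or $k$) new branches. Since $(k-1)\cdot\frac{k}{k+1}>1$ for every $k\geq 3$, the bound $\E\bigl[\#\{\text{depth-}n\text{ branches with }\tau_n>\tau^\ast\}\bigr]\leq k^n\,\E[\tau_n]/\tau^\ast$ diverges: contraction in expectation does not control the supercritical phase. Almost-sure finiteness of that phase \emph{along each branch} does not suffice either, because exceptional slowly-contracting branches are amplified by the exponentially many branches. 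What actually saves the argument is the factorial: writing $\tau_n/t$ as a product of $n$ independent ratios each roughly $\mathrm{Beta}(k,1)$-distributed gives $\P(\tau_n>\tau^\ast)\approx (k\ln(t/\tau^\ast))^n/n!$, which does beat $k^n$ --- but this is precisely the decreasing-labels computation of the paper, not a contraction estimate, and it must appear explicitly.

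A secondary issue: recursing into the children of $w$ only with threshold $\lambda_w^{(k)}$ determines whether $w$'s \emph{first} potential blowup occurs, but not its later ones. If one of the $k$ smallest edges at $w$ is removed before time $\lambda_w^{(k)}$, then $w$ may still reach degree $k$ at a later time in $(\lambda_w^{(k)},\tau_w]$, and certifying or refuting that requires re-examining the neighbours (and further edges of $w$) up to that later time; your single-threshold recursion does not do this. The repair enlarges the exploration, but the recursion times remain labels of edges adjacent to the previous vertex and remain decreasing along the path, so the repaired exploration is again dominated by the set of propagation paths and the same factorial count closes the argument --- which is the route the paper takes via Lemmata \ref{lemBornel} and \ref{lemTinOmegaMoins}.
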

Lemma \ref{longrange} will be proven in part \ref{partPropagationPaths}. By Lemma \ref{longrange}, $\Phi(T^\infty_t)$ is well defined, and, as a subgraph of the tree $T^\infty_t$, $\Phi(T^\infty_t)$ is a forest. Let $T^k_t$ be the connected component of the root $\emptyset$ in $\Phi(T^\infty_t)$. 

\begin{cor}
For any $i$, $G^k_{n,t}$ rooted at $m$ independent uniform vertices locally converges in distribution toward $m$ independent copies of $(T^k_t,\emptyset)$.
\end{cor}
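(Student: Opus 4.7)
The plan is to promote the local convergence of $G^\infty_{n,t}$ toward $T^\infty_t$ (Lemma \ref{localconvergenceER}) through the forbidden-degree operator $\Phi$, exploiting the locality property from Lemma \ref{longrange}. Since $G^k_{n,t}=\Phi(G^\infty_{n,t})$ in the finite setting and $T^k_t$ is the connected component of the root in $\Phi(T^\infty_t)$, the task reduces to showing that $\Phi$ is continuous at $T^\infty_t$ for the local topology.

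Fix an integer $j\geq 0$ and $m$ independent uniform roots $v=(v_1,\ldots,v_m)$ in $G^\infty_{n,t}$. For each integer $L$, introduce the deterministic function $F_{j,L}$ sending a rooted labelled graph $(H,w)$ to $B_j(\Phi(H),w)$ whenever for every edge $e\in B_j(H,w)$ the neighborhood with boundary $(B_L(H,e),S_L(H,e))$ is sufficient to decide the status of $e$ in the sense of Definition \ref{deficertain}, and to a conventional symbol $\bot$ otherwise. By construction $F_{j,L}$ depends only on the isomorphism class of $B_{j+L}(H,w)$, so it is continuous for the local topology.

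Let $E_L$ denote the event that $l(e)\leq L$ for every edge $e$ of $B_j(T^\infty_t,\emptyset)$. On $E_L$ one has $F_{j,L}(T^\infty_t,\emptyset)=B_j(T^k_t,\emptyset)$ by Definition \ref{defiExtensionPhi}. Since $B_j(T^\infty_t,\emptyset)$ is almost surely finite and Lemma \ref{longrange} guarantees $l(e)<\infty$ a.s.\ for each of its edges, $\mathbb P(E_L)\to 1$ as $L\to\infty$; fix $\varepsilon>0$ and choose $L$ such that this bound holds uniformly at the $m$ roots with probability at least $1-\varepsilon$. Applying Lemma \ref{localconvergenceER} and the continuous mapping theorem yields that $F_{j,L}(G^\infty_{n,t},v)$ converges in distribution to $F_{j,L}$ evaluated at $m$ independent copies of $(T^\infty_t,\emptyset)$, and the continuity of the indicator of $E_L$ transfers the tail bound from the limit to the finite graph: $\liminf_n \mathbb P(E_L^n)\geq 1-\varepsilon$, where $E_L^n$ is the analogous event for $G^\infty_{n,t}$. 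On $E_L^n$, $F_{j,L}(G^\infty_{n,t},v)$ coincides with $B_j(G^k_{n,t},v)$, because the same local criterion determines the edges of $G^k_{n,t}$ in the finite setting. Letting $\varepsilon\to 0$ and then $j\to\infty$ yields the announced local convergence.

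The only non-routine obstacle is this transfer of the tail control on $l(e)$ from $T^\infty_t$ to $G^\infty_{n,t}$: it relies on the fact that the event $\{l(e)\leq L\}$ is itself a local event, determined by $B_{L+1}(H,e)$ through Definition \ref{deficertain}, so that its indicator is continuous for the local topology and convergence of probabilities follows from Lemma \ref{localconvergenceER}. Independence of the $m$ limit trees requires no extra argument, as Lemma \ref{localconvergenceER} already delivers the multi-rooted convergence.
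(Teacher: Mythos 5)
Your argument is correct and is essentially the paper's own proof in different packaging: both rest on Lemma \ref{localconvergenceER}, the a.s.\ finiteness of $l(e)$ on the limit tree (Lemma \ref{longrange}), and the fact that the status of an edge in the $j$-ball is determined by a $(j+L)$-ball once $l(e)\leq L$. The paper encapsulates this as continuity of $\Phi$ on $\Omega^+_m$ (Lemma \ref{Psicontinous}) followed by the continuous mapping theorem, whereas you realize the same mechanism through the truncated functionals $F_{j,L}$ and an explicit transfer of the tail bound on $l(e)$ to $G^\infty_{n,t}$; the two are interchangeable.
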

\begin{proof}
By Lemma \ref{localconvergenceER}, $G^\infty_{n,t}$ rooted at $m$ random uniform vertices $(v_i)_{1\le i \le m}$ locally converges in distribution toward $\T:=\cup_{1\leq i \leq m}(T_i,r_i)_{1\le i\le m}$, where $(T_i,r_i)$ are $m$ independent copies of $(T^\infty_t,\emptyset)$. 

\begin{lem}
\label{Psicontinous}
For any positive integer $m$, let $\Omega^+_m$ be the set of $m$-rooted elements of $\Omega^+$. Then the application
\begin{eqnarray*}
\Omega^+_m&\rightarrow&\Omega^+_m\\
(G,(r_i)_{1\leq i \leq m})&\rightarrow &(\Phi(G),(r_i)_{1\leq i \leq m})
\end{eqnarray*} is continuous for the local topology.
\end{lem}
Lemma \ref{Psicontinous} entails that $(G^k_{n,t},v)$ locally converges toward $(\Phi(\T),r)$, as $G^k_{n,t}=\Phi(G^\infty_{n,t})$. As $\Phi(\T)=\cup_i(\Phi(T_i))$, and $T^k_t$ is the connected component of the root in $\Phi(T^\infty_t)$, $(G^k_{n,t},v)$ locally converges toward $m$ independent copies of $T^k_t$.
\end{proof}
\begin{proof}[Proof of Lemma \ref{Psicontinous}]
Let $(G^n,(r_i^n)_{1\leq i \leq m})$ be a sequence of $n$-rooted graphs converging toward $(H,(s_i)_{1\leq i \leq m})\in \Omega^+_m$ for the local topology and let $j$ denote a positive integer. As $H\in \Omega^+$, $\cup_i B_j(H,s_i)$ is finite, and $l(e)$ is finite for every edge $e$ of $H$. Therefore, $L=\max_{e\in \cup_i B_j(H,s_i)} l(e)<\infty$. By definition of the local convergence, for $n$ large enough, there exists an isomorphism $\Psi_n$ between $\cup_i B_{j+L+1}(G^n,r^n_i)$ and $\cup_i B_{j+L+1}(H,s_i)$. By definition of $L$ and $l(e)$, every edge $e$ of $\cup_i B_j(G^n,r^n_i)$ is present in $\Phi(G^n)$ if and only if $\Psi_n(e)$ is present in $\Phi(H)$. Therefore $\Psi_n$ induces an isomorphism between $\cup_i B_j(\Phi(G^n),r^n_i)$ and $\cup_i B_j(\Phi(H),s_i)$. As this construction works for any integer $j$, $(\Phi(G^n),(r_i^n)_{1\leq i \leq m})$ locally converges toward $(\Phi(H),(s_i)_{1\leq i \leq m})$.
\end{proof}

\subsubsection{Propagation paths}
\label{partPropagationPaths}

\begin{defi}
\label{defpathpropagation}
For any element $G$ of $\Omega$, a \emph{propagation path} of length $l\in \N$ is the data of a self-avoiding path $(v_0,e_1,v_1,e_2,\dots,e_l,v_l)$ of length $l$ and a sequence of edges $(\tilde e_1,\tilde e_2,\dots,\tilde e_{l-1})$ of length $l-1$ in $G$ such that
\begin{itemize}
\item for all $1\leq i\leq l-1$, $\tilde e_i$ is adjacent to $v_{i}$;
\item for all $i\ne i'$, $\tilde e_i\ne e_{i'}$;
\item the labels of the sequence $(\tilde e_i)$ are decreasing.
\end{itemize}
For any vertex $v$  of $G$, let $l'(v)\in \N\cup\{\infty\}$ be the supremum of the lengths of the propagation paths starting at $v$, \ie{} such that $v=v_0$.
\end{defi}

\begin{lem}
\label{lemBornel}
Let $G\in \Omega$. For every vertex $v$ in $G$:
$$l(v)\leq l'(v)+1.$$
\end{lem}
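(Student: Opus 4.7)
The plan is to show that knowing the ball $(B_{L+1}(G,v), S_{L+1}(G,v))$ with $L=l'(v)$ is sufficient to determine, for each edge $e$ adjacent to $v$, whether $e$ survives in $\Phi(G)$. This will give $l(v)\le L+1=l'(v)+1$. If $l'(v)=\infty$ there is nothing to show, so assume $L<\infty$.

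The argument unfolds the recursive definition of $\Phi$. An edge $e_1=(v,v_1)$ with label $\lambda_1$ is removed in $\Phi(G)$ iff one of its endpoints has degree exactly $k-1$ in $\Phi(G_{\lambda_1^-})$. The degree of $v_1$ in $\Phi(G_{\lambda_1^-})$ is the number of edges $\tilde e$ adjacent to $v_1$ with $\lambda(\tilde e)<\lambda_1$ that survive in $\Phi(G)$. For each such edge $\tilde e_1=(v_1,u)$, determining whether it survives requires (recursively) knowing the statuses of edges adjacent to $u$ (and to $v_1$) with labels strictly smaller than $\lambda(\tilde e_1)$. Iterating, the status of $e_1$ is computable from a tree of dependencies in which, along every branch, the labels of the tilde-edges strictly decrease.

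The second step is to translate this dependency tree into a propagation path. A branch of depth $d$ in the dependency tree is a sequence of edges $e_1,\tilde e_1,\tilde e_2,\dots,\tilde e_{d-1}$ with strictly decreasing labels among the $\tilde e_i$, where each $\tilde e_i$ is adjacent to the current vertex reached by the exploration. Formally, this branch traces a walk in $G$ visiting vertices $v_0=v,v_1,v_2,\dots$ with $\tilde e_i$ adjacent to $v_i$. I would then argue that, by contracting cycles in this walk (i.e., shortcutting whenever the exploration revisits a previously visited vertex and restarting from the first occurrence), one obtains a self-avoiding path $(v_0,e_1,v_1,\dots,e_l,v_l)$ equipped with tilde-edges $(\tilde e_1,\dots,\tilde e_{l-1})$ satisfying the three bullet conditions of Definition~\ref{defpathpropagation}, hence a propagation path of length $l\ge d-\text{(loops cut)}$. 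A more careful bookkeeping shows that the maximum depth $d$ of the dependency tree is at most $l'(v)+1$: otherwise one would exhibit a propagation path starting at $v$ of length strictly greater than $l'(v)$, contradicting the definition of $l'(v)$.

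Finally, once the dependency tree has depth at most $L+1$, every edge it consults lies in $B_{L+1}(G,v)$. The boundary set $S_{L+1}(G,v)$ plays a safeguarding role: at any vertex $u\notin S_{L+1}$, every edge of $G$ incident to $u$ is in the known ball, so the degree of $u$ in $\Phi(G_{t})$ is fully computable; at a vertex $u\in S_{L+1}$, the exploration is forced to stop because continuing it would yield a propagation path of length $L+2$. Thus the ball together with its boundary information is sufficient, giving $l(v)\le L+1$.

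The main obstacle is the careful bookkeeping in the second step: showing that a dependency walk with strictly decreasing tilde-labels can be pruned to a genuine self-avoiding propagation path without breaking the monotonicity of the tilde-edge labels or the adjacency condition $\tilde e_i$-to-$v_i$. The decreasing-label condition is robust under taking subsequences, which is the key feature making the pruning work; the subtlety is ensuring that after shortcutting a loop in the walk, the tilde-edges assigned to the surviving vertices remain valid witnesses and are still distinct from the backbone edges of the shortened path.
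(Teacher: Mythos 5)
Your high-level strategy is the same as the paper's: show that the information needed to decide the fate of an edge at $v$ can only "flow in" along a chain whose witness edges have strictly decreasing labels, so that a chain reaching the sphere of radius $l'(v)+1$ would produce a propagation path longer than $l'(v)$. But the first step of your recursion is wrong, and the error is not cosmetic. You write that $e_1=(v,v_1)$ with label $\lambda_1$ is removed in $\Phi(G)$ iff an endpoint has degree $k-1$ in $\Phi(G_{\lambda_1^-})$. That is only the condition for $e_1$ to be destroyed \emph{at the instant of its addition}; $e_1$ can also be removed at any later time $s>\lambda_1$, when some edge with label $s$ (possibly much larger than $\lambda_1$) pushes an endpoint to degree $k$. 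Consequently the edges you must query at the next level of the dependency tree do \emph{not} all have labels below $\lambda_1$, and the strictly-decreasing-label structure you rely on does not follow from your unfolding. What actually decreases along the chain is not the label of the queried edge but the \emph{time at which the removal event is being queried}. The paper formalizes this by introducing, for each edge $e$ of the ball, the onset time $y_e$ of "uncertainty" (the first time at which the ball-with-boundary fails to determine the status of $e$), and proving (Lemma \ref{lemuncertaintypropa}) that uncertainty of $e$ at time $y_e$ forces the existence of an edge $e'$ with label exactly $y_e$ sharing an endpoint with $e$, together with an edge $e''\neq e'$ at that endpoint with $y_{e''}<y_e$. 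It is the sequence of onset times $y_{e_1}>y_{e_2}>\cdots$, equal to the labels of the witness edges $\tilde e_i=e_i'$, that is decreasing. Recovering this from your forward recursion is exactly the missing content.

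Two further points in your bookkeeping also need repair. First, bounding the \emph{depth} of the dependency tree by $l'(v)+1$ "because otherwise a long propagation path exists" conflates recursion depth with self-avoiding path length: after shortcutting revisits, a deep branch may collapse to a short path, so depth is not controlled by $l'(v)$. The correct statement is about the ball: if every edge adjacent to $v$ is certain given $(B_{l}(G,v),S_l(G,v))$ with $l=l'(v)+1$, you are done; otherwise an uncertain edge exists, and its uncertainty chain must terminate at a vertex of $S_l$ (uncertainty can only originate at the boundary), so the pruned self-avoiding path joins $v$ to $S_l$ and therefore has length at least $l>l'(v)$, a contradiction. Second, the disjointness condition $\tilde e_i\neq e_{i'}$ for \emph{all} $i\neq i'$ is not automatically preserved by your shortcutting; the paper gets it by first building a chain satisfying only the weaker condition $\tilde e_i\neq e_{i+1}$, then taking the \emph{shortest} such chain reaching $S$, observing that minimality forces self-avoidance, and deducing the full disjointness from that. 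As written, your argument leaves both the decreasing-label claim and the termination-at-the-boundary claim unproved, so there is a genuine gap.
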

Lemma \ref{lemBornel} has a useful consequence:

\begin{cor}
\label{leminclusionomega}
Let $\Omega^-$ be the set of graphs $G\in \Omega$ such that for all vertex $v\in G$, $l'(v)<\infty$. Then

$$\Omega^-\subset \Omega^+.$$
\end{cor}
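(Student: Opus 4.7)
The plan is to combine Lemma \ref{lemBornel}, which controls the vertex-based depth $l(v)$ via the propagation-path depth $l'(v)$, with an elementary observation that the edge-based depth $l(e)$ is dominated by the vertex-based depths of the two endpoints of $e$.

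Concretely, I would take $G \in \Omega^-$, so that $l'(v) < \infty$ for every vertex $v$; Lemma \ref{lemBornel} then gives $l(v) \leq l'(v) + 1 < \infty$ for every vertex. For any edge $e = \{u, w\}$ of $G$, I would set $L = \max(l(u), l(w)) < \infty$ and aim to show $l(e) \leq L$, which would yield $l(e) < \infty$ for every edge, hence $G \in \Omega^+$, as desired.

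The substantive step is the inequality $l(e) \leq \max(l(u), l(w))$. Writing $\V_e = (B_L(G,e), S_L(G,e))$ and $\V_u = (B_L(G,u), S_L(G,u))$, the identity $B_L(G,e) = B_L(G,u) \cup B_L(G,w)$ exhibits $\V_u$ as a sub-neighborhood of $\V_e$. Unpacking Definition \ref{deficertain}, any finite graph $\tilde G \in \mathcal S_{\V_e}$ with counterpart edge $\tilde e = \{\tilde u, \tilde w\}$ yields, by restriction to the neighborhood of $\tilde u$, an element of $\mathcal S_{\V_u}$: the triangle inequality ensures that every neighbor in $G$ of an interior vertex of $B_L(G,u)$ still lies inside $B_L(G,e)$, which is the only point needing verification for the boundary-containment condition on $\V_u$ to pass through the isomorphism. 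Since the knowledge of $\V_u$ is by definition sufficient to determine the status of every edge incident to $u$, in particular that of $e$, the same holds for $\V_e$, proving $l(e) \leq L$.

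I anticipate no genuine obstacle in this argument: the only mildly delicate step is the sub-neighborhood compatibility between $\V_e$ and $\V_u$, which is a purely set-theoretic bookkeeping based on the definitions of $B_L$ and $S_L$. Everything else is an immediate consequence of Lemma \ref{lemBornel} followed by the set inclusion thus obtained.
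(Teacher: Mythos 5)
Your proof is correct. The paper gives no separate argument for this corollary because its proof of Lemma \ref{lemBornel}, despite the vertex-level statement, is actually carried out for an edge $e^*$, with $B=B_l(G,e^*)$ and $S=S_l(G,e^*)$: it shows directly that $l(e^*)>l$ forces a propagation path of length about $l$ starting at an endpoint of $e^*$, so $l'(u)<\infty$ and $l'(w)<\infty$ bound $l(e)$ for $e=\{u,w\}$ with no intermediate step, and the corollary is immediate. You instead take the stated vertex bound $l(v)\le l'(v)+1$ at face value and supply the bridge $l(e)\le\max(l(u),l(w))$, which is indeed needed under that reading and which the paper never states explicitly. Your verification of that bridge is sound: restricting an element of $\mathcal S_{\V_e}$ to the image of $B_L(G,u)$ yields an element of $\mathcal S_{\V_u}$, the only point to check being boundary containment, and your observation that interior vertices of $B_L(G,u)$ keep all their $G$-neighbours inside $B_L(G,u)\subset B_L(G,e)$ (together with the fact that a vertex of $B_L(G,u)$ lying in $S_L(G,e)$ is automatically in $S_L(G,u)$) settles it. So the two arguments reach the same conclusion by the same underlying mechanism; yours is slightly longer but has the merit of relying only on the statement of Lemma \ref{lemBornel} rather than on the edge-level form that is visible only inside its proof.
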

Lemma \ref{longrange} is a consequence of Corollary \ref{leminclusionomega} and the following lemma:
\begin{lem}
\label{lemTinOmegaMoins}
For all $t$, a.s. $T^\infty_t\in \Omega^-$.
\end{lem}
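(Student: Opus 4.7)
The plan is, for each fixed $t$, to bound the expected number $N_l(v)$ of propagation paths of length $l$ starting at a given Ulam--Harris address $v$ in $T^\infty_t$, and then conclude by a countable union bound over addresses.

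Conditionally on the unlabelled tree, the edge labels of $T^\infty_t$ are i.i.d.\ uniform on $[0,t]$, so for any fixed tuple of $l-1$ candidate witness edges the probability that their labels form a strictly decreasing sequence is exactly $1/(l-1)!$. This gives
$$
\E\!\left[\1{v \in T^\infty_t}\, N_l(v)\right]
\;=\; \frac{1}{(l-1)!}\, \E\!\left[\1{v \in T^\infty_t} \sum_\pi \prod_{i=1}^{l-1}(d_{v_i}-2)_+\right],
$$
where $\pi=(v_0,e_1,v_1,\dots,e_l,v_l)$ ranges over self-avoiding paths of length $l$ starting at $v$ and $d_{v_i}$ is the degree of $v_i$.

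Since $T^\infty_t$ is a tree, each such $\pi$ decomposes uniquely as an ``up'' segment of $k$ steps from $v$ to the common ancestor $a_k$ of $v_0$ and $v_l$, followed by a ``down'' segment of $l-k$ steps into a sibling subtree of $a_k$, with $0 \le k \le \min(l, |v|)$. I would treat each $k$ separately, using the Galton--Watson branching property. The core sub-computation is that in a Poisson$(t)$ Galton--Watson tree rooted at a generic vertex, the expected weighted count of down-paths of length $m$,
$$
P_m \;:=\; \E\!\left[\sum_{\text{down-paths of length } m} \prod_{i=1}^{m-1}(X_{v_i}-1)\right],
$$
satisfies $P_1 = t$ and $P_m = t^2 P_{m-1}$ for $m \ge 2$, yielding $P_m = t^{2m-1}$; this follows by conditioning on the root's offspring count and using $\E[X(X-1)] = t^2$ for $X \sim \Poi(t)$. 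For the up-segment, conditionally on $v \in T^\infty_t$ the offspring counts of $a_1,\dots,a_k$ are independent with first and second moments bounded uniformly (by $t$ and $t+t^2$ respectively), so the up-segment contributes at most a constant factor. Summing over $k$ then gives
$$
\E\!\left[\1{v \in T^\infty_t}\, N_l(v)\right] \;\le\; \frac{C\, t^{2l}}{(l-1)!} \;\xrightarrow[l \to \infty]{}\; 0,
$$
for a constant $C$ depending only on $|v|$ and $t$.

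By Markov's inequality this yields $\P[v \in T^\infty_t,\ l'(v) \ge l] \to 0$, hence $\P[v \in T^\infty_t,\ l'(v) = \infty] = 0$. Since the set of Ulam--Harris addresses is countable, a union bound concludes that $T^\infty_t \in \Omega^-$ almost surely. The main technical obstacle will lie in the case analysis for the apex $a_k$ (whose witness count is the offspring count minus~$1$, but drops by one more when $a_k$ is the root of $T^\infty_t$), and in handling the boundary cases $k \in \{0, l, |v|\}$; these however only affect the constants in the bound and do not disturb the dominance of the factorial denominator $(l-1)!$, so the argument carries through uniformly.
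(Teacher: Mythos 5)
Your proposal follows the same core strategy as the paper's proof: condition on the unlabelled tree so that the $l-1$ witness labels are i.i.d.\ uniform, extract the factor $1/(l-1)!$ from the decreasing-label requirement, bound the expected number of combinatorial path skeletons by a quantity exponential in $l$, and conclude by Markov. Where you genuinely differ is in how you cover all vertices. The paper reduces everything to the root: it invokes absolute continuity of the law of the tree rerooted at a child with respect to the original law, so that it suffices to control $l'(\emptyset)$, and then every propagation path from the root is a pure down-path, giving the clean recursion $\E(|H_{l+1}|)=\E(X_t^2)\E(|H_l|)=(t+t^2)\E(|H_l|)$. You instead work directly at an arbitrary Ulam--Harris address, decompose a self-avoiding path in the tree into an up-segment to an apex followed by a down-segment, and finish with a countable union bound over addresses. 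Your route is more self-contained (no rerooting step, which the paper leaves rather implicit when passing from the root to all vertices) at the price of the apex and boundary case analysis.

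One slip worth fixing: Definition \ref{defpathpropagation} only requires $\tilde e_i\ne e_{i'}$ for $i'\ne i$, so the witness $\tilde e_i$ is allowed to coincide with the incoming path edge $e_i$ and is only forbidden from equalling the outgoing edge $e_{i+1}$. The number of admissible witnesses at an interior vertex is therefore $d_{v_i}-1$, not $(d_{v_i}-2)_+$. Since you are proving an \emph{upper} bound on the expected number of propagation paths, undercounting the witnesses goes in the wrong direction. The repair is harmless: the per-step factor becomes $\E(X_t^2)=t+t^2$ instead of $\E(X_t(X_t-1))=t^2$ (which is exactly the paper's recursion), and $t(t+t^2)^{l-1}/(l-1)!$ still tends to $0$. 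Similarly, your claim that, conditionally on $v\in T^\infty_t$, the ancestors' offspring counts have moments bounded by $t$ and $t+t^2$ is not literally correct (conditioning on the presence of $v$ biases its ancestors toward having more children), but these conditional moments are still bounded by a constant depending only on $|v|$ and $t$, which is all your final estimate uses.
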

 The rest of this subsection is devoted to the proof of Lemmata \ref{lemBornel} and \ref{lemTinOmegaMoins}.

\begin{proof}[Proof of Lemma \ref{lemBornel}]
Let $G$ be a graph in $\Omega$, $e^*$ be an edge of $G$ and $l$ a positive integer. In order to simplify the notations, $B$ will denote $B_l(G,e^*)$ and $S$ will denote $S_l(G,e^*)$.
\begin{defi}

An edge $e$ of $B$ will be said \textbf{certain at time t} if, for all $t'\leq t$, the knowledge of $(B,S)$ allows one to determine if $e$ is removed in the forbidden degree version of $G_{t'}$ and \textbf{uncertain at time $t$} otherwise. 

The edges in $G\setminus B$  are said uncertain at any time.
\end{defi}
By definition, $l(e^*)> l$ if and only if $e^*$ is uncertain at some time $t$. $G$ is a locally finite graph, therefore $B$ is a finite graph. Let $J$ denote the finite set of labels of $B$. The application $t\rightarrow B_t$ is a right-continuous piecewise constant function jumping only at elements of $J$.

For every edge $e\in B$, let $y_e$ denote the infimum of the times where $e$ is uncertain, and $t_e$ denote the label of $e$.

\begin{lem}
\label{lemuncertaintypropa}
For every edge $e\in B\setminus S$, there exists:
\begin{itemize}
\item an edge $e'$ adjacent to one endpoint $v$ of $e$ such that $t_{e'}=y_e$;
\item an edge $e''\ne e'$ adjacent to $v$ such that $y_{e''}< y_e$.
\end{itemize}
\end{lem}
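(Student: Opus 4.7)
The plan is to analyse the event responsible for the transition of $e$ from determined to undetermined at time $y_e$. Since the endpoints $v,w$ of $e$ lie in $B\setminus S$, every edge incident to $v$ or $w$ belongs to the induced subgraph $G_B$, so the knowledge of $(B,S)$ reveals the labels of all such edges. I will first argue that, for any admissible completion of $G$, the map $t\mapsto \Phi(G_t)(e)$ is piecewise constant with jumps only at labels of edges incident to $v$ or $w$; consequently, the property ``the status of $e$ at time $t$ is determined by $(B,S)$'' can only flip at such labels. In particular, $y_e$ must coincide with the label of some edge $e'$ incident to an endpoint of $e$; write $v$ for this shared endpoint, so that $t_{e'}=y_e$. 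The degenerate case $y_e=t_e$ (where $e'=e$) is handled identically in what follows.

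The next step is to extract the edge $e''$. At time $y_e$ the event is the insertion of $e'$ at $v$, and its effect on $e$ is fully governed by the degree of $v$ in $\Phi(G_{y_e^-})$, the edge $e$ being removed at $y_e$ precisely when this degree equals $k-1$. Since $e$'s status at $y_e^-$ is determined by $(B,S)$ by definition of $y_e$, while its status at $y_e$ is not, the degree of $v$ in $\Phi(G_{y_e^-})$ must itself be undetermined by $(B,S)$. But this degree is the number of edges incident to $v$ that are present in $\Phi(G_{y_e^-})$; if every such edge had a determined status at $y_e^-$, the sum would be determined. Hence some edge $e''$ incident to $v$ has undetermined status at $y_e^-$, giving $y_{e''}<y_e$. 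Moreover $e''\neq e'$ because $e'$ has label $y_e$ and is therefore absent from $\Phi(G_{y_e^-})$ in every completion, and $e''\neq e$ because $e$ is determined strictly before $y_e$ by the very definition of $y_e$.

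The main obstacle is to formalise the step-function claim of the first paragraph, namely that the determinedness of $e$'s status under $(B,S)$ can only switch at labels of edges in $G_B$ incident to $v$ or $w$, despite the existence of potentially infinitely many edges outside $B$ that may influence vertices in $S$. The key point is that the status of $e$ in $\Phi(G_t)$ depends on events outside $B$ only through the degrees of $v$ and $w$, and these degrees change only when an edge at $v$ or $w$ is added or removed---an event whose timing is a label in $G_B$, since $v,w\in B\setminus S$ forces every edge at $v$ or $w$ to lie in $B$.
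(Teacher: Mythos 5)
Your proof is correct and follows essentially the same route as the paper's: locate $e'$ as the edge whose label is the jump time $y_e$, use $v,w\in B\setminus S$ to confine all relevant jump times to labels of edges of $G_B$ incident to $e$, and then trace the uncertainty of the removal back to an uncertain degree at $v$ just before $y_e$, which yields $e''$ with $y_{e''}<y_e$ and $e''\neq e'$ since no edge can be uncertain before its own label. Your first and third paragraphs in fact make explicit a step the paper leaves terse (why $y_e$ is a label of an edge adjacent to an endpoint of $e$), so no gap.
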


\begin{proof}

Let $e$ be an edge in $B\setminus S$. $y_e$ corresponds to a jump of $t\rightarrow B_t$, therefore there exists an edge $e'$ such that $t_{e'}=y_e$. As $e$ is certain at time $y_e^-$ and uncertain at time $y_e$, then,, $\tilde e$ is removed at time $y_e$ in some graph $\tilde G$, and it is not in some other graph $\tilde G$ (according to the notations of Definition~\ref{deficertain}). Let $e'$ denote the only edge with label $y_e$ ( $e'$ is unique by definition of $\Omega$). Only the endpoints of $e'$ can reach the forbidden degree at time $y_e$, and only edges adjacent to an endpoint of $e'$ can be removed at time $y_e$ thus $e'$ and $e$ share a common endpoint~$v$. Moreover, in order for the removal of $e$ to be uncertain, the degree of $v$ must be uncertain before time $y_e=t_{e'}$ in the forbidden degree version of $G_\cdot$\footnote{it is possible that $e$ and $e'$ share both their endpoints, e.g. if $e=e'$. In that case, the degree of at least one of their endpoints must be uncertain. Let $v$ denote such an endpoint.}, \ie{} one of the edge adjacent to $v$ is uncertain before time $y_e$. Let $e''$ denote such an edge. As an edge cannot be uncertain before its label, $e''\neq e'$. 
 \end{proof}

\begin{lem}[Necessary condition for the uncertainty to spread]
\label{lemNecessarayUncertainty}
Given an edge $e\in B$ such that $y_e<\infty$, there exists a self-avoiding path $e_1,v_1,e_2,\dots e_{p},v_p$ in $B$ with $e_1=e$  and a sequence of edges $(\tilde e_i)_{1\leq i\leq p-1}$ in $B$ such that:
\begin{enumerate}
\item for every $i \in\{1,p-1\}$, $\tilde e_i$ is adjacent to $v_{i}$;
\item for every $i\ne j$,  $\tilde e_i\ne e_{j}$;
\item $t_{\tilde e_1}>t_{\tilde e_2}\dots >t_{\tilde e_{p-1}}$;
\item the vertex $v_p$ is in $S$.
\end{enumerate}
\end{lem}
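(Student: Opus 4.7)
The plan is to iterate Lemma \ref{lemuncertaintypropa} starting from $e$, and then extract a self-avoiding sub-path. Initialise $e_1=e$. Given $e_i\in B$ with $y_{e_i}<\infty$, distinguish two cases. If some endpoint of $e_i$ already lies in $S$, declare that endpoint to be $v_i$ and stop, setting $p=i$. Otherwise both endpoints of $e_i$ belong to $B\setminus S$, and Lemma \ref{lemuncertaintypropa} delivers an endpoint $v_i$ of $e_i$, an edge $\tilde e_i$ adjacent to $v_i$ with $t_{\tilde e_i}=y_{e_i}$, and another edge $e_{i+1}\ne \tilde e_i$ adjacent to $v_i$ satisfying $y_{e_{i+1}}<y_{e_i}$. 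Because $v_i\in B\setminus S$ is interior to $B$, every edge of $G$ incident to $v_i$ lies in $G_B$; in particular $e_{i+1}\in B$, so the recursion may proceed with $y_{e_{i+1}}$ still finite.

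Termination is forced by the proof of Lemma \ref{lemuncertaintypropa}, which identifies $y_{e_i}=t_{\tilde e_i}$ as the label of some edge of the finite graph $B$. The strictly decreasing sequence $(y_{e_i})$ thus takes values in a finite set and must terminate in finitely many steps, which can only happen in the first case above, yielding condition (4). Condition (1) is built into the construction, and condition (3) follows from $t_{\tilde e_i}=y_{e_i}>y_{e_{i+1}}=t_{\tilde e_{i+1}}$.

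The main obstacle is securing the self-avoidance of the vertex sequence $v_0,v_1,\ldots,v_p$ (where $v_0$ denotes the endpoint of $e_1$ distinct from $v_1$) while simultaneously verifying condition (2). Both will be handled by loop-removal: whenever $v_i=v_j$ with $i<j$, excise $e_{i+1},\ldots,e_j$ and $\tilde e_i,\ldots,\tilde e_{j-1}$ and reindex; iterating this produces a self-avoiding path whose $\tilde e$-labels remain strictly decreasing and whose terminal vertex is unchanged, so (1), (3), (4) are preserved. Once the path is self-avoiding, condition (2) follows automatically: an equality $\tilde e_k=e_l$ with $k\ne l$ in the final indexing would force $v_k$ to coincide with an endpoint of $e_l$, namely $v_{l-1}$ or $v_l$, and self-avoidance then pins $l=k+1$ or $l=k$. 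The latter is excluded by the hypothesis $k\ne l$, and the former by the clause $e_{i+1}\ne\tilde e_i$ delivered by Lemma \ref{lemuncertaintypropa}; the subtlety is when loop-splicing has glued the step in question, in which case $\tilde e_k$ is an original $\tilde e_i$ and the successor edge is an original $e_{j+1}$, but then the label comparison $t_{\tilde e_i}=y_{e_i}>y_{e_{j+1}}\geq t_{e_{j+1}}$ rules out equality.
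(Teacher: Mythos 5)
Your proof is correct and follows essentially the same route as the paper's: iterate Lemma \ref{lemuncertaintypropa} until the path is forced to stop at $S$ (the paper phrases this as taking a maximal admissible path, you as an explicit recursion terminated by the strictly decreasing labels in the finite graph $B$), then pass to a self-avoiding sub-path (the paper takes the shortest admissible path) and deduce condition (2) from self-avoidance together with the clause $\tilde e_i\neq e_{i+1}$. Your explicit verification that loop-splicing preserves condition (2), via the comparison $t_{\tilde e_i}=y_{e_i}>y_{e_{j+1}}\geq t_{e_{j+1}}$, spells out a detail the paper leaves implicit but does not change the argument.
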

\begin{proof}
Let $\Xi$ denote the set of integers $p$ such that there exists a path $e=e_1,v_1,e_2,\dots e_{p},v_p$ in $ B$ and a sequence of edges $(\tilde e_i)_{1\leq i\leq p-1}$ such that:
\begin{enumerate}
\item for every $i \in\{1,p-1\}$, $\tilde e_i$ is adjacent to $v_{i}$;
\item[2'.] for every $i$,  $\tilde e_i\ne e_{i+1}$;
\item[3a.] $y_{e_1}>y_{e_2}\dots >y_{e_{p-1}}$;
\item[3b.] for every $i \in	\{1,p-1\}$, $t_{\tilde e_i}=y_{e_i}$.
\end{enumerate}
Such a path trivially exists for $p=1$. The edges $e_i$ in the path are distinct by condition 3a, and the graph $B$ is finite, thus $\Xi$ is bounded from above. Let $p_{\max{}}=\max{} \Xi$. By Lemma \ref{lemuncertaintypropa}, if $e_{p_{\max{}}}\in B\setminus S$, the path can be extended one step further, by taking $e=e_p$, $\tilde e_p=e'$ and $e_{p+1}=e''$, and therefore $p$ is not maximal. This entails that for any maximal path, either $v_{p_{\max{}}}$ or $v_{p_{\max{}}-1}$ is in $S$, and therefore there exists a path satisfying conditions 1, 2', 3a, 3b and 4 (and condition 3, as a consequence of 3a and 3b). The shortest path satisfying conditions 1, 2', 3a, 3b and 4 is self-avoiding (otherwise it would not be the shortest). If this path does not satisfy condition~2, while satisfying condition 2', there exists $(i,j)$ such that $j\notin \{i,i+1\}$ and $\tilde e_i=e_j$. In that case, $e_j$ is incident to $v_i$, and the path is not self-avoiding.
\end{proof}
As a consequence of Lemma \ref{lemNecessarayUncertainty}, if $l(e^*)>l$, then $e^*$ is uncertain at some time $t$, and there exists a propagation path of length $l$ starting at an endpoint of $e^*$, proving Lemma \ref{lemBornel}.

\end{proof}
\begin{proof}[Proof of Lemma \ref{lemTinOmegaMoins}]
 We will now prove that for all $t\geq 0$, $T^\infty_t\in\Omega^{-}$ a.s. If $T^\infty_t$ contains only the root, $T^\infty_t\in\Omega^-$. With positive probability, the root has at least one child. Let $\T$ be the tree $T^\infty_t$ conditioned on having at least one edge, and let $v$ be one children of the root of $\T$ chosen uniformly at random. The law  of $\T$ rerooted at $v$ is absolutely continuous with respect to the law of $\T$. Therefore, it is sufficient to prove that almost surely $l'(\emptyset)<\infty$ in $\T$, and therefore in $T^\infty_t$.

\begin{defi}
A possible propagation path of length $l$ is the data of:
\begin{itemize}
 \item a self-avoiding path $\emptyset=v_0,e_1,v_1,e_2,v_2,\dots e_{l},v_l$ in $B_l$ from $\emptyset$ to a vertex $v_l$ of $S_l$.
\item a sequence of edges $(\tilde e_i)_{1\leq i\leq l-1}$ in $B_l$ such that:
\begin{itemize}
\item for every $i\in\{1,l-1\}$, $\tilde e_i$ is adjacent to $v_{i}$;
\item for every $i\neq i'$, $\tilde e_i\neq e_{i'}$.
\end{itemize}
\end{itemize}
Let $H_l$ be the set of such possible propagation paths.
\end{defi}

A possible propagation path is a propagation path if the labels of $\tilde e$ are decreasing. Let $p(l)$ denote the probability that there exists a propagation path of length~$l$ starting at the root of $T^\infty_t$. The set $H_l$ only depends on the unlabelled version of the tree $T^\infty_t$. Conditionally on the unlabelled version of the tree $T^\infty_t$, the labels of the edges are i.i.d. uniform on $[0,t]$, thus the conditional probability that a given possible propagation path is actually a propagation path is $\frac{1}{(l-1)!}$.  Therefore, by union bound, $p(l)\leq \frac{\E(|H_l|)}{(l-1)!}$.

The following lemma gives an explicit formula for the expected size of $H_l$:

\begin{lem}
Let $X_t$ be a Poisson random variable of parameter $t$. Then:
$$\E(|H_l|)=\E(X_t)\E(X_t^2)^{l-1}=t(t+t^2)^{l-1}$$  
\end{lem}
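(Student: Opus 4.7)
My plan is to compute $\E(|H_l|)$ in two steps. First, I will give a combinatorial identity expressing $|H_l|$ as a sum over downward paths in the tree, weighted by a product of offspring counts. Second, I will factor this expectation using the branching property of the Galton--Watson tree.

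For the combinatorial count, the key observation is that in the tree $T^\infty_t$ rooted at $\emptyset=v_0$, every self-avoiding path $v_0, e_1, v_1, \ldots, e_l, v_l$ is the unique geodesic from the root to some vertex $v_l$ at depth $l$; in particular $v_{i-1}$ is the parent of $v_i$. Fix such a path and an index $i\in\{1,\ldots,l-1\}$, and let $D_{v_i}$ denote the number of children of $v_i$. The edges incident to $v_i$ are its parent edge $e_i$ and its $D_{v_i}$ child edges. Among the constraints $\tilde e_i\neq e_{i'}$ with $i'\neq i$, only the case $i'=i+1$ is binding, because in a tree no $v_j$ with $j\notin\{i-1,i+1\}$ is adjacent to $v_i$; in particular $\tilde e_i=e_i$ is allowed. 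Hence $\tilde e_i$ has exactly $D_{v_i}$ valid values, and therefore
\[
|H_l|=\sum_{v_l\in S_l(T^\infty_t,\emptyset)}\;\prod_{i=1}^{l-1}D_{v_i}.
\]

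For the expectation, I introduce for every vertex $v$ of $T^\infty_t$ the analogous quantity $I^v_j$ computed in the subtree rooted at $v$, characterised by $I^v_1=D_v$ and, for $j\geq 2$, $I^v_j=\sum_{w}D_w\,I^w_{j-1}$ where the sum runs over the children $w$ of $v$; then $|H_l|=I^\emptyset_l$. Setting $J_j:=\E\bigl[D_\emptyset\,I^\emptyset_j\bigr]$, the branching property (which makes the children's subtrees i.i.d.\ copies of the whole tree) combined with $\E[X_t^2]=t+t^2$ yields the recursion $J_j=\E[D_\emptyset^2]\cdot J_{j-1}=(t+t^2)J_{j-1}$. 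With the base case $J_1=\E[D_\emptyset^2]=t+t^2$, induction gives $J_j=(t+t^2)^j$, so for $l\geq 2$ one has $\E(|H_l|)=\E[D_\emptyset]\cdot J_{l-1}=t(t+t^2)^{l-1}$, and the case $l=1$ is immediate since $|H_1|=D_\emptyset$.

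The only genuine subtlety lies in Step~1: one must carefully interpret the condition $\tilde e_i\neq e_{i'}$ (which applies only for $i'\neq i$, so $\tilde e_i=e_i$ is legal) and use the tree structure to see that $e_{i+1}$ is the unique path edge adjacent to $v_i$ apart from $e_i$. Once this combinatorial identity is in hand, the expectation falls out of a standard branching-process / many-to-one argument.
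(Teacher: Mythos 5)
Your proof is correct and follows essentially the same route as the paper: the paper phrases the count as an inductive construction in which each element of $H_l$ spawns $d^2$ elements of $H_{l+1}$ (with $d$ the number of children of $v_l$), whereas you make the same observation explicit as the product formula $|H_l|=\sum_{v_l}\prod_{i=1}^{l-1}D_{v_i}$, but both rest on the identical points that $\tilde e_i$ has exactly $D_{v_i}$ admissible values (only $e_{i+1}$ being excluded) and that the branching property turns the expectation into $\E(X_t)\E(X_t^2)^{l-1}$.
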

\begin{proof}

  The proof is done by induction on $l$. 

$H_{1}$ is the set of self-avoiding paths of length $1$ starting from the root, equal to the degree of the root, distributed as $X_t$.

The set $H_{l+1}$ can be constructed from $H_{l}$ in the following way:
\begin{itemize}
\item Take an element of $H_{l}$ and denote its endpoint by $v_{l}$.
\item Take a child $v_{l+1}$ of $v_l$ and extend the self-avoiding path to $v_{l+1}$.
\item Choose $w$ a neighbor of $v_l$ different from $v_{l+1}$ (either a child of $v_l$, or $v_{l-1}$). Let $\tilde e_{l}$ be the edge between $v_{l}$ and $w$.
\end{itemize}
This construction gives every element of $H_{l+1}$ once. Therefore an element of $H_{l}$ gives $d^2$ elements of $H_{l+1}$ with $d$ the number of children of $v_l$. By the branching property of $T^\infty_t$, this implies that $\E(|H_{l+1}|)=\E(X_t^2)\E(|H_l|)$.
\end{proof}

By a quick computation:

$$p(l)\leq\frac{\E(|H_l|)}{(l-1)!}=\frac{t(t+t^2)^{l-1}}{(l-1)!}\xrightarrow[l\rightarrow\infty]{}0$$
\noindent{} ending the proof of Lemma \ref{lemTinOmegaMoins}.

\end{proof}
Lemma \ref{longrange} allows us to study the local limit, and therefore the convergence of $l$ balls of radius $i$, with $i$ and $l$ fixed. In Section \ref{sectequivalence}, more precise results will be needed, in which $i$ and $l$ can depend on $n$. For this reason, the following lemma will be useful:

\begin{lem}
There exist two sequences $b_n=o(\ln n)$ and $c_n=o(1)$ such that, asymptotically almost surely:
\label{range}
\begin{enumerate}
\item  For every $v$ in $G^\infty_{n,t}$, $l(v)\le b_n$.
\item For every $v$ in $G^\infty_{n,t}$, the ball of radius $b_n$ centered at $v$ in $G^\infty_{n,t}$ contains at most $x_n$ vertices, where $x_n=n^{c_n}$.
\end{enumerate}
\end{lem}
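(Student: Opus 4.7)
The plan is to choose $b_n$ slightly sub-logarithmic, for instance $b_n=\lceil 2\ln n/\ln\ln n\rceil$, and to establish both items by combining a first-moment / concentration estimate with a union bound over the $n$ vertices.

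For item 1, I reduce to bounding $l'(v)$ via Lemma \ref{lemBornel}, and adapt to $G^\infty_{n,t}$ the propagation-path counting performed in the proof of Lemma \ref{lemTinOmegaMoins}. In $G^\infty_{n,t}$, the expected number of self-avoiding paths of length $l$ starting at a fixed vertex is at most $t^l$: there are at most $n^l$ ordered sequences of $l$ distinct vertices, and each potential edge is present with probability at most $t/n$. Conditionally on such a path, the expected number of admissible extra-edge sequences $(\tilde e_i)_{1\le i\le l-1}$ is $O(t^{l-1})$, since each $\tilde e_i$ is adjacent to $v_i$ whose degree in $G^\infty_{n,t}$ is $\mathrm{Poisson}(t(n-1)/n)$. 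Multiplying by the probability $1/(l-1)!$ that the labels $(t_{\tilde e_i})$ are strictly decreasing yields constants $A=A(t)$, $B=B(t)$ such that $\P(l'(v)\ge l)\le A\, B^l/(l-1)!$. Summing over $v$ and using Stirling's formula,
$$\P(\exists v:\ l'(v)\ge b_n)\le n\,A\,B^{b_n}/(b_n-1)!=\exp\bigl(\ln n+O(b_n)-b_n\ln b_n\bigr),$$
which tends to $0$ because $b_n\ln b_n\sim 2\ln n$. This establishes item 1.

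For item 2, I argue by stochastic domination. Exploring $G^\infty_{n,t}$ via breadth-first search from $v$, the number of vertices at generation $i+1$ of the BFS tree is bounded by the sum of the degrees of the $Z_i$ vertices at generation $i$, a sum of $Z_i$ independent $\mathrm{Poisson}(t(n-1)/n)$ variables. Hence $|B_{b_n}(G^\infty_{n,t},v)|$ is stochastically dominated by the total progeny up to generation $b_n$ of a Galton-Watson tree with $\mathrm{Poisson}(t)$ offspring. For this dominating tree, an explicit Chernoff argument based on the recursion $\E\!\left[e^{\lambda Z_{r+1}}\,\middle|\,Z_r\right]=\exp(t Z_r(e^\lambda-1))$, applied with $\lambda$ of order $c(t)/t^r$, yields an $r$-uniform tail bound $\P(Z_r\ge Kt^r)\le e^{-c'(t)K}$; summing over generations gives the analogous bound for the total progeny, so $\P(|B_{b_n}(v)|\ge K\,t^{b_n})\le e^{-c''(t)K}$. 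Taking $K=C\ln n$ with $C$ large enough and a union bound over the $n$ vertices ensures that every $b_n$-ball has size at most $C(\ln n)t^{b_n}$ a.a.s. Setting $x_n=n^{c_n}$ with $c_n=b_n\ln t/\ln n+2\ln\ln n/\ln n=O(1/\ln\ln n)$ handles the second conclusion, as this $c_n$ tends to $0$.

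The main obstacle is item 2: Markov's inequality alone on the expected ball size would force $c_n\ge 1$, which is far from $o(1)$. The crucial step is therefore to obtain genuine exponential concentration for ball sizes in the dominating Galton-Watson tree, for which the Poisson generating-function recursion above is essential; once this is available, the rest of the argument is routine bookkeeping.
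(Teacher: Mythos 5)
Your proof is correct and follows the same two-part strategy as the paper: reduce item 1 to the absence of long propagation paths via Lemma \ref{lemBornel} and kill them by a first moment plus union bound, and for item 2 dominate ball growth by a $\mathrm{Poisson}(t)$ Galton--Watson tree and apply a concentration estimate generation by generation. The execution differs in two places, both in your favour for readability. For item 1, the paper first passes to the discrete-time graph with $\lfloor tn\rfloor$ edges and enumerates ``potential propagation paths'' together with their collision sets, whereas you stay in the Poisson model and use that, conditionally on the unlabelled multigraph, the labels are i.i.d.\ uniform so each candidate pays a factor $\frac1{(l-1)!}$ --- the same device the paper uses for $T^\infty_t$ in Lemma \ref{lemTinOmegaMoins}; to make your ``expected number of extra-edge sequences is $O(t^{l-1})$'' airtight you should compute the first moment via factorial moments of the Poisson multiplicities, so that a reused vertex pair (a chord $\tilde e_i=\{v_i,v_j\}$, or $\tilde e_i=e_i$) costs $(t/n)^2$ rather than $\E(N^2)$; the count then closes without worrying about correlated conditional degrees. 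For item 2, the paper chains the events $\{Z_{i+1}>(2t)^{i+1}\ln^2 n,\ Z_i\le(2t)^i\ln^2 n\}$ using $\P(\Poi(x)\ge 2x)\le e^{-\alpha x}$, while you prove a generation-uniform exponential tail for $Z_r/t^r$ from the moment-generating-function recursion; both yield an $n^{o(1)}$ ball volume (note your normalization implicitly requires $t>1$, which the paper also assumes without loss of generality). Your observation that Markov's inequality on the mean ball size plus a union bound cannot give $c_n=o(1)$ is accurate and is exactly why some form of concentration is needed here.
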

\begin{defi}[Good event characteristic functions]
To avoid dealing with problematic events of vanishing probability, we will introduce several characteristic functions, denoted $GE_1,GE_2,\dots$ ($GE$ stands for \emph{``Good Event''}) such that $GE_i$ is a Bernoulli variable of parameter tending to $1$. Let $GE_1$ be the characteristic function of the events of Lemma \ref{range}.
\end{defi}
\begin{proof}
By Lemma \ref{lemBornel}, proving that a.a.s. there is no propagation path of length $b_n$ in $G^\infty_{n,t}$ implies the first part of Lemma \ref{longrange}. The total number of edges in $G^\infty_{n,t}$ is a Poisson variable of parameter $\binom n 2 \frac t n=\frac{t(n-1)}2$ and is smaller than $tn$ with high probability. As a consequence, $G^\infty_{n,t}$ is a.a.s. a subgraph of $G^\infty_{tn}=G^\infty_{n,\tau_{\lfloor nt\rfloor }}$, the graph $G^\infty_{n,\cdot}$ stopped the first time $\tau_{\lfloor tn\rfloor}$ there are $\lfloor tn\rfloor$ edges in $G^\infty_{n,\cdot}$, thus it is sufficient to prove the absence of propagation path of length $b_n$ in $G^\infty_{nt}$ instead of $G^\infty_{n,t}$, and for this proof we shall rely on combinatorial arguments.

To detect a propagation path, we do not need the label of each edge, only its rank among the $\lfloor nt\rfloor$ edges of $G^\infty_{nt}$. It is convenient to see $G^\infty_{nt}$ as the result of a random allocation of $\lfloor nt\rfloor$ balls in $\frac{n(n-1)}2$ urns (the edge $r\leq \lfloor nt\rfloor$ being incident to $2$ random vertices $\{v(r),\tilde v(r)\}$).

\begin{rem}[Alternative description of propagation paths]
\label{defipathpropaG}
A propagation path of length $l$ can be described by a sequence of vertices $(v_0,v_1,\dots,v_l)$  without repetition together with a $l$-uple without repetition $(r_1,\dots,r_l)$ of integers smaller than $tn$ and a strictly decreasing $l-1$-uple $(\tilde r_1,\dots,\tilde r_{l-1})$ of integers smaller than $tn$ such that:
\begin{enumerate}[(C1)]
\item For every $i\in\{1,\dots, l\}$, $\{v(r_i),\tilde v(r_i)\}=\{v_{i-1},v_i\}$.
\item For every $i\in\{1,\dots,l-1\}$, $v_i\in\{v(\tilde r_i),\tilde v(\tilde r_i)\}$.
\item If $j\neq j'$, then $\tilde r_j\neq r_{j'}$.
\end{enumerate}
\end{rem}
 This description is equivalent to the description in Definition \ref{defpathpropagation}, where the edge $e_i$ (resp. $\tilde e_i$) in Definition~\ref{defpathpropagation} corresponds to the edge $r_i$ (resp. $\tilde r_i$) in this remark.
 
The collision set of a propagation path is the set of integers $i$ such that $r_i=\tilde r_i$. The collision set is a subset of $\{1,\dots,l-1\}$. 

\begin{defi}
A potential propagation path with collision set $S$ is the data of a $l$-uple without repetition $(r_1,\dots,r_l)$ of integers smaller than $tn$ and a strictly decreasing $l-1$-uple $(\tilde r_1,\dots,\tilde r_{l-1})$ of integers smaller than $tn$ such that:
\begin{itemize}
\item $r_i=\tilde r_i\Leftrightarrow i\in S$.
\item If $i\neq j$, then $\tilde r_i\neq r_{j}$.
\end{itemize}
\end{defi}
A potential propagation path is only two sequences of edges (denoted by their corresponding integer), without any constraint on their endpoints: the sequence of edges described by $(r_i)_{1\leq i \leq l}$ does not need to constitute a path nor does the edges $r_i$ and $\tilde r_i$ need to share a common endpoint. A potential propagation path will be a propagation path if and only if there exists a sequence of vertices $(v_0,v_1,\dots,v_l)$ without repetition such that conditions (C1) and (C2) are satisfied.

The number of potential  propagation paths with collision set $S$ is:
$$\binom {\lfloor tn\rfloor} {l-1}\frac{(\lfloor tn\rfloor-l)!}{(\lfloor tn\rfloor-2l+|S|)!}\leq \frac{(tn)^{2l-1-|S|}}{(l-1)!}$$
This formula is obtained by choosing first $(\tilde r_i)_{1\leq i \leq l-1}$, and then choosing the $l-|S|$ elements of $(r_i)_{1\leq i \leq l}$ that are still unknown. 

Given two vertices $v_1$ and $v_2$ and an integer $r$,  $\P(\{v(r),\tilde v(r)\}=\{v_1,v_2\})={\binom {n}2}^{-1}$, and $\P(v_1\in\{v(r),\tilde v(r)\})=\frac 2 n$. Therefore, given a potential propagation path $((r_i)_i,(\tilde r_i)_i)$  and a sequence of vertices $(v_0,v_1,\dots,v_l)$ without repetition, the probability that conditions (C1) and (C2) hold is:

$${\binom {n}2}^{-l}\left(\frac 2 n\right)^{l-1-|S|}=\left(n-1\right)^{-l}\left(\frac 2 n\right)^{2l-1-|S|}.$$

There are less than $n(n-1)^l$ possible choices for $(v_i)_{0\leq i \leq n}$, therefore, by union bound, the probability $p_l$ that there exists a propagation path of length $b_n$ in $G^\infty_{tn}$ is bounded by:
\begin{eqnarray*}
p_l&\leq &\sum_{S\subset \{1,\dots,l-1\}}\frac{(tn)^{2l-|S|-1}}{(l-1)!}n(n-1)^l(n-1)^{-l}\left(\frac 2 n\right)^{2l-1-|S|}\\
&=&\sum_{S\subset \{1,\dots,l-1\}}n\frac{(2t)^{2l-|S|-1}}{(l-1)!}\\
&=&\frac n {(l-1)!}\sum_{i=1}^{l-1} \binom {l-1} i(2t)^{2l-i-1}\\
&=&\frac{n(2t)^l(2t+1)^{l-1}}{(l-1)!}
\end{eqnarray*}

Taking $l=\frac{\ln n}{\sqrt{\ln \ln n}}=:b_n$, and using Stirling formula, one obtains that $p_l\rightarrow 0$.

We now need to bound the volume of the ball of radius $b_n$ in $G^\infty_{n,t}$. For any $w\in S_l(G^\infty_{n,t},1)$, let $d^*(w)$ be the number of edges that connects $w$ to some vertex of $G^\infty_{n,t}\setminus B_l(G^\infty_{n,t},1)$. Conditionally on $B_l(G^\infty_{n,t},1)$, $(d^*_{w})_{w\in  S_l(G^\infty_{n,t},1)}$ is an i.i.d. family of Poisson variables of parameter  $\left(n-|B_l(G^\infty_{n,t},1)|\right)\frac{t}n\leq t$. Therefore the volume of the ball of radius $b_n$ in $G^\infty_{n,t}$ is stochastically dominated by the volume of the first $b_n$ generations of $T^\infty_t$. We assume that $t>1$.

Let $Z_l$ be the number of vertices in the $l$th generation of $T^\infty_t$. Conditionally on $Z_i$, $Z_{i+1}$ is the sum of $Z_i$ independent Poisson variables of parameter~$t$, \ie{} a Poisson variable of parameter $tZ_i$. Using large deviations theory (or direct computation), there exists a positive constant $\alpha$ such that:
\begin{equation}
\label{largeDevPoisson}
\forall x\geq 1,\,\P(\Poi(x)\geq 2x)\leq exp(-\alpha x)
\end{equation}

For all integers $i$, let $p_i=\P\left(Z_{i+1}>(2t)^{i+1}\ln^2 n\text{ and } Z_i\leq (2t)^{i}\ln^2 n\right)$.
\begin{eqnarray*}
p_i&=&\E\left(\P\left(Z_{i+1}>(2t)^{i+1}\ln^2 n|Z_i\right)\1{Z_i\leq (2t)^{i}\ln^2 n}\right)\\
&=&\E\left(\P\left(Poi(tZ_i)>(2t)^{i+1}\ln^2 n|Z_i\right)\1{Z_i\leq (2t)^{i}\ln^2 n}\right)\\
&\leq&\E\left(\P\left(Poi(t(2t)^{i}\ln^2 n)>(2t)^{i+1}\ln^2 n|Z_i\right)\1{Z_i\leq (2t)^{i}\ln^2 n}\right)\\
&\leq&\P\left(Poi(t(2t)^{i}\ln^2 n)>(2t)^{i+1}\ln^2 n\right)\\
&\leq&exp(-\alpha t(2t)^{i}\ln^2 n)\\
&\leq&exp(-\alpha \ln^2 n)\\
&=&o(\frac 1 {n^2})
\end{eqnarray*}
Therefore $P_n:=\sum_{i=0}^{b_n}p_i=o(\frac 1 n)$. With probability larger than $1-P_n$, for all $i\leq b_n$, $Z_i\leq (2t)^i\ln^2 n$, and therefore $\sum_{i=0}^{b_n}Z_i\leq \frac{(2t)^{b_n+1}-1}{2t-1}\ln^2 n=:n^{c_n}$, where $c_n=o(1)$. Therefore, $B_{b_n}(G^\infty_{n,t},1)$ contains less than $n^{c_n}$ vertices with probability larger than $1-P_n$. By union bounds, all the balls of radius $b_n$ in $G^\infty_{n,t}$ contain less than $n^{c_n}$ vertices with probability larger than $1-nP_n\rightarrow 1$.
\end{proof}

\subsection{The local limit is a branching process}
\label{branching}

\begin{defi}
For any $G\in \Omega^+$ and $t\geq 0$, let $\Phi_t(G)=\Phi(G_t)$, where $G_t$ still denotes the subgraph of $G$ restricted to edges with labels smaller than or equal to $t$.
\end{defi}

Given a vertex $v$ of $T^\infty_\infty$, different from the root, let $w$ denote the parent of $v$ and $T^{\infty,v}_\infty$ denote the connected component of $v$ in $T^\infty_\infty\setminus\{v,w\}$ \ie{} the  subtree of $T^\infty_\infty$ starting at~$v$, and let $\tilde T^{\infty,v}_\infty=(T^{\infty,v}_\infty\cup\{v,w\})$ and $\tilde T^{\infty,v}_t=(T^{\infty,v}_\infty)_t$. See figure \ref{Tv} for an example.

\begin{figure}[h]
 \begin{tikzpicture}[level/.style={level distance=1.5cm},{sibling distance=2cm},
        level 2/.style={sibling distance=0.4cm},
        level 3/.style={sibling distance=0.4cm},level 4/.style={sibling distance=0.4cm}, edge from parent/.style={draw},]
 \node [draw,circle] {$\emptyset$}
    child{ node[draw,circle] {}
		child{ node[draw,circle] {}}
	    child{node[draw,circle] {}}
		child{node[draw,circle] {}}
	    child{node[draw,circle] {}}
	    child{node{}edge from parent[dotted]}    		
    		}
    child{node[draw,circle] {}
		child{ node[draw,circle] {}}
		child{node[draw,circle] {}}
	    child{node[draw,circle] {}}
	    child{node[draw,circle] {}}
	    child{node{}edge from parent[dotted]}    		
    		}
    child{node[draw,circle] {}
		child{node[draw,circle] {}}
		child{ node[draw,circle] {}}
	    child{node[draw,circle] {$v$}
			child{ node[draw,circle] {}}
			child{node[draw,circle] {}}
	    		child{node[draw,circle] {}}
	    		child{node[draw,circle] {}}
	    		child{node{}edge from parent[dotted]}    		
	    		}
	    child{node[draw,circle] {}}
	    child{node{}edge from parent[dotted]}    		
    		}
    child{node[draw,circle] {}
		child{node[draw,circle] {}}
		child{ node[draw,circle] {}}
	    child{node[draw,circle] {}}
	    child{node[draw,circle] {}}
	    child{node{}edge from parent[dotted]}    		
    		}
    child{node{}edge from parent[dotted]}
    ;

\begin{scope}[shift={(4,-1.5)}]
\node  {}
    child{ node[draw,circle] {$v$}
		child{ node[draw,circle] {}}
	    child{node[draw,circle] {}}
		child{node[draw,circle] {}}
	    child{node[draw,circle] {}}
	    child{node{}edge from parent[dotted]}    		
    		edge from parent[draw=none]}
    		;
\end{scope}

\begin{scope}[shift={(6.5,-1.5)}]
\node [draw,circle] {}
    child{ node[draw,circle] {$v$}
		child{ node[draw,circle] {}}
	    child{node[draw,circle] {}}
		child{node[draw,circle] {}}
	    child{node[draw,circle] {}}
	    child{node{}edge from parent[dotted]}    		
    		}
    		;
\end{scope}

 \end{tikzpicture}

 \caption{Example of $T^{\infty}_\infty$, $T^{\infty,v}_\infty$ and $\tilde T^{\infty,v}_\infty$.}\label{Tv}

 \end{figure}
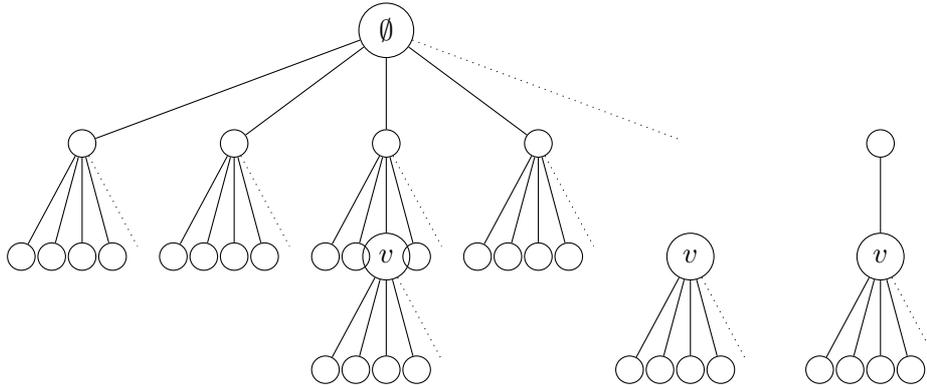

\begin{lem}
\label{lemDecoupage}
For a graph $G\in \Omega^+$, an edge $e\in G$ and a non-negative $t$, we say that $e$ is removed in $\Phi_t(G)$ if $e\in G_t$ and $e\notin \Phi_t(G)$.

Let $t$ be a non-negative number and $v$  vertex of $T^\infty_\infty$. If  $T^\infty_t\in \Omega^-$, then, at least one of the following propositions hold:
\begin{itemize}
\item $\Phi(T^\infty_t)\cap \tilde T^{\infty,v}_\infty=\Phi(\tilde T^{\infty,v}_t)$;
\item the edge between $v$ and its parent $w$ is removed in $\Phi_t(T^\infty_\infty)$.
\end{itemize}

\end{lem}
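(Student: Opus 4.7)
The strategy is to run the forbidden-degree dynamics on $T^\infty_t$ and on $\tilde T^{\infty,v}_t$ simultaneously and to track, at every time, the set of edges of $\tilde T^{\infty,v}_\infty$ that are present in each. The key structural observation is that $w$ is the only vertex of $\tilde T^{\infty,v}_\infty$ with a neighbour in $T^\infty_\infty\setminus\tilde T^{\infty,v}_\infty$, and that the only edge of $\tilde T^{\infty,v}_\infty$ incident to $w$ is $\{v,w\}$. So the ``outside'' dynamics can influence the ``inside'' one only through a cascade at $w$ that removes $\{v,w\}$, which is precisely the alternative the lemma leaves open.

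Since $T^\infty_t\in\Omega^-\subset\Omega^+$ and $\tilde T^{\infty,v}_t$ inherits this property (any propagation path in a subgraph is a propagation path in the ambient graph, so $l'$ can only shrink), both $\Phi(T^\infty_t)$ and $\Phi(\tilde T^{\infty,v}_t)$ are well defined via Definition \ref{defiExtensionPhi}. I would first reduce to a finite setting: for any fixed inside edge $e$, Definition \ref{defiExtensionPhi} decides $e\in\Phi(T^\infty_t)$ and $e\in\Phi(\tilde T^{\infty,v}_t)$ from a sufficiently large but finite ball of $T^\infty_t$ containing $e$ and $\{v,w\}$, so it is enough to prove the analogous statement when $T^\infty_t$ is replaced by a finite labelled graph $H\supset\tilde T^{\infty,v}_t$ including this ball.

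On $H$ only finitely many edge-addition events occur, and I would argue by induction on their chronological order, with the inductive hypothesis that at every time $s\le t$ the set of edges of $\tilde T^{\infty,v}_\infty$ present in the dynamics on $H$ coincides with the set present in the dynamics on $\tilde T^{\infty,v}_t$. At an \emph{inside} event, every vertex of $\tilde T^{\infty,v}_\infty$ other than $w$ has only inside edges incident to it, so by induction its current degree is the same in both dynamics and any cascade it triggers is identical. The delicate sub-case is the addition of $\{v,w\}$ itself: if it made $v$ or $w$ reach degree $k$ in $H$, then $\{v,w\}$ would be removed at once and, since each edge of $T^\infty_\infty$ carries a unique label, could never be added back, contradicting the hypothesis that $\{v,w\}$ is not removed in $\Phi_t(T^\infty_\infty)$. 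At an \emph{outside} event, no inside edge is directly touched; the only possible interference is a cascade at $w$, which would permanently erase $\{v,w\}$ if $\{v,w\}$ were present at that moment --- again contradicting the hypothesis. Hence either $\{v,w\}$ has not yet been added, and nothing inside changes, or $w$ stays below degree $k$; in both cases the inductive step goes through.

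The main obstacle is the infinite-graph definition of $\Phi$: one has to check carefully that a single large enough ball of $T^\infty_t$ simultaneously determines the status of $e$ in $\Phi(T^\infty_t)$ and in $\Phi(\tilde T^{\infty,v}_t)$. This is where the assumption $T^\infty_t\in\Omega^-$, Lemma \ref{longrange}, and the locality built into Definition \ref{defiExtensionPhi} intervene. Once this bookkeeping is set up, the induction above yields $\Phi(T^\infty_t)\cap\tilde T^{\infty,v}_\infty=\Phi(\tilde T^{\infty,v}_t)$ whenever the edge $\{v,w\}$ is not removed in $\Phi_t(T^\infty_\infty)$, which is the first alternative of the lemma.
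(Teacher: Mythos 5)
Your proposal is correct and follows essentially the same route as the paper: reduce to a finite ball via the $\Omega^+$/locality machinery, then compare the two dynamics over time and observe that the only vertex where they can first diverge is $w$, whose unique inside edge is $\{v,w\}$, forcing the second alternative. The paper phrases this as a minimal-counterexample argument (first time $s$ of disagreement, propagated to a strictly earlier disagreement unless the culprit is $w$), which is just the contrapositive of your forward induction over events.
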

An immediate corollary of Lemma \ref{lemDecoupage} is:
\begin{cor}
\label{corDecoupage}
With the notation of Lemma \ref{lemDecoupage}, if $(v,w)$ is removed in $\Phi(\tilde T^{\infty,v}_t)$, then it is removed in $\Phi(T^\infty_t)$.
\end{cor}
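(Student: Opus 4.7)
The plan is to derive the corollary directly from the dichotomy in Lemma \ref{lemDecoupage} by case analysis on which of its two alternatives holds for the given vertex $v$. Assume the edge $(v,w)$ is removed in $\Phi(\tilde T^{\infty,v}_t)$, meaning that this edge belongs to $\tilde T^{\infty,v}_t$ but not to $\Phi(\tilde T^{\infty,v}_t)$.

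In the first alternative one has $\Phi(T^\infty_t)\cap \tilde T^{\infty,v}_\infty = \Phi(\tilde T^{\infty,v}_t)$. Since the edge $(v,w)$ lies in $\tilde T^{\infty,v}_\infty$ by the very definition $\tilde T^{\infty,v}_\infty = T^{\infty,v}_\infty\cup\{v,w\}$, the equality above forces the presence of $(v,w)$ in $\Phi(T^\infty_t)$ to coincide with its presence in $\Phi(\tilde T^{\infty,v}_t)$. By hypothesis the latter does not contain $(v,w)$, hence neither does the former, and $(v,w)$ is removed in $\Phi(T^\infty_t)$, as wanted.

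In the second alternative the lemma asserts exactly that $(v,w)$ is removed in $\Phi_t(T^\infty_\infty)$. Unfolding the definition $\Phi_t(G)=\Phi(G_t)$ and using $(T^\infty_\infty)_t = T^\infty_t$, this says $(v,w)$ is not in $\Phi(T^\infty_t)$, which is the desired conclusion.

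There is essentially no obstacle beyond making sure the hypothesis $T^\infty_t\in\Omega^-$ needed to invoke Lemma \ref{lemDecoupage} is in force; but this is guaranteed almost surely by Lemma \ref{lemTinOmegaMoins}, so the corollary follows on the full-measure event where the lemma applies. Since the argument is a two-line case split, the only thing worth emphasizing when writing it up is that both alternatives of the dichotomy independently entail that $(v,w)\notin\Phi(T^\infty_t)$, so the conclusion holds regardless of which alternative is realized.
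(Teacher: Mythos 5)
Your proof is correct and matches the paper, which simply states the corollary as an immediate consequence of Lemma \ref{lemDecoupage}: in the first alternative the equality $\Phi(T^\infty_t)\cap \tilde T^{\infty,v}_\infty=\Phi(\tilde T^{\infty,v}_t)$ transfers the absence of $(v,w)$, and in the second the conclusion is the alternative itself. The case split you write out is exactly the intended (and only natural) argument.
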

In other words, until $(v,w)$ is removed in $\Phi_t(T^\infty_\infty)$, the knowledge of $\tilde T^{\infty,v}_\infty$ is sufficient to know the evolution of the subtree starting at $v$ in the forbidden degree version of $T^\infty_\infty$.
\begin{proof}
The graphs $\Phi(T^\infty_t)\cap \tilde T^{\infty,v}_t$ and $\Phi_t(\tilde T^{\infty,v}_t)$ have the same vertex set (the vertex set of $\tilde T^{\infty,v}_t$), therefore any difference comes from the set of edges. Given two graphs $G_1$ and $G_2$ with the same set of vertices and an edge $e$, we are going to say that $e$ \emph{separates} $G_1$ and $G_2$ if $e$ is present in one of the graphs but not the other. Let us assume that $\Phi(T^\infty_t)\cap \tilde T^{\infty,v}_t \neq\Phi(\tilde T^{\infty,v}_t)$, and let $e$ be an edge separating $\Phi(T^\infty_t)\cap \tilde T^{\infty,v}_t$ and $\Phi(\tilde T^{\infty,v}_t)$.  

The graph $T^\infty_t$ is in $\Omega^-$ a.s., so is $\tilde T^{\infty,v}_t$, and therefore these two graphs are in $\Omega^+$. By definition of $\Omega^+$, there exists an integer $l$ such that the knowledge of the balls of radius $l$ centered at $e$ and $(v,w)$ in $T^\infty_t$ and $\tilde T^{\infty,v}_t$ are sufficient to know whether $e$ and $(v,w)$ are present in $\Phi(T^\infty_t)$ and $\Phi(\tilde T^{\infty,v}_t)$. Let $B^a=B_l(T^{\infty}_t,e)\cup B_l(T^{\infty}_t,(v,w))$ and $B^b= B_l(\tilde T^{\infty,v}_t,e)\cup B_l(\tilde T^{\infty,v}_t,(v,w))$. By Definition \ref{deficertain}, $e$ and $(v,w)$ are present in $\Phi(T^\infty_t)$ (resp. $\Phi(\tilde T^{\infty,v}_t)$) if and only if they are present in $\Phi(B^a)$ (resp. $\Phi(B^b)$). It should be noted that $B^b=B^a\cap \tilde T^{\infty,v}_t$. $B^a$ is a finite graph, therefore the set of labels of $B^a$ (and $B^b$) is finite. Let $s$ be the first time such that $\Phi_s(B^a)\cap \tilde T^{\infty,v}_t\neq \Phi_s(B^b)$ holds, and let $e'$ be an edge of $T^{\infty,v}_t$ that separates $\Phi_s(B^a)\cap \tilde T^{\infty,v}_t$ and $\Phi_s(B^b)$, \ie{} $e'$ is removed at time $s$ in either $\Phi_\cdot(B^a)$ or $\Phi_\cdot(B^b)$.  Let $x$ be an endpoint of $e'$ that reaches the forbidden degree at time~$s$ in one of the graphs, but not the other. Therefore the degree of $x$ is different in $\Phi(B^a)$ and in $\Phi(B^b)$ strictly before time $s$. This implies that an edge $e''$ adjacent to $x$ is present in either $\Phi_{s^-}(B^a)$ or $\Phi_{s^-}(B^a)$ but not in the other graph. If $x\neq w$, every edge adjacent to $x$ in either $B^a$ or $B^b$ is in $\tilde T^{\infty,v}_t$, and $e''$ contradicts the definition of $s$ and $e'$. Therefore $x=w$ and $e'=(v,w)$ (as $(v,w)$ is the only edge of $\tilde T^{\infty,v}_t$ adjacent to $w$). As the degree of $w$ in $B^b$ is $1$, $w$ cannot reach the forbidden degree in $\Phi(B^b)$. Therefore $w$ reaches the forbidden degree in $\Phi(B^a)$ and $(v,w)$ is removed from $\Phi(B^a)$ \ie{} $(v,w)$ is removed in $\Phi(T^\infty_t)$.

\end{proof}

Let us add to the set of rooted graphs an element $\X$, used to denote a graph that is no longer \emph{relevant}: let $\tilde T^{k,v}_t$ (resp. $T^{k,v}_t$) be defined as equal to $\Phi(\tilde T^{\infty,v}_t)$ (resp. $\tilde T^{k,v}_t\cap T^{\infty,v}_t$) if the edge between $v$ and its parent has not been removed from $\Phi(\tilde T^{\infty,v}_t)$ and equal to $\X$ otherwise. Let $\tau_1,\dots,\tau_l$ denote the labels of the edges adjacent to the root in $T^{\infty}_t$, ordered in increasing order, and let $v_1,\dots,v_l$ denote the other endpoints of these edges. For any $s\leq t$, let $\tilde S_s$ denote the set of integers such that $\{v_i:i\in \tilde S_s\}$ is the set of neighbors of the root in $\Phi_s(T^\infty_t)$ \ie{} $\tilde S_s$ records which edge is present in the forbidden degree version of $T^\infty_t$ at time $s$. For any $i\leq l$, let $\rho_i=\inf\{s:T^{k,v_i}_s=\X\}$. Let $\Gamma_\tau=\{\tau_i,1\leq i \leq l\}$ and $\Gamma_\rho=\{\rho_i,1\leq i \leq l \}$. 

\begin{lem}
\label{lemtaurho}
Almost surely, for every $1\leq i<j\leq l$, $\tau_i\neq \tau_j$, $\rho_i\neq \tau_j$, and either $\rho_i\neq \rho_j$ or $\rho_i=\rho_j=\infty$.
\end{lem}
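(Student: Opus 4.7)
The plan is to reduce all three assertions to the single almost-sure statement that any countable family of distinct edges in $T^\infty_t$ carries pairwise distinct labels, which is immediate from the fact that, conditional on the unlabelled tree, the labels are i.i.d.\ uniform on $[0,t]$ (equivalently, a rate-$1$ Poisson process along each branch).

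The first claim is immediate: by definition $\tau_i$ is the label of the edge $\{\emptyset,v_i\}$, and for $i\neq j$ these are distinct edges of $T^\infty_t$. The second and third claims reduce to identifying which edge's label $\rho_i$ is, when finite. I would argue as follows: in the forbidden-degree dynamics, the subgraph $\Phi_s(\tilde T^{\infty,v_i}_\cdot)$ can only change at times $s$ which are labels of edges of $\tilde T^{\infty,v_i}_t$ (this is the convention of footnote~\ref{conventionk}); consequently, the hitting time $\rho_i=\inf\{s:T^{k,v_i}_s=\X\}$ is either $+\infty$ or the label of some edge of $\tilde T^{\infty,v_i}_t$. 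In the latter case the triggering edge must be adjacent to $v_i$ (since in $\tilde T^{\infty,v_i}_\infty$ the vertex $w$ has degree one and thus, for $k\geq 2$, cannot be responsible for removing $(v_i,w)$), but for the present lemma the only thing that matters is that $\rho_i$ is a label carried by an edge of $\tilde T^{\infty,v_i}_t$.

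The key combinatorial observation is that for $i\neq j$ the edge sets of $\tilde T^{\infty,v_i}_\infty$ and $\tilde T^{\infty,v_j}_\infty$ are disjoint: these subtrees share only the root $w=\emptyset$, and each contains a single edge incident to $w$, namely $\{w,v_i\}$ and $\{w,v_j\}$ respectively, while below $v_i$ and $v_j$ the descendant subtrees are vertex-disjoint. Hence the label $\tau_j=\text{label}(\{w,v_j\})$ does not appear in $\tilde T^{\infty,v_i}_t$, and no label of $\tilde T^{\infty,v_j}_t$ appears in $\tilde T^{\infty,v_i}_t$. Combining with the almost-sure distinctness of edge labels, we get $\rho_i\neq \tau_j$ a.s., and $\rho_i\neq \rho_j$ a.s.\ whenever both are finite, yielding all three assertions.

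The only mildly delicate point is the assertion that $\rho_i$, when finite, is realised as the label of an edge of $\tilde T^{\infty,v_i}_t$; once this is granted, the rest is just observing that the relevant edges live in disjoint parts of the tree and invoking the trivial fact that labels of distinct edges differ almost surely. I do not expect any serious obstacle beyond cleanly writing out this disjointness argument.
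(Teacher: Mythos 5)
Your proof is correct and follows essentially the same route as the paper's (which is a two-line version of the same argument): identify $\tau_i$ as the label of $(\emptyset,v_i)$ and $\rho_i$, when finite, as the label of an edge living in $\tilde T^{\infty,v_i}_t$ — the paper even sharpens this to an edge adjacent to $v_i$, exactly as in your parenthetical remark about $w$ having degree one — and then conclude from the pairwise edge-disjointness of the subtrees $\tilde T^{\infty,v_i}_\infty$ together with the almost-sure distinctness of labels, i.e. $T^\infty_t\in\Omega$.
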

\begin{proof}
By definition, $\tau_i$ is the label of $(\emptyset,v_i)$ and $\rho_i$ is the first time $(\emptyset,v_i)$ is removed in $\Phi(\tilde T^{\infty,v_i}_t)$, \ie{} is either infinite or equal to the label of an edge adjacent to $v_i$. As $T^\infty\in \Omega$, no two edges have the same label.
\end{proof}
Lemmata \ref{lemDecoupage} and \ref{lemtaurho} allow us to consider the following dynamic set $(S_s)_{0\leq s \leq t}$:
\begin{itemize}
\item at time $0$, $S_0$ is empty;
\item $s\rightarrow S_s$ is piecewise constant, and its set of jumps is included in $\Gamma_\rho\cup\Gamma_\tau$;
\item for every $\tau_i\notin \Gamma_\rho$, if $|S_{\tau_i^-}|=k-1$, then $S_{\tau_i}=\emptyset$, otherwise $S_{\tau_i}=S_{\tau_i^-}\cup\{i\}$;
\item for every finite $\rho_i\notin \Gamma_\tau$,  $S_{\rho_i}=S_{\rho_i^-}\setminus\{i\}$;
\item for every $i$ such that $\tau_i=\rho_i$, if $|S_{\tau_i^-}|=k-1$, then $S_{\tau_i}=\emptyset$, otherwise $S_{\tau_i}=S_{\tau_i^-}$.	
\end{itemize}
In other words, the element $i$ is added at time $\tau_i$, removed at time $\rho_i$, and $S$ becomes empty whenever it reaches size $k$, even fleetingly\footnote{Actually, $S$ never reaches size $k$, going directly from size $k-1$ to $0$, but we informally say that $S$ reaches size $k$ for an instant, as described in the footnote \ref{conventionk} page \pageref{conventionk}}.
\begin{cor}
\label{corbranching}
For any time $s$, $S_s=\tilde S_s$ and the connected component of the root is the same in $\Phi(T^\infty_s)$ and in $\cup_{i\in S_s} \Phi(\tilde T^{\infty,v_i}_s)$.
\end{cor}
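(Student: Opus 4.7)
The plan is a joint induction on the ordered set of jump times $J = (\Gamma_\tau \cup \Gamma_\rho) \cap [0,s]$, which is a.s. finite since the root of $T^\infty_t$ has a.s. finitely many children. By Lemma \ref{lemtaurho} the elements of $J$ are pairwise distinct, with the only possible coincidences being of the form $\tau_i = \rho_i$. Order $J$ as $0 < u_1 < \cdots < u_N \leq s$. The joint induction hypothesis at step $m$ is: (a) $S_{u_m} = \tilde S_{u_m}$, and (b) for every $i \in S_{u_m}$, the edge $(\emptyset, v_i)$ is present in $\Phi(T^\infty_{u_m})$ and $\Phi(T^\infty_{u_m}) \cap \tilde T^{\infty,v_i}_\infty = \Phi(\tilde T^{\infty,v_i}_{u_m})$. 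The base case $m = 0$ is trivial since both sets are empty and no edge has yet appeared.

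For the inductive step I would distinguish three cases according to the event at $u_{m+1}$. The root's neighborhood in $\Phi(T^\infty_\cdot)$ is unchanged between consecutive elements of $J$ (changes can only come from $\tau$- or $\rho$-type events, and by (b) these coincide with events in $\Phi(\tilde T^{\infty,v_j}_\cdot)$), so (a) guarantees that the root has degree exactly $|S_{u_m}|$ in $\Phi(T^\infty_{u_{m+1}^-})$. If $u_{m+1} = \tau_i$ with $\tau_i < \rho_i$, the edge $(\emptyset, v_i)$ is newly born; the root saturates in $\Phi(T^\infty_\cdot)$ precisely when $|S_{u_m}| = k-1$, and the update of $\tilde S$ matches the add-or-empty rule defining $S$. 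If $u_{m+1} = \rho_i$ with $\rho_i > \tau_i$, Corollary \ref{corDecoupage} shows that $(\emptyset, v_i)$ is removed in $\Phi(T^\infty_{u_{m+1}})$; by (b) at step $m$ this removal is localized at $v_i$, so $i$ leaves both $S$ and $\tilde S$. The coincidence $u_{m+1} = \tau_i = \rho_i$ combines the two preceding situations and is handled by the last bullet of the definition of $S$. In every case, for each $j$ belonging to (or newly entering) $S_{u_{m+1}}$, the edge $(\emptyset, v_j)$ is present in $\Phi(T^\infty_{u_{m+1}})$, which rules out the second alternative of Lemma \ref{lemDecoupage} and forces the identity in (b).

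Since no jump occurs in $(u_N, s]$, (a) and (b) extend unchanged to time $s$. The final claim then follows from the tree structure of $T^\infty$: the connected component of the root in $\Phi(T^\infty_s)$ is the union of $\{\emptyset\}$ with, for each $i \in S_s$, the connected component of $v_i$ in $\Phi(T^\infty_s) \cap \tilde T^{\infty,v_i}_\infty$, and (b) identifies each such intersection with $\Phi(\tilde T^{\infty,v_i}_s)$. Taking the union over $i \in S_s$ yields precisely the right-hand side of the claim. The main obstacle is the bookkeeping at the jump times, especially the simultaneous ``root-saturation-on-edge-birth'' event and the coincidence $\tau_i = \rho_i$; once these cases are correctly matched, the rest of the argument reduces to direct applications of Lemma \ref{lemDecoupage} and Corollary \ref{corDecoupage}.
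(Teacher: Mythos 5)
Your proposal is correct and follows essentially the same route as the paper: the paper argues via a smallest time $s$ at which $S_s\neq\tilde S_s$ (equivalent to your induction on the finitely many jump times) and disposes of exactly your three cases using Corollary \ref{corDecoupage} for removals detected in the subtree and Lemma \ref{lemDecoupage} to transfer the degree of $v_i$ when it saturates in $\Phi(T^\infty_\cdot)$. The concluding identification of the root's component via $\Phi(T^\infty_s)\cap\tilde T^{\infty,v_i}_\infty=\Phi(\tilde T^{\infty,v_i}_s)$ for $i\in S_s$ is also the paper's.
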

\begin{proof}
Let first assume that for some $s$, $S_s\neq \tilde S_s$. The set~$S$ can only change at the times $(\tau_i)_{1\leq i \leq l}$ and $(\rho_i)_{1\leq i \leq l}$; and the set~$\tilde S$ can only change at times equal to a label of an edge in $B_2(T^\infty_t,\emptyset)$. These two sets of times are finite, therefore there exists a smallest $s$ such that $S_s\neq \tilde S_s$. Let $i\in S_s\Delta \tilde S_s$, in which $\Delta$ denotes the symmetric difference. The element~$i$ is added to $S$ and $\tilde S$ at time $\tau_i$, therefore the difference eventually comes from the removal times: when $i$ is in $S$ and $\tilde S$ at time $s^-$ and is removed from one, but not the other at time $s$. An element can be removed from $S$ or $\tilde S$ at time $s$ for the following reasons:
\begin{enumerate}
\item $S$ (resp. $\tilde S$) reaches size $k$ at time $s$. Therefore $|S_{s^-}|=k-1$ (resp. $|\tilde S_{s^-}|=k-1$) and an element is added to $S$ (resp. $\tilde S$) at time $s$. As $S_{s^-}=\tilde S_{s^-}$ by definition of $s$, and the times of addition are identical for $S$ and $\tilde S$, $S$ reaches size $k$ at time $s$ if and only if $\tilde S$ reaches size $k$ at time $s$.
\item $\rho_i=s$, that is $(\emptyset,v_i)$ is removed from $\Phi(\tilde T^{\infty,v}_\cdot)$ at time $s$. By Corollary~\ref{corDecoupage}, $(\emptyset,v_i)$ is also removed from $\Phi(T^\infty_s)$, and $i$ is removed from $\tilde S$.
\item $(\emptyset,v_i)$ is removed from $\Phi(T^\infty_\cdot)$ at time $s$ because the vertex $v_i$ reaches degree $k$ in $\Phi(T^\infty_\cdot)$, \ie{} $v_i$ has degree $k-1$ in $\Phi(T^\infty_{s^-})$, and an edge is added to $v_i$. $i\in S_{s^-}$, therefore $(\emptyset,v_i)$ is present in $\Phi(\tilde T^{\infty,v_i}_{s^-})$. By Lemma \ref{lemDecoupage},   $\Phi(T^\infty_{s^-})\cap \tilde T^{\infty,v_i}_\infty=\Phi(\tilde T^{\infty,v_i}_{s^-})$, and therefore $v_i$ has degree $k-1$ in $\Phi(\tilde T^{\infty,v_i}_{s^-})$, and reaches degree $k$ at time $s$. Therefore $\rho_i=s$, and $i$ is removed from $S$ at time $s$.
\end{enumerate}
Therefore, for all $s$, $S_s=\tilde S_s$. Moreover, for any $i\in S_s$, $(\emptyset,v_i) \in \Phi(\tilde T^{\infty,v_i}_s)$, therefore,  by Lemma \ref{lemDecoupage}, $\Phi(T^\infty_{s})\cap \tilde T^{\infty,v_i}_\infty=\Phi(\tilde T^{\infty,v_i}_{s})$, ending the proof of Corollary~\ref{corbranching}.

\end{proof}
\begin{defi}
Given  a rooted graph $(G,\emptyset)$ in $\Omega$ and a non-negative number $y$, we introduce the following notations:
\begin{itemize}
\item  $\Theta^y(G,\emptyset)$ denotes the graph $G$ rooted at $\emptyset$ with an extra vertex $w$ and an edge between $w$ and $\emptyset$, labelled by $y$;
\item if $\Theta^y(G,\emptyset)\in \Omega^+$, then $\tilde \Phi^y(G,\emptyset)$ and $\Phi^y(G,\emptyset)$ are defined as follows: if $(w,\emptyset)$ is removed from $\Phi(\Theta^y(G,\emptyset))$, then let $\tilde \Phi^y(G,\emptyset)=\X$ and $\Phi^y(G,\emptyset)=\X$; otherwise, let $\tilde \Phi^y(G,\emptyset)=\Phi(\Theta^y(G,\emptyset))$ and  $\Phi^y(G,\emptyset)= \tilde \Phi^y(G,\emptyset)\cap G$;
\item let $T^{\infty,y}_t=\Theta^y(T^\infty_t)$, $\tilde T^{k,y}_t=\tilde \Phi^y(T^\infty_t)$ and $T^{k,y}_t=\Phi^y(T^\infty_t)$.
\end{itemize}
\end{defi}

Let $m(t,y)=\P(T^{k,y}_t\neq \X)$. The tree $T^{\infty}_t$ contains no edge with probability $e^{-t}>0$ and in that case $T^{k,y}_t\neq \X$, therefore $m(t,y)>e^{-t}>0$. Let $T^{k+,y}_t$ (resp. $\tilde T^{k+,y}_t$) be the random tree $T^{k,y}_t$ (resp. $\tilde T^{k,y}_t$) conditioned on not being equal to $\X$.

\begin{lem}
\label{lembranchingproperty}
Conditionally on $(\tau_i)_{1\leq i \leq l}$ and $(S_s)_{0\leq s \leq t}$:
\begin{itemize}
\item the graphs $(T^{k,v_i}_t)_{i\in S_t}$ (resp. $(\tilde T^{k,v_i}_t)_{i\in S_t}$) are independent:
\item for each $i\in S_t$, $T^{k,v_i}_t$ (resp. $\tilde T^{k,v_i}_t$) has the same law as $T^{k+,\tau_i}_t$ (resp. $\tilde T^{k+,\tau_i}_t$).
\end{itemize}
\end{lem}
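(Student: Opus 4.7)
The plan is to combine three ingredients: the branching property of $T^\infty_t$ at the root, the fact that $\rho_i$, $T^{k,v_i}_t$ and $\tilde T^{k,v_i}_t$ are each measurable functionals of the single subtree $\tilde T^{\infty,v_i}_t$, and the observation that conditioning on the trajectory $(S_s)_{0\leq s\leq t}$ factorizes as a product of events, each involving only one $\rho_i$.

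I would first invoke the branching property in the following form: conditionally on $(\tau_i)_{1\leq i\leq l}$ (which includes $l$), the subtrees $(\tilde T^{\infty,v_i}_t)_{1\leq i\leq l}$ are independent, and each $\tilde T^{\infty,v_i}_t$ has the same law as $T^{\infty,\tau_i}_t=\Theta^{\tau_i}(T^\infty_t)$. This is the standard branching property of a Galton-Watson tree with Poisson offspring, enriched with i.i.d.\ uniform edge labels (and also follows directly from the Poisson point process description of $T^\infty_t$). Then, using Corollary~\ref{corDecoupage} together with the definitions, I would check that $\rho_i$, $\tilde T^{k,v_i}_t$ and $T^{k,v_i}_t$ are deterministic functionals of $\tilde T^{\infty,v_i}_t$ alone.

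The key step is then to decompose the event $\{(S_s)_{0\leq s\leq t}=\sigma\}$ as a product of events on individual $\rho_i$'s. Given $(\tau_i)_{1\leq i\leq l}$, the trajectory of $S$ is a deterministic function of $(\rho_i)_{1\leq i\leq l}$. Walking through the dynamics, each index $i$ contributes exactly one elementary constraint: either none (if $i$ never joined $S$ because $|S_{\tau_i^-}|=k-1$), or an exact value $\rho_i=s$ (if $i$ left $S$ at a time $s\notin\Gamma_\tau$, in which case Lemma~\ref{lemtaurho} forces $s=\rho_i$), or a strict lower bound $\rho_i>s$ (either with $s=\tau_j$ when $S$ emptied at $\tau_j$ while $i\in S_{\tau_j^-}$, or with $s=t$ when $i\in S_t$). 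Each such constraint involves only $\rho_i$, hence only $\tilde T^{\infty,v_i}_t$, which is the factorization we need.

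The conclusion then follows quickly: independence of the $(\tilde T^{\infty,v_i}_t)_{1\leq i\leq l}$ given $(\tau_i)$ is preserved under further conditioning on a product event, and the same persists for the derived quantities $T^{k,v_i}_t$ and $\tilde T^{k,v_i}_t$. For $i\in S_t$, the surviving constraint on $\rho_i$ is exactly $\rho_i>t$, i.e.\ $\tilde T^{k,v_i}_t\neq\X$; combined with $\tilde T^{\infty,v_i}_t\stackrel{d}{=}T^{\infty,\tau_i}_t$ and the very definition of $\tilde T^{k+,\tau_i}_t$ as $\tilde T^{k,\tau_i}_t$ conditioned to differ from $\X$, this yields the asserted conditional law. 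The same reasoning handles $T^{k,v_i}_t$ versus $T^{k+,\tau_i}_t$. I expect the main obstacle to be the bookkeeping in the third step, in particular the correct handling of the batch of indices simultaneously ejected when the root saturates; however Lemma~\ref{lemtaurho} guarantees that each jump of $S$ can be unambiguously attributed to a unique $\tau$ or $\rho$, so this reduces to a careful but routine case analysis.
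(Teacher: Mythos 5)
Your argument is correct and follows essentially the same route as the paper: branching property of $T^\infty_t$ at the root, the observation (via Corollary \ref{corDecoupage}) that $\rho_i$, $\tilde T^{k,v_i}_t$ and $T^{k,v_i}_t$ are functionals of $\tilde T^{\infty,v_i}_t$ alone, and preservation of conditional independence under a product-form conditioning, with the constraint for $i\in S_t$ reducing to $\rho_i>t$, i.e.\ $\tilde T^{k,v_i}_t\neq\X$. The paper sidesteps the case-by-case factorization of $\{(S_s)_{0\leq s\leq t}=\sigma\}$ that you anticipate as the main bookkeeping burden: it simply conditions on the finer $\sigma$-algebra $\F=\sigma((\tau_i)_i,(\rho_i)_i)\supset\F_1$, for which the factorization is automatic since each $\rho_i$ is measurable with respect to the $i$th subtree, and then notes that the resulting conditional laws depend only on $\F_1$-measurable data.
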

\begin{proof}

 Let $\F$ be the $\sigma$-algebra generated by the sequences $(\tau_i)_{1\leq i \leq l}$ and $(\rho_i)_{1\leq i \leq l}$, and $\F_1$ be the $\sigma$-algebra generated by $(\tau_i)_{1\leq i \leq l}$ and $(S_s)_{0\leq s \leq t}$. As $\F_1\subset \F$, it is sufficient to prove Lemma \ref{lembranchingproperty} with $\F$ instead of $\F_1$. By the branching property, conditionally on $(\tau_i)_{1\leq i \leq l}$, the trees $(\tilde T^{\infty,v_i}_t)_{1\leq i \leq l}$ are independent, and for each $i$, $\tilde T^{\infty,v_i}_t$ is a copy of $\tilde T^{\infty,\tau_i}_t$. Each $\rho_i$ only depends on $\tilde T^{\infty,v_i}_t$, and $S_t$ is $\F$-measurable. Therefore, conditionally on $\F$:  the trees $(\tilde T^{\infty,v_i}_t)_{1\leq i \leq l}$ are independent and for each $i$, $\tilde T^{\infty,v_i}_t$ has the same law as $\tilde T^{\infty,\tau_i}_t$ conditionally on $\inf\{s:\tilde T^{k,\tau_i}_s=\X\}=\rho_i$. It follows that conditionally on $\F$:
 the trees $(\tilde T^{k,v_i}_t)_{i\in S_t}$ are independent and for each $i$ in $S_t$, $\tilde T^{k,v_i}_t$ has the same law as $T^{k+,\tau_i}_t$.
  
As $T^{k,v_i}_t=\tilde T^{k,v_i}_t \cap T^{\infty,v_i}_t$ and $T^{k,\tau_i}_t=\tilde T^{k,\tau_i}_t \cap T^{\infty,\tau_i}_t$, Lemma \ref{lembranchingproperty} with $\tilde T^{k,v_i}_t$ imply  Lemma \ref{lembranchingproperty} with $T^{k,v_i}_t$.
\end{proof}

Lemma \ref{lembranchingproperty} implies the following theorem:
\begin{thm}
Let $B(y)$ be the law of the set of the labels of  the edges adjacent to the root of $T^{k+,y}_t$.
Let $\BP$ be the multitype branching process with offspring law $B(\cdot)$. Then $T^{k+,y}_t$ has same law as $\BP$ starting with a root of label $y$.
\end{thm}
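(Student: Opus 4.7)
The plan is to prove the equality in law by induction on the depth of the tree, the inductive step being essentially Lemma \ref{lembranchingproperty} together with Corollary \ref{corbranching}, adapted so as to apply to $T^{k+,y}_t$ rather than to $T^k_t$.

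For the base case, observe that by the very definition of $B(y)$, the random set of labels of the edges incident to the root of $T^{k+,y}_t$ is distributed as $B(y)$, which is precisely the first-generation offspring law of $\BP$ started with a root of label $y$.

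For the inductive step I would establish a version of Lemma \ref{lembranchingproperty} in which $T^\infty_t$ is replaced by $\Theta^y(T^\infty_t)$. The extension is cosmetic: the extra edge $(w,\emptyset)$, which carries the fixed label $y$, simply shifts the degree counter at the root by one throughout the analysis of which edges survive at $\emptyset$, and the arguments of Lemma \ref{lemDecoupage}, Corollary \ref{corbranching} and Lemma \ref{lembranchingproperty} transfer without modification. The resulting statement reads: conditionally on the labels $(\tau_i)_{i\in S_t}$ of the edges that survive at the root of $T^{k+,y}_t$, the subtrees $(T^{k,v_i}_t)_{i\in S_t}$ are independent, and each $T^{k,v_i}_t$ is distributed as $T^{k+,\tau_i}_t$. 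Applying the inductive hypothesis to each such subtree, the joint distribution of the first $n+1$ generations of $T^{k+,y}_t$ coincides with that of $\BP$ truncated at depth $n+1$, started from a root of label $y$.

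To conclude, since $T^{k+,y}_t$ and $\BP$ are both random rooted labelled trees whose laws are determined by all their finite-depth marginals (by a standard monotone class argument in the local topology on labelled rooted trees), equality at every depth $n$ yields equality in law. The main, and really the only, obstacle is the verification that the variant of Lemma \ref{lembranchingproperty} adapted to $\Theta^y(T^\infty_t)$ is indeed a routine modification of its original proof; I do not expect a substantive difficulty here, because the added edge is deterministic given $y$, never vanishes in $\Omega^+$, and enters the analysis only through the degree of the root, so all the combinatorial and measurability arguments used in Sections \ref{branching} and \ref{partPropagationPaths} go through unchanged.
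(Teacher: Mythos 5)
Your proposal is correct and follows essentially the same route as the paper: the paper likewise reduces the theorem to the one-step branching property of Lemma \ref{lembranchingproperty}, noting that the event $T^{k,y}_t=\X$ is measurable with respect to the $\sigma$-algebra generated by $(\tau_i)$ and $(S_s)$, so that conditionally on survival and on the labels of the root's edges the subtrees are independent copies of $T^{k+,\tau_i}_t$. The paper leaves the recursion from this one-step property to the full identification of the law implicit, whereas you spell it out as an induction on depth plus determination of the law by finite-depth marginals; that is a faithful elaboration, not a different argument.
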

\begin{proof}
$T^{k,y}_t$ is either equal to $T^k_t$ or to $\X$. With the notations of the proof of Lemma \ref{lembranchingproperty}, the event $T^{k,y}_t=\X$ is $\F_1$-measurable, therefore, conditionally on $T^{k,y}_t\neq \X$ and the sequence $(\tau_1,\dots,\tau_l)$ of labels of the edges adjacent to the root, the subtrees starting at the children of the root are independent copies of $T^{k+,\tau_i}_t$.
\end{proof}
Lemma \ref{lembranchingproperty} also implies that $T^k_t$ is a two-stages branching process:
\begin{thm}
Conditionally on the set $(\tau_i)_{i\in S_t}$ of labels of edges adjacent to the root in $T^k_t$, the subtrees starting at the root's children are independent, and copies of $T^{k+,\tau_i}_t$.
\end{thm}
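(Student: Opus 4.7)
The plan is to deduce this theorem directly from Lemma \ref{lembranchingproperty} and Corollary \ref{corbranching}; essentially all the hard work has already been done, and what remains is a coarsening of the conditioning $\sigma$-algebra.

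First, I would verify that the subtrees hanging off the root in $T^k_t$ are indeed the family $(T^{k,v_i}_t)_{i\in S_t}$. By Corollary \ref{corbranching}, the connected component of $\emptyset$ in $T^k_t=\Phi(T^\infty_t)$ coincides with the connected component of $\emptyset$ in $\bigcup_{i\in S_t}\Phi(\tilde T^{\infty,v_i}_t)$. For each $i\in S_t$, the edge $(\emptyset,v_i)$ is present in $\Phi(\tilde T^{\infty,v_i}_t)$ by the very definition of $S_t$, and therefore the subtree of $T^k_t$ attached at $v_i$ is precisely $T^{k,v_i}_t=\tilde T^{k,v_i}_t\cap T^{\infty,v_i}_t$. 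This legitimates the identification needed to state the result.

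Second, I would apply Lemma \ref{lembranchingproperty}: conditionally on $((\tau_i)_{1\le i\le l},(S_s)_{0\le s\le t})$, the trees $(T^{k,v_i}_t)_{i\in S_t}$ are independent and each $T^{k,v_i}_t$ is distributed as $T^{k+,\tau_i}_t$. Then I would coarsen the conditioning using the tower property. The multiset $(\tau_i)_{i\in S_t}$ is measurable with respect to $((\tau_i)_{1\le i\le l},(S_s)_{0\le s\le t})$, so for any bounded product test functional $F((T_i)_i)=\prod_{i\in S_t}f_i(T_i)$, Lemma \ref{lembranchingproperty} gives
\[
\E\!\left[F\bigl((T^{k,v_i}_t)_{i\in S_t}\bigr)\,\middle|\,(\tau_j)_{1\le j\le l},(S_s)_{0\le s\le t}\right]
=\prod_{i\in S_t}\E\!\left[f_i(T^{k+,\tau_i}_t)\right].
\]
The right-hand side is already a deterministic function of $(\tau_i)_{i\in S_t}$ alone, so taking a further conditional expectation with respect to $(\tau_i)_{i\in S_t}$ leaves it unchanged. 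This simultaneously yields the conditional independence of the subtrees starting at the root's children and the identification of their marginal laws as $T^{k+,\tau_i}_t$.

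There is no substantive obstacle here: the delicate content (which edges adjacent to the root survive, and why knowledge of the subtree $\tilde T^{\infty,v_i}_\infty$ determines the subtree of $T^k_t$ rooted at $v_i$ until the edge $(\emptyset,v_i)$ is removed) is already encapsulated in Corollary \ref{corDecoupage}, Corollary \ref{corbranching}, and Lemma \ref{lembranchingproperty}. The only mild subtlety is purely measure-theoretic — namely that the richer $\sigma$-algebra generated by $(\tau_i)_{1\le i\le l}$ and the full trajectory $(S_s)_{0\le s\le t}$ contains information (the precise removal times $\rho_i$, the labels of removed edges, the full ordering of the $\tau_i$'s) that is irrelevant to the conditional law of the surviving subtrees, which is the observation encoded in the computation above.
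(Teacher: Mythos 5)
Your proof is correct and follows exactly the route the paper intends: the paper states this theorem as an immediate consequence of Lemma \ref{lembranchingproperty} (together with the identification of the root's subtrees provided by Corollary \ref{corbranching}) and omits the details, which are precisely the coarsening-of-the-conditioning argument you carry out. The tower-property step is routine and correctly executed, so nothing is missing.
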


\subsubsection{Properties of the branching process}
Let $\mu_{t,y}$ (resp. $\mu_{t,y}^+$, $\nu_t$) be the law of the set of labels of edges adjacent to the root in $T^{k,y}_t$ (resp. $T^{k+,y}_t$, $T^{k}_t$). This measure can be decomposed according to the degree of the root in these trees:
\begin{eqnarray*}
\mu_{t,y}&=&\sum_{i=-1}^{k-2}\mu^i_{t,y};\\
 \mu^+_{t,y}&=&\sum_{i=0}^{k-2}\mu^{i+}_{t,y};\\
\nu_{t}&=&\sum_{i=0}^{k-1}\nu^{i}_{t}.
\end{eqnarray*}
\noindent{}in which $\mu_{t,y}^{i}$ (resp. $\mu_{t,y}^{i+}$,  $\nu_{t}^i$) only puts mass on sets of cardinality $i$ and $\mu_{t,y}^{-1}$ is equal to $(1-m(t,y))\delta_\X$.
%

\begin{defi}
For $X$ be a finite subset of $[0,t]$, let us define the random rooted tree $T^{\infty,X}_t$ as follows:
\begin{itemize}
\item let $P$ be a Poisson point process of intensity $1$ on $[0,t]$;
\item conditionally on $P$, let $(T^z)_{z\in P\cup X}$ be an i.i.d family of copies of $T^\infty_t$;
\item then, adding a root $\emptyset$ to the forest $(T^z)_{z\in P\cup X}$, and, for each $z\in P\cup X$, adding an edge with label $z$ between $\emptyset$ and the root of $T^z$, one obtains $T^{\infty,X}_t$. $T^{\infty,X}$ is rooted at $\emptyset$.
\end{itemize}
Let $E^{X}_t$ denote the event \emph{"the only edges adjacent to the root in  $\Phi(T^{\infty,X}_t)$ are the edges labelled by $X$"}.
If $y\notin X$, let $E^{y,X}_t$ denote the event \emph{"$\Phi^y(T^{\infty,X}_t)\neq \X$ and the only edges adjacent to the root in  $\Phi^y(T^{\infty,X}_t)$ are the edges labelled by $X$"}.
\end{defi}
Informally, $T^{\infty,X}_t$ is the conditional distribution of the tree $T^\infty_t$, given that the root is incident to edges with labels in $X$. This informal explanation will be justified rigorously with the help of Campbell formulas in the following pages. If $y\notin X$, a.s., $\Theta^y(T^{\infty,X}_t)\in \Omega^+$ and $\Phi^y(T^{\infty,X}_t)$ is well-defined.

\begin{defi}
Let $0\leq j\leq i\leq k-2$ be two integers. $Q^j_i$ is the set $\{(t,y,x_1,\dots,x_i):0\leq x_1\leq\dots x_j\leq y\leq x_{j+1}\leq \dots\leq x_i\leq t\}$, and $Q_i$ is the simplex $\{(t,x_1,\dots,x_i):0\leq x_1\leq\dots \leq x_i\leq t\}$.
\end{defi}

\begin{lem}
\label{continuiteproba}
For any integer $i$ and $j$, $(t,y,x_1,\dots,x_i)\rightarrow \P(E^{y,\{x_1,\dots,x_i\}}_t)$ (resp.  $(t,x_1,\dots,x_i)\rightarrow \P(E^{\{x_1,\dots,x_i\}}_t)$) is continuous on $\mathring Q^{j}_i$ (resp. $\mathring Q_i$), and both are larger than $exp(-(i+1)t)$.

\end{lem}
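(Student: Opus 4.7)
The plan is to address the two claims in the lemma separately. For the lower bound $\P(E^{y,X}_t)\geq e^{-(i+1)t}$, I would exhibit an explicit event of that probability on which $E^{y,X}_t$ holds. Consider the event that the Poisson point process $P$ in the construction of $T^{\infty,X}_t$ is empty on $[0,t]$ (probability $e^{-t}$) and each of the $i=|X|$ subtrees $T^z$, $z\in X$, consists of only its root (probability $e^{-t}$ each, independently). On this event of total probability $e^{-(i+1)t}$, the graph $\Theta^y(T^{\infty,X}_t)$ is a star with $i+1$ leaves attached to $\emptyset$, so every vertex has degree at most $i+1\leq k-1$ (using $i\leq k-2$), no vertex reaches the forbidden degree, $\Phi$ is the identity, and $E^{y,X}_t$ holds. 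The argument for $E^X_t$ is identical with one fewer leaf.

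For the continuity claim, I would use a coupling followed by dominated convergence. Fix $(t_0,y_0,x^0)\in\mathring Q^j_i$ and a converging sequence $(t_n,y_n,x^n)$. Construct all the trees $T^{\infty,X_n}_{t_n}$ on a single probability space: fix once and for all a unit-intensity Poisson point process $\tilde P$ on $\R_+$ and an i.i.d.\ family of Poisson-weighted infinite trees indexed by the points of $\tilde P$ and by the integers $1,\dots,i$. Then $T^{\infty,X_n}_{t_n}$ is built by attaching, at $\emptyset$, the $j$-th fixed subtree via an edge of label $x^n_j$ and the subtree indexed by $z\in\tilde P\cap[0,t_n]$ via an edge of label $z$, restricting every attached subtree to edges with label $\leq t_n$; one extra leaf joined by an edge of label $y_n$ is appended to form $\Theta^{y_n}(T^{\infty,X_n}_{t_n})$.

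By Lemma \ref{longrange}, almost surely there exists a (random) finite integer $L$ such that the event $E^{y_0,X_0}_{t_0}$ is determined by the ball $B_L$ of radius $L$ around $\emptyset$ in $\Theta^{y_0}(T^{\infty,X_0}_{t_0})$. On a full-probability event, $\tilde P$ has no point equal to $t_0$, all labels appearing inside this ball are pairwise distinct, and they differ from each of $t_0,y_0,x^0_1,\dots,x^0_i$. Since $(t_0,y_0,x^0)$ lies in the interior of $Q^j_i$ its coordinates are strictly ordered, so for $n$ large enough $\tilde P\cap[0,t_n]=\tilde P\cap[0,t_0]$, the restrictions to labels $\leq t_n$ and $\leq t_0$ agree inside $B_L$, and the relative order of all labels in $B_L$ together with $t_n,y_n,x^n_j$ is the same as for the unperturbed parameters. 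Because $\Phi$ acts on a finite graph only through the order of its edge-labels, $\1{E^{y_n,X_n}_{t_n}}=\1{E^{y_0,X_0}_{t_0}}$ eventually, and dominated convergence yields $\P(E^{y_n,X_n}_{t_n})\to\P(E^{y_0,X_0}_{t_0})$. The $E^X$ case is identical.

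The main obstacle is this label-ordering step: perturbing the distinguished parameters $t,y,x_j$ could in principle reorder them with respect to each other or to one of the infinitely many random labels in the tree, which would change the forbidden-degree dynamics. Restricting to $\mathring Q^j_i$ preserves the strict order of the distinguished parameters under small perturbations, and Lemma \ref{longrange} is what reduces the rest of the problem to the finitely many labels inside $B_L$, for which a.s.\ distinctness from each fixed $t_0,y_0,x^0_j$ then suffices.
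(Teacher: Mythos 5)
Your proposal is correct and follows essentially the same route as the paper: the lower bound comes from the same explicit event (empty Poisson process and empty attached subtrees, giving a star of degree at most $k-1$), and the continuity argument is the same coupling of all the trees $T^{\infty,X}_t$ on a single probability space, reduced to a finite ball via the a.s.\ finiteness of $l(e)$ (absence of long propagation paths) together with the observation that $\Phi$ depends only on the relative order of the edge labels. The paper packages the coupling step as a separate lemma with explicit "good" conditions whose probabilities tend to $1$, whereas you phrase it as a.s.\ eventual equality of indicators plus dominated convergence, but the content is identical.
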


\begin{lem}
\label{probaegaldensite}
$(y,t)\rightarrow m(t,y)$ is continuous on $\{0\leq y<t\}$ and is larger than $e^{-t}$.
For any integer $i$ and $y<t$, $\mu^i_{t,y}$, $\mu^{i+}_{t,y}$ and $\nu^i_t$ are absolutely continuous with respect to the Lebesgue measure on $[0,t]^i$, with respective densities:
\begin{eqnarray*}
\frac{\partial \mu^{i}_{t,y}}{\partial x}=f_i(t,y,x)&=&\P(E^{y,\{x_1,\dots,x_i\}}_t)\\
\frac{\partial \mu^{i+}_{t,y}}{\partial x}=g_i(t,y,x)&=&\P(E^{y,\{x_1,\dots,x_i\}}_t)/m(t,y).\\
\frac{\partial \nu^{i}_{t}}{\partial x}=h_i(t,x)&=&\P(E^{\{x_1,\dots,x_i\}}_t)
\end{eqnarray*}
\end{lem}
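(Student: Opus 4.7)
The plan is to apply the Slivnyak--Mecke (reduced Campbell) formula for marked Poisson point processes. The key observation is that $T^\infty_t$ admits a natural marked-PPP description: the labels of the edges adjacent to the root form a Poisson point process $P$ on $[0,t]$ of intensity $1$, and each point of $P$ carries as an i.i.d.\ mark an independent copy of $T^\infty_t$ representing the subtree hanging below the corresponding child. Under this description, inserting $i$ distinct deterministic points $x_1,\dots,x_i$ into $P$ with fresh independent subtree marks reconstructs, in distribution, the tree $T^{\infty,\{x_1,\dots,x_i\}}_t$, so the reduced Palm distribution of the marked PPP at $(x_1,\dots,x_i)$ is exactly the law of $T^{\infty,\{x_1,\dots,x_i\}}_t$.

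For $\nu^i_t$ I would write
$$\nu^i_t(A)=\E\left[\sum_{\{x_1,\dots,x_i\}\subset P}\1{\{x_1,\dots,x_i\}\in A}\,\1{\mathcal{S}=\{x_1,\dots,x_i\}}\right],$$
where $\mathcal{S}$ denotes the set of labels of edges adjacent to the root in $T^k_t$, and then apply Slivnyak--Mecke to obtain
$$\nu^i_t(A)=\frac{1}{i!}\int_{[0,t]^i}\1{\{x_1,\dots,x_i\}\in A}\,\P\!\left(E^{\{x_1,\dots,x_i\}}_t\right)dx_1\cdots dx_i,$$
since in the Palm picture the event $\{\mathcal{S}=\{x_1,\dots,x_i\}\}$ becomes exactly $E^{\{x_1,\dots,x_i\}}_t$. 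Restricting the integral to the ordered simplex $Q_i$ cancels the factor $1/i!$ and yields the stated density $h_i$. The density $f_i$ of $\mu^i_{t,y}$ follows from the same argument applied to $\Theta^y(T^\infty_t)$: the extra parent edge of deterministic label $y$ is attached before applying $\Phi$ and plays no role in the Slivnyak--Mecke step beyond turning $E^{\{x_1,\dots,x_i\}}_t$ into $E^{y,\{x_1,\dots,x_i\}}_t$.

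For $\mu^{i+}_{t,y}$, the density $g_i=f_i/m(t,y)$ then follows from the very definition of $T^{k+,y}_t$ as $T^{k,y}_t$ conditioned on $\{T^{k,y}_t\ne\X\}$, provided $m(t,y)>0$. The bound $m(t,y)\ge e^{-t}$ is immediate: with probability $e^{-t}$ the root of $T^\infty_t$ has no children, in which case the only edge incident to the root in $\Theta^y(T^\infty_t)$ is the parent edge of label $y$, so neither of its endpoints reaches degree $k$ and the edge survives, i.e.\ $T^{k,y}_t\ne\X$. Continuity of $m(t,y)$ on $\{0\le y<t\}$ then follows from $m(t,y)=\sum_{i=0}^{k-2}\int_{Q_i} f_i(t,y,x)\,dx$ together with Lemma~\ref{continuiteproba} and dominated convergence, since each integrand is bounded by $1$. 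The only delicate point is the Slivnyak--Mecke step in the marked setting, which requires checking that the subtrees really are i.i.d.\ marks on $P$ independent of everything else---a direct consequence of the branching description of $T^\infty_t$ and the marking theorem for Poisson point processes.
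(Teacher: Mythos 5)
Your proof is correct and follows essentially the same route as the paper: both reduce the computation of the densities to the reduced Campbell/Slivnyak--Mecke formula for the Poisson process of root-edge labels (with i.i.d.\ subtree marks), identify the Palm version with $T^{\infty,X}_t$ so that the relevant event becomes $E^{X}_t$ or $E^{y,X}_t$, and then deduce $g_i=f_i/m$ by conditioning and the continuity of $m$ from Lemma~\ref{continuiteproba} plus boundedness of the densities. The only difference is cosmetic bookkeeping (sums over subsets with a $1/i!$ versus sums over distinct ordered tuples yielding a symmetric density on $[0,t]^i$).
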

The functions $f_i$, $g_i$ and $h_i$ are not defined on the null set where two or more coordinates are equal. They are by definition symmetric in the variables $(x_j)_{1\leq j \leq i}$. By Lemma \ref{continuiteproba}, $f_i$ and $g_i$ are continuous on every set $\mathring Q^j_i$, and $h_i$ is continuous on $Q_i$, and $f_i$, $g_i$ and $h_i$ are larger than $exp(-(i+1)t)$.

The continuity in Lemma \ref{continuiteproba} will be proven by a coupling argument:
\begin{itemize}
\item Let $P^{coupl}$ be a Poisson point process of intensity $1$ on $(0,\infty)$. 
\item Let $(T^{z,coupl})_{z\in P^{coupl}}$ be a family of independent copies of the process $T_\infty^\infty$.
\item Let $(T^{j,coupl})_{j\leq i}$ be $i$ independent copies of $T_\infty^\infty$.
\end{itemize}

Let $T^{\infty,\{x_1,\dots,x_i\}}$ be constructed in the following way:
\begin{itemize}
\item $P$ is the restriction of $P^{coupl}$ to $(0,t)$.
\item For any $z\in P$, $T^z=T^{z,coupl}_t$.
\item For any $x_j$, $T^{x_j}=T^{j,coupl}_t$.
\end{itemize} 
This construction allows to couple every $T^{\infty,\{x_1,\dots,x_i\}}_t$ by using the same randomness.
Let $T$ be a positive real number, and let $(t,y,x_1\dots,x_l)$ and $(\tilde t,\tilde y,\tilde x_1\dots,\tilde x_l)$ be two elements of $\mathring Q^{j}_i$ such that $T\geq t$ and $T\geq \tilde t$.  Using the notation $[a,b]$  to describe the non-empty interval $[\min(a,b),\max(a,b)]$, let $I=[\tilde t, t]\cup[y,\tilde y]\cup_i[x_i,\tilde x_i]$. Let $l$ be a positive integer. Let $X=\{x_1\dots,x_i\}$ and $\tilde X=\{\tilde x_1\dots,\tilde x_i\}$

\begin{lem}
\label{couplagecontinuite}
 If the following properties hold, 

\begin{enumerate}
\item $P^{coupl}\cap I=\emptyset$;
\item For every $z\in P^{coupl}\cap [0,T ]$, no edge in the first $l$ generation of $T^{z,coupl}_T$ has a label in~$I$.
\item For every $j$, no edge in the first $l$ generation of $T^{j,coupl}_T$ has a label in~$I$.
\item For every $z\in P^{coupl}\cap [0,T ]$, there is no propagation path of length $l$ starting from the root of $T^{z,coupl}_T$.
\item For every $j$, there is no propagation path of length $l$ starting from the root of $T^{j,coupl}_T$.
\end{enumerate}
then $E^{X,y}_{t}$ holds if and only if $E^{\tilde X,\tilde y}_{\tilde t}$ holds, and $E^{X}_t$ holds if and only if $E^{\tilde X}_{\tilde t}$ holds.
\end{lem}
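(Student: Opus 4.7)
The plan is to exploit the fact that the forbidden degree dynamics only depend on the \emph{order} of the labels in a suitable neighbourhood of the root, together with the comparisons of these labels to the marked times $t$, $y$, and the $x_j$. The role of $I$ is precisely to collect all labels whose relative position to these marked values would differ between the two constructions: since $I = [\tilde t, t] \cup [y, \tilde y] \cup \bigcup_j [x_j, \tilde x_j]$, any label $s \notin I$ satisfies $s \leq t \Leftrightarrow s \leq \tilde t$, $s \leq y \Leftrightarrow s \leq \tilde y$, and $s \leq x_j \Leftrightarrow s \leq \tilde x_j$ for every $j$. So under conditions (1)--(3), none of the labels visible in the first $l$ generations of either construction lie in $I$, and the order-structure of the labels is rigidly matched across the two constructions.

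Next, I would spell out the combinatorial correspondence between the two truncated trees. Condition (1) forces the Poisson children of $\emptyset$ coming from $P^{coupl} \cap [0,t]$ to coincide with $P^{coupl} \cap [0,\tilde t]$; the extra ``marked'' children at the labels in $X$ (resp. $\tilde X$) correspond one-to-one, and under condition~(1) together with the assumption $(t,y,x_{\cdot}) \in \mathring Q^j_i$, their relative order around $y$ (resp. $\tilde y$) is preserved. Conditions (2)--(3) then guarantee that each subtree $T^{z, coupl}_t$ equals $T^{z, coupl}_{\tilde t}$ on its first $l$ generations (and similarly for $T^{j,coupl}$), again because truncation only removes edges with labels in~$I$.

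Finally, I would invoke Lemma \ref{lemBornel}: conditions (4)--(5) say that no propagation path of length $l$ emanates from the root of any subtree rooted at a child of $\emptyset$, so whether a given edge incident to $\emptyset$ is removed in $\Phi_t(T^{\infty,X}_t)$ or in $\Phi^y(T^{\infty,X}_t)$ is entirely determined by the labelled $(l+1)$-ball at~$\emptyset$ together with the order of its labels. By the preceding step this ball, as a labelled graph ordered by labels, is in canonical correspondence with the analogous ball in the $(\tilde t, \tilde y, \tilde X)$ construction. Hence the set of surviving edges at $\emptyset$ is the same in both constructions, giving $E^{X,y}_t \Leftrightarrow E^{\tilde X,\tilde y}_{\tilde t}$ and $E^X_t \Leftrightarrow E^{\tilde X}_{\tilde t}$.

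The main obstacle I anticipate is not conceptual but bookkeeping: one has to be careful that the label $y$ (attached to the extra edge $(\emptyset, w)$ in $\Theta^y(\cdot)$) interacts correctly with the root's degree dynamics, and that when $y = x_j$ or $\tilde y = \tilde x_j$ etc., degenerate orderings are excluded by the assumption that $(t,y,x_\cdot)$ lies in the \emph{open} simplex $\mathring Q^j_i$ (and similarly for the tilded point). Once this is checked, the argument reduces to matching the $(l+1)$-balls, order-isomorphically, and applying Lemma \ref{lemBornel}.
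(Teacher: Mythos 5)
Your proposal is correct and follows essentially the same route as the paper: conditions (4)--(5) (via Lemma \ref{lemBornel}) reduce the question to the $l$-ball at the root, conditions (1)--(3) make the two truncated balls coincide as unlabelled graphs with the same relative order of labels, and the observation that $\Phi$ depends only on that order finishes the argument. The only cosmetic difference is that you spell out the role of $I$ in preserving comparisons to the marked times and cite Lemma \ref{lemBornel} explicitly, where the paper leaves these points implicit.
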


\begin{proof}
The knowledge of $B^l(\Theta^y(T^{X}_t),\emptyset)$ (resp. $B^l(\Theta^{\tilde y}(T^{\tilde X}_{\tilde t}),\emptyset)$) is sufficient to know which edges are adjacent to the root in $\Phi(\Theta^y(T^{X}_t))$ (resp. $\Phi(\Theta^{\tilde y}(T^{\tilde X}_{\tilde t}))$), by properties 4 and 5. By properties 1, 2 and 3, the unlabelled versions of  $B^l(\Theta^y(T^{X}_t),\emptyset)$ and $B^l(\Theta^{\tilde y}(T^{\tilde X}_{\tilde t}),\emptyset)$ are equal. For any graph $G\in \Omega^+$, the set of removed edges in $\Phi(G)$ only depends on the unlabelled graph $G$ and the respective order of the edges' labels, not on the actual labels. By hypothesis, the elements of $(t,y,x)$ and $(\tilde t,\tilde y,\tilde x)$ are in the same respective order. Therefore, by properties 2 and 3, the respective order of the labels is the same in $B^l(\Theta^y(T^{X}_t),\emptyset)$ and $B^l(\Theta^{\tilde y}(T^{\tilde X}_{\tilde t}),\emptyset)$, and therefore $E^{x,y}_t$ holds if and only if $E^{\tilde x, \tilde y}_{\tilde t}$ holds.

The proof for $E^X_t$ is identical upon removing $\Theta^y$.
\end{proof}
\begin{proof}[Proof of Lemma \ref{continuiteproba}]

For a given $T$, the probability of the properties 4 and 5 tends to $1$ when $l$ tends to $\infty$. For a given $T$ and $l$, the properties of properties 1, 2 and 3 tends to $1$ when $(\tilde t,\tilde y,\tilde x_1,\dots\tilde x_l)$ tends to  $( t, y, x_1,\dots x_l)$, proving the continuity of $\P(E^{y,X}_t)$ and $\P(E^{X}_t)$.

If $P=\emptyset$ and every subtree starting from a children is empty, the events $E^{y,X}_t$ and $E^X_t$ hold. $|P|$ is a Poisson variable of parameter $t$, and $T^\infty_t$ is empty with probability $exp(-t)$, therefore $\P(E^{y,X}_t)=$ (resp. $\P(E^X_t)$) is larger than $exp(-(i+1)t)$ for all set $X$ of size $i\leq k-2$ and any $y\notin X$ (resp. for all set $X$ of size $i\leq k-1$).
\end{proof}

\begin{proof}[Proof of Lemma~\ref{probaegaldensite}]

For any finite set $X\subset [0,t]$ and $y\in [0,t]\setminus X$, we consider the following construction:
\begin{itemize}
\item Let $(T_t^z)_{z\in X}$ be an i.i.d. family of copies of $T^\infty_t$.
\item Let $T^{\infty,(X)}_t$ is the tree obtained by taking a vertex $\emptyset$, every tree $(T_t^z)_{z\in X}$ and adding an edge, labelled by $z$, between $\emptyset$ and the root of $T_t^z$. $T^{\infty,(X)}_t$ is rooted at $\emptyset$.
\item Let $Z^y_X\subset X$ (resp. $Z_X$) be equal to the random set of labels of the edges adjacent to the root of $\Phi^y(T^{\infty,(X)}_t)$ (resp. $\Phi(T^{\infty,(X)}_t)$), if defined.
\end{itemize}
\begin{rem}
The difference between $T_t^{\infty,X}$ and $T_t^{\infty,(X)}$ is that the latter have only edges labelled by elements of $X$ adjacent to the root, whereas the former have edges labelled by $X$ and additional edges, according to the Poisson point process $P$.
\end{rem}
By the branching property, conditionally on $P$, the set of edges adjacent to the root of $T^\infty_t$, $T^\infty_t$ has same distribution as $T^{\infty,(P)}_t$, and therefore the set of edges adjacent to $\Phi^y(T^\infty_t)$ has same distribution as $Z^y_P$.

Let $\lambda$ be a bounded positive continuous function $[0,t]^i\rightarrow [0,\infty)$. 
$$\int_{[0,t]^i}\hspace{-0.5cm}\lambda(x_1,\dots,x_i)\mu^i_{y,t}(d(x_1\dots x_i))=\int_{[0,t]^i}\hspace{-0.5cm}\lambda(x_1,\dots,x_i)f(t,y,x_1\dots,x_l)dx_1\dots dx_i.$$
Let $A$ denote the left-hand side:
\begin{eqnarray*}
A&=&\E\left(\sum_{x_1,\dots,x_i\in P}^{\neq} \lambda(x_1,\dots,x_i)\1{Z^y_P=\{x_1,\dots,x_i\}}\right)\\
&=&\E\left(\E\left(\sum_{x_1,\dots,x_i\in P}^{\neq} \lambda(x_1,\dots,x_i)\1{Z^y_P=\{x_1,\dots,x_i\}}\bigg|P\right)\right)\\
&=&\E\left(\sum_{x_1,\dots,x_i\in P}^{\neq}\lambda(x_1,\dots,x_i)\P(Z^y_P=\{x_1,\dots,x_i\})\right)\\
&=&\E\left(\sum_{x_1,\dots,x_i\in P}^{\neq}\gamma(x_1,\dots,x_i,P)\right)
\end{eqnarray*}
\noindent{}in which $\gamma(x_1,\dots,x_i,Y)=\lambda(x_1,\dots,x_i)\P(Z^y_Y=\{x_1,\dots,x_i\})$ for every set $Y$ and real numbers $x_1,\dots,x_i$. By the reduced Campbell formula, \cite[formula (9.12)]{Baccelli}:
$$\E\left(\sum_{x_1,\dots,x_i\in P}^{\neq}\gamma(x_1,\dots,x_i,P)\right)=\int_{[0,t]^i}\E(\gamma(x_1,\dots,x_l,P_{x_1,\dots,x_i}))M^{(i)}(dx_1\dots dx_n).$$
\noindent{}in which, for a Poisson point process of intensity $\Lambda$ equal to the Lebesgue measure, $M^{(i)}=\Lambda^i$, by \cite[Proposition 9.1.3]{Baccelli} and $P_{x_1,\dots,x_i}$ has same law as $P\cup \{x_1,\dots,x_i\}$ by \cite[Corollary 9.2.5]{Baccelli}. Therefore:
\begin{eqnarray*}
A&=&\int_{[0,t]^i}\E(\gamma(x_1,\dots,x_l,P_{x_1,\dots,x_i}))dx_1\dots dx_i\\
&=&\int_{[0,t]^i}\lambda(x_1,\dots,x_i)\P\left(Z^y_{\{x_1,\dots,x_i\}\cup P}=\{x_1,\dots,x_i\}\right)dx_1\dots dx_i\\
&=&\int_{[0,t]^i}\lambda(x_1,\dots,x_i)\P(E^{y,X}_t)dx_1\dots dx_i
\end{eqnarray*}
Therefore $\mu^i_{y,t}$ is absolutely continuous with respect to the Lebesgue measure with density $(x_1,\dots,x_i)\rightarrow \P(E^{y,\{x_1\dots,x_i\}}_t)$. This density is a probability, and is therefore bounded by $1$, and is continuous by Lemma \ref{continuiteproba}. 
Therefore $(t,y)\rightarrow m(t,y)=\sum_{i= 0}^{k-2}\mu^i_{y,t}(Q_i)$ is continuous.

As the random tree $T^{k+,y}_t$ is equal to the tree $T^{k,y}_t$ conditioned on not being equal to $\X$, $\mu^{i+}_{y,t}$ is absolutely continuous with respect to the Lebesgue measure with density $(x_1,\dots,x_i)\rightarrow \P(E^{y,X}_t)/m(t,y)$. 

The proof for $\nu^i_t$ is identical to the proof for $\mu^i_{y,t}$ upon replacing $\Phi^y$ by $\Phi$, $E^{y,X}_t$ by $E^X_t$ and $Z^y_X$ by $Z_X$.

\end{proof}

\subsubsection{Link between $\mu_{.,t}$ and $\nu_t$}
\label{link}
\begin{lem}
\label{lemlink}
For every integer $i\geq 1$ and $i$uple $(x_1,\dots x_i)\in\,(0,t)^i$, $h_i(t,x_1\dots, x_i)=m(t,x_1)f_{i-1}(t,x_1,\dots,x_l)$.
\end{lem}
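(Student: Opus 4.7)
The plan is to decompose the tree $T^{\infty,\{x_1,\ldots,x_i\}}_t$ across the edge labelled $x_1$ adjacent to the root and to factor the event $E^{\{x_1,\ldots,x_i\}}_t$ accordingly. By the branching construction of Section \ref{sectlocal}, $T^{\infty,\{x_1,\ldots,x_i\}}_t$ has the same distribution as the graph obtained from $\Theta^{x_1}(T^{\infty,\{x_2,\ldots,x_i\}}_t)$ (whose pendant vertex I shall call $v_1$) by attaching at $v_1$ an independent copy $T_1$ of $T^\infty_t$. I would then introduce
$$A:=\bigl\{\Phi^{x_1}(T_1)\neq\X\bigr\}, \qquad B:= E^{x_1,\{x_2,\ldots,x_i\}}_t \text{ on } \Theta^{x_1}(T^{\infty,\{x_2,\ldots,x_i\}}_t),$$
which depend respectively on the disjoint and independent pieces $T_1$ and $T^{\infty,\{x_2,\ldots,x_i\}}_t$, so $\P(A\cap B)=\P(A)\P(B)=m(t,x_1)\,f_{i-1}(t,x_1,x_2,\ldots,x_i)$ by definition of $m$ and by Lemma \ref{probaegaldensite}.

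The core of the proof is the set identity $E^{\{x_1,\ldots,x_i\}}_t=A\cap B$. I plan to establish it using the branching framework of Corollary \ref{corbranching} applied at $\emptyset$. For each child $v$ of $\emptyset$, recall the intrinsic time $\rho_v$ at which the edge $(v,\emptyset)$ would be removed on the $v$-side; it depends only on the subtree $\tilde T^{\infty,v}_\infty$. For $v_j$ with $j\ge 2$ and for the Poisson children, these subtrees are identical in the full tree and in the pendant tree, so their $\rho$'s coincide; for $v_1$, the full-tree value $\rho^{\text{full}}_{v_1}$ is determined by $T_1$, while $\rho^{\text{pend}}_{v_1}=+\infty$ since $v_1$ is a leaf in the pendant tree. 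Under $A$ one has $\rho^{\text{full}}_{v_1}>t$, so on $[0,t]$ all intrinsic $\rho$'s agree in the two models, and by Corollary \ref{corbranching} the dynamic set $S_s$ of neighbours of $\emptyset$ evolves identically in the two processes. In particular, under $A$, the full process satisfies $E^{\{x_1,\ldots,x_i\}}_t$ if and only if the pendant process satisfies $B$. For the converse implication $E^{\{x_1,\ldots,x_i\}}_t\Rightarrow A$, I would use Lemma \ref{lemDecoupage} applied at $v_1$: since $(v_1,\emptyset)$ is present in $\Phi(T^{\infty,\{x_1,\ldots,x_i\}}_t)$, the lemma gives $\Phi(T^{\infty,\{x_1,\ldots,x_i\}}_t)\cap\tilde T^{\infty,v_1}_\infty=\Phi(\Theta^{x_1}(T_1))$, which in particular forces $(v_1,\emptyset)\in\Phi(\Theta^{x_1}(T_1))$, i.e.\ $A$ holds.

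Combining these yields
$$h_i(t,x_1,\ldots,x_i)=\P\bigl(E^{\{x_1,\ldots,x_i\}}_t\bigr)=\P(A)\,\P(B)=m(t,x_1)\,f_{i-1}(t,x_1,x_2,\ldots,x_i).$$
The main obstacle will be to make the ``identical dynamics on $[0,t]$'' step rigorous; the delicate point is that the intrinsic removal times $\rho_v$ already encapsulate all the interaction between the subtree at $v$ and the root, which is the content of Corollary \ref{corbranching}, so no additional work beyond careful bookkeeping of the synchronization under $A$ should be needed.
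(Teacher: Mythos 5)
Your decomposition of $T^{\infty,\{x_1,\dots,x_i\}}_t$ across the $x_1$-edge into the two independent pieces, with the key identity $E^{\{x_1,\dots,x_i\}}_t=A\cap B$ justified via Lemma \ref{lemDecoupage} and the synchronization of the root's dynamics from Corollary \ref{corbranching}, is exactly the argument the paper gives (the paper states the equivalence more tersely, citing only Lemma \ref{lemDecoupage}, and you simply make the bookkeeping explicit). The proposal is correct and takes essentially the same approach as the paper.
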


Let $X=\{x_1,\dots,x_i\}$ and $\tilde X=\{x_2,\dots,x_i\}$. By Lemma \ref{probaegaldensite}, the density $h_i(t,x_1\dots,x_i)$ is equal to $\P(E^X_t)$, \ie{} the probability that the only edges adjacent to the root in $\Phi(T^{\infty,X}_t)$ are the edges labelled by elements of $X$. 

Let $v$ be the other endpoint of the edge labelled by $x_1$, and $T^{\infty,x_1}_t$ the subtree of $T^{\infty,X}_t$ starting at $v$ and rooted at $v$. L	et $T^{\infty,X,\setminus x_1}_{t}$ be the tree $T^{\infty,X}_t\setminus T^{\infty,x_1}_t$. The figure \ref{figureTX} illustrates these trees.

\begin{figure}[h]
\label{figureTX}
 \begin{tikzpicture}[level/.style={sibling distance = 1.3cm/#1,
  level distance = 1.5cm}, edge from parent/.style={draw},]
\tikzstyle{every node}=[circle,draw,minimum width=0.6cm]
 \node {$\emptyset$}
    child{ node {$v$}
	    child{ node {}}
	    child{ node {}}
	    edge from parent node[left,draw=none]{$x_1$}
	    }
    child{node {}
   	    edge from parent node[left,draw=none]{$x_2$}}
    child{ node {}
	    child{ node {}}
	    child{ node {}}
	    child{ node {}}
	    edge from parent node[right,draw=none]{$x_3$}
	    }
    ;
\begin{scope}[shift={(5,0)}]
 \node {$\emptyset$} 
    child{ node {$v$} 
	    child{ node {}}
	    child{ node {}}
	    edge from parent[draw=none]
	    }
    child{node {}
   	    edge from parent node[left,draw=none]{$x_2$}}
    child{ node {}
	    child{ node {}}
	    child{ node {}}
	    child{ node {}}
	    edge from parent node[right,draw=none]{$x_3$}
	    }
    
;
\end{scope}
 \end{tikzpicture}
 \caption{Example of $T^{\infty,X}_t$, $T^{\infty,x_1}_t$ and $T^{\infty,X,\setminus x_1}_t$, from left to right.}
 \end{figure}
By definition of $T^{\infty,X}_t$, $T^{\infty,x_1}_t$ and $T^{\infty,X,\setminus x_1}_t$ are independent, and have same distribution as respectively $T^\infty_t$ and $T^{\infty,\tilde X}_t$.

$E^X_t$ implies that the edge labelled by $x_1$ is present in $\Phi(T^{\infty,X}_t)$, therefore, by Lemma \ref{lemDecoupage}, $E^X_t$ is equivalent to:
\begin{itemize}
\item the edge labelled by $x_1$ is present in $\Phi(\Theta^{x_1}(T^{\infty,x_1}_t))$, \ie{} $\Phi^{x_1}(T^{\infty,x_1}_t)\neq \X$, and
\item the edges adjacent to the root in $\Phi(\Theta^{x_1}(T^{\infty,X,\setminus x_1}))$ are the edges labelled by $x_1,\dots,x_i$.
\end{itemize}
This two events are independent and of probability $m(t,x_1)$ and $\P(E^{\{x_2,\dots,x_i\}}_{x_2})$, therefore $\P(E^X_t)=m(t,x_1)\P(E^{x_1,\tilde X}_t)$, proving Lemma \ref{lemlink}.

This formula allows to compute the density of any $ \nu^i_t$ for $i\geq 1$ from the density of $\mu^i_{t,y}$. As the probability that the root of $T^k_t$ is isolated is $1-\sum_{i=1}^{k-1}\nu^i(Q_i)$, it is sufficient to study the measure $\mu$ to know $\nu$. This computation also implies that $m(t,x_1)f_{i-1}(t,x_1,x_2\dots,x_i)$ is symmetric in the $x_i$.

\section{The equivalence between supercriticality of the local limit and the existence of the giant component}
\label{sectequivalence}

Let $a^k_t$ be the probability that the tree $T^k_t$ is infinite.
The goal of this section is to prove Theorem \ref{thmequivalence}.

\begin{customthm}{\ref{thmequivalence}}
For all $t\geq 0$, $\frac{\Cmax\left(G^k_{n,t}\right)}n$ converges in probability to $a^k_t$.
\end{customthm}
For readability, $ \Cmax\left(G^k_{n,t}\right)$ is abridged in $\Cmax$.
\begin{rem} 
Theorem \ref{thmequivalence} is meaningful both in the subcritical case and in the supercritical case. 

If $T^k_t$ is critical or subcritical, $a^k_t=0$, and therefore the largest component's size is $o_p(n)$.

If $T^k_t$ is supercritical, $a^k_t>0$, so there is a giant component of size equivalent to $a_t^k n$.

This result is analogous to the result on the size of the largest component in the Erd\H{o}s-R\'enyi graph, where $a_t$ is the probability of survival of a Galton-Watson whose offspring is Poisson distributed with parameter~$t$.
\end{rem}

The local limit results imply that $a^k_t$ is close to the expected proportion of vertices in large components. To prove Theorem \ref{thmequivalence}, one needs to prove that $a^k_t$ is a.s. close to the actual proportion of vertices in large components (not only in expectation), and that almost every vertices in large components are in the same component.

\subsection{The subcritical or critical case}

For any integer $i$, let $N^i$ be the number of vertices of $G^k_{n,t}$ in components of size at least $i$.

\begin{lem}
\label{lemmeNi}
For all $\epsilon>0$, $\P(\frac{N^{\sqrt {n}}}n \geq a^k_t+\epsilon)\xrightarrow[n\rightarrow\infty]{} 0$.
\end{lem}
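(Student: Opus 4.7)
The plan is to estimate $\espx{N^{\sqrt n}}$ directly from the local limit (Theorem \ref{local}) and then conclude by Markov's inequality. In the subcritical or critical regime that this subsection is devoted to, $a^k_t=\P(|T^k_t|=\infty)=0$ and $T^k_t$ is almost surely finite, so the statement reduces to $N^{\sqrt n}/n\to 0$ in probability.

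By symmetry, $\espx{N^{\sqrt n}/n}=\P(C(v)\geq\sqrt n)$, where $v$ is a uniform vertex of $G^k_{n,t}$ and $C(v)$ is the size of the component of $v$. The key observation is that, for any fixed $R\geq 0$,
\[
\acc{C(v)\geq\sqrt n}\subset\acc{|B_R(G^k_{n,t},v)|\geq\sqrt n}\cup\acc{S_{R+1}(G^k_{n,t},v)\neq\emptyset},
\]
since if $|B_R(v)|<\sqrt n\leq C(v)$, the component must contain a vertex at distance exactly $R+1$ from $v$. Both events on the right depend only on the $(R+1)$-ball of $(G^k_{n,t},v)$ and hence are continuous in the local topology, so by Theorem \ref{local}, for any fixed $R$ and $M$,
\[
\lim_{n\to\infty}\P(|B_R(G^k_{n,t},v)|\geq M)=\P(|B_R(T^k_t)|\geq M),
\]
\[
\lim_{n\to\infty}\P(S_{R+1}(G^k_{n,t},v)\neq\emptyset)=\P(\mathrm{depth}(T^k_t)>R).
\]

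For $n\geq M^2$ the first display yields $\limsup_n\P(|B_R(v)|\geq\sqrt n)\leq\P(|B_R(T^k_t)|\geq M)$, which tends to $0$ as $M\to\infty$ since $|B_R(T^k_t)|$ is almost surely finite. The second display, combined with König's lemma applied to the locally finite tree $T^k_t$, shows that $\P(\mathrm{depth}(T^k_t)>R)$ decreases to $\P(|T^k_t|=\infty)=a^k_t=0$ as $R\to\infty$. Letting successively $n\to\infty$, $M\to\infty$, $R\to\infty$ gives $\espx{N^{\sqrt n}/n}\to 0$, whence by Markov's inequality
\[
\P\pare{\tfrac{N^{\sqrt n}}n\geq a^k_t+\epsilon}=\P\pare{\tfrac{N^{\sqrt n}}n\geq\epsilon}\leq\frac{\espx{N^{\sqrt n}/n}}\epsilon\xrightarrow[n\to\infty]{}0.
\]
The only delicate point is the bookkeeping of the triple limit in $(n,M,R)$; no concentration or second-moment argument is needed because of the crude deterministic bound $N^{\sqrt n}/n\leq 1$ combined with a vanishing mean. (In the supercritical regime the same first-moment bound would only yield a non-trivial but strictly positive Markov ratio, and the argument would have to be supplemented by a two-root local-limit computation of $\espx{(N^{\sqrt n}/n)^2}$ together with a matching lower bound on $\espx{N^{\sqrt n}/n}$, which we do not need here.)
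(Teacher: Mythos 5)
Your argument is correct as far as it goes, but it does not prove the lemma as stated. You restrict at the outset to the case $a^k_t=0$, arguing that this is "the regime that this subsection is devoted to". The statement of Lemma \ref{lemmeNi}, however, carries no such restriction on $t$, and the paper genuinely needs the general case: in the supercritical subsection, the chain of inequalities leading to \eqref{akt} invokes precisely $\P(\tfrac{N^{\sqrt n}}n\leq a^k_t+\epsilon)\to 1$ with $a^k_t>0$, and Corollary \ref{coroMajorationCmax} (which is deduced from this lemma) is again used there to pin $\Cmax/n$ between $a^k_t-\epsilon$ and $a^k_t+\epsilon$. When $a^k_t>0$, your first-moment bound combined with Markov only gives $\limsup_n\P(N^{\sqrt n}/n\geq a^k_t+\epsilon)\leq a^k_t/(a^k_t+\epsilon)<1$, which is not the conclusion. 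You acknowledge this in your closing parenthesis but dismiss it as unnecessary; it is in fact necessary.

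The paper's proof handles all $t$ at once by a second-moment argument: writing $N^i/n=\P(|C(v_1)|\geq i\mid G^k_{n,t})$ for a uniform root $v_1$, it uses the one-rooted local limit to get $\E(N^i/n)\to\P(|T^k_t|\geq i)$ and the \emph{two}-rooted local limit (two independent uniform roots converging to two independent copies of $T^k_t$) to get $\E((N^i/n)^2)\to\P(|T^k_t|\geq i)^2$, hence $\mathrm{Var}(N^i/n)\to 0$ and $N^i/n\to\P(|T^k_t|\geq i)$ in probability for each fixed $i$; monotonicity of $i\mapsto N^i$ and $\P(|T^k_t|\geq i)\downarrow a^k_t$ then finish. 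Your decomposition of $\{|C(v)|\geq\sqrt n\}$ into the two ball/sphere events is a clean way of justifying that "being in a component of size at least $i$" is asymptotically captured by finite-radius information (the paper asserts this more tersely), and that part of your write-up is fine. To repair the proof you would need to carry out exactly the two-root variance computation you defer, or some other concentration argument; as written, the lemma is only established in the case $a^k_t=0$.
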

Lemma \ref{lemmeNi} allows to bound $\Cmax$ from above:
\begin{cor}
\label{coroMajorationCmax}
For all $\epsilon>0$, $\P(\frac{\Cmax}n \geq a^k_t+\epsilon)\xrightarrow[n\rightarrow\infty]{} 0$.
\end{cor}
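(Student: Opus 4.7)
The plan is to deduce the upper bound on $\Cmax/n$ directly from the bound on $N^{\sqrt n}/n$ established in Lemma \ref{lemmeNi}, by observing that for large $n$ the event $\{\Cmax/n \ge a^k_t+\epsilon\}$ forces $\Cmax$ to be at least $\sqrt n$, so that every vertex of the largest component is counted in $N^{\sqrt n}$.

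More precisely, first I would fix $\epsilon>0$ and choose $n$ large enough that $(a^k_t+\epsilon)n > \sqrt n$ (this requires only $n > 1/(a^k_t+\epsilon)^2$, which holds eventually). Then on the event $\{\Cmax \ge (a^k_t+\epsilon)n\}$, the largest connected component of $G^k_{n,t}$ has size at least $\sqrt n$, and hence all its vertices belong to components of size at least $\sqrt n$. This yields the pointwise inequality
\[
\Cmax \, \mathbf{1}_{\{\Cmax \ge (a^k_t+\epsilon)n\}} \le N^{\sqrt n}\, \mathbf{1}_{\{\Cmax \ge (a^k_t+\epsilon)n\}},
\]
so that the inclusion of events
\[
\left\{\tfrac{\Cmax}{n} \ge a^k_t+\epsilon\right\} \subseteq \left\{\tfrac{N^{\sqrt n}}{n} \ge a^k_t+\epsilon\right\}
\]
holds for $n$ large enough. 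Applying Lemma \ref{lemmeNi} to the right-hand event gives the desired convergence $\P(\Cmax/n \ge a^k_t+\epsilon) \to 0$.

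There is essentially no obstacle here: the corollary is a purely deterministic reduction to Lemma \ref{lemmeNi}, relying only on the trivial fact that a component of size $\ge \sqrt n$ contributes its whole size to $N^{\sqrt n}$. All the analytic content (local convergence, branching process estimates, control of the probability that a uniform vertex lies in an infinite component of $T^k_t$) is already packaged inside Lemma \ref{lemmeNi}.
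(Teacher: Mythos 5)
Your proof is correct and follows essentially the same route as the paper: the paper notes that $\Cmax\le \max(N^i,i)\le i+N^{i}$ and takes $i=\sqrt n$, which is the same deterministic reduction to Lemma \ref{lemmeNi} that you carry out via the event inclusion $\{\Cmax/n\ge a^k_t+\epsilon\}\subseteq\{N^{\sqrt n}/n\ge a^k_t+\epsilon\}$ for $n$ large. Your version is, if anything, marginally cleaner since it avoids the additive $i/n$ slack, but the idea is identical.
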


 For all $i$, if $\Cmax\geq i$, then $N^i\geq \Cmax$. Therefore for all integer $i$, ${\Cmax\leq \max(N^i,i)\leq i+N^i}$. Using $i=\sqrt n$ gives Corollary \ref{coroMajorationCmax}.

Corollary \ref{coroMajorationCmax} is sufficient to prove Theorem \ref{thmequivalence} in the subcritical or critical case, as in that case $a^k_t=0$ and $\frac \Cmax n$ is a positive random variable.

\begin{proof}[Proof of Lemma \ref{lemmeNi}] 	
Let $v_1$ and $v_2$ be two independent uniform random vertices of $G^k_{n,t}$, and $i$ a non-negative integer. Let $C(v_1)$ (resp. $C(v_2)$) denote the component of $v_1$ (resp. $v_2$) in $G^k_{n,t}$:
\begin{eqnarray*}
\P(|C(v_1)|\geq i|G^k_{n,t})&=&\frac{N^i}{n}\\
\P(|C(v_1)|\geq i)&=&\E\left(\frac{N^i}{n}\right)\\
\P(|C(v_1)|\geq i\text{ and } |C(v_2)|\geq i|G^k_{n,t})&=&\left(\frac{N^i}{n}\right)^2\\
\P(|C(v_1)|\geq i\text{ and } |C(v_2)|\geq i)&=&\E\left(\frac{N^i}{n}\right)^2.
\end{eqnarray*}

Being in a component of size at least $i$ is a local event, therefore by the birooted local convergence proven in Lemma \ref{localconvergenceER}:
\begin{eqnarray*}
\P(|C(v_1)|\geq i)&\xrightarrow[n\rightarrow \infty]{}& \P(|T^k_t|\geq i)\\
\P(|C(v_1)|\geq i\text{ and } |C(v_2)|\geq i)&\xrightarrow[n\rightarrow \infty]{} &\P(|T^k_t|\geq i)^2.
\end{eqnarray*}
Therefore, for all $i$,
\begin{eqnarray*}
\E\left(\frac{N^i}n\right)&\xrightarrow[n\rightarrow \infty]{}&\P(|T^k_t|\geq i)\\
\text{Var}\left(\frac{N^i}n\right)&\xrightarrow[n\rightarrow \infty]{}&0\\
\text{and therefore }\frac{N^i}n&\xrightarrow[n\rightarrow \infty]{p}&\P(|T^k_t|\geq i).
\end{eqnarray*}
$N^i$ is non-increasing in $i$, and $\P(|T^k_t|\geq i)\xrightarrow[i\rightarrow \infty]{} a^k_t$, therefore for all $\epsilon>0$, $\P\left(\frac{N^{\sqrt {n}}}n \geq a^k_t+\epsilon\right)\xrightarrow[n\rightarrow\infty]{} 0$.
\end{proof}

\subsection{The supercritical case}
If $a_t^k>0$, a lower bound is needed. The following lemma gives such a lower bound.

\begin{lem}
\label{lemmelien}
Let $v_1$ and $v_2$ be two independent uniform vertices of $G^k_n$. Then $\liminf_n\P(\text{$v_1$ and $v_2$ are in the same component})\geq (a^k_t)^2$.
\end{lem}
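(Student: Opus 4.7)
The plan is to combine the two-root local limit (Theorem \ref{local}) with a sprinkling argument designed to force uniqueness of the giant component.

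For the first step, Theorem \ref{local} applied to two independent uniform roots gives: for every fixed integer $i$,
\[
\P\bigl(|C(v_1)|\ge i,\ |C(v_2)|\ge i\bigr)\ \xrightarrow[n\to\infty]{}\ \P(|T^k_t|\ge i)^2.
\]
Since $\P(|T^k_t|\ge i)\downarrow a^k_t$ as $i\to\infty$, a standard diagonal extraction yields a sequence $i_n\to\infty$ for which
\[
\liminf_n\ \P\bigl(|C(v_1)|\ge i_n,\ |C(v_2)|\ge i_n\bigr)\ \ge\ (a^k_t)^2.
\]
This already says that, with probability at least $(a^k_t)^2+o(1)$, both $v_1$ and $v_2$ fall into components of size at least $i_n$; what remains is to prove that these two large components are typically the same.

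The heart of the proof is therefore a uniqueness statement of the form $\P(C(v_1)\ne C(v_2)\mid |C(v_1)|,|C(v_2)|\ge i_n)\to 0$. I would establish this by sprinkling. Split the driving Poisson point process $\Pi$ on $[0,t]\times E$ as $\Pi=\Pi'\cup\Pi''$, where $\Pi'$ is the restriction to $[0,t-\delta]$ and $\Pi''$ the restriction to $(t-\delta,t]$, for some small $\delta>0$. Conditionally on $G^k_{n,t-\delta}$ (which depends only on $\Pi'$), the number of edges arriving in $\Pi''$ is $\Poi(\delta(n-1)/2)$, each with two uniformly chosen endpoints. Call a vertex of $G^k_{n,t-\delta}$ \emph{safe} if its degree is at most $k-2$, so that adding a single edge at a safe endpoint does not trigger the forbidden-degree rule. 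By the one-root version of Theorem \ref{local} at time $t-\delta$, the proportion of safe vertices inside any $\Omega(n)$-size component is bounded below a.a.s. Hence, if $G^k_{n,t-\delta}$ contained two disjoint components $A,B$ of size $\ge \alpha n$, the number of edges of $\Pi''$ joining a safe vertex of $A$ to a safe vertex of $B$ would be Poisson with mean $\gtrsim \alpha^2\delta n$, and at least one such edge would survive the processing of $\Pi''$ with probability tending to one, forcing $A$ and $B$ to merge in $G^k_{n,t}$. Applying this to every pair of macroscopic components and letting $\delta\to 0$, while using the continuity of $t\mapsto a^k_t$ which can be read off the density formulas of Lemma \ref{probaegaldensite}, one concludes that all large components of $G^k_{n,t}$ coalesce into a single one.

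The principal obstacle is precisely this sprinkling step, because the process $G^k_n$ is not monotone in edges: the arrivals in $\Pi''$ can simultaneously merge large components and sever other connections. Controlling that the total number of removals triggered in the short window $(t-\delta,t]$ is $O(\delta n)$ -- hence negligible compared to the $\Omega(n)$-size components being merged -- and that enough safe vertices remain on the boundary of each large component after a small perturbation, are what make Step 2 delicate. Once the uniqueness is established, combining it with the upper bound $\Cmax\le(a^k_t+o(1))n$ provided by Corollary \ref{coroMajorationCmax} and with the concentration of $N^{i_n}/n$ around $a^k_t$ already appearing in the proof of Lemma \ref{lemmeNi} gives $\P(v_1,v_2\text{ in the same component})\to (a^k_t)^2$, which is strictly stronger than the stated $\liminf$ inequality.
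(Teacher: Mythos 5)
Your high-level plan (local two-root limit to get the $(a^k_t)^2$ lower bound on both vertices lying in ``large'' components, then sprinkling on a window $(t-\delta,t]$ to connect them) is the same as the paper's, but there is a genuine gap at the step that carries essentially all of the paper's technical weight. Your diagonal extraction only yields that $v_1$ and $v_2$ lie in components of size at least $i_n$ for \emph{some} sequence $i_n\to\infty$, which may grow arbitrarily slowly; your sprinkling step, however, needs the two components to be macroscopic (or at least to expose polynomially many non-saturated boundary vertices), since the expected number of sprinkled edges joining two sets of sizes $s_1,s_2$ is of order $\delta s_1 s_2/n$ and tends to $0$ unless $s_1s_2\gg n$. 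Nothing in Theorem \ref{local} bridges ``size $\ge i_n$'' to ``size $\ge n^{2/3}$'': the local limit controls only balls of fixed radius. This is precisely why the paper builds the exploration graph $\Gmod$ coupled with the branching process, extracts an included branching process by percolation, and invokes the growth estimate $|Z_i|/\rho^i\to W$ (Lemma \ref{lemmevitessecroissance}) to show that, on survival, generation $K_n=\frac{(1-\epsilon_2)\ln n}{\ln \rho_{t'}}$ contains at least $n^{2/3}$ non-saturated vertices. Without some substitute for this quantitative growth statement, your Step 2 cannot start.

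The second weak point is that you flag the non-monotonicity of the process in the sprinkling window but do not resolve it, and the resolution is not a routine counting of ``$O(\delta n)$ removals.'' The danger is not the total number of removals but that a single edge arriving in $(t',t]$ at a vertex on the path from $v_j$ to the merging edge can sever that path. The paper handles this by working only with vertices to which \emph{no} edge is added in $(t',t]$ (the sets $\tilde E_{j,i}$ and $\hat E_{j,i}$), requiring the endpoints $w_1,w_2$ of the connecting edge to receive exactly that one extra edge, and absorbing the resulting thinning into a percolation of parameter $1-\epsilon_1+o(1)$ on the branching process, whose survival probability is continuous in $\epsilon_1$ (Lemma \ref{continuiteprocessus}). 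A related minor issue: the one-root local limit gives the global proportion of safe (non-saturated) vertices, not a lower bound on their proportion \emph{inside a given macroscopic component}, so that step of your argument also needs justification.
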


Let us first see why Lemma \ref{lemmelien} implies Theorem \ref{thmequivalence}. Let $(C_i)_{i\geq 1}$ be the sequence of the sizes of the component in $G^k_{n,t}$ (in any order). Conditionally on $G^k_{n,t}$, the probability that $v_1$ and $v_2$ are in the same component is $\frac 1 {n^2}\sum C_i^2$.

Let $\epsilon>0$. Lemmata \ref{lemmeNi} and \ref{lemmelien} imply that for $n$ large enough:
\begin{eqnarray}
\notag(a^k_t)^2-\epsilon&\leq&\frac 1 {n^2}\E(\sum_i C_i^2)\\
\notag&=&\frac 1 {n^2}\E\left(\sum_{i:C_i\geq\sqrt n}C_i^2+\sum_{i:C_i<\sqrt n}C_i^2\right)\\
\notag&\leq&\frac 1 {n^2}\E\left(\sum_{i:C_i\geq\sqrt n}C_i^2+\sqrt n\sum_{i:C_i<\sqrt n}C_i\right)\\
\notag&\leq&\frac 1 {n^2}\E\left(\sum_{i:C_i\geq\sqrt n}C_i^2+\sqrt n\cdot n\right)\\
\notag&\leq&\E\left(\frac{\Cmax}n\frac{\displaystyle\sum_{i:C_i\geq\sqrt n}C_i}n\right)+n^{-\frac 1 2}\\
\notag&=&\E\left(\frac{\Cmax}n\frac{N^{\sqrt n}}n\right)+n^{-\frac 1 2}\\
\label{akt}(a^k_t)^2-\epsilon&\leq&\E\left(\frac{\Cmax}n\right)(a^k_t+\epsilon)+\epsilon
\end{eqnarray}
The last inequality holding for $n$ large enough because $\frac{\Cmax}n\leq 1$ a.s., $\frac{N^{\sqrt n}}n\leq 1$ a.s. and $\P(\frac{N^{\sqrt n}}n\leq a^k_t+\epsilon)\rightarrow 1$.

By taking $\epsilon\rightarrow 0$, (\ref{akt}) implies that $\liminf \E(\frac {\Cmax}n)\geq a^k_t$. As $\frac {\Cmax}n$ is smaller than $1$ a.s. and smaller than $a^k_t+\epsilon$ with high probability by Corollary \ref{coroMajorationCmax}, this implies that $\frac{\Cmax}n\xrightarrow{p}a^k_t=\P(|T^k_t|=\infty)$.

The proof of Lemma \ref{lemmelien} will follow these steps:
\begin{enumerate}
\item With probability close to $(a^k_t)^2$, $v_1$ and $v_2$ are in components of $G^k_{n,t'}$ of size larger than a threshold (that depends on the number of vertices) at time $t'=t-\epsilon$;
\item In that case, with probability close to one, a path between $w_1$ and $w_2$ will exist at time $t$ in $G^k_t$.
\end{enumerate}
In order to do the first step, and use the local limit, we need to construct a graph that approximates $T^k_{t'}$ and $G^k_{n,t'}$ simultaneously.

\subsubsection{A few standard results on multitype branching processes}
\label{resultmultitype}
This part will summarize the results on multitype branching processes that will be used, using the results and notations of \cite{Harris}. Let $T$ be a multitype branching process, with type set $\Xset=[0,\tau]$ for some positive $\tau$.  For every integer $i$, let $Z_i$ be the random set equal to the labels of the elements of  $i$th generation of $T$. Let $\P_x$ (resp. $\E_x$) denote the probability (resp. expectation) when the root of $T$ has type $x$. For all $A\subset \Xset$, let $M(x,A)=\E_x(|A\cup Z_1|)$. We will assume that the branching processes considered always satisfy the following two conditions:
\begin{enumerate}[(C1)]
\item For all $x\in \Xset$, $|Z_1|\leq k$ $\P_x$-a.s.
\item There exists two positive real numbers $a$ and $b$ such that for all $x\in \Xset$, $M(x,\cdot)$ is absolutely continuous with respect to the Lebesgue measure, and its density $m(x,y)$ is a uniformly positive bounded function, ${0<a\leq m(x,y)\leq b<\infty}$.
\end{enumerate}

Conditions (C1) and (C2) will be direct consequences of Lemmata \ref{continuiteproba} and \ref{probaegaldensite} for all branching processes we will consider. (C1) and (C2) implies that $T$ satisfies technical conditions 10.1 and 13.1 with the notations of \cite{Harris}.

By \cite[Theorem 10.1]{Harris}, the operator $M$ has a real positive eigenvalue~$\rho$, larger than any other eigenvalue. This eigenvalue~$\rho$ will be called the spectral radius associated to $T$. Let $q(x)=\P_x(|T|<\infty)$ denote the extinction probability function of $T$. By \cite[Theorem 12.1, Theorem 14.1 and following remarks]{Harris}:
 \Needspace*{3\baselineskip}
\begin{lem}
\label{lemmevitessecroissance}
\leavevmode
\begin{enumerate}
\item If $\rho\leq 1$, then for all $x\in \Xset$, $q(x)=1$
\item If $\rho>1$:
\begin{itemize}
\item For all $x\in \Xset$, $q(x)<1$.
\item For all $x\in \Xset$, conditionally on $|T|=\infty$, $\frac{|Z_i|}{\rho^i}$ converges $P_x$-a.s. toward a random non-zero variable $W$ when $i$ tends to $\infty$.
\end{itemize}
\end{enumerate}
\end{lem}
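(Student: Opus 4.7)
The plan is to follow the classical Perron--Frobenius approach for positive integral operators together with a martingale argument. First I would use the assumptions (C1) and (C2) to apply the Krein--Rutman theorem to the mean operator $M$ acting on $C(\Xset)$: the kernel $m(x,y)$ is bounded, so $Mf(x)=\int_\Xset f(y)\,m(x,y)\,dy$ is a compact operator on $C(\Xset)$, and the uniform lower bound $m(x,y)\ge a>0$ makes it strongly positive. This produces a simple dominant eigenvalue $\rho>0$, a continuous eigenfunction $h\colon\Xset\to (0,\infty)$ with $Mh=\rho h$, and a dual eigen-measure $\nu$, with $h$ bounded away from $0$ and $\infty$ on $\Xset$ thanks to (C2).

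The main tool is then the nonnegative martingale
\[
W_n \;=\; \rho^{-n}\sum_{v\in Z_n} h(\text{type}(v)),
\]
whose martingale property is immediate from $Mh=\rho h$ and the branching property. Since (C1) bounds the offspring count uniformly by $k$ and (C2) bounds the densities, a straightforward second-moment computation shows $\sup_n E_x(W_n^2)<\infty$, so $W_n\to W$ both $P_x$-almost surely and in $L^2$, with $E_xW=h(x)$.

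For the subcritical and critical regimes $\rho\le 1$, I would argue as follows. If $\rho<1$, then $E_x|Z_n|\le (\sup h/\inf h)\rho^n\to 0$, so $|Z_n|=0$ eventually by the first Borel--Cantelli lemma applied to the events $\{|Z_n|\ge 1\}$, giving $q(x)=1$. The critical case $\rho=1$ is subtler: here $(W_n)$ is a bounded nonnegative martingale with $E_xW_n=h(x)$ but with strictly positive per-generation variance (thanks to the uniform positivity in (C2), which forces genuine randomness in the offspring of every type). A standard second-moment / quadratic-variation argument (as in the Harris critical theorem, carried out for every type because of (C2)) then shows that $W_n\to 0$ $P_x$-a.s., forcing $|Z_n|\to 0$ a.s.\ and hence $q(x)=1$.

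For the supercritical regime $\rho>1$, the $L^2$ convergence $E_xW=h(x)>0$ immediately gives $P_x(W>0)>0$, and on $\{W>0\}$ clearly $|T|=\infty$. A zero-one argument shows $\{W=0\}$ is an inherited event, so $P_x(W=0)\in\{q(x),1\}$; since $P_x(W>0)>0$ we conclude $P_x(W=0)=q(x)<1$. To pass from $W_n=\rho^{-n}\langle h,Z_n\rangle$ to $|Z_n|/\rho^n$, note that $a'|Z_n|\le \langle h,Z_n\rangle\le b'|Z_n|$ with $a'=\inf h>0$ and $b'=\sup h<\infty$, so $\rho^{-n}|Z_n|$ stays comparable to $W_n$; a more refined argument using a law of large numbers on the empirical type distribution of $Z_n$ (available because (C2) makes the process strongly ergodic among types) shows that $\langle h,Z_n\rangle/|Z_n|$ converges to the constant $\int h\,d\pi$ where $\pi$ is the stationary type distribution, hence $|Z_n|/\rho^n$ converges $P_x$-a.s. to a non-zero limit on non-extinction.

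The main obstacle is the critical case: verifying almost-sure extinction at $\rho=1$ requires the uniform positivity of variances across types, which is where (C2) is used in a non-trivial way; the rest is either a standard eigenvalue argument or a direct consequence of the $L^2$-bounded martingale $W_n$.
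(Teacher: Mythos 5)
The paper does not actually prove this lemma: it checks that (C1)--(C2) imply Harris's technical conditions 10.1 and 13.1 and then quotes \cite[Theorems 10.1, 12.1 and 14.1]{Harris}. Your proposal reconstructs the underlying argument, and the supercritical part is essentially the standard, correct one: a Perron eigenfunction $h$ pinched between two positive constants (which, once $h\ge 0$ exists, follows directly from $a\le m\le b$), the martingale $W_n=\rho^{-n}\sum_{v\in Z_n}h(\mathrm{type}(v))$, its $L^2$-boundedness from the uniform offspring bound $k$ when $\rho>1$, the fixed-point identification $\P_x(W=0)=q(x)$, and type-ergodicity to pass from $\sum_{v\in Z_n}h$ to $|Z_n|$. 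Two caveats there: compactness of $M$ on $C(\Xset)$ does not follow from boundedness of the kernel alone (you need equicontinuity of $x\mapsto m(x,\cdot)$, or you should work in $L^2$, where a bounded kernel on a bounded domain is Hilbert--Schmidt); and the bound $\sup_n\E_x(W_n^2)<\infty$, which you assert before splitting into cases, is true only for $\rho>1$.

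The genuine gap is the critical case. Conditions (C1)--(C2) constrain only the first-moment measure $M(x,\cdot)$ and are compatible with a \emph{singular} process: let every particle produce exactly one child whose type is uniform on $[0,\tau]$. Then $|Z_1|=1\le k$ almost surely, $m(x,y)=1/\tau$ is uniformly positive and bounded, $\rho=1$, and yet $|T|=\infty$ almost surely, so $q\equiv 0$. This shows that your key claim for the critical case --- that ``the uniform positivity in (C2) forces genuine randomness in the offspring of every type'', hence a positive per-generation variance for $W_n$ --- is false, and the quadratic-variation argument you invoke cannot be run from (C1)--(C2) alone. What is actually needed is the non-singularity hypothesis built into Harris's critical extinction theorem; it does hold for the processes the paper applies the lemma to (each particle is childless with probability at least $e^{-t}$), but it must be imported explicitly, since otherwise part 1 of the lemma is not a consequence of the stated assumptions.
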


For all non-negative function $s$, let $\phi_x(s)$ be defined by:
$$\phi_x(s)=\E_x\left(\prod_{y\in Z_1} s(y)\right).$$
\begin{rem}
With the notation of \cite{Harris}, this is actually $\Phi(-\ln s)$, but this version is more suited for our use.
\end{rem}
By \cite[Theorem 15.1]{Harris}:
\begin{lem}
\label{lemmeprobasurvie}

If $\rho>1$, 
\begin{itemize}
\item $q$ is the only uniformly positive and uniformly less than $1$ function satisfying
$q(x)=\phi_x(q)$ for all $x\in \Xset$.
\item If $q^0$ is a positive and uniformly less than $1$ function, and if we define the sequence of functions $(q^i)_{i\geq 1}$ by $q^{i+1}(x)=\phi_x(q^i)$, then $q^i$ converges everywhere toward $q$.
\end{itemize}
\end{lem}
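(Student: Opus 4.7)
My plan is to combine the branching property with Lemma \ref{lemmevitessecroissance}, in a manner parallel to the single-type case but with the hypotheses (C1)--(C2) compensating for the continuous type space. First, I would verify that the extinction probability $q$ itself solves $q(x) = \phi_x(q)$: conditioning on the first generation $Z_1$ under $\P_x$ and applying the branching property, the event $\{|T|<\infty\}$ is the intersection of the conditionally independent events $\{|T^{(y)}|<\infty\}$ for $y\in Z_1$, so
\[
q(x) = \E_x\!\left[\prod_{y\in Z_1} q(y)\right] = \phi_x(q).
\]
Since $\rho>1$, Lemma \ref{lemmevitessecroissance} gives $q(x)<1$ pointwise, and (C1)--(C2) together with the bound $|Z_1|\le k$ will ensure that $q$ is actually uniformly bounded away from $1$, so that $q$ falls within the class in which we claim uniqueness.

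Second, a straightforward induction based on the branching/Markov property yields the identity
\[
q^i(x) = \E_x\!\left[\prod_{y\in Z_i} q^0(y)\right]\qquad(i\ge 1),
\]
which reduces the study of the functional iterates to the asymptotic behaviour of the $i$th generation $Z_i$. I would then split according to whether $T$ is finite or infinite. On $\{|T|<\infty\}$, eventually $Z_i=\emptyset$ and the empty product equals $1$; on $\{|T|=\infty\}$, the supercritical half of Lemma \ref{lemmevitessecroissance} gives $|Z_i|/\rho^i\to W$ with $W>0$ almost surely, so in particular $|Z_i|\to \infty$; since $q^0\le b<1$ uniformly, $\prod_{y\in Z_i} q^0(y)\le b^{|Z_i|}\to 0$. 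Dominated convergence (the integrand lies in $[0,1]$) then gives
\[
q^i(x)\ \longrightarrow\ \P_x(|T|<\infty) \ =\ q(x)
\]
for every $x\in\Xset$, which is the second bullet of the lemma. Uniqueness (first bullet) is then immediate: if $q'$ is any uniformly positive and uniformly less than $1$ solution of $q'=\phi(q')$, then starting the iteration with $q^0=q'$ gives the constant sequence $q^i\equiv q'$, while the argument above forces $q^i\to q$ pointwise; hence $q'=q$.

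The main obstacle is the step asserting $|Z_i|\to\infty$ a.s.\ on $\{|T|=\infty\}$: survival of a branching process does not a priori imply unbounded generations, and for multitype processes with an uncountable type space one genuinely needs a Kesten--Stigum-type growth theorem. This is precisely what the second item of Lemma \ref{lemmevitessecroissance} provides, itself relying on the Perron--Frobenius theory of the kernel $M$ under hypotheses (C1)--(C2). Once this input is granted, the proof is essentially a clean dominated-convergence argument, and no further delicate estimate on $\phi_x$ (contraction, convexity, etc.) is required.
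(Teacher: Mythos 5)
Your argument is correct, but it is worth noting that the paper does not prove this lemma at all: it is quoted verbatim from Harris \cite[Theorem 15.1]{Harris} as an external input, exactly as Lemma \ref{lemmevitessecroissance} is. What you have done is give a self-contained derivation of Harris's theorem \emph{from} Lemma \ref{lemmevitessecroissance}, which is a legitimate and arguably cleaner route for this paper's purposes: the identity $q^i(x)=\E_x\bigl[\prod_{y\in Z_i}q^0(y)\bigr]$, the dichotomy between $\{|T|<\infty\}$ (where $Z_i$ is eventually empty) and $\{|T|=\infty\}$ (where $|Z_i|\sim W\rho^i\to\infty$ by the Kesten--Stigum-type statement), the bound $\prod_{y\in Z_i}q^0(y)\le b^{|Z_i|}$ with $b=\sup q^0<1$, dominated convergence, and uniqueness by starting the iteration at any putative fixed point are all sound; only the second bullet's hypothesis that $q^0$ be uniformly less than $1$ is actually used, and positivity of $q^0$ plays no role in your argument, which is consistent with the statement. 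Two small points deserve a line each if this were to replace the citation. First, you assert but do not prove that $q$ is uniformly less than $1$; this does follow from (C1)--(C2), e.g.\ via $1-q(x)\ge\frac1k\,\E_x\sum_{y\in Z_1}(1-q(y))\ge\frac ak\int_{\Xset}(1-q(y))\,dy>0$, using $1-\prod_i a_i\ge\max_i(1-a_i)$ and the pointwise bound $q<1$ from Lemma \ref{lemmevitessecroissance}. Second, uniform \emph{positivity} of $q$ does not follow from (C1)--(C2) alone (one can have $\P_x(Z_1=\emptyset)=0$ for all $x$, whence $q\equiv0$), so strictly speaking the first bullet's implicit claim that $q$ itself lies in the uniqueness class needs the extra observation, true in this paper's application, that each individual has no offspring with probability bounded below (here by $e^{-t}$); this is a gap in the lemma as transcribed rather than in your proof.
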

\begin{cor}
\label{coromonotonie}
\leavevmode
\begin{itemize}
\item If $q^0$ is a positive and uniformly less than $1$ function such that for all $x$, $q^0(x)\geq \phi_x(q^0)$, then for all $x\in\Xset$, $q(x)\leq  q^0(x)$.
\item If $q^0$ is a positive and uniformly less than $1$ function such that for all $x$, $q^0(x)\leq \phi_x(q^0)$, then for all $x\in\Xset$, $q(x)\geq  q^0(x)$.
\end{itemize}
\end{cor}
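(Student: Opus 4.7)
The plan is to combine the monotonicity of the branching operator $\phi_x$ with the iterative characterization of $q$ from Lemma \ref{lemmeprobasurvie}. The key observation is that $s\mapsto \phi_x(s)$ is monotone non-decreasing pointwise in $s$: if $s_1\leq s_2$ are two positive functions on $\Xset$, then
\[\prod_{y\in Z_1}s_1(y)\leq\prod_{y\in Z_1}s_2(y)\]
$\P_x$-almost surely for every $x$ (since all factors lie in $(0,1]$), and taking expectations gives $\phi_x(s_1)\leq\phi_x(s_2)$.

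For the first statement, assume $q^0$ is positive, uniformly less than $1$, and satisfies $q^0(x)\geq\phi_x(q^0)$ for every $x$. I define the iterates $q^{i+1}(x):=\phi_x(q^i)$; the hypothesis rewrites as $q^0\geq q^1$, and applying the monotonicity of $\phi_\cdot$ inductively yields $q^i\geq q^{i+1}$ for every $i\geq 0$. Thus $(q^i)$ is pointwise non-increasing, and in particular $q^0\geq q^i$ for all $i$. In the supercritical regime $\rho>1$, Lemma \ref{lemmeprobasurvie} provides $q^i\to q$ pointwise, whence $q^0\geq q$. The second statement follows by the mirrored argument: starting from $q^0\leq q^1$, induction produces a non-decreasing sequence $(q^i)$ bounded above by $1$, converging to $q$ by the same lemma, so $q^0\leq q$.

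The case $\rho\leq 1$ presents no difficulty: Lemma \ref{lemmevitessecroissance} gives $q\equiv 1$, so $q^0\leq q$ is automatic (settling the second inequality), while the hypothesis of the first inequality turns out to be vacuous in that regime (any positive $q^0$ uniformly less than $1$ satisfies $\phi_x(q^0)>q^0$, as one can check via Jensen's inequality applied to the convex map $t\mapsto q^0(x)^t$ together with the subcritical spectral bound encoded in condition (C2)). I do not foresee any real obstacle: the proof is a textbook monotone fixed-point comparison argument, with all of the analytic content (existence of $q$, convergence of the iterates) already supplied by Lemma \ref{lemmeprobasurvie}; the only care required is to verify that the iterates stay in the class of positive functions uniformly less than $1$ so that the cited convergence applies, which follows immediately from the fact that $q^0\in(0,1-\eta]$ is preserved under $\phi_\cdot$ thanks to the bounded density condition (C2).
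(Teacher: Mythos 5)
Your argument is exactly the paper's: define the iterates $q^{i+1}(x)=\phi_x(q^i)$, use the pointwise monotonicity of $s\mapsto\phi_x(s)$ to make the sequence monotone non-increasing (resp. non-decreasing), and invoke the convergence $q^i\to q$ from Lemma \ref{lemmeprobasurvie} to compare $q^0$ with the limit. The extra digression on the case $\rho\leq 1$ is not needed (the corollary is only invoked in the supercritical regime, and the paper's proof ignores that case), and your claim there that the first hypothesis is vacuous is not actually justified, but this does not affect the correctness of the main argument.
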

\begin{proof}
We define $(q^i)_{i\geq 0}$ as in Lemma \ref{lemmeprobasurvie}. Then by Lemma \ref{lemmeprobasurvie}, for all $x$, $q^i(x)$ converges toward $q(x)$. For the first part of Corollary \ref{coromonotonie}, $q^1(x)\leq q^0(x)$. As $\phi_x(s)$ is an increasing function of the positive function $s$, the sequence $(q^i)_{i\geq 0}$ is a non-increasing sequence  and is therefore larger than its limit. Similarly, in the second part, the sequence $q^i$ is non-decreasing, and therefore smaller than its limit.
\end{proof}

Let $T'$ be a two stages-branching process, with a different law for the root, and same law as $T$ for the remaining of the tree. Let $Z_i'$ be the random set of labels of the $i$th generation of $T'$. Recall that $q$ and $\rho$ denote the extinction probability function and the spectral radius of $T$. 
\Needspace*{3\baselineskip}
\begin{lem}
\label{lemmeprobasurviedeuxetages}
\leavevmode
\begin{itemize}
\item $\P(|T'|<\infty)=\E\left(\displaystyle\prod_{y \in Z'_1}q(y)\right).$
\item If $\rho>1$, conditionally on $|T'|=\infty$, $\frac{[Z'_i|}{\rho^i}$ converges toward a random non-zero variable $W$.
\end{itemize}
\end{lem}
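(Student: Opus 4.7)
The plan is to reduce both claims to properties of the single-type process $T$ by applying the branching property at the root. By construction of the two-stage process, conditionally on $Z'_1$ the $|Z'_1|$ subtrees hanging at the children of the root are independent, and the one rooted at a type $y\in Z'_1$ has distribution $\P_y$. All the hard analytic work---namely the existence of $\rho$, the dichotomy $q<1$ versus $q=1$, and the $\rho$-normalised martingale convergence---has already been done for $T$ in Lemma \ref{lemmevitessecroissance} under the standing assumptions (C1)--(C2), so I only need to splice these facts together.

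For the first part, $\{|T'|<\infty\}$ is the intersection, over $y\in Z'_1$, of the events ``the subtree starting at $y$ is finite''. Conditional independence therefore gives
$$\P(|T'|<\infty\mid Z'_1)=\prod_{y\in Z'_1}\P_y(|T|<\infty)=\prod_{y\in Z'_1}q(y),$$
and taking expectation yields the announced identity, provided $|Z'_1|$ is a.s.\ finite (which is immediate in the intended application, where $Z'_1$ has size at most $k-1$).

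For the second part, decompose the $i$th generation for $i\ge 1$ as $Z'_i=\bigsqcup_{y\in Z'_1} Z^{(y)}_{i-1}$, where the $Z^{(y)}_{\cdot}$ are the generation sequences of independent copies $T^{(y)}$ of $T$ started at $y$. Lemma \ref{lemmevitessecroissance} applied to each subtree gives $|Z^{(y)}_{i-1}|/\rho^{i-1}\to W_y$ a.s.\ under $\P_y$, with $W_y>0$ exactly on $\{|T^{(y)}|=\infty\}$. Since the sum over $y\in Z'_1$ has a.s.\ finitely many terms, I can pass to the limit term by term:
$$\frac{|Z'_i|}{\rho^i}=\frac{1}{\rho}\sum_{y\in Z'_1}\frac{|Z^{(y)}_{i-1}|}{\rho^{i-1}}\xrightarrow[i\to\infty]{\text{a.s.}}\frac{1}{\rho}\sum_{y\in Z'_1}W_y=:W.$$

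The only real point to check is nondegeneracy on $\{|T'|=\infty\}$, which I expect to be the main (small) obstacle. But $\{|T'|=\infty\}=\bigcup_{y\in Z'_1}\{|T^{(y)}|=\infty\}$, so on this event at least one of the $W_y$ is strictly positive, whence $W>0$ a.s.\ conditionally on $\{|T'|=\infty\}$. Conversely, on the complementary event all $W_y$ vanish and so does $W$, which matches the informal meaning of the statement. Once the nondegeneracy is in hand, the lemma follows.
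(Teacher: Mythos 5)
Your proof is correct and follows essentially the same route as the paper: conditional independence of the subtrees at the root gives the extinction-probability identity, and the growth rate is obtained by applying Lemma \ref{lemmevitessecroissance} to the (finitely many) subtrees hanging from $Z'_1$, with positivity of the limit coming from the surviving ones. Your write-up merely makes explicit the term-by-term passage to the limit and the nondegeneracy check that the paper leaves implicit.
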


For such a two-stages branching process $T'$, $\rho$ will be called the spectral radius associated to $T'$.
\begin{proof}
Conditionally on the first generation $Z_1$, the subtrees starting at elements of $Z_1$ are independent, so the probability of extinction is $\prod_{y \in Z'_1}q(y)$. The first part of Lemma \ref{lemmeprobasurviedeuxetages} is obtained by taking the expectation. The second part is obtained by using Lemma \ref{lemmevitessecroissance} on the surviving subtrees.
\end{proof}

For all $t$, let $\rho_t$ be the spectral radius associated to $T^{k+,\cdot}_t$ or $T^k_t$. 
\begin{lem}
\label{lemmecontinuite1}
The spectral radius $\rho_t$ is a upper semi-continuous function of~$t$.
\end{lem}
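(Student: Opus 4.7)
The strategy is to exhibit $\rho_t$ as an infimum over $n$ of a family of continuous functions of $t$; upper semicontinuity then follows immediately.

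First, I would identify the first-moment operator $M_t$ of the branching process $T^{k+,\cdot}_t$, acting on $L^\infty([0,t])$ (equivalently on $C([0,t])$). Decomposing $\mu^+_{t,y}$ by the number of children of the root as in Section~\ref{branching} and using the symmetric densities $g_i(t,y,\cdot)$ from Lemma~\ref{probaegaldensite}, the kernel of $M_t$ is
\[
K_t(x,y) \;=\; \sum_{i=1}^{k-2} i \int_{[0,t]^{i-1}} g_i(t,x,y,u_1,\dots,u_{i-1})\,du_1\cdots du_{i-1}.
\]
By Lemma~\ref{continuiteproba}, each $g_i$ is continuous on every open simplex $\mathring Q^j_i$, and by Lemma~\ref{probaegaldensite} together with the uniform bound $m(t,y)\ge e^{-t}$, each $g_i$ is uniformly bounded above and below on any compact range of $t$. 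Dominated convergence therefore yields joint continuity of $K_t(x,y)$ in $(t,x,y)$ on $\{0 \le x,y \le t\}$.

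Next I would show that for each fixed $n\ge 1$, the map $t \mapsto \|M_t^n\|_{L^\infty \to L^\infty}$ is continuous. Because every vertex of $T^{k+,\cdot}_t$ has at most $k-2$ children, one has $|Z_n|\le (k-2)^n$ deterministically, so $M_t^n\mathbf{1}(x)=\E_x|Z_n|$ is uniformly bounded by $(k-2)^n$. By induction on $n$, using dominated convergence driven by the continuity of $K_t$ and this uniform bound, the function $(t,x)\mapsto M_t^n\mathbf{1}(x)$ is jointly continuous on $\{0\le x\le t\le T\}$ for every $T<\infty$. A standard compactness argument (for the $\limsup$ inequality extract a convergent subsequence of near-maximizers; for the $\liminf$ pick any $x<t$ and use continuity) then shows that the supremum $\|M_t^n\|=\sup_{x\in[0,t]}M_t^n\mathbf{1}(x)$ depends continuously on $t$.

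Finally, $M_t$ is a bounded positive linear operator and $\|M_t^{n+m}\|\le \|M_t^n\|\|M_t^m\|$, so by Gelfand's spectral radius formula combined with Fekete's lemma,
\[
\rho_t \;=\; \lim_{n\to\infty}\|M_t^n\|^{1/n} \;=\; \inf_{n\ge 1}\|M_t^n\|^{1/n}.
\]
(That the operator-theoretic spectral radius of $M_t$ coincides with the Perron-Frobenius eigenvalue $\rho_t$ of Harris follows from Krein-Rutman: conditions (C1)-(C2) and the continuity of $K_t$ make $M_t$ a compact positive operator with a uniformly positive kernel.) Hence $\rho_t$ is an infimum of a sequence of continuous functions of $t$, and is therefore upper semicontinuous. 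The case of the two-stages process $T^k_t$ is identical, since by Lemma~\ref{lemmeprobasurviedeuxetages} its spectral radius coincides with that of $T^{k+,\cdot}_t$. The main obstacle will be the second step: cleanly handling the fact that both the integration domain and the type space itself vary with $t$, which is why the uniform bound $|Z_n|\le(k-2)^n$ is so convenient--once dominated convergence is applied, the Gelfand plus infimum argument is essentially automatic.
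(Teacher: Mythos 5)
Your proof is correct and follows essentially the same route as the paper: both reduce the statement to the joint continuity and boundedness of the first-moment kernel supplied by Lemmata \ref{continuiteproba} and \ref{probaegaldensite}, and then deduce upper semicontinuity of the spectral radius from norm-continuity of $t\mapsto M_t$ --- the paper by citing Newburgh's theorem, you by unpacking that citation into the Gelfand/Fekete argument $\rho_t=\inf_{n\ge 1}\|M_t^n\|^{1/n}$, an infimum of continuous functions. The only slip is the combinatorial prefactor in your kernel: with the paper's convention that $g_i$ is the symmetric density of $\mu^{i+}_{t,y}$ on $[0,t]^i$, the expected-offspring density is $\sum_{i}\frac{1}{(i-1)!}\int g_i$, not $\sum_i i\int g_i$; this normalization is immaterial to the semicontinuity argument.
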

Lemma \ref{lemmecontinuite1} will be proven in part \ref{preuvecontinuiteprocessus}.
\subsection{Approximating $G^k_{n,t}$ by an idealised graph}

Let $t$ be such that $a^k_t>0$, \ie{} $\rho_t>1$. Let $\frac 1 2>\epsilon_1>0$ and $t'=t-\epsilon_1$. Let $\epsilon_2>0$. Let $K_n=\frac{(1-\epsilon_2)\ln n}{\ln (\rho_{t'})}$. Using Lemma \ref{lemmecontinuite1}, we can choose $\epsilon_1$ small enough so $\rho_{t'}>1$.

The graph $\Gmod$ will be constructed dynamically while exploring $G^\infty_{n,t'}$. $\Gmod$ will denote the growing  graph process and $\Gmod_\text{end}$ the graph $\Gmod$ at the end of its construction. At any point, $\Gmod$ will satisfy the following properties:
\begin{itemize}
\item $\Gmod$ is either empty, one planar tree with labelled edges or two planar trees with labelled edges, such that no two edges have the same label.
\item In each planar tree, the children of a given vertex are ordered, for the planar tree order, in increasing order of the labels of the outgoing edges.
\end{itemize}

The graph $\Gmod_\text{end}$ will have the same law as two independent copies of $T^\infty_{t'}$. Informally, $\Gmod_\text{end}$ will be built in such a way that the components of the roots in $\Phi(\Gmod_\text{end})$ are close to the components of $v_1$ and $v_2$ in ${G^k_{n,t'}=\Phi(G^\infty_{n,t'})}$. In order to achieve this, we define an exploration process, that will only look at the parts of the graph $G^\infty_{n,t'}$ that are useful to construct the component of $v_1$ and $v_2$ in $G^k_{n,t'}$. At the beginning of its exploration, $G^\infty_{n,t'}$ (seen from $v_1$ and $v_2$) looks like two independent copies of $T^\infty_{t'}$. But, as the exploration continues, these two graphs differ: $G^\infty_{n,t'}$ can have cycles or multiple edges, and the numbers of neighbors of the vertices decreases (in law), as a growing fraction of $G^\infty_{n,t'}$ is known. The following construction explains how to couple $G^\infty_{n,t'}$ with its idealized branching version $\Gmod_\text{end}$ in such a way that the differences between $\Phi(\Gmod_\text{end})$ and $G^k_{n,t'}$ are small.

Let $V^G$ denote the set of vertices of $G^\infty_{n,t}$. In order to construct $\Gmod$, the process will use $G^\infty_{n,t'}$ and some extra randomness. Initially, $\Gmod$ is empty, and will be created dynamically along with the exploration of $G^\infty_{n,t'}$. The process will add vertices to $\Gmod$, from $V^G$ and extra vertices. The extra vertices will be called \emph{dummy vertices}. 

As $\Gmod_\text{end}$ is two planar trees, we can define the following notions:
\begin{defi}
At any point, $\Gmod$ is a rooted planar tree or two rooted planar trees. For any  vertex $w$ of $\Gmod$ and integer $i$, \emph{the $i$-children} of $w$ are defined as the vertices at distance $i$ of $w$ in the subtree starting at $w$.

The set of vertices of $\Gmod_\text{end}$ is ordered with the breadth-first order, denoted by $\breadth$. For any two vertices $w$ and $w'$ of $\Gmod_\text{end}$, $w\breadth w'$ if either:
\begin{itemize}
\item $w$ is in the first component of $\Gmod_\text{end}$ and $w'$ is in the second component.
\item $w$ and $w'$ are in same component, and the distance between $w$ and the root is strictly smaller than the distance between $w'$ and the root.
\item $w$ and $w'$ are in the same component, in the same generation, and the father of $w$ is strictly smaller than the father of $w'$ for the breadth-first order.
\item $w$ and $w'$ are two children of the same vertex $w''$, and the label of the edge between $w$ and $w''$ is smaller than the label of the edge between $w'$ and $w''$.
\end{itemize}
\end{defi}

The construction of $\Gmod_\text{end}$ will use the following increasing sets:

\begin{itemize}
\item $A^G\subset V^G$ will denote the set of vertices of $G^\infty_{n,t'}$ that have already been discovered.
\item $B^G\subset A^G$ will denote the set of the vertices whose neighbors are known. These vertices will be called \emph{used}.
\item $A^C\subset A^G$ will denote the set of \emph{corrupted} vertices. They indicate where $\Gmod$ and $G^\infty_{n,t'}$ are different.
\item $B^C\subset A^C$ will denote the set of corrupted vertices whose children in $\Gmod_\text{end}$ have already been constructed.
\item $A^{Dum}$ will denote the set of dummy vertices. At any point, $A^{Dum}\cap V^G=\emptyset$. 
\item $B^{Dum}\subset A^{Dum}$ will denote the subset of dummy vertices whose children in $\Gmod_\text{end}$ have already been constructed.

\item $A=A^G\cup A^{Dum}$ is the set of the vertices of $\Gmod$. The sets $A^G$ and $A^{Dum}$ are disjoint.
\item $B=B^G\cup B^C\cup B^{Dum}$ is the set of the vertices of $\Gmod$ whose children in $\Gmod$ have already been constructed. The sets $B^G\cup B^C$ and $B^{Dum}$ are disjoint, but $B^G$ and $B^C$ might not be disjoint.
\end{itemize}

 During the process, all these sets will only increase. 
 
$\Gmodm$ is the isomorphism class of the graph $\Gmod$, with respect to the isomorphism of graphs with labelled edges and unlabelled vertices. This allows for example to consider $\Gmodm$ without knowing if a given vertex is a dummy vertex. As the labels of the edges of $\Gmod$ are all different, the only automorphism of $\Gmod$, seen as a graph with labelled edges and unlabelled vertices, is the identity, and therefore a unique way to map $\Gmodm$ back to $\Gmod$, allowing us to consider the vertex of $\Gmod$ associated to a given vertex of $\Gmodm$.

Let $\F$ denote the increasing $\sigma$-algebra, generated by $A^G$, $B^G$, $A^C$, $B^C$, $A^{Dum}$, $B^{Dum}$, the set of labelled edges with at least one endpoint in $B$ in the graph $G^\infty_{n,t'}$ and the set of labelled edges with at least one endpoint in $B$ in $\Gmod_\text{end}$. $\F$ represents the current knowledge obtained with the exploration of $G^\infty_{n,t'}$ and the construction of $\Gmod$.

The following tools will be used in the construction of $\Gmod$.

\subsubsection{Split a vertex/edge:}
There is no multiple edge in $T^\infty_{t'}$, whereas there can be multiple edges in $G^\infty_{n,t'}$. To solve this issue, any multiple edge will be substituted upon discovery by the appropriate number of simple edges. If a multiple edge $e$ of multiplicity $l$ is discovered between $w$ and $w'$ while looking at the neighbors of $w$, add $w'$ to $\Gmod$ and only one edge between $w$ and $w'$, labelled by the smallest label of $e$. For any label $y$ among the $l-1$ other labels, considered in increasing order, add a dummy vertex $w_y$ to $\Gmod$ and $A^{Dum}$, and add an edge between $w$ and $w_y$ labelled by $y$ in $\Gmod$. An example can be found in figure \ref{figuresplit}. This operations allows to avoid adding multiple edges to $\Gmod$ without altering the degree of $w$ (it does alter the degree of $w'$).

\begin{figure}[h]
 \begin{tikzpicture}[-,auto,node distance=3.5cm,  thick,main node/.style={circle,draw}]

   \node[main node] (1) {$w$};
   \node[main node] (2) [above of=1] {$w'$};
   \node[main node] (3) [right of=1] {$w$};
  \node[main node] (4) [above of=3] {$w'$};
  \node[main node] (5) [right of=4]{$a$};
   \node[main node] (6) [right of=5]{$b$};

   \path
   (1) edge [bend left,left]node  {0.2} (2)
  (1) edge node {0.4} (2)
  (1) edge [bend right,right] node  {0.53} (2)
  (3) edge node  {0.2} (4)
  (3) edge[dashed] node  {0.4} (5)
  (3) edge [dashed]node  {0.53} (6);

 \end{tikzpicture}
 \caption{The triple edge between $w$ and $w'$ in $G^\infty_{n,t'}$ is split in $\Gmod$ by adding two dummy vertices $a$ and $b$.}
 \label{figuresplit}
 \end{figure}
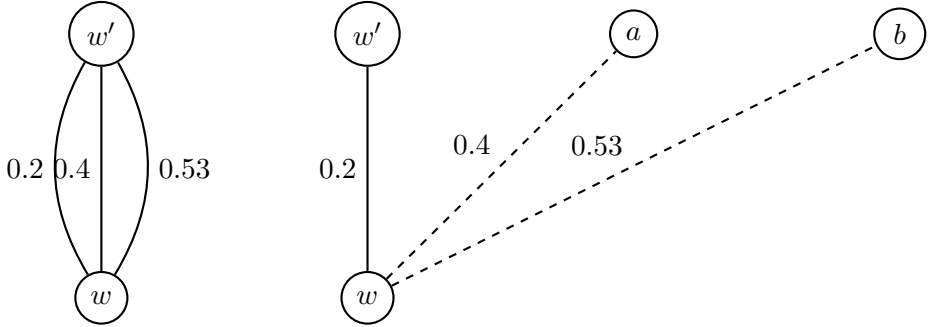

\subsubsection{Probing a vertex $w$:} 

This operation will be done for a vertex $w\in A^G\setminus B$. The goal is to approximate  the set of labelled  edges adjacent to $w$ in $G^\infty_{n,t'}$ by a Poisson point process of intensity $1$ on $[0,t']$.
\begin{itemize}
\item Let $\poissondummy_w$ be an independent Poisson point process of intensity $\frac{|A^G|}n$ on $[0,t']$. For each point $s$ of $\mu_w$, in increasing order, add a dummy vertex $w'$ and an edge labelled by $s$ between $w$ and $w'$ to $\Gmod$. Add $w'$ to $A^{Dum}$. If there is at least one such edge, add $w$ to $A^C$ and $B^C$.
\item  For every vertex $w'\in V^G\setminus A^G$, look at the edge between $w'$ and $w$:
\begin{itemize} 
\item If there is a single edge between $w$ and $w'$, add this edge and $w'$ to $\Gmod$. Add $w'$ to $A^G$.
\item If there is a multiple edge between $w$ and $w'$, split it. Add $w'$ to $A^G$ and $A^C$. Add $w$ to  $A^C$ and $B^C$.
\item If there is no edge between $w$ and $w'$, do not do anything.
\end{itemize}
\item For every vertex $w'$ in $A^G\setminus B^G$, do not add the edges between $w$ and $w'$ to $\Gmod$. If there is at least one such edge, add $w$ to $A^C$ and $B^C$ and  add $w'$ to $A^C$. 
\item Add $w$ to $B^G$.
\end{itemize}

\subsubsection{Fake-probing a vertex $w$:} 

\begin{enumerate}
\item
If $w$ is a vertex of $A^{Dum}\setminus B^{Dum}$, fake-probing $w$ is the following operations:
\begin{itemize}
\item  Add $w$ to $B^{Dum}$.
\item Let $Y_w$ be an independent Poisson point process of intensity $1$ on $[0,t']$. For each point $y$ of $Y_w$, add a dummy vertex $w_y$ to $\Gmod$ and $A^{Dum}$, and add an edge labelled by $y$ between $w$ and $w_y$ to $\Gmod$.
\end{itemize}
\item
If $w$ is a vertex of $A^G\setminus B$, fake-probing $w$ is the following operations:
\begin{itemize}
\item  Add $w$ to $A^C$ and $B^{C}$.
\item Let $Y_w$ be an independent Poisson point process of intensity $1$ on $[0,t']$. For each point $y$ of $Y_w$, add a dummy vertex $w_y$ to $\Gmod$ and $A^{Dum}$, and add an edge labelled by $y$ between $w$ and $w_y$ to $\Gmod$.
\end{itemize}

\end{enumerate}

For any vertex $w$, and any set,  $\sigma$-algebra or graph $X$, let $X_w$ denote the value of $X$  before the probing or the fake-probing of $w$. Similarly, let $X_{w+}$ denote the value of $X$ after the probing or fake-probing of $w$.

The following lemma summarizes the properties that will hold through the construction of $\Gmod_\text{end}$:
\begin{lem}
\label{probeproperties}
\leavevmode
\begin{enumerate}
\item For every vertex $w'\in \Gmod_w$, $w'\neq w$, the set of edges adjacent to $w'$ in $\Gmod$ is not modified during the probing or fake-probing of $w$.
\end{enumerate}
\leavevmode
If the following properties hold before the probing or the fake-probing of a vertex $w$ in $A_w\setminus B_w$, then the following properties will also hold after the probing or fake-probing of $w$.

\begin{enumerate}[\hspace{5mm}1.]
\setcounter{enumi}{1}
\item The vertex set of $\Gmod$ is $A$.
\item For every vertex $w'\in B^G\setminus B^C$, the set of edges adjacent to $w'$ is the same in $\Gmod$ and $G^\infty_{n,t'}$.
\item For every vertex $w'\in A\setminus B$, $w'$ has no child in $\Gmod$. 
\item If $w'\in A^G\setminus A^C$, the set of edges between $w'$ and elements of $B^G$ is the same in $\Gmod$ and in $G^\infty_{n,t'}$.
\item If $w'\in V^G\setminus A$ then there is no edge between $w'$ and any vertex of $B^G$ in $G^\infty_{n,t'}$.

\end{enumerate}
We assume that the choice of $w$ and of whether to do a probing or a fake-probing are $\F$-measurable. If the following property holds before the probing or fake-probing of $w$,  then it holds after:
\begin{enumerate}[\hspace{5mm}1.]
\setcounter{enumi}{6}
\item Let $E^B\subset E$ be the set of edges with at least one endpoint in $B^G$. For any $e\in E^B$, the set of labels of $e$ in $G^\infty_{n,t'}$ is $\F$-measurable. Conditionally on $\F$, the set of labels of edges in $E\setminus E^B$ is an i.i.d. family of Poisson point processes of intensity $\frac 1 n$ on $[0,t']$.
\end{enumerate}

If the properties 2-7 hold before the probing of $w$ and if one of these two properties holds before the probing of a vertex, they still holds after:
\begin{enumerate}[\hspace{5mm} {8}a.]
\item $\Gmod$ is a tree rooted at $\tilde v_1$.
\item $\Gmod$ is two trees rooted at $\tilde v_1$ and $\tilde v_2$.
\end{enumerate}
If the properties 2-7 hold before the probing of $w$, then
\begin{enumerate}[\hspace{5mm}1.]
\setcounter{enumi}{8}
\item conditionally on $\F_w$, the law of set of all labels of edges between $w$ and the children of $w$ in $\Gmod$, including dummy vertices, is a Poisson point process of intensity $1$ on $[0,t']$.
\end{enumerate}

\end{lem}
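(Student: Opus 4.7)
The plan is to verify the invariants one at a time, splitting each verification across the three types of operations: (a) probing a vertex $w\in A^G\setminus B$, (b) fake-probing a vertex $w\in A^{Dum}\setminus B^{Dum}$, (c) fake-probing a vertex $w\in A^G\setminus B$. Property 1 is essentially by construction: in every operation, all new edges are incident to $w$, and no previously present edge is deleted, so vertices $w'\neq w$ in $\Gmod_w$ retain the same adjacent edge set. Properties 2 and 4 are immediate from the bookkeeping: every new vertex added to $\Gmod$ is simultaneously added to $A^{Dum}$ or $A^G$, and the only vertex promoted to $B$ during an operation is $w$ itself, whose children have just been attached.

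Properties 3, 5, 6 and 8 follow from a routine case analysis of the probing step. During the probing of $w$, the children we attach to $w$ in $\Gmod$ come from three disjoint sources: dummy vertices (one per point of $\poissondummy_w$), newly discovered vertices of $V^G\setminus A^G$ (one per single edge found, plus dummy substitutes for the multiple parts), and nothing for vertices of $A^G\setminus B^G$ (which is why $w$ is added to $A^C$). For Property 3, observe that whenever a real edge of $G^\infty_{n,t'}$ incident to $w$ is suppressed or replaced in $\Gmod$, we mark $w$ as corrupted and add it to $A^C\cap B^C$, so $w\notin B^G\setminus B^C$ after the step; the remaining $w'\in B^G\setminus B^C$ were not touched by Property 1. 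For Property 5, any edge of $G^\infty_{n,t'}$ between $w$ and an element of $B^G$ could have existed only if $w\in A^G$ already; such edges were processed when that element of $B^G$ was itself probed, so by the induction hypothesis (Property 5 applied to the previous step) either $w$ was corrupted or the edge appears identically in $\Gmod$. For Property 6, a vertex $w'\notin A$ has never been an endpoint of a discovered edge, so in particular no edge between $w'$ and $B^G$ can exist in $G^\infty_{n,t'}$ by Property 5 applied to the probe that would have revealed it. The tree properties 8a/8b are preserved because every new vertex is added as a fresh child of $w$, so no cycle is created and no merging of the two components occurs.

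The substantive point is Property 7, and it is what makes Property 9 work. By induction, before the step, conditionally on $\F_w$, the labels of edges in $E\setminus E^B_w$ form independent Poisson point processes of intensity $\tfrac 1 n$ on $[0,t']$. During the probing of $w$, we observe the labels of the edges between $w$ and each $w'\notin B^G_w$; these labels were independent of $\F_w$, so after observation, the edges between $w$ and $V^G\setminus B^G_{w+}$ are measurable with respect to $\F_{w+}$. For any remaining edge $e\in E\setminus E^B_{w+}$, neither endpoint is in $B^G_{w+}$, so its label distribution is unaffected: conditionally on $\F_{w+}$, it is still a Poisson point process of intensity $\tfrac 1 n$, independent of the others. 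For fake-probing, no information about $G^\infty_{n,t'}$ is used, so Property 7 is trivially preserved, the only additional randomness being the external $Y_w$.

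Finally, Property 9 follows from Property 7 at time $w$. The labels of edges between $w$ and the real children (elements of $V^G\setminus A^G_w$) form, conditionally on $\F_w$, the superposition of $n-|A^G_w|$ independent Poisson point processes of intensity $\tfrac 1 n$, hence a Poisson point process of intensity $\tfrac{n-|A^G_w|}{n}$. The independent process $\poissondummy_w$ of intensity $\tfrac{|A^G_w|}{n}$ added during probing contributes the dummy children. By the superposition property of Poisson point processes, the union is a Poisson point process of intensity $1$ on $[0,t']$, as required. The main obstacle in carrying out this plan is making sure the bookkeeping of corrupted vertices is tight enough so that Properties 3 and 5 survive when an edge between $w$ and some $w'\in A^G\setminus B^G$ is silently dropped from $\Gmod$; this is precisely why both $w$ and such $w'$ must be added to $A^C$.
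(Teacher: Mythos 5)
Your proposal is correct and follows essentially the same route as the paper: properties 1--6 and 8a/8b by direct inspection of the probing/fake-probing rules and the bookkeeping of $A^C$, property 7 by noting that only $w$ enters $B^G$ and only edges incident to $w$ are examined, and property 9 by superposing the $\tfrac{n-|A^G_w|}{n}$-intensity process of real edges with the compensating dummy process $\poissondummy_w$ of intensity $\tfrac{|A^G_w|}{n}$. The paper's own proof is just a terser version of the same argument.
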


\begin{proof}

The  properties 1-6 and 8a/8b are direct consequences of the description of the probing and fake-probing of a vertex.

If $w\in A^G\setminus B^G$ is probed, the only vertex added to $B^G$ during the probing of $w$ is~$w$, and the edges of $G^\infty_{n,t'}$ looked at are the edges adjacent to~$w$. If $w$ is fake-probed, no vertex is added to $B^G$ and no edge is looked at during the fake-probing of $w$. Therefore property 7 holds after the probing if it holds before the probing.

If $w\in A$ is probed, by property 7, conditionally on $\F_w$, the set of labels of edges between $w$  and elements of $V^G\setminus |A^G|$ is a Poisson point process of intensity $\frac{n-|A^G|}n$. Adding an independent Poisson point process of intensity $\frac{|A^G|}n$ gives the wanted distribution for the property 9. If $w$ is fake-probed, property 9 is a direct consequence of the definition of the fake-probing.
\end{proof}

Each time several vertices are probed (or fake-probed) at the same time by the algorithm, they are probed in breadth-first order.

\subsubsection{The initialisation of a component:}
This tool is used at the beginning of the construction of each of the two components of $\Gmod_\text{end}$. Let $j\in\{1,2\}$. The aim of this part is to construct the ball of radius $b_n$ centered at $\tilde v_j$ in $\Gmod$.

The description assume that, before the initialisation, the properties 2-7 of Lemma \ref{probeproperties} hold and that either $\Gmod$ is empty (if $j=1$) or 8a holds (if $j=2$).

For every $i,j$, $S_{j,i}$  is the sphere of radius $i$ centered at $\tilde v_j$ in $\Gmod_\text{end}$. Let $\B_{1,i}=\cup_{i'\leq i}S_{1,i}$ and $\B_{2,i}=\cup_{i'\leq i}S_{2,i}\cup \Gmod_{\text{end},1}$. For any $w\in \Gmod$, $\gen(w)$ is the ordered pair $(j,i)$ such that $w\in S_{j,i}$.

\begin{defi}[Initialisation failed]
If one of the following events happen, the initialisation is said to have failed:
\begin{itemize}
\item A previous initialisation has failed.
\item Before the initialisation, $v_j$ was already in $A^G$.
\item At any point of the construction of a $S_{j,i}$, a vertex is added to $A^C$.
\end{itemize}
\end{defi}

If $v_j\in A^G$ or if a previous initialisation has failed, then add to $\Gmod$ and $A^{Dum}$ a dummy vertex $\tilde v_j$, as the root of the second component. Otherwise add $v_j$ to $A^G$ and to $\Gmod$ and let $\tilde v_j=v_j$.

 Let $0\leq i <b_n$. We assume that $S_{j,i}$ has already been built. If the initialisation has not yet failed, probe every vertex	 $w$ of $S_{j,i}$. If the initialisation has failed, fake-probe every vertex $w$ of $S_{j,i}$. In both cases, $S_{j,i+1}$ is the set of children of elements of $S_{j,i}$ in $\Gmod$, \ie{} the sphere of radius $i+1$ in $\Gmod$ centered at $\tilde v_j$.

Starting with $S_{j,0}=\{\tilde v_j\}$, the sets $S_{j,i}$ are built until $S_{j,b_n}$ has been built. For any set, $\sigma$-algebra of graph $X$, let $X_{j,i}$ denote the value of $X$ after the construction of $S_{j,i}$.

\begin{lem}
\label{initialisationproperties}
\leavevmode
\begin{itemize}
\item After the construction of $S_{j,i}$, if the initialisation has not yet failed, $S_{j,i}\subset A^G\setminus (B^G\cup A^C)$.
\item After the construction of $S_{j,i}$, for every $i'<i$, $S_{j,i'}\subset B$.
\end{itemize}
\end{lem}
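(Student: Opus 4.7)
The plan is to prove both statements by simultaneous induction on $i$. The base case $i=0$ is immediate: $S_{j,0}=\{\tilde v_j\}$; if the initialisation has not yet failed, then by the definition of failure $v_j\notin A^G$ beforehand, so $\tilde v_j=v_j$ is freshly added to $A^G$, is not in $B^G$ (no probing has yet occurred during this initialisation) and is not in $A^C$ (nothing has been added to $A^C$ during this initialisation, and since $A^C\subset A^G$ the vertex $v_j$ was not in $A^C$ before either). The second statement is vacuous for $i=0$.

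For the inductive step, the second statement is the easier one and does not require the no-failure hypothesis: during the construction of $S_{j,i+1}$ every vertex of $S_{j,i}$ is probed or fake-probed, and in each case it is added to $B^G$, $B^C$, or $B^{Dum}$, hence to $B$. Combined with the inductive hypothesis $S_{j,i'}\subset B$ for $i'<i$, this yields $S_{j,i'}\subset B$ for every $i'<i+1$.

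For the first statement with $i+1$, the argument I have in mind assumes no failure has occurred through the construction of $S_{j,i+1}$. In particular no failure has occurred through the construction of $S_{j,i}$, so by the inductive hypothesis $S_{j,i}\subset A^G\setminus(B^G\cup A^C)$, and each $w\in S_{j,i}$ is really probed (not fake-probed). Inspecting the probing procedure, the only ways a vertex can be added to $A^C$ during the probing of $w$ are (i) a non-empty $\poissondummy_w$, (ii) a multiple edge between $w$ and some $w'\in V^G\setminus A^G$, or (iii) an edge between $w$ and some $w'\in A^G\setminus B^G$. Each of these would constitute a failure, so under our hypothesis none of them occurs; consequently the only new children of $w$ in $\Gmod$ are fresh vertices $w'\in V^G\setminus A^G$ joined to $w$ by a single edge, and each such $w'$ is added only to $A^G$ (and certainly not to $B^G$, having just been discovered). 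Since $S_{j,i+1}$ is the union over $w\in S_{j,i}$ of these children, we conclude $S_{j,i+1}\subset A^G\setminus(B^G\cup A^C)$.

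There is no substantive obstacle; the proof is really a careful enumeration of the cases of the probing procedure. The main conceptual point to verify is that the definition of \emph{failure} was designed to capture exactly those steps in which probing could add something to $A^C$ or could produce a dummy (hence non-$A^G$) child in the next generation, so the no-failure hypothesis is precisely what is needed to close the induction.
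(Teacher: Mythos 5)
Your proof is correct and follows the same route as the paper, which simply observes that the failure event is defined precisely so that any addition to $A^C$ (or creation of a dummy child) during an initialisation constitutes a failure, and that the first property is what licenses probing the elements of $S_{j,i}$. Your version merely makes the induction explicit and checks the cases of the probing procedure one by one, which the paper leaves to the reader.
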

These properties are direct consequences of the algorithm: if at any point a vertex is added to $A^C$, the initialisation is said to have failed. The first property allows us to probe the elements of $S_{j,i}$ when the initialisation has not yet failed.

\subsubsection{Subsequent construction of the component of $v_j$ in $\Gmod$:}
\label{sssectionSuiteConstruction}
If the initialisation has not failed, the balls of radius $b_n$ centered at $v_j$ are the same in $\Gmod$ and in $G^\infty_{n,t'}$, by property 3 of Lemma~\ref{probeproperties}. Therefore, if $GE_1=1$, it is possible to determine the neighbors of $v_j$ in $G^k_{n,t'}$ (by Lemma \ref{range}). Let $D_{j,0}=\{v_j\}$, $E_{j,0}=\{v_j\}$ and $D_{j,1}$ be the set of neighbors of $v_j$ in $G^k_{n,t'}$. If the initialisation has failed or if it is not possible to determine the neighbors of $v_j$ in $G^k_{n,t'}$, let $D_{j,1}=D_{j,0}=E_{j,0}=\emptyset$.

After the initialisation, the component of $\tilde v_j$ in $\Gmod$ is a planar tree rooted at $\tilde v_j$ of depth at most $b_n$, the vertices of the $b_n-1$ first generations belonging to $B$ and the vertices of the $b_n$th generation belonging to $A\setminus B$.

\subsubsection{One iteration of the main part of the algorithm:}

We now describe the iteration $i$, constructing the sets $E_{j,i}$, $D_{j,i+1}$. For all $j,i$, $E_{j,i}\subset D_{j,i}\subset S_{j,i}$. For any element $w$ of $D_{j,i}$, if we are able to determine the children of $w$ in the forbidden degree version of $\Gmod$, $w$ is added to $E_{j,i}$ and its children are added to $D_{j,i+1}$. More precisely, assuming $D_{j,i}$ and $\B_{j,b_n+i-1}$ are known, the iteration $i$ is the following operations:
\begin{itemize}
\item Initially, $E_{j,i}$ and $D_{j,i+1 }$ are empty.
\item For each $w\in S_{j,i+b_n-1}$, in the breadth-first order, if $w$ is a $b_n-1$-children of an element of $D_{j,i}$, probe $w$. Otherwise, fake-probe $w$.
 \item Consider every $w$ in $D_{j,i}$ such that no $(b_n-1)$-child or $b_n$-child of $w$ is in $A^C$. If the knowledge of $\Gmod$ is sufficient to determine the set~$N$ of neighbors of $w$ in $G^k_{n,t'}$ add $w$ to $E_{j,i}$, and add every element of $N\setminus{p(w)}$ to $D_{j,i+1}$ where $p(w)$ is the parent of $w$ in $\Gmod$.
\end{itemize} 
\begin{rem}
If $GE_2=1$, the knowledge of the ball of radius $b_n$ centered at $w$ in $G^\infty_{n,t'}$ is sufficient to know the set $N$. Lemma \ref{basicpropertiesGmod} will prove that the balls of radius $b_n$ centered at $w$ in $\Gmod_\text{end}$ and $G^\infty_{n,t'}$ are identical and therefore that the knowledge of $\Gmod$ is sufficient to know the set $N$ if $GE_2=1$. Lemma \ref{basicpropertiesGmod} will also prove that for every $b_n1$-children of elements of $D_{j,i}$ is in $A^G_{j,i+b_n}$, and therefore can be probed.
\end{rem}

An iteration is the execution of the algorithm above, for a given value of $i$. For any set, $\sigma$-algebra or graph $X$, let $X_{j,i+b_n}$ denote the value of $X$  after the $i$th iteration in the construction of the component of $\tilde v_j$, \ie{} the construction of $E_{j,i}$ and $D_{j,i+1}$.

\subsubsection{Finalisation of a component:}
After $K_n$ iterations of this algorithm are done, and $\B_{j,K_n+b_n}$ is constructed, graft to every vertex of $S_{j,b_n+K_n}$ an independent copy of $T^\infty_{t'}$. Add every added vertex to $A^{Dum}$ and $B^{Dum}$. Add every vertex of $S_{j,b_n+K_n}$ to $A^C$ and $B^C$. 

The value of the sets and $\sigma$-algebra $X$ after the finalisation of the $j$th component will be denoted by $X_{j,\infty}$. If the properties 2-7 and either 8a or 8b hold before the finalisation of a component, they hold after the finalisation of a component, as no edge of $G^k_{n,t'}$ is looked at, and the copies are independent.

\subsubsection{Construction of $\Gmod_\text{end}$:}
The construction of $\Gmod_\text{end}$ is done by starting with $A^G=B^G=A^C=B^C=A^{Dum}=B^{Dum}=\emptyset$ and doing the following operations:
\begin{enumerate}
\item First initialisation, for the component of $\tilde v_1$.
\item Do the iteration $i$ of the algorithm for $i$ from $1$ to $K_n$, to construct  $\B_{1,b_n+i}$ and the sets $E_{1,i}$ and $D_{1,i+1}$ for every $i\leq K_n$.
\item Finalise the construction of the first component.
\item Second initialisation, for the component of $\tilde v_2$.
\item Do the iteration $i$ of the algorithm for $i$ from $1$ to $K_n$, to construct $\B_{2,i+b_n}$ and the sets $E_{2,i}$ and $D_{2,i+1}$ for every $i\leq K_n$.
\item Finalise the construction of the second component.
\end{enumerate}
For any graph, $\sigma$-algebra of set $X$,  $X_\text{end}$ denotes the value of $X$ after the complete algorithm.

\subsubsection{First properties of $\Gmod$}

\begin{lem}

\label{basicpropertiesGmod}

For any $i<b_n$, $j\in\{1,2\}$, after the construction of $S_{j,i}$ in the initialisation:
\begin{enumerate}
\item the properties 2-7 of Lemma \ref{probeproperties} hold;
\item if $j=1$, 8a holds; if $j=2$, 8b holds.
\end{enumerate}

For every $i\leq K_n $ and $j\in\{1,2\}$, after the iteration $i$ constructing $E_{j,i}$ and  $D_{j,i+1}$:
\begin{enumerate}
\setcounter{enumi}{2}
\item for every vertex $w$ of $E_{j,i}$, the ball of radius $b_n$ centered at $w$ is the same in $\Gmod$ and $G^\infty_{n,t'}$;
\item every vertex of $D_{j,i+1}$ is a child of a vertex of $E_{j,i}$ in $\Gmod$;
\item the set of vertices of $\Gmod_{j,i+b_n}$ is $\B_{j,i+b_n}$;
\item $A_{j,i+b_n}=\B_{j,i+b_n}$;
\item $B_{j,i+b_n}=\B_{j,i+b_n-1}$;
\item the properties 2-7 of Lemma \ref{probeproperties} hold;
\item if $j=1$, 8a holds; if $j=2$, 8b holds.
\end{enumerate}
After the construction of $\Gmod_\text{end}$:
\begin{enumerate}
\setcounter{enumi}{9}
\item $\Gmodm_\text{end}$ has same law as two independent copies of $T^\infty_{t'}$. Moreover the second component of $\Gmodm_\text{end}$ is independent of $\F_{1,\infty}$.
\end{enumerate}

\end{lem}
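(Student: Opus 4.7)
\medskip

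The plan is to proceed by induction along the steps of the algorithm (initialisation, iteration, finalisation of each component), with Lemma \ref{probeproperties} serving as the inductive building block for every probing and fake-probing. Claims \emph{1}, \emph{2}, \emph{8} and \emph{9}, together with the persistence of properties 2--7 and 8a/8b of Lemma \ref{probeproperties}, are obtained immediately from the definitions: the base case (all sets empty) is trivial, every probing and fake-probing preserves the list by Lemma \ref{probeproperties}, and the finalisation only grafts independent copies of $T^\infty_{t'}$ to leaves that are freshly moved into $A^C$ and $B^C$, so it trivially preserves them as well.

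For the structural claims \emph{3}--\emph{7}, the observations are as follows. Each iteration $i$ probes precisely the $(b_n-1)$-children in $\Gmod$ of the vertices of $D_{j,i}$ and fake-probes the remaining vertices of $S_{j,i+b_n-1}$; Lemma \ref{initialisationproperties} together with the induction hypothesis places every such vertex in $A\setminus B$ before the operation, so property 1 of Lemma \ref{probeproperties} applies. The definition of $E_{j,i}$ forces that no $(b_n-1)$- or $b_n$-child of $w\in E_{j,i}$ sits in $A^C$, so by property 3 of Lemma \ref{probeproperties} the edges incident to the probed vertices inside the $b_n$-ball of $w$ in $\Gmod$ coincide with those in $G^\infty_{n,t'}$, yielding claim \emph{3}. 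Claim \emph{4} follows at once, since $D_{j,i+1}$ collects the $G^k_{n,t'}$-neighbors (other than the parent) of vertices in $E_{j,i}$, all of which lie in the matching ball. Claims \emph{5}--\emph{7} are then obtained by tracking the breadth-first order: one iteration moves $S_{j,i+b_n-1}$ from $A\setminus B$ into $B$ and appends $S_{j,i+b_n}$ to $A$.

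Claim \emph{10} is the substantive distributional step. Property 9 of Lemma \ref{probeproperties} guarantees that each probing or fake-probing of a vertex $w$ produces, conditionally on $\F_w$, the set of labels of the edges from $w$ to its children in $\Gmod$ as a Poisson point process of intensity $1$ on $[0,t']$. Carrying out these operations in the breadth-first order of $\Gmod_\text{end}$ verifies the branching property characterising $T^\infty_{t'}$ generation by generation, which identifies $\Gmodm_\text{end}$ in law with two copies of $T^\infty_{t'}$. For the independence of the second component from $\F_{1,\infty}$, note that every probing of a second-component vertex $w$ yields a Poisson point process of children labels that is the superposition of the independent dummy process $\poissondummy_w$ of rate $|A^G_w|/n$ and the (conditionally) Poisson edge processes from $w$ to $V^G\setminus A^G_w$, whose total conditional rate given $\F_w$ is $(n-|A^G_w|)/n$ by property 7 of Lemma \ref{probeproperties}; the Laplace functional of this superposition equals $\exp\!\bigl(\int(e^{-f}-1)\,ds\bigr)$, so the superposition is a rate-$1$ Poisson process \emph{independent} of $\F_w$, and a fortiori of $\F_{1,\infty}$. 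Fake-probing randomness and the subtrees grafted at the finalisation are independent by construction; a breadth-first induction then delivers claim \emph{10}.

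The main obstacle is claim \emph{10}, and specifically the conditional independence of the second component from $\F_{1,\infty}$: this is precisely where the introduction of dummy vertices and dummy Poisson processes pays off, since the superposition argument cancels the $\F_w$-measurable rate of $\poissondummy_w$ against the $\F_w$-dependent rate of the remaining edges and washes out the conditioning at every step of the breadth-first exploration.
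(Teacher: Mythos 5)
Your proposal is correct and follows essentially the same route as the paper: an induction along the algorithm's steps with Lemma \ref{probeproperties} as the building block, claim 3 handled by propagating matching $b_n$-balls from $E_{j,i}$ to $E_{j,i+1}$ via the no-corrupted-children condition, and claim 10 obtained from the fact that probing and fake-probing both yield, conditionally, i.i.d.\ rate-$1$ Poisson processes of children labels (your explicit superposition argument for the independence from $\F_{1,\infty}$ is just a spelled-out version of property 9 of Lemma \ref{probeproperties}, which the paper invokes directly).
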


\begin{proof}
  The properties 1, 2, 8 and 9 are consequences of Lemma \ref{probeproperties} and the fact that the properties 2-7 of Lemma \ref{probeproperties} are preserved by the initialisation or the finalisation of a component. The properties 4, 5, 6 and 7 are consequences of the description of the algorithm, as every vertex of $S_{j,i+b_n-2}$ is either probed or fake-probed for the construction of $D_{j,i}$.

The property 3 is proven by induction. As noted in the subsection \ref{sssectionSuiteConstruction}, property 3 holds for $i=0$ and $j\in\{1,2\}$. Let us assume that the property~3 holds for $i\geq 0$ and  $j\in\{1,2\}$. Let $w$ be a vertex of $E_{j,i+1}\subset D_{j,i+1}$. By property~4, $w$ is a children of a vertex $w'$ of $E_{j,i}$. As the ball of radius $b_n$ centered at $w'$ is the same in $\Gmod$ and $G^\infty_{n,t'}$, the only possible difference between the balls of radius $b_n$ centered at $w$ is the neighbors of the {$b_n-1$-children} of $w$. On one hand, if none of the $b_n-1$-children or the $b_n$-children of $w$ are added to $A^C$, then every ball of radius $1$ centered at such children is the same in $\Gmod$ and in $G^\infty_{n,t'}$ and therefore the ball of radius $b_n$ centered in $w$ is the same in $\Gmod$ and $G^\infty_{n,t'}$. One the other hand, if one such vertex is added to $A^C$, $w$ is not added to $E_{j,i}$.

To prove property 10, it is sufficient to show that for any $i$ and $j$, conditionally on $\B_{j,i}$, the law of the set of labels of the edges outgoing of the vertices of $S_{j,i}$ is an i.i.d. family of Poisson point processes of intensity $1$ on $[0,t']$. If $i<b_n$, conditionally on $\F_{j,i}$, if the initialisation have failed at an earlier stage, every fake-probing gives the correct law. If the initialisation has not yet failed, the algorithm is going to probe each vertex of $S_{j,i}$. By Lemma~	\ref{probeproperties}, the conditional law of the set of exiting edges is also a Poisson point process of intensity $1$ on $[0,t']$ (some of them might point to dummy vertices). As $\B_{j,i}$ is measurable with respect to $\F_{j,i}$, the conditional law is the same conditioned on $\B_{j,i}$.

For all $i\in\{b_n,\dots,K_n+b_n-1\}$,  $\B_{j,i}$ is measurable with respect to $\F_{j,i}$. Each vertex of the ball of radius $i$ is either probed or fake-probed. Therefore, by property 7 of Lemma \ref{probeproperties}, conditionally on $\F_{j,i}$, or on $\B_{j,i}$, the set of labels of edges between each vertex of the ball of radius $i$ and their children is an i.i.d. family of Poisson point processes of intensity $1$ on $[0,t']$.

If $i\geq K_n+b_n$, the law of the subsequent generations comes from the grafting of independent copies of $T^\infty_{t'}$, and gives the correct conditional law by the branching properties of $T^\infty_{t'}$.
\end{proof}
$\Gmod_\text{end}$ has the law of two independent copies of $T^\infty_{t'}$. Therefore ${\Gmod_\text{end}\in \Omega^+}$ a.s. Let $\Gmodk$ denote the components of $\tilde v_1$ and $\tilde v_2$ in $\Phi(\Gmod_\text{end})$, the forbidden degree version of $\Gmod$. For any $i\leq K_n$, $j\in\{1,2\}$, let $\D^{-}_{j,i}$ be the union of the sets $D_{j',i'}$ where $(j',i')$ is strictly smaller than $(j,i)$ for the lexicographic order. The set $\D^-_{j,i}$ is the union of the sets $D_{j',i'}$ constructed before $D_{j,i}$.  Let $\D^{+}_{j,i}=\D^{-}_{j,i}\cup D_{j,i}$. The following lemma gives a few useful observations about the sets $D_{j,i}$ and $E_{j,i}$ and the graph $\Gmod$ produced by the algorithm and allows to link them with the component of $v_1$ and $v_2$ in $G^k_{n,t'}$:
\begin{lem}
\label{observationDE}

\leavevmode
\begin{enumerate}
\item Every pair of vertices of $\D^{+}_{2,K_n}$ that are neighbors in $\Gmod_\text{end}$ are also neighbors in $G^k_{n,t'}$ and in $\Gmodk$, and the edge between them has the same label in $G^k_{n,t'}$ and $\Gmodk$.
\item Every vertex of $D_{j,i}$  belongs to the component of $v_j$  in $G^k_{n,t'}$ and in $\Gmodk$.
\item For $i\leq K_n$ and $j\in\{1,2\}$, every neighbor of vertices of $E_{j,i}$ in $G^k_{n,t'}$ (resp. in $\Gmodk$) belong to either $D_{j,i+1}$ or $E_{j,i-1}$. 
\end{enumerate}
\end{lem}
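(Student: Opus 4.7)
The plan is to prove the three claims essentially simultaneously, by induction on the pair $(j,i)$ ordered lexicographically, since the three assertions refer to overlapping stages of the algorithm. Claim 3 is essentially a restatement of the rule defining $E_{j,i}$ and $D_{j,i+1}$, while Claim 2 propagates component membership, and Claim 1 verifies that the edges discovered by the algorithm faithfully represent real edges of $G^k_{n,t'}$ and of $\Gmodk$.

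For Claim 2, I would induct on $(j,i)$. In the base case $D_{j,0}=\{v_j\}$ (or is empty, in which case the statement is vacuous), so trivially $v_j$ is in its own component in both $G^k_{n,t'}$ and $\Gmodk$. For the inductive step, pick $w'\in D_{j,i+1}$. By the definition of the iteration, $w'$ was inserted because some $w\in E_{j,i}$ has $w'$ as a neighbor in $G^k_{n,t'}$, so $w'$ lies in the component of $w$ in $G^k_{n,t'}$; by the induction hypothesis $w$ lies in the component of $v_j$, giving the first half. For $\Gmodk$, the key tool is property~3 of Lemma~\ref{basicpropertiesGmod}: the ball of radius $b_n$ around $w$ coincides in $\Gmod_\text{end}$ and $G^\infty_{n,t'}$. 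Combined with the locality of $\Phi$ (Lemma~\ref{range} / the event $GE_1$, which ensures $l(e)\le b_n$ for every edge), the set of neighbors of $w$ in $\Phi(\Gmod_\text{end})$ and in $\Phi(G^\infty_{n,t'})=G^k_{n,t'}$ are identified under the natural matching of the balls, so $w'$ is a neighbor of $w$ in $\Phi(\Gmod_\text{end})$ as well, and by induction $w$ is connected to $\tilde v_j$ in $\Gmodk$, concluding the inductive step.

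Claim 1 will then follow from the same local ball-coincidence. Indeed, two vertices $w,w'$ of $\D^+_{2,K_n}$ which are neighbors in $\Gmod_\text{end}$ must have been related as parent/child at some iteration $i$, with (say) $w\in E_{j,i}$ and $w'\in D_{j,i+1}$, the edge arising from the real edge in $G^\infty_{n,t'}$ discovered when probing $w$. Since the labelled $b_n$-ball around $w$ agrees in $\Gmod_\text{end}$ and $G^\infty_{n,t'}$, the label of the edge $(w,w')$ in $\Gmod_\text{end}$ equals its label in $G^\infty_{n,t'}$; and because $w'$ was added to $D_{j,i+1}$ precisely as a neighbor of $w$ in $G^k_{n,t'}$, the edge survives the forbidden-degree operation in both $G^k_{n,t'}$ and (by the locality argument above) in $\Phi(\Gmod_\text{end})$, hence belongs to $\Gmodk$ with the same label. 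Finally, Claim 3 is essentially tautological from the iteration's description: for $w\in E_{j,i}$ the set of its neighbors in $G^k_{n,t'}$ is computable from $\Gmod$, is inserted into $D_{j,i+1}$ except for the parent $p(w)\in E_{j,i-1}$ (using Claim~2 at step $i-1$ to identify $p(w)$), and the identical statement for $\Gmodk$ is the same local-coincidence argument applied once more.

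The main obstacle is the bookkeeping around dummy and corrupted vertices: one must check that the vertices appearing in $\D^+_{2,K_n}$ are genuine elements of $V^G$ (not dummies), and that for $w\in E_{j,i}$ none of the $b_n$-children in $\Gmod$ are corrupted, so that the ball-coincidence with $G^\infty_{n,t'}$ actually holds. Both points are guaranteed by the conditions built into the algorithm ($w$ enters $E_{j,i}$ only when no $(b_n{-}1)$- or $b_n$-child is corrupted, and $D_{j,i+1}$ is populated only with vertices identified as neighbors in $G^k_{n,t'}$, which are honest vertices of $V^G$), so what looks like a delicate check collapses to invoking Lemma~\ref{basicpropertiesGmod} items 3--5 at the relevant stage.
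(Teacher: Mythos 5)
Your proposal is correct and follows essentially the same route as the paper: the paper's (very terse) proof likewise reads Claims 1 and 3 off the definition of the iteration constructing $E_{j,i}$ and $D_{j,i+1}$, and obtains Claim 2 by chaining parent--child edges via properties 3--5 of Lemma \ref{basicpropertiesGmod}, exactly the ball-coincidence and bookkeeping you invoke. Your version merely makes the induction and the ordering (Claim 2 before Claim 1 rather than after) explicit, which is a cosmetic difference.
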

The first and third observations come from the definition of $D_{j,i}$. The second observation is consequence of the first observation of Lemma \ref{observationDE} and the property 4 of Lemma \ref{basicpropertiesGmod}.

The first two observations guarantee that the subgraph of $\Gmod_\text{end}$ restricted to vertices of $\D^+_{2,K_n}$ can be seen as a subgraph of $G^k_{n,t'}$. The last property guarantees that the degree of vertices of $E_{j,i}$  is the same in this subgraph and in $G^k_{n,t'}$.

\subsection{Finding  a path in $G^k_{n,t}$}

\begin{defi} 
For every $i<K_n$ and $j\in\{1,2\}$, $\tilde E_{j,i}$ is the set of vertices~$w$ of $E_{j,i}$ such that no edge is added to $w$ between $t'$ and $t$ in $G^\infty_{n,\cdot}$. Let $\tilde E_{j,K_n}=E_{j,K_n}$. 

The sets $\hat E_{j,i}$ are defined by induction:
\begin{itemize}
\item For $j\in \{1,2\}$, $\hat E_{j,0}=\tilde E_{j,0}$.
\item For $j\in \{1,2\}$, $i\in\{1,\dots, K_n\}$, $\hat E_{j,0}=\{u\in \tilde E_{j,i}:p(u)\in\hat E_{j,i-1}\}$, where $p(u)$ denotes the parent of $u$ in $\Gmodk$.
\end{itemize}

\end{defi}
The set $\hat E_{j,i}$ is the subset of vertices $u$ of $\tilde E_{j,i}$ such that no edge is added between $t'$ and $t$ to any vertex in the path from $\tilde v_j$ to $u$ (including $u$ if $i<K_n$, excluding $u$ if $i=K_n$). By Lemma \ref{observationDE}, any neighbor in $\Gmodk$ of a vertex $v$ of $E_{j,i}$ is either in $D_{j,i+1}$ or in $D_{j,i-1}$. Therefore the following lemma holds:
\begin{lem}
Starting from $\Gmodk$, remove every elements of $\cup D_{j,i}\setminus \tilde E_{j,i}$. Let $\hat \Gmodk$ be the union of the components of $v_1$ and $v_2$ in the resulting graph. Then the set of vertices in the balls of radius $K_n$ centered at $v_1$ and $v_2$ in $\hat \Gmodk$ is $\cup \hat E_{j,i}$.
\end{lem}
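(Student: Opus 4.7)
The plan is to prove by induction on $i$ that, for each $j\in\{1,2\}$ and each $0\le i\le K_n$, the ball of radius $i$ around $v_j$ in $\hat\Gmodk$ equals $\bigcup_{i'\le i}\hat E_{j,i'}$. Taking $i=K_n$ and unioning over $j$ then yields the lemma. The crucial structural fact, used throughout, is that by property 10 of Lemma \ref{basicpropertiesGmod}, $\Gmod_\text{end}$ is a forest, so $\Gmodk\subset \Phi(\Gmod_\text{end})$ and $\hat\Gmodk$ are forests too; hence distances in $\hat\Gmodk$ coincide with tree distances and each non-root vertex has a well-defined parent $p(\cdot)$, which by property 4 of Lemma \ref{basicpropertiesGmod} sends elements of $D_{j,i+1}$ to $E_{j,i}$.

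For the base case $i=0$, the ball centered at $v_j$ in $\hat\Gmodk$ is either $\{v_j\}$ or $\emptyset$; if the initialisation succeeded then $v_j=\tilde v_j\in D_{j,0}$ and $v_j$ survives the removal iff $v_j\in\tilde E_{j,0}=\hat E_{j,0}$, while if the initialisation failed then $D_{j,0}=E_{j,0}=\emptyset$ and $v_j\notin\hat\Gmodk$ (either because $v_j$ is not in $\Gmodk$ at all, or $v_j\in D_{j',i'}\setminus\tilde E_{j',i'}$ for the other index and has been removed); either way both sides agree.

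For the inductive step $i\to i+1$, I handle the two inclusions separately using Lemma \ref{observationDE}. The inclusion $\bigcup_{i'\le i+1}\hat E_{j,i'}\subset$ ball: given $u\in\hat E_{j,i+1}$, by definition $p(u)\in\hat E_{j,i}$ and $u\in\tilde E_{j,i+1}$, so $u$ is not removed; by IH $p(u)$ is at distance $i$ in $\hat\Gmodk$, and item 1 of Lemma \ref{observationDE} shows the edge between them persists, so $u$ is at distance $\le i+1$. The reverse inclusion: if $u$ is at distance $i+1$ in $\hat\Gmodk$, it has a neighbor $u'$ in $\hat\Gmodk$ at distance $i$; by IH $u'\in\hat E_{j,i'}$ for some $i'\le i$, and since forest distance is tree distance, we must have $i'=i$ and $u'\in\hat E_{j,i}\subset E_{j,i}$. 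By item 3 of Lemma \ref{observationDE}, $u\in D_{j,i+1}\cup E_{j,i-1}$; the alternative $u\in E_{j,i-1}$ would force $u$ to be at tree distance $i-1$ from $v_j$, contradicting its distance $i+1$ in $\hat\Gmodk$. Hence $u\in D_{j,i+1}$, and since $u$ survived the removal, $u\in\tilde E_{j,i+1}$; combined with $p(u)=u'\in\hat E_{j,i}$ this gives $u\in\hat E_{j,i+1}$.

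The main obstacle I anticipate is bookkeeping around the degenerate cases (initialisation failure, or $v_j\in A^G$ at the start of the second component's construction), where $\tilde v_j\neq v_j$ and $v_j$ may fail to belong to $\Gmodk$ at all. These cases are absorbed cleanly by the convention that $D_{j,i}=E_{j,i}=\emptyset$ in the failure case, which makes both sides of the claimed equality empty; the forest structure of $\Gmod_\text{end}$ keeps the tree-distance argument above valid without further modification.
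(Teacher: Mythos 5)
Your induction on the radius is correct and is exactly the argument the paper intends: the paper's ``proof'' consists of the single observation that, by Lemma \ref{observationDE}, any $\Gmodk$-neighbor of a vertex of $E_{j,i}$ lies in $D_{j,i+1}$ or the previous generation, and your write-up merely makes explicit the forest structure (so that graph distance equals tree distance and parents are well defined) and the degenerate initialisation-failure cases, both of which the paper leaves implicit. No gap; if anything your treatment is more careful than the original.
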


\begin{lem}
\label{lemmett}
\leavevmode
\begin{enumerate}

\item For $j\in\{1,2\}$, every vertex of the ball of radius $K_n-1$ in $\hat\Gmodk$ is in the component of $v_j$  in the graph $G^k_{n,u}$ for any $u\in[t',t]$.

\item For $j\in\{1,2\}$, if $w$ of $ \hat E_{j,K_n}$ is such that
\begin{itemize}
\item the degree of $w$ in $\Gmodk$ is smaller than $k-2$, and
\item at most one edge is added to $w$ in $G^\infty_{n,}$ between $t'$ and $t$
\end{itemize} then $w$ is in the component of $v_j$ in $G^k_{n,u}$ for any $u\in[t',t]$.
\end{enumerate}
\end{lem}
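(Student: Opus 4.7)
The plan is to show, for both parts, that the natural path from $\tilde v_j$ to $w$ in $\hat\Gmodk$ survives intact in $G^k_{n,u}$ for every $u \in [t',t]$. The starting observation is that, by part~1 of Lemma \ref{observationDE}, any edge of $\Gmod_\text{end}$ between two vertices of $\D^+_{2,K_n}$ is also an edge of $G^k_{n,t'}$ with the same label; in particular the path sits in $G^k_{n,t'}$ from the start. An edge of $G^k$ can disappear only when one of its endpoints reaches the forbidden degree $k$, and this itself requires an addition to that endpoint in $G^\infty_{n,\cdot}$. So the task reduces to showing that no vertex along the path reaches degree $k$ in $G^k_{n,\cdot}$ during $(t',t]$.

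For part~1, I would take $w$ in the ball of radius $K_n-1$ and let $\tilde v_j = u_0, u_1, \dots, u_d = w$ denote the path in $\hat\Gmodk$, where $d \le K_n-1$. Each $u_i$ belongs to $\hat E_{j,i} \subseteq \tilde E_{j,i}$, and since $i < K_n$ the definition of $\tilde E_{j,i}$ forbids any edge addition to $u_i$ in $G^\infty_{n,\cdot}$ during $(t',t]$. The $G^k$-degree of a vertex can only strictly increase through a fresh addition in $G^\infty$, so none of the $u_i$'s $G^k$-degrees ever go up on $[t',t]$, and in particular none reaches $k$. Hence every edge of the path is preserved throughout $[t',t]$, yielding the first part.

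For part~2, the argument above still handles the initial segment $u_0, \dots, u_{K_n-1}$: all these vertices satisfy the no-addition condition, so the corresponding path edges persist. Only the endpoint $w = u_{K_n}$ requires extra care, since by definition $\tilde E_{j,K_n} = E_{j,K_n}$ carries no restriction on edge additions. Here I would invoke property~3 of Lemma \ref{basicpropertiesGmod} to identify the ball of radius $b_n$ around $w$ in $\Gmod$ with the corresponding ball in $G^\infty_{n,t'}$; combined with the good event $GE_1$ and Lemma \ref{range}, this gives that the forbidden-degree operation is local on this ball, and hence that the degree of $w$ in $\Gmodk$ coincides with its degree in $G^k_{n,t'}$. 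By hypothesis this common degree is at most $k-3$, and at most one edge is added at $w$ in $(t',t]$, so the $G^k$-degree of $w$ stays bounded by $k-2$, safely below the forbidden threshold. The final edge $(u_{K_n-1}, w)$ therefore also survives, and $w$ remains connected to $\tilde v_j$ at every time $u \in [t',t]$.

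The only delicate step is this degree identification between $\Gmodk$ and $G^k_{n,t'}$ at a vertex of $E_{j,K_n}$, since globally $\Gmodk$ and $G^k_{n,t'}$ may well differ; the argument depends on the locality of $\Phi$ furnished by Lemma \ref{range} on the event $GE_1$ together with the radius-$b_n$ coupling in Lemma \ref{basicpropertiesGmod}. Once this is in hand, everything else is routine bookkeeping on path edges and on the monotonicity of $G^k$-degrees between additions.
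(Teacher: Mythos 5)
Your proof is correct and follows essentially the same route as the paper's: Lemma \ref{observationDE} embeds the path into $G^k_{n,t'}$, the absence of edge additions at the interior vertices (those in $\tilde E_{j,i}$, $i<K_n$) prevents them from ever reaching degree $k$ on $[t',t]$, and for part~2 the endpoint $w$ is controlled by identifying its $G^k_{n,t'}$-degree with its $\Gmodk$-degree and adding at most one. The only cosmetic difference is that you justify this degree identification via Lemma \ref{basicpropertiesGmod} and the event $GE_1$, whereas the paper reads it off from Lemma \ref{observationDE} (point 3 and the remark following it) — it is in fact built into the definition of $E_{j,i}$, since $w$ is only placed in $E_{j,i}$ when $\Gmod$ determines its neighbor set in $G^k_{n,t'}$ — so no conditioning on a good event is needed there.
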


By Lemma \ref{observationDE}, all the elements of $\cup_i \tilde E_{j,i}$ and the edges between them in $\Gmodk$ are present in $G^k_{n,t'}$. Therefore $\hat\Gmodk$ is a subgraph of $G^k_{n,t'}$. As by definition of $\tilde E_{j,i}$ no edge is added to any vertex in the balls of radius $K_n-1$ between $t'$ and $t$, no vertex of the balls of radius $K_n$ reaches the forbidden degree between $t'$ and $t$ and therefore the ball of radius $K_n-1$ is a subgraph of $G^k_{n,u}$ for any $u\in[t',t]$. 

If $w$ is a non-saturated vertex of $\hat\Gmodk$ such that at most one edge is added to $w$ in $G^\infty_{n,\cdot}$ between $t'$ and $t$, the edge between $w$ and its parent in $\Gmodk$ is present in $G^k_{n,u}$ for any $u\in[t',t]$.

\begin{thm}
\label{mainthm}
For any $\eta>0$, for $\epsilon_1$ and $\epsilon_2$ small enough and $n$ large enough, with probability at least $(a^k_t)^2-\eta$, there exists a vertex $w_1$ in $\hat E_{1,K_n}$ and a  vertex $w_2$ in $\hat E_{2,K_n}$ such that:
\begin{itemize}
\item The vertices $w_1$ and $w_2$ are non-saturated in $G^k_{n,t'}$.
\item There is an edge added between time $t'$ and time $t$ between $w_1$ and $w_2$ in $G^\infty_{n,.}$.
\item No other edge is added between time $t'$ and time $t$ to either $w_1$ or $w_2$ in $G^\infty_{n,.}$.
\end{itemize}

\end{thm}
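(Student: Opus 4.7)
The proof proceeds in two steps: first, establish that the frontier sets $\hat E_{j,K_n}$ are macroscopically large with probability close to $(a^k_t)^2$; second, show that conditional on this, the additional randomness of the edges added in the interval $(t',t]$ produces a clean connecting edge with conditional probability tending to $1$.

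\emph{Step 1 (macroscopic frontier).} By property 10 of Lemma \ref{basicpropertiesGmod}, conditional on both initialisations succeeding, $\Gmodm_\text{end}$ has the distribution of two independent copies of $T^\infty_{t'}$, and the components of $\tilde v_1,\tilde v_2$ in $\Gmodk$ are two independent copies of $T^k_{t'}$. Initialisation failure is rare: by Lemma \ref{range} the explored ball has volume $n^{o(1)}$, so vertex collisions and multiple edges contribute negligible probability. On the event that $T^k_{t'}$ survives (of probability $a^k_{t'}$), Lemma \ref{lemmevitessecroissance} applied to the multitype branching process $T^{k+,y}_{t'}$, whose spectral radius $\rho_{t'}>1$ for $\epsilon_1$ small by Lemma \ref{lemmecontinuite1}, gives $|E_{j,K_n}|/\rho_{t'}^{K_n}\to W_j>0$ a.s., so that $|E_{j,K_n}|\geq n^{1-\epsilon_2-o(1)}$. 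The set $\hat E_{j,K_n}$ is obtained by imposing that no fresh edge be added in $(t',t]$ to any ancestor at generations $0,\dots,K_n-1$; this is a per-vertex Poisson-$\epsilon_1$ event, involving only randomness independent of $\F_\text{end}$. Hence $\hat E_{j,K_n}$ is distributed as the $K_n$-th generation of a branching-thinning of $T^{k+,y}_{t'}$ with survival probability $e^{-\epsilon_1}$ per vertex; its effective spectral radius $\rho_{t'}e^{-\epsilon_1}$ remains strictly above $1$ for $\epsilon_1$ small, and its overall survival probability tends to $a^k_t$ as $\epsilon_1\to 0$. Thus for $\epsilon_1,\epsilon_2$ small enough and $n$ large, $|\hat E_{j,K_n}|\geq n^{\alpha}$ for some $\alpha>1/2$ with probability at least $a^k_t-\eta/4$ per component; by independence of the two components (property 10) the joint event has probability at least $(a^k_t)^2-\eta/2$.

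\emph{Step 2 (clean connecting edge).} Condition on the $\sigma$-algebra generated by $\F_\text{end}$ together with all edges added in $(t',t]$ that are incident to vertices at depth at most $K_n-1$ in either explored component; under this conditioning the sets $\hat E_{j,K_n}$ are determined. Restrict further to $\hat E'_{j,K_n}\subset \hat E_{j,K_n}$ consisting of vertices whose degree in $\Gmodk$ is at most $k-2$; by property 3 of Lemma \ref{basicpropertiesGmod} (combined with the good event $GE_1$) this equals non-saturation in $G^k_{n,t'}$, and a positive fraction of vertices at generation $K_n$ in the multitype branching process have at most $k-3$ children, so that $|\hat E'_{j,K_n}|\geq c|\hat E_{j,K_n}|$ with high conditional probability. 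Conditionally on the above, the edges added in $(t',t]$ at vertex pairs with both endpoints outside the conditioned region form an independent Poisson process. For a given pair $(w_1,w_2)\in\hat E'_{1,K_n}\times \hat E'_{2,K_n}$, the probability that $\{w_1,w_2\}$ is added in $(t',t]$ and no other new edge is incident to either endpoint in $(t',t]$ is of order $\epsilon_1 e^{-2\epsilon_1}/n$. By linearity the expected number of such clean edges is at least $c'\epsilon_1\,n^{2\alpha-1}$, which tends to infinity; a standard second-moment or Poisson-approximation argument gives existence with conditional probability tending to $1$.

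Combining both steps yields the stated bound $(a^k_t)^2-\eta$. The main obstacle is Step 1, specifically rigorously justifying that the survival probability of the thinned branching process tends to $a^k_t$ as $\epsilon_1\to 0$: this requires continuity of $t\mapsto a^k_t$ at $t$, which must be extracted from Lemma \ref{lemmecontinuite1} together with a coupling between $T^k_t$ and $T^k_{t'}$ exploiting that the extra edges of $T^\infty_t\setminus T^\infty_{t'}$ have intensity $\epsilon_1$ and therefore perturb only a vanishing neighbourhood of the root. Secondary technical points are the careful bookkeeping to ensure that the presence of dummy and corrupted vertices in the exploration does not distort the branching approximation, and the use of the good events $GE_1,GE_2$ to guarantee that balls of radius $b_n$ suffice to determine the forbidden-degree structure at all explored vertices.
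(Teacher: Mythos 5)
Your top-level plan coincides with the paper's: show each frontier set $\hat E_{j,K_n}$ contains polynomially many non-saturated vertices with probability close to $(a^k_t)^2$, then sprinkle the $(t',t]$ edges to find a clean connecting pair. Step 2 is essentially the paper's argument (the paper uses $2n^{3/5}$ frontier vertices, excludes edges internal to the frontier from the killing events, and concludes via $(1-a_n)^{n^{6/5}}\to 0$ plus a union bound on double edges). The gap is in Step 1.

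You treat the passage from $D_{j,i}$ to $\hat E_{j,K_n}$ as a ``per-vertex Poisson-$\epsilon_1$ event, involving only randomness independent of $\F_\text{end}$.'' That accounts only for the $(t',t]$ thinning. It ignores the loss $D_{j,i}\setminus E_{j,i}$ coming from corruption during the exploration: a frontier vertex $w$ is discarded when a $b_n$-neighbourhood of $w$ collides with the already-explored set, contains a multiple edge, or shares a fresh neighbour with another frontier vertex. These events are indexed by \emph{pairs} of explored vertices (the variables $X^b_{w,w'}$ and $X^c_{w,w'}$ of Lemma \ref{lembound1}) and are not independent across vertices, so the surviving subgraph is not a priori a branching-thinning of $T^{k+,y}_{t'}$. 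Dominating these pairwise corruption events by a single i.i.d.\ per-vertex Bernoulli family --- at the cost of inflating the parameter to $2\sqrt{\eta|I|}$ via Lemma \ref{independentisation}, and propagating conditional independence through the exploration filtration $\F^{aug}$ (Corollaries \ref{corindepenfants}--\ref{coroindependanceparent}) --- is the central technical content of the paper's proof, not a secondary bookkeeping point; without it the claim that $\hat E_{j,K_n}$ dominates the $K_n$-th generation of a supercritical percolated branching process is unproved. Relatedly, you correctly flag that the survival probability of the thinned process must converge to $a^k_t$, but you leave this open; the paper resolves it in Lemma \ref{continuiteprocessus} via the fixed-point characterisation of the extinction probability (Corollary \ref{coromonotonie}) applied to the perturbed operator $\phi_{s,\epsilon}$, which is the other ingredient your sketch would need to be completed.
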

Theorem \ref{mainthm} implies Lemma \ref{lemmelien}, as by Lemma \ref{lemmett}, $w_1$ and $w_2$ are respectively in the component of $v_1$ and $v_2$ in $G^k_{n,t}$ and the edge between $v_1$ and $v_2$ is present in $G^{k}_{n,t}$.

To prove this result, the following facts will be proven:
\begin{enumerate}
\item For $\epsilon_2$ small enough, with probability larger than $(a^k_t)^2-\eta$, for all $j$, there are at least $n^\frac 2 3$ non saturated vertices in $|\hat E_{j,K_n}|$.
\item The probability that the previous condition happens without the existence of $w_1$ and $w_2$ satisfying the conditions of Theorem \ref{mainthm} converges to~$0$.
\end{enumerate}

It will be done by finding a subgraph of $\hat\Gmodk$ that is a branching process with offspring law close to the law of $T^k_{t'}$. Therefore, the probability of survival of each component is close to $a^k_{t'}$, and conditionally on survival, the components will be large enough.

\begin{lem}
\label{sizeA}
Let $z_n=n^{1-\frac{\epsilon_2}2}$. With high probability, at any point of the algorithm $|A^G|\leq z_n$.
\end{lem}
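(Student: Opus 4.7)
The plan is to bound $\E(|A^G|_{\text{end}})$ by $n^{1-\epsilon_2+o(1)}$ and conclude by Markov's inequality, using that $|A^G|$ is monotone nondecreasing along the algorithm so that $\sup_t |A^G| = |A^G|_{\text{end}}$.

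First, I would split $|A^G|_{\text{end}}$ into contributions from the two initialisations and from the iterations. Under $GE_1$, Lemma \ref{range} bounds each initialisation ball by $x_n = n^{c_n}$ with $c_n=o(1)$, which is negligible compared to $z_n$. The remaining vertices are added only during probings, so I would estimate the expected additions per probe. By property 7 of Lemma \ref{probeproperties}, when a vertex $w$ is probed, the multiplicities of the edges between $w$ and $V^G\setminus A^G_w$ are, conditionally on $\F_w$, i.i.d.\ Poisson of parameter $t'/n$; hence the conditional expected number of new vertices added to $A^G$ at the probing of $w$ is at most $(n-|A^G_w|)(1-e^{-t'/n})\leq t'$.

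Second, I would bound the expected total number of probes across iterations. By construction, the probed vertices at iteration $i$ of component $j$ are the $(b_n-1)$-descendants in $\Gmod_\text{end}$ of the vertices of $D_{j,i}$. Since $\Gmod_\text{end}$ has the same law as two independent copies of $T^\infty_{t'}$ (property 10 of Lemma \ref{basicpropertiesGmod}) and the $l$-th generation of $T^\infty_{t'}$ has expected size $(t')^l$, a conditioning argument on $D_{j,i}$ followed by the branching property gives $\E(|P_{j,i}|)\leq (t')^{b_n-1}\,\E(|D_{j,i}|)$.

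Third, I would control $\E(|D_{j,i}|)$ through the multitype branching structure of $\Gmodk=\Phi(\Gmod_\text{end})$ developed in Section \ref{branching}. The set $D_{j,i}$ embeds in the $i$-th generation of $\Gmodk$, a two-stage multitype branching process with spectral radius $\rho_{t'}$; Lemma \ref{lemmevitessecroissance} then yields $\E(|D_{j,i}|)\leq C\rho_{t'}^{i}$, so $\sum_{i=0}^{K_n}\E(|D_{j,i}|)\leq C'\rho_{t'}^{K_n}=C'n^{1-\epsilon_2}$ by the definition of $K_n$. Combining the three steps with $(t')^{b_n}=n^{o(1)}$ (since $b_n=o(\ln n)$) gives $\E(|A^G|_{\text{end}})\leq n^{1-\epsilon_2+o(1)}$, and Markov's inequality with $z_n=n^{1-\epsilon_2/2}$ yields $\P(|A^G|_{\text{end}}\geq z_n)\leq n^{-\epsilon_2/2+o(1)}\to 0$.

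The main obstacle will be making the conditioning arguments rigorous: $D_{j,i}$ is built dynamically from both $\Gmod_\text{end}$ and the $\F$-measurable decisions of which vertices to probe, so applying Lemma \ref{lemmevitessecroissance} requires identifying $(D_{j,i})_{i\geq 0}$ as a subfamily of generations of a genuine copy of $T^k_{t'}$ (a coupling that is essentially supplied by Lemma \ref{basicpropertiesGmod}). Similarly, decoupling $|D_{j,i}|$ from the subsequent $(b_n-1)$-generation below it in order to multiply their expectations must be done before each iteration via the branching property of $T^\infty_{t'}$, rather than after the fact.
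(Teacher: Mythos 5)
Your overall architecture is the paper's: bound $\E\bigl(GE_1\,|A^G_\text{end}|\bigr)$ by $n^{o(1)}\sum_{j,i}\E|D_{j,i}|$, control $\sum_i\E|D_{j,i}|$ by $n^{1-\epsilon_2+o(1)}$ using that $\Gmodk$ has the law of $T^k_{t'}$, the growth rate $\rho_{t'}^i$ of the two-stages branching process and the definition of $K_n$, then apply Markov's inequality and monotonicity of $A^G$. The last two steps match the paper exactly.

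The gap is in your second step, the factorization $\E(|P_{j,i}|)\leq (t')^{b_n-1}\,\E(|D_{j,i}|)$. You cannot condition on $D_{j,i}$ and then apply the branching property to its $(b_n-1)$-descendants: membership of $w$ in $E_{j,i-1}$ (hence of its children in $D_{j,i}$) is decided by inspecting the ball of radius $b_n$ around $w$ in $\Gmod$, so $D_{j,i}$ is $\F_{j,i+b_n-1}$-measurable and already determines the $(b_n-1)$ generations below it; the two factors are not independent, and "decoupling before the iteration" does not help because the dependence is built into the definition of $D_{j,i}$ itself. The crude alternative $|P_{j,i}|\le|S_{j,i+b_n-1}|$, of expectation $(t')^{i+b_n-1}$, is also insufficient since $\rho_{t'}\le t'$ and $(t')^{K_n}$ need not be $o(n)$. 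The paper avoids the issue entirely with a deterministic bound: every vertex of $A^G$ lies in a ball of radius $b_n$ around some element of $\cup_{j,i}D_{j,i}$ in $G^\infty_{n,t'}$, and on the event $GE_1=1$ Lemma \ref{range} bounds every such ball by $x_n=n^{o(1)}$ vertices, giving $GE_1|A^G_\text{end}|\le x_n\sum_{j,i}|D_{j,i}|$ pathwise, after which one takes expectations of the sum alone. Replacing your per-probe expectation accounting by this pathwise volume bound repairs the argument; the rest of your proof then goes through as in the paper.
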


Every element of $A^G$ belongs to a ball of radius $b_n$ centered at an element of $\cup_{j,i} D_{j,i}$ in $G^\infty_{n,t'}$. Moreover, by Lemma \ref{range}, if $GE_1=1$, there exists $x_n=n^{o(1)}$ such that every ball of radius $b_n$ in $G^\infty_{n,t'}$ contains less than $x_n$ vertices. Therefore, 

$$GE_1|A^G_{\text{end}}|\leq x_n\sum_{1\leq j\leq 2,0\leq i\leq K_n}|D_{j,i}|.$$

As every element of $D_{j,i}$ belongs to the ball of radius $i$ centered at $\tilde v_j$ in $\Gmodk$, and $\Gmodk$ has the same law as $T^k_{t'}$:

$$\E(\sum_{1\leq j\leq 2,0\leq i\leq K_n}|D_{j,i}|)\leq  2\E(|B_{K_n}(T^k_{t'},\emptyset)|)$$

\noindent{} where $B_{K_n}(T^k_{t'},\emptyset)$ is the ball of radius $K_n$ centered at the root $\emptyset$. As $T^k_{t'}$ is a (two-stages) supercritical branching process, by Lemma \ref{lemmeprobasurviedeuxetages}:
$$\frac{|B_{i}(T^k_{t'},\emptyset)|}{\rho_{t'}^i}\xrightarrow[i\rightarrow \infty]{}W.$$
\noindent{}with $W$ a random variable with a finite expectation. As $K_n=\frac{(1-\epsilon_2)\ln n}{\ln (\rho_{t'})}$, this limit implies that
\begin{eqnarray*}
\sum_{1\leq j\leq 2,0\leq i\leq K_n}\E|D_{j,i}|&\leq& n^{1-\epsilon_2+o(1)}\\
\E (GE_1|A^G_\text{end}|)&\leq& x_n n^{1-\epsilon_2+o(1)}\\
&=&n^{1-\epsilon_2+o(1)}
\end{eqnarray*}
Therefore, with high probability, $|A^G_\text{end}|\leq z_n$. Let $GE_2$ be the random variable equal to $1$ if this inequality holds and $GE_1=1$. Otherwise, let $GE_2=0$. As $A^G$ is an increasing subset, $|A^G|\leq|A^G_\text{end}|$ at any point of the algorithm.

\begin{lem}
\label{lemGE3}
With high probability, all the vertices in the balls of radius $K_n+b_n$ centered at $\tilde v_1$ and $\tilde v_2$ in $\Gmod_\text{end}$ have less than $\log n$ children. Let $GE_3$ be the random variable equal to $1$ if this property holds and $GE_2=1$. Otherwise, let $GE_3=0$.
\end{lem}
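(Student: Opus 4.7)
The plan is to reduce everything to properties of $T^\infty_{t'}$ and then combine a polynomial bound on the ball size with a superpolynomial Poisson tail bound.

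First, by item 10 of Lemma \ref{basicpropertiesGmod}, the unlabelled graph $\Gmodm_\text{end}$ has the law of two independent copies of $T^\infty_{t'}$, in which the offspring of every vertex is an independent $\Poi(t')$ variable. Let $N$ denote the total number of vertices at distance at most $K_n+b_n$ from the roots $\tilde v_1,\tilde v_2$ in $\Gmod_\text{end}$. Since $b_n=o(\ln n)$ and $K_n=\frac{(1-\epsilon_2)\ln n}{\ln \rho_{t'}}$, while the mean offspring in $T^\infty_{t'}$ equals $t'$, we get
$$\E[N]\le 2\sum_{i=0}^{K_n+b_n}(t')^i\le C\,(t')^{K_n+b_n}=n^{A}$$
for some constant $A=A(t',\rho_{t'})$. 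By Markov's inequality, $N\le n^{A+1}$ with high probability; call this event $\mathcal N$.

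Next, apply the standard Chernoff/Stirling bound for a Poisson variable: for $m$ much larger than $t'$,
$$\P(\Poi(t')\ge m)\le\frac{e^{-t'}(et')^{m}}{m^{m}}.$$
Plugging in $m=\lceil\log n\rceil$ gives $\P(\Poi(t')\ge \log n)\le \exp\!\bigl(-\log n\,\log\log n\,(1+o(1))\bigr)$, which is $o(n^{-B})$ for every constant $B$. Condition on $\mathcal N$: since the offspring variables are mutually independent and attached to at most $n^{A+1}$ vertices, a union bound shows that the probability that some vertex in the ball of radius $K_n+b_n$ has at least $\log n$ children is at most $n^{A+1}\cdot o(n^{-(A+2)})=o(1)$.

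Combining this with $\P(\mathcal N)\to 1$, we obtain that with high probability every vertex in the relevant balls of $\Gmod_\text{end}$ has fewer than $\log n$ children. By Lemma \ref{sizeA}, $GE_2=1$ with high probability as well, so by a final union bound $GE_3=1$ with high probability, as desired. No step looks genuinely delicate; the only point to be careful about is ensuring the ball-size bound is polynomial \emph{uniformly} in $n$, which follows from the explicit formula $K_n=O(\log n)$ together with the geometric estimate for the mean size of a Galton--Watson ball.
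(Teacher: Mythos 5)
Your proof is correct and follows essentially the same route as the paper: a Markov bound showing the ball of radius $K_n+b_n$ contains only polynomially many vertices with high probability, followed by a union bound combined with a superpolynomial Poisson tail estimate for the degree. The only imprecision is the phrase ``condition on $\mathcal N$'' (conditioning on the ball-size event perturbs the offspring laws); the paper sidesteps this by taking the union bound over the first $n^{\alpha_1}$ vertices of each component in breadth-first order, which is the clean way to phrase exactly your argument.
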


\begin{proof}
The expected number of vertices in the sphere of radius $i$ centered at $\tilde v_j$ in $\Gmod$ is $t'^i$. Therefore the expected total volume of each ball of radius $K_n+b_n$ centered at $\tilde v_1$ and $\tilde v_2$ is equal to $ \frac{t'^{K_n+b_n}-1}{t'-1}=o(n^{\alpha_1})$ for some $\alpha_1>0$. Therefore it is smaller than $n^{\alpha_1}$ with high probability by Markov's inequality. We assume that $n$ is large enough such that $\log n>2t'$. The law of the degree of a given vertex is a Poisson random variable of parameter~$t'$, therefore the probability that one of the first $n^{\alpha_1}$ vertices of each component on $\Gmod_\text{end}$ (in the breadth-first order) has degree larger than $\log n$ is smaller than

\begin{eqnarray*}
2n^{\alpha_1}\sum_{i=\lfloor \log n\rfloor}^{\infty}e^{t'}\frac {t'^i}{i!}&\leq&2n^{\alpha_1} e^{-t'}\frac{t'^{\log_n}}{\lfloor\log_n\rfloor!}\underbrace{\sum_{i\geq 0}\frac{t'^i}{(2t')^i}}_{=2}\\
&=&4n^{\alpha_1} e^{-t'}\frac{t'^{\log_n}}{\lfloor\log_n\rfloor!}\\
&\sim&4e^{-t'}n^{\alpha_1} n^{\log t'}\frac 1{\sqrt{2\pi\lfloor\log n\rfloor}}\left(\frac{e}{\lfloor\log n\rfloor}\right)^{\lfloor\log n\rfloor}\\
&\leq&\alpha_2n^{\alpha_3-\log \log n}\\
&\rightarrow &0
\end{eqnarray*}
With $\alpha_2$ and $\alpha_3$ two constants. The Stirling's approximation is used for the factorial.
\end{proof}

\begin{lem}

With high probability, no initialisation fails.
\end{lem}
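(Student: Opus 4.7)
The plan is to first condition on the good events $GE_1$ (from Lemma \ref{range}) and $GE_2$ (from Lemma \ref{sizeA}), which both hold with high probability. Together they ensure that throughout the entire algorithm every ball of radius $b_n$ in $G^\infty_{n,t'}$ has at most $x_n = n^{o(1)}$ vertices, and that $|A^G|\le z_n = n^{1-\epsilon_2/2}$ at all times. Under these events it suffices to show that both initialisations succeed with conditional probability $1-o(1)$.

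An initialisation for $\tilde v_j$ can fail in two ways: either $v_j\in A^G$ when it starts, or during the construction of some sphere $S_{j,i}$ with $i<b_n$ a probed vertex adds a vertex to $A^C$. For $j=1$ the first possibility cannot occur since $A^G$ is empty initially. For $j=2$, since $v_2$ is independent of all randomness used to build the first component, its uniform law together with $GE_2$ gives $\P(v_2\in A^G) \le z_n/n = n^{-\epsilon_2/2} = o(1)$.

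For the second type of failure, inspecting the definition of the probing procedure, a probed vertex $w$ can cause a corruption in only three ways: (i) the auxiliary process $\poissondummy_w$, of intensity $|A^G|/n$ on $[0,t']$, contains at least one point, which has probability at most $t' z_n/n = O(n^{-\epsilon_2/2})$; (ii) $w$ shares a multi-edge with some vertex of $V^G\setminus A^G$ in $G^\infty_{n,t'}$, whose expected count over the at most $n$ potential neighbors is $(n-1)\cdot O((t'/n)^2)=O(1/n)$; or (iii) $w$ is joined by some edge in $G^\infty_{n,t'}$ to a vertex of $A^G\setminus B^G$, whose expected count is $O(z_n/n)=O(n^{-\epsilon_2/2})$. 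Each initialisation probes at most $x_n$ vertices (by $GE_1$), and a union bound over the $2x_n$ probings yields a total corruption probability of $O(x_n\cdot n^{-\epsilon_2/2})=n^{o(1)-\epsilon_2/2}=o(1)$.

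The only mild subtlety is ensuring that the uniform bound $|A^G|\le z_n$ is in force at the moment each probing takes place, but this is automatic since $A^G$ is monotone non-decreasing along the execution of the algorithm. Summing the contributions of all three mechanisms (i)--(iii) and the separate event $\{v_2\in A^G\}$, both initialisations succeed with probability $1-o(1)$, which is the claim. I expect no substantive obstacle here; the statement is essentially a routine birthday-style estimate exploiting the sparsity of $G^\infty_{n,t'}$ together with the smallness of the explored set compared to $n$.
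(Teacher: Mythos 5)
Your proof is correct and follows essentially the same route as the paper: condition on the good events bounding $|A^G|$ by $z_n$ and ball volumes by $x_n$, treat $\{v_2\in A^G\}$ via independence of $v_2$ from the first exploration, enumerate the corruption triggers in the probing procedure, and union-bound using the fact that unexplored edges remain i.i.d.\ Poisson$(t'/n)$ conditionally on the exploration's past. The only cosmetic difference is that your mechanism (iii) (an edge to $A^G\setminus B^G$ at the moment of probing) subsumes three of the paper's five listed events, namely edges within $S_{j,i}$, edges to previously discovered unexplored vertices, and two sphere vertices sharing a common new neighbour; the resulting bound is the same order.
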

As $A$ is initially empty, it is not possible that $v_1\in A$ before the first initialisation. If $GE_2=1$, the size of the set $A$ is smaller than $z_n$ before the second initialisation. As $v_2$ is a uniform vertex independent of $\F_{1,\infty}$, the probability that $v_2$ belongs to $A$ before its initialisation, conditionally on $\F_{1,\infty}$ before the second initialisation is equal to $\frac{|A^G|}n$. This quantity is smaller than $\frac{z_n}n$ if $GE_2=1$, therefore 
\begin{eqnarray*}
\P(v_2\in A_{1,\infty})&\leq&\P(GE_2=0)+\P(GE_2=1\text{ and }v_2\in A_{1,\infty})\\
&=&\P(GE_2=0)+\E(GE_2\frac{|A_{1,\infty}|}n)\\
&\leq&\P(GE_2=0)+\frac{z_n}n\\
&=&o(1)
\end{eqnarray*}
A vertex is added to $A^C$ for the first time during the construction of $S_{j,i+1}$ if one of the following event happens:

\begin{enumerate}
\item Two vertices of $S_{j,i}$ are linked by an edge in $G^\infty_{n,t'}$.
\item A vertex of $S_{j,i}$ is linked to a vertex of $A^G_{j,i}\setminus B^G_{j,i}$ by an edge in $G^\infty_{n,t'}$.
\item Two vertices of $S_{j,i}$ are linked by an edge to the same vertex $w\notin S_{j,i-1}$ in $G^\infty_{n,t'}$.
\item There is a multiple edge in $G^\infty_{n,t'}$ between a vertex $w$ of $S_{j,i}$ and a vertex in $V^G\setminus A_{j,i}$.
\item When a vertex $w$ is probed, the Poisson point process of intensity $\frac{|A^G|}n$ creating dummy vertices is non-empty.
\end{enumerate}
By Lemma \ref{basicpropertiesGmod}, conditionally on the $\sigma$-algebra $\F_{j,i}$, the labels of the edges in $ \Gmod_{n,t'}$ with no endpoint in $B_{j,i}$ are an i.i.d. family of Poisson point processes if intensity $\frac 1 n$ on $[0,t']$. Therefore, conditionally on $\F_{j,i}$:
\begin{enumerate}
\item   the probability of the first event is smaller than $\binom{|S_{j,i}|}2\P(Poi(\frac{t'}n)\geq 1)$;
\item The probability of the second event is smaller than $|S_{j,i}||A^G_{i,j}|\P(Poi(\frac{t'}n)\geq 1)$;
\item   the probability of the third event is smaller than $n\binom{|S_{j,i}|}2\P(Poi(\frac{t'}n)\geq 1)^2$ (there are less than $n$ possible choices for $w$);
\item the probability of the fourth event is smaller than $|S_{j,i}|n\P(Poi(\frac{t'}n)\geq 2)$;
\item the probability of the last event is smaller than $\frac{t'|S_{j,i}||A^G_{j,i+1}|}n$.
\end{enumerate}
Recall that $GE_3=1$ implies that $|S_{j,i}|\leq x_n$ and $|A^G_{j,i+1}|\leq z_n$. Let  $y_n=n^{-\frac {\epsilon_2}3}$. Either $GE_3=0$ or the sum of these  probabilities is smaller than $\frac {y_n} {2b_n} $, for $n$ large enough. By union bound, the probability than one of these events happens for $i\leq b_n$ during one of the initialisations and $GE_2=3$ is smaller than  $y_n=o(1)$.

Let $GE_4$ be the random variable equals to $1$ if no initialisation fails and $GE_3=1$. Otherwise, let $GE_4=0$.

\subsection{Construction of an included branching process}
The goal of this part is to find a branching process simultaneously included in $\Gmod_\text{end}$, $G^k_{n,t'}$ and $G^k_{n,t}$ with law close to $T^k_t$.

\subsubsection{Construction of the included branching process for time~$t'$}

The idea is to find a two-stages branching process whose set of vertices is a subset of $\cup \tilde E_{j,i}$ and whose law will be close to $T^k_{t}$. In order to do this, some bounds of the probability that a given vertex $w$ is in $D_{j,i}\setminus \tilde E_{j,i}$ will be needed. For technical reasons, $w\in D_{j,i}\setminus E_{j,i}$ and $w\in E_{j,i}\setminus \tilde E_{j,i}$ will be treated separately. Let us add the following families of random variables, that will be used as extra randomness when needed, such that, conditionally on $\F_\text{end}$, these families are i.i.d. families of uniform random variables on $[0,1]$.
\begin{itemize}
\item  $(U^a_w)_{w\in \Gmod_\text{end}}$;
\item  $(U^b_{w,w'})_{w\breadthl w'\in \Gmod_\text{end}}$;
\item  $(U^c_{w,w'})_{w\breadthl w'\in \Gmod_\text{end}}$;
\item  $(U^d_w)_{w\in \Gmod_\text{end}}$;
\end{itemize}
These random variables will be used to construct independent variables, by using the following straightforward construction:
\begin{lem}
\label{lemmecouplagebernoulli}
If $U$ and $V$ are random variables, $\F$ a $\sigma$-algebra and $ p\in[0, 1)$ such that:
\begin{itemize}
\item $V\in\{0,1\}$ a.s.
\item $\P(V=1|\F)<p$ a.s.
\item $U$ is a uniform random variable on $[0,1]$, independent of $\F$.
\end{itemize}
Then the random variable $V^{ind}$ equals to $1$ if either $V=1$ or $U<\frac{p-\P(V=1|\F)}{\P(V=0|\F)}$ is a Bernoulli random variable of parameter $p$, larger than $V$, independent of $\F$.
\end{lem}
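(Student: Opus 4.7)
The plan is to verify three properties of $V^{ind}$: that $V^{ind} \geq V$ pointwise, that $V^{ind}$ is a Bernoulli random variable of parameter $p$, and that $V^{ind}$ is independent of $\F$. The first property is immediate from the very definition of $V^{ind}$, since whenever $V = 1$ the construction forces $V^{ind} = 1$. The other two properties will follow from a single computation of the conditional probability $\P(V^{ind} = 1 \mid \F)$, which I expect to be pointwise equal to the constant $p$.

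The core step is therefore to compute $\P(V^{ind} = 1 \mid \F)$. Writing $c = \frac{p - \P(V=1|\F)}{\P(V=0|\F)}$ (which is $\F$-measurable and, by the assumption $\P(V=1|\F) < p$, lies in $[0,1]$), I split
\[
\P(V^{ind} = 1 \mid \F) = \P(V = 1 \mid \F) + \P(V = 0,\ U < c \mid \F).
\]
Here I interpret the hypothesis ``$U$ independent of $\F$'' in the natural way as $U$ being independent of the pair $(\F, V)$ (otherwise the construction could not deliver the claimed output). Under that independence, conditionally on $\F$ the variables $U$ and $V$ are independent, and $U$ remains uniform on $[0,1]$; since $c$ is $\F$-measurable this yields $\P(V=0,\ U<c \mid \F) = \P(V=0\mid\F)\cdot c = p - \P(V=1\mid\F)$. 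Summing gives $\P(V^{ind}=1\mid\F) = p$ almost surely.

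From the fact that the conditional probability is the deterministic constant $p$, the remaining conclusions are immediate: for any $A \in \F$,
\[
\P(\{V^{ind} = 1\} \cap A) = \E\bigl[\1_A \,\P(V^{ind}=1 \mid \F)\bigr] = p\,\P(A),
\]
which simultaneously shows that $V^{ind}$ is Bernoulli with parameter $p$ (take $A$ to be the whole space) and that $V^{ind}$ is independent of $\F$. There is no real obstacle here; the only subtle point is the (harmless) strengthening of the independence hypothesis on $U$, needed to let $U$ be independent of $V$ as well as of $\F$, so that the conditional factorization in the key step is legitimate.
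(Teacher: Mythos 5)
Your proof is correct. The paper in fact states this lemma without proof, introducing it only as a ``straightforward construction,'' so there is no argument of the author's to compare against; your computation of $\P(V^{ind}=1\mid\F)=p$ followed by $\P(\{V^{ind}=1\}\cap A)=\E[\1{A}\,\P(V^{ind}=1\mid\F)]=p\,\P(A)$ is exactly the natural way to fill the gap. You are also right that the hypothesis as literally written ($U$ independent of $\F$ alone) is insufficient and must be read as $U$ independent of $\sigma(\F)\vee\sigma(V)$ — this is how the lemma is actually invoked later in the paper (e.g.\ in the proof of Lemma \ref{intermediaire}, where the $U$-variables are conditionally independent of the $X$-variables), so your strengthening is the intended reading rather than a defect of your argument.
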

For any $w\in \Gmod_\text{end}$, let us define the following sets:
\begin{itemize}
\item $\Gamma^a_w=\{w'\in\Gmod_{\text{end}};w'\breadth w\}$. 
\item $\Gamma^{a}_{w+}=\{w'\in\Gmod_{\text{end}};w'\breadthl w\}=\Gamma^a_w\cup\{w\}$. 
\item $\Gamma^{b}_w=\{(w',w'')\in{\Gmod_{\text{end}}}^2;w'\breadthl w''\breadth w\}$. 
\item $\Gamma^{b}_{w+}=\{(w',w'')\in{\Gmod_{\text{end}}}^2;w'\breadthl w''\breadthl w\}$.
\item $\Gamma^{c}_w=\{(w',w'')\in{\Gmod_{\text{end}}}^2;w'\breadthl w'',w'\breadth w,\gen(w'')\leq \gen(w)\}$.
\item $\Gamma^{c}_{w+}=\{(w',w'')\in{\Gmod_{\text{end}}}^2;w'\breadthl w'',w'\breadthl w,\gen(w'')\leq \gen(w)\}$.
\item $\Gamma^{d}_w=\{w'\in\Gmod_{\text{end}}:\gen(w')+(0,b_n)\leq \gen(w)\}$.
\end{itemize}
These sets will denote the set of uniform variables that have been used via Lemma \ref{lemmecouplagebernoulli} when $w$ is probed or fake-probed. For every $w\in B^G_\text{end}$, let $\F^{aug}_{w}$ be the $\sigma$-algebra generated by $\F_w$, $(U^a_{w'})_{w'\in\Gamma^{a}_w}$, $(U^b_{(w',w'')})_{(w',w'')\in\Gamma^{b}_w}$, $(U^c_{(w',w'')})_{(w',w'')\in\Gamma^{c}_w}$ and $(U^d_{w'})_{w'\in\Gamma^{d}_w}$. 

For every $(j,i)$, $i\geq b_n$, let $\F^{aug}_{j,i}$ be the $\sigma$-algebra generated by $\F_{j,i}$, $(U^a_w)_{w\in\B_{j,i-1}}$, $(U^b_{w,w'})_{(w,w')\in \B_{j,i-1}}$, $(U^c_{w,w'})_{(w,w')\in \B_{j,i-1}}$ and $(U^d_{w})_{w\in \B_{j,i-b_n}}$.

For every vertex $w\in B^G_\text{end}$, let $\F^{aug}_{w+}$ be the $\sigma$-algebra generated by $\F_w$, $(U^a_{w'})_{w'\in\Gamma^{a}_{w+}}$, $(U^b_{(w',w'')})_{(w',w'')\in\Gamma^{b}_{w+}}$, $(U^{c}_{(w',w'')})_{(w',w'')\in\Gamma^{c}_{w+}}$ and $(U^d_{w'})_{w'\in\Gamma^{d}_{w+}}$.

For any $w$ probed or fake-probed during the construction of $E_{j,i}$ (therefore such that $\gen(w)=(j,i+b_n-1)$), $\F^{aug}_{j,i+b_n-1}\subset \F^{aug}_w\subset \F^{aug}_{w+}\subset\F^{aug}_{j,i+b_n}$.

\begin{lem}
\label{auglemma}
For every $w\in B_\text{end}$, every $\F_\text{end}$-measurable random variable~$X$, the distribution of $X$ is the same conditionally on $\F_w$ (resp. $\F_{w+}$) and conditionally on $\F^{aug}_w$ (resp. $\F^{aug}_{w+}$).
\end{lem}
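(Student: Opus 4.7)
The plan is to observe that the auxiliary families $(U^a_w)$, $(U^b_{w,w'})$, $(U^c_{w,w'})$, $(U^d_w)$ were introduced with conditional distribution given $\F_\text{end}$ equal to that of i.i.d.\ uniform variables on $[0,1]$; since this conditional law is the same (deterministic) distribution regardless of the value of $\F_\text{end}$, the entire collection of U's is \emph{unconditionally} independent of $\F_\text{end}$. The augmented $\sigma$-algebra $\F^{aug}_w$ (resp.\ $\F^{aug}_{w+}$) differs from $\F_w$ (resp.\ $\F_{w+}$) only by the adjunction of the finite subfamily of U's indexed by $\Gamma^a_w$, $\Gamma^b_w$, $\Gamma^c_w$ and $\Gamma^d_w$ (resp.\ $\Gamma^a_{w+}$, $\Gamma^b_{w+}$, $\Gamma^c_{w+}$ and $\Gamma^d_{w+}$).

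First I would check the inclusion $\F_w \subset \F_\text{end}$, which is immediate from the definition of the increasing $\sigma$-algebra $\F$: each generator of $\F_w$ is the value at step $w$ of a set or of a family of labelled edges that also appears, in an enlarged form at the end of the algorithm, among the generators of $\F_\text{end}$. In particular, for any $\F_\text{end}$-measurable random variable $X$, the $\sigma$-algebra $\sigma(X,\F_w)$ is contained in $\F_\text{end}$, and the finite subfamily of U's appended to form $\F^{aug}_w$ is therefore independent of $\sigma(X,\F_w)$.

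The conclusion then follows from the standard fact that enlarging the conditioning $\sigma$-algebra by an independent sub-$\sigma$-algebra does not alter the conditional distribution of $X$: one has $\P(X\in A\mid \F^{aug}_w) = \P(X\in A\mid \F_w)$ almost surely for every measurable set $A$. The same argument, with $\Gamma^\bullet_w$ replaced by $\Gamma^\bullet_{w+}$, handles $\F^{aug}_{w+}$.

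I do not anticipate a substantive obstacle: the lemma simply formalises the intuition that the U's are auxiliary randomness, introduced solely to enable the coupling construction of Lemma~\ref{lemmecouplagebernoulli}, and therefore carry no information about the algorithm producing $\Gmod_\text{end}$.
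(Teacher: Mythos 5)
Your argument is correct and is essentially the paper's own proof: the $U$-variables are independent of $\F_\text{end}$, the augmentation only adjoins a subfamily of them, and conditioning on an independent enlargement does not change the conditional law of an $\F_\text{end}$-measurable $X$. The only point the paper makes explicit that you leave implicit is that the index sets $\Gamma^i_w$ (resp.\ $\Gamma^i_{w+}$) are themselves random but $\F_w$-measurable (resp.\ $\F_{w+}$-measurable), which is what legitimises treating the appended collection of $U$'s as a subfamily determined by the conditioning $\sigma$-algebra.
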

For every $i\in \{a,b,c,d\}$, every $w\in B^G_\text{end}$, $\Gamma^i_w$ is $\F_w$-measurable and $\Gamma^{i}_{w+}$ is $\F_{w+}$-measurable. Lemma \ref{auglemma} comes from the fact that the law of every $U$-variables is independent of $\F_\text{end}$. Lemma \ref{auglemma} simply means that any event that can be expressed without the extra randomness is independent of the extra randomness.

In order to construct the included branching process, one needs to study when a vertex belongs to $\cup D_{j,i}\setminus E_{j,i}$. Assuming $GE_1=1$, a vertex $w_1$ is in $D_{j,i}\setminus E_{j,i}$ for some $(i,j)$ if a $b_n-1$-child or a $b_n$-child of $w_1$ is added to $A^C$ during the construction of $D_{j,i+1}$. Let $w_2$ be a $b_n-1$-child of $w_1$. During the construction of $D_{j,i+1}$, only $b_n-1$-children of elements of $D_{j,i}$ are probed. 

\begin{itemize}
\item The vertex $w_2$ is added to $A^C$ while probed if one of the following happens:
\begin{itemize}
\item There exists a multiple edge between $w_2$ and another vertex in $G^\infty_{n,t'}$.
\item When $w_2$ is probed, $\poissondummy_{w_2}$ is not empty.
\item There is an edge between $w_2$ and a vertex $ w_3$ of $A^G_{w_2}\setminus B^G_{w_2}$. 
\end{itemize}

\item The vertex $w_2$ is added to $A^C$ while another vertex $w_3$ is probed if there is an edge between $w_2$ and $w_3$ in $G^\infty_{n,t'}$.

\item A child $w_3$ of $w_2$ is added to $A^C$ while $w_2$ is probed if there is a multiple edge between $w_2$ and $w_3$ in $G^\infty_{n,t'}$.

\item A child $w_3$ of $w_2$ is added to $A^C$ while another vertex $w_4$ is probed if there is an edge between $w_4$ and $w_3$ in $G^\infty_{n,t'}$.
\end{itemize}
For any $(i,j)\in\{0,\dots,K_n\}\times\{1,2\}$, let $\Delta_{j,i+b_n-1}$ be the set of $b_n-1$-children of elements of $D_{j,i}$. For any $i$, $j$,  $\Delta_{j,i+b_n-1}$ is a subset of $S_{j,i+b_n-1}$. Then $w_2$ or one of its children is added to $A^C$ during the construction of 
$D_{j,i+1}$ if at least one of the following events happens:
\begin{enumerate}
\item while $w_2$ is probed, $\poissondummy_{w_2}$ is not empty;
\item there is a multiple edge in $G^\infty_{n,t'}$ between $w_2$ and vertex not in $A^G_{w_2}$;
\item there is an edge between $w_2$ and a vertex of $A^G$ not in $\Delta_{j,i+b_n-1}$ and not a children of an element of $\Delta_{j,i+b_n-1}$;
\item there is an edge between $w_2$ and  a vertex $w_3\in \Delta_{j,i+b_n-1}$;
\item there are a vertex $w_3\neq w_2$ in $\Delta_{j,i+b_n-1}$ and $w_4\notin A^G_{j,i+b_n-1}$,  such that there are edges between $w_2$ and $w_4$ and between $w_3$ and $w_4$ in $G^\infty_{n,t'}$.

\end{enumerate}

For any $(i,j)$ and any vertices $w_2$ and $w_3$ in $\Delta_{j,i+b_n-1}$, let $X^a_{w_2}$ be equal to $1$ if any of the first three events occurs, $X^b_{w_2,w_3}$ be equal to $1$ if the fourth event occurs and $X^c_{w_2,w_3}$ be equal to $1$ if the fifth event occurs. This classification corresponds to the following criteria:
\begin{enumerate}[a.]
\item one vertex of $\Delta_{j,i+b_n-1}$ is added to $A^C$;
\item two vertices $w_2\breadth w_3$ of $\Delta_{j,i+b_n-1}$ are added to $A^C$, when $w_2$ is considered;
\item two vertices $w_2\breadth w_3$ of $\Delta_{j,i+b_n-1}$ are added to $A^C$, when $w_3$ is considered.
\end{enumerate}

$\F^a_w$ be the $\sigma$-algebra generated by $\F_w$ and $\Gmodm_{w+}$. $\F^a_w$ represents the knowledge obtained by the algorithm before $w$ is probed, and the set of labels of edges outgoing from $w$ (without knowing their other endpoints). Let $\F^{aug,a}_w$ be the $\sigma$-algebra generated by $\F^{aug}_w$ and $\Gmodm_{w+}$, \ie{} $\F^a_w$ with the knowledge of the appropriate extra-randomness.

For any $w\in\cup_{j,i}\Delta_{j,i+b_n-1}$, let $GE^{c}_w$ be equal to $1$ if for every $w'\breadth w$  the degree of $w'$ in $\Gmod_\text{end}$ is smaller than $\log n$, and let $GE^a_w$ be equal to $1$ if the degree of $w$ is smaller than $\log n$ and $|A^G_w|\leq z_n$. For any $w$, $GE^a_w\geq GE_4$ and $GE^{c}_w\geq GE_4$. $GE^a_w$ is $\F^a_w$-measurable, and $GE^c_w$ is $\F_{w-}$-measurable.

Let $ \eta_n^a:=\frac {(\ln n+t')z_n+\ln^2 n}  {n} $, $\eta^b_n:=\frac {t'} {n} $ and $\eta^c_n:=\frac {t'\ln n} {n} $. For $n$ large enough, all these variables are in $[0,1]$.

\begin{lem}
\label{lembound1}
For any $w\in\cup\Delta_{j,i+b_n-1}$, conditionally on $\F_w^{aug,a}$,  the families $(X^{a}_{w})$, $(X^{b}_{w,w'})_{w'\in \Delta_{j,i+b_n-1},w\breadth w'}$, $(X^{c}_{w',w})_{w'\in \Delta_{j,i+b_n-1}, w'\breadth w}$  are independent Bernoulli variables. Moreover:
\begin{itemize}
\item $\P(GE^a_wX^a_w=1|\F_w^{aug,a})\leq \eta^a_n$ a.s.;
\item For all $w'\in\Delta_{j,i+b_n-1}$, $w\breadth w'$, $\P(X^b_{w,w'}=1|\F_w^{aug,a})\leq \eta^b_n$ a.s.;
\item For all $w'\in\Delta_{j,i+b_n-1}$, $w'\breadth w$, $\P(GE^c_wX^c_{w,w'}=1|\F_w^{aug,a})\leq \eta^c_n$ a.s.;
\end{itemize}

\end{lem}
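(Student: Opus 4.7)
\textbf{Proof plan for Lemma \ref{lembound1}.}

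The starting point is property 7 of Lemma \ref{probeproperties}: conditionally on $\F_w$, the multiset of labels of edges not touching $B^G_w$ in $G^\infty_{n,t'}$ is an i.i.d.\ family of Poisson point processes of intensity $1/n$ on $[0,t']$, one per edge. Independently of these, the dummy process $\poissondummy_w$ is Poisson of intensity $|A^G_w|/n$ on $[0,t']$, and the $U$-variables are i.i.d.\ uniform on $[0,1]$ independent of $\F_\text{end}$. Since $\F^{aug,a}_w$ only augments $\F_w$ by the isomorphism class $\Gmodm_{w+}$ (which records the labels of edges from $w$ in $\Gmod$ but not which endpoint in $V^G$ or which dummy they connect to) and by $U$-variables indexed by $\Gamma^a_w,\Gamma^b_w,\Gamma^c_w,\Gamma^d_w$ (all of which are $\F_w$-measurable), I would apply Lemma \ref{auglemma} to transfer the conditional law from $\F^{aug,a}_w$ to $\F^a_w$ when checking any event not involving the $U$-variables.

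Next, I would decompose the three families according to which vertices and randomness they depend on. The event $X^a_w$ depends on (i) $\poissondummy_w$ (event~1), (ii) multi-edges from $w$ to vertices of $V^G\setminus A^G_w$ (event~2), and (iii) simple edges from $w$ to vertices of $A^G_w\setminus(\Delta_{j,i+b_n-1}\cup\operatorname{children}(\Delta_{j,i+b_n-1}))$ (event~3). The variable $X^b_{w,w'}$ depends only on whether there is an edge from $w$ to $w'\in\Delta_{j,i+b_n-1}$ with $w\breadth w'$, and $X^c_{w',w}$ only on whether there is an edge from $w$ to one of the newly-discovered children of the earlier $w'\in\Delta_{j,i+b_n-1}$. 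These three classes involve pairwise disjoint edges (and $\poissondummy_w$, which enters only $X^a_w$), so by property~7 they are conditionally mutually independent given $\F_w$. Conditioning further on $\Gmodm_{w+}$ only constrains the joint record of labels of edges adjacent to $w$ in $\Gmod$, but not the identity of the endpoint on the $V^G$ side, so the decomposition still produces independent Bernoullis; the independence extends to $\F^{aug,a}_w$ by Lemma \ref{auglemma} together with the $U$-variables being independent of $\F_\text{end}$.

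For the probability bounds I would use $\P(\Poi(t'/n)\ge 1)\le t'/n$ and $\P(\Poi(t'/n)\ge 2)\le (t'/n)^2$, together with the definitions of $GE^a_w$ and $GE^c_w$. For $X^b_{w,w'}$, the bound $\le t'/n=\eta^b_n$ is immediate. For $X^c_{w',w}$, the number of newly-discovered children of $w'$ is at most the degree of $w'$ in $\Gmod_\text{end}$, which on the event $\{GE^c_w=1\}$ is bounded by $\log n$; a union bound gives $\log n\cdot t'/n=\eta^c_n$. For $X^a_w$, on the event $\{GE^a_w=1\}$: event~1 contributes at most $|A^G_w|\,t'/n\le z_n t'/n$; event~2 contributes at most $n\cdot(t'/n)^2=(t')^2/n$; event~3 contributes at most $|A^G_w|\cdot t'/n\le z_n t'/n$. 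The sum is absorbed into $\eta^a_n=((\ln n+t')z_n+\ln^2 n)/n$ for $n$ large enough.

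The main obstacle, as I see it, is justifying that conditioning on $\F^{aug,a}_w$ (in particular on $\Gmodm_{w+}$) does not break the independence structure, because $\Gmodm_{w+}$ carries some aggregated information about events~1 and~2 (through the number and labels of the dummy/new-$V^G$ children attached to $w$). The clean way around this is to phrase each event as a function of a disjoint subset of the Poisson point processes plus $\poissondummy_w$ plus one $U$-variable, and invoke Lemma \ref{lemmecouplagebernoulli} with the corresponding $U^a_w,U^b_{w,w'},U^c_{w',w}$ to produce Bernoulli variables of the required parameters that are genuinely independent of $\F^{aug,a}_w$ and dominate the raw events on $\{GE^a_w=1\}$, $\{GE^c_w=1\}$ respectively; the pointwise a.s.\ bounds then follow automatically.
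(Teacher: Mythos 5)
Your overall strategy matches the paper's: reduce from $\F^{aug,a}_w$ to $\F^a_w=\sigma(\F_w,e_w)$ (where $e_w$ is the label set of the edges leaving $w$ in $\Gmod$) via Lemma \ref{auglemma}, split the relevant edge processes of $G^\infty_{n,t'}$ into disjoint groups driving $X^a_w$, $(X^b_{w,\cdot})$ and $(X^c_{\cdot,w})$, deduce conditional independence from property 7 of Lemma \ref{probeproperties}, and note that $\Gmodm_{w+}$ fixes only the multiset $e_w$ and not which endpoint (dummy, or which vertex of $V^G\setminus A^G_w$) each point of $e_w$ is attached to. The treatment of $X^b$, $X^c$ and of event 3 of $X^a_w$ (edges to $V^{a2}_w\subset A^G_w$, whose processes are disjoint from those generating $e_w$) is also the paper's.

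The gap is in the quantitative step for events 1 and 2 of $X^a_w$. These two events are functions of exactly the processes whose union \emph{is} $e_w$ ($\poissondummy_w$ and the processes towards $V^G\setminus A^G_w$), so the ``disjoint subset of Poisson processes'' escape you invoke at the end does not apply to them: conditioning on $\F^{aug,a}_w$ genuinely fixes $|e_w|$, and your bounds $\P(\poissondummy_w\neq\emptyset)\leq |A^G_w|t'/n$ and $n\,\P(\Poi(t'/n)\geq 2)\leq (t')^2/n$ are computed given $\F_w$ only, not given $e_w$. The correct device is the paper's endpoint-assignment variables: given $\F^a_w$, the marks $\alpha_y\in\{0\}\cup(V^G\setminus A^G_w)$ of the points $y\in e_w$ are i.i.d.\ with $\P(\alpha_y=0)=|A^G_w|/n$ and $\P(\alpha_y=v)=1/n$, so event 1 has conditional probability at most $|e_w|\,|A^G_w|/n$ and event 2 at most $|e_w|^2/n$. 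These are controlled only because $GE^a_w=1$ forces $|e_w|\leq\log n$, which is precisely why $\eta^a_n$ contains the terms $z_n\ln n/n$ and $\ln^2 n/n$ rather than your $z_nt'/n+(t')^2/n$; the fact that your argument never uses the degree half of $GE^a_w$ is the symptom of the missing step. (A minor further point: the final appeal to Lemma \ref{lemmecouplagebernoulli} and the $U$-variables belongs to the subsequent construction of the $X^{\cdot,indep}$ variables, not to this lemma, which only asserts conditional independence and the a.s.\ bounds.)
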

\begin{proof}
By Lemma \ref{auglemma}, it is sufficient to prove Lemma \ref{lembound1} with $\F_w^a$ instead of $\F_w^{aug,a}$. 

$\F_{j,i+b_n-1}$, $\Gmodm_{w}$, $GE_{w}^{c}$ are measurable with respect to $\F_{w}$. The only difference between $\Gmodm_{w}$ and $\Gmodm_{w+}$ is the set of edges outgoing from $w$. Let denote in this proof by $e_{w}$ the set of labels of outgoing edges. Therefore $\F_w^a$ is the $\sigma$-algebra generated by $e_w$ and $\F_w$.

$V^G\setminus B^G_{w}$ can be decomposed in these disjoint subsets:
\begin{itemize}
\item $V^{a1}_{w}=V^G\setminus A^G_w$;
\item $V^{a2}_{w}$ is the subset of vertices in $A^G_{w}$, but not in $\Delta_{j,i+b_n-1}$ or children of elements of $\Delta_{j,i+b_n-1}$;
\item $V^b_{w}=\Delta_{j,i+b_n-1}\cap A^G_w$;
\item For any $w'\in \Delta_{j,i+b_n-1}$, $w'\breadth w$, $V^c_{w,w'}$ is the subset of vertices that are in $A^G_{w}$ and are children of $w'$.
\end{itemize}

Conditionally on $\F_{w}$:
\begin{itemize}
\item $\poissondummy_w$ is a Poisson point process of intensity $\frac{|A^G_{w}|}n$ on $[0,t']$,
\item $(\poisson{(w,w')})_{w'\in V^G_w\setminus B^G_w}$ is an i.i.d. family of Poisson point processes of intensity $\frac 1 n$ on $[0,t']$, independent of $\poissondummy_w$.
\end{itemize}
 
$e_{w}$ is the union of $\poissondummy_w$ and the $n-|A^G_{w}|$ processes associated to elements on $V^{a1}_w$. Therefore, conditionally on $\F_{w}$, $e_w$ is a Poisson point process of intensity $1$ on $[0,t']$.

\begin{itemize}
\item The variable $(X^{a}_{w},\Gmod_{w+})$ depends only on $\poissondummy_w$ and on $(\poisson{(w,w')})_{w'\in V^{a1}_w\cup V^{a2}_w}$; 
\item for $w'\in \Delta_{j,i+b_n-1}$, $w\breadth w'$, $X^{b}_{w,w'}$ depends only on the $\poisson{(w,w')}$;
\item for $w'\in \Delta_{j,i+b_n-1}$, $w>w'$, $X^{c}_{w,w'}$ depends only on the $(\poisson{(w,w'')})_{w''\in V^c_{w,w'}}$,
\end{itemize}
 therefore  conditionally on $\F_{w}$, the following variables are independent:
 \begin{itemize}
 \item $(X^a_w,\Gmodm_{w+})$,
 \item  $(X^{b}_{w,w'})$ for $w'\in\Delta_{j,i+b_n-1}$, $w\breadth w'$ 
 \item $(X^{c}_{w,w'})$ for $w'\in\Delta_{j,i+b_n-1}$. 
 \end{itemize}
This implies that conditionally on $\F_w$ and $\Gmodm_{w+}$ (\ie{} conditionally on $\F_w^a$),  the following variables are independent:
 \begin{itemize}
 \item $X^a_w$,
 \item  $(X^{b}_{w,w'})$ for $w'\in\Delta_{j,i+b_n-1}$, $w\breadth w'$ 
 \item $(X^{c}_{w,w'})$ for $w'\in\Delta_{j,i+b_n-1}$. 
 \end{itemize}
  The conclude the proof, we now prove that the parameters of these variables are respectively smaller than $\eta^a_n$, $\eta^b_n$ and $\eta^c_n$.

For any element $y$ of $e_{w}$, let $\alpha_y$ be equal to $0$ if $y\in\poissondummy_w$ and equal to $w'$ if $y\in\poisson{(w,w')}$. Conditionally on $\F^a_w$, the family $(\alpha_y)_{y\in e_{w}}$ is an i.i.d. family, with $\P(\alpha_y=0|\F^{a}_{w})=\frac{|A^G_{w}|}n$ and $\P(\alpha_y=v|\F^a_{w})=\frac 1n$ for any $v\in V^G\setminus A^G_{w}$. Therefore, conditionally on $\F_{w}$ and $\Gmodm_{w+}$, the probability that there is a multiple edge between $w$ and an element of $V^G\setminus A^G_{w}$ is smaller than 
$(n-|A^G_{w}|)\P(\text{Binom}(|e_{w}|,\frac 1 n)\geq 2)\leq \frac{|e_{w}|^2}{n}$
 and the probability that there is an element  $y$ of $e_{w}$ such that $\alpha_y=0$ is smaller than $\frac{|e_{w}||A^G_{w}|}n.$ If $GE^a_{w}=1$, then $|e_{w}|\leq \log n$ and $|A^G_{w}|\leq z_n$. Conditionally on $\F^{a}_{w}$, the probability that there is at least an edge between $w$ and an element of $V^{a2}_{w}$, conditionally on $\F_{w}$, is smaller than $\frac{t'|V^{a2}_{w}|}n\leq \frac{t'|A^G_{w}|}n$. If $GE^a_{w}=1$, this quantity is smaller than $\frac {t'z_n}n$. By summing the previous results, we obtain that $\P(GE^a_{w}X^a_{w}=1|\F^a_{w})\leq \eta^a_n$.

$X^b_{w,w'}$ is equal to $1$ if $\poisson{(w,w')}\neq \emptyset$. Therefore $\P(X^b_{w,w'}|\F^{a}_{w})=e^{-\frac{t'}n} \frac {t'}n=\eta^b_n$. 

$X^c_{w',w}$ is equal to $1$ if $\cup_{w''\in V^c_{w,w'}}\poisson{(w,w'')}\neq \emptyset$. The probability of this event is smaller than $\frac {t'|V^c_{w,w'}|}n$. If $GE^c_{w}=1$, then $|V^c_{w,w'}|\leq \log n$ and therefore $\P(GE^c_{w}X^c_{w',w})\leq \frac{t'\log n}n=\eta^c_n$. 
\end{proof}
The following constructions use the tool explained in Lemma \ref{lemmecouplagebernoulli}.

For any $w\in \cup_{j,i}\Delta_{j,i+b_n-1}$, let $X^{a,indep}_w=1$ if either $GE^a_wX^a_w=1$ or ${U^a_w\leq \frac{\eta^a_n-\P(GE^a_wX^a_w=1|\F^{aug,a}_w)}{\P(GE^a_wX^a_w=0|\F^{aug,a}_w)}}$ . Otherwise, let $X^{a,indep}_w=0$.

For any $(w, w')\in \cup_{j,i}\Delta_{j,i+b_n-1}^2$ with $w\breadth w'$, let $X^{b,indep}_{w,w'}=1$ if either $X^{b}_{w,w'}=1$ or $U^b_{w,w'}\leq \frac{\eta^b_n-\P(X^b_{w,w'}=1|\F^{a,aug}_w)}{\P(X^b_{w,w'}=0|\F^{a,aug}_w)}$. Otherwise, let $X^{b,indep}_{w,w'}=0$.

For any $(w, w')\in \cup_{j,i}\Delta_{j,i+b_n-1}^2$ with $w'\breadth w$, let $X^{c,indep}_{w',w}=1$ if either $GE^c_wX^{c}_{w',w}=1$ or $U^c_{w',w}\leq \frac{\eta^c_n-\P(GE^c_wX^c_{w',w}=1|\F^{a,aug}_w)}{\P(GE^c_wX^c_{w',w}=0|\F^{a,aug}_w)}$. Otherwise let $X^{c,indep}_{w',w}=0$.

\begin{cor}
\label{corindepenfants}
For all $w\in\Delta_{j,i+b_n-1}$ and $\bar w\in\{w,w+\}$, conditionally on $\F^{aug}_{j,i-1}$ and $\Gmodm_{\bar w}$, the families of random variables $(X^{a,indep}_{w'})_{w'\in\Delta_{j,i+b_n-1}\cap \Gamma^1_{\bar w}}$, $(X^{b,indep}_{w',w''})_{(w',w'')\in \Gamma^2_{\bar w}\cap(\Delta_{j,i+b_n-1})^2}$, $(X^{c,indep}_{w',w''})_{(w',w'')\in \Gamma^c_{\bar w}\cap(\Delta_{j,i+b_n-1})^2}$ are independent i.i.d. families of Bernoulli random variables of respective parameters $\eta^a_n$, $\eta^b_n$ and $\eta^c_n$.
\end{cor}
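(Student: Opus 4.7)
The plan is to derive the corollary from Lemma \ref{lembound1} by invoking the coupling device of Lemma \ref{lemmecouplagebernoulli} vertex by vertex, in the breadth-first order on $\Delta_{j,i+b_n-1}$, and then to propagate independence from step to step.

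Fix $w'\in\Delta_{j,i+b_n-1}$ with $w'\breadthl\bar w$. By Lemma \ref{auglemma}, Lemma \ref{lembound1} remains valid once $\F^a_{w'}$ is replaced by $\F^{aug,a}_{w'}$: conditionally on $\F^{aug,a}_{w'}$ the indicators $X^a_{w'}$, $(X^b_{w',w''})_{w'\breadth w''}$ and $(X^c_{w''',w'})_{w'''\breadth w'}$ are independent Bernoulli with conditional means at most $\eta^a_n$, $\eta^b_n$ and $\eta^c_n$ respectively, once the $GE$ factors are incorporated. The uniform variables $U^a_{w'}$, $(U^b_{w',w''})_{w''}$ and $(U^c_{w''',w'})_{w'''}$ are, by construction, independent of $\F_\text{end}$ and of one another, hence independent of $\F^{aug,a}_{w'}$ and of those $X$-indicators. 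Applying Lemma \ref{lemmecouplagebernoulli} coordinate by coordinate then turns the local triple into mutually independent Bernoulli variables of exact parameters $\eta^a_n$, $\eta^b_n$, $\eta^c_n$, independent of $\F^{aug,a}_{w'}$ itself. To propagate the independence I would argue inductively on $w'\breadthl\bar w$: by the very definitions of $\Gamma^a_{w'}$, $\Gamma^b_{w'}$ and $\Gamma^c_{w'}$, the $\sigma$-algebra $\F^{aug,a}_{w'}$ contains $\F^{aug}_{j,i-1}$, the unlabelled graph $\Gmodm_{w'}$ and every $X^{\cdot,indep}$-variable attached to indices strictly smaller than $w'$, so the fresh block at $w'$ is independent of everything produced before. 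Collecting over $w'\breadthl\bar w$, restricting each family to its intersection with $\Delta_{j,i+b_n-1}$ or $(\Delta_{j,i+b_n-1})^2$, and coarsening the conditioning from $\F^{aug,a}_{\bar w}$ (or its $\bar w+$ variant) to $\F^{aug}_{j,i-1}\vee\sigma(\Gmodm_{\bar w})$ yields the three independent i.i.d. Bernoulli families claimed.

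The main obstacle I expect is the $\sigma$-algebra bookkeeping: one has to verify that $\Gmodm_{\bar w}$ reveals no information about the $X^{\cdot,indep}$-variables beyond what the successive $\F^{aug,a}_{w'}$ already encode, and that coarsening the conditioning to $\F^{aug}_{j,i-1}\vee\sigma(\Gmodm_{\bar w})$ does not spoil the marginals. Both points are benign: the first follows from Lemma \ref{auglemma} together with the inclusion $\Gmodm_{\bar w}\subset\F^{aug,a}_{\bar w}$ coming from the $\Gamma^a$, $\Gamma^b$, $\Gamma^c$ bookkeeping, and the second is automatic because the parameters $\eta^a_n$, $\eta^b_n$, $\eta^c_n$ are deterministic, so the Bernoulli law produced by Lemma \ref{lemmecouplagebernoulli} is preserved under any coarsening of the conditioning $\sigma$-algebra.
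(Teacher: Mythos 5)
Your proposal is correct and follows essentially the same route as the paper: an induction along the breadth-first order on $\Delta_{j,i+b_n-1}$, applying Lemma \ref{lembound1} together with the coupling of Lemma \ref{lemmecouplagebernoulli} at each vertex, and using the fact that all previously constructed $X^{\cdot,indep}$-variables (and $\Gmodm_{\bar w}$) are measurable with respect to the augmented $\sigma$-algebra at the current vertex, so that the freshly produced block, having deterministic parameters, is independent of everything already built and the conditioning can be coarsened without altering the law. This is exactly the paper's argument, only phrased with slightly more explicit bookkeeping.
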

The version with $w+$ is equivalent to the version with $w$ by substituting $w$ by the next vertex of $\Delta_{j,i+b_n-1}$. Corollary \ref{corindepenfants} is proven by induction. Corollary \ref{corindepenfants} holds for $w$ if $w$ is the first element of $\Delta_{j,i+b_n-1}$ (for the breadth-first order), as the families are empty. Assuming that Corollary \ref{corindepenfants} holds for a vertex $w$, let $F$ be the $\sigma$-algebra generated by the random variables $(X^{a,indep}_{w'})_{w'\in\Delta_{j,i+b_n-1}\cap \Gamma^1_{w}}$, $(X^{b,indep}_{w',w''})_{(w',w'')\in \Gamma^2_{w}\cap(\Delta_{j,i+b_n-1})^2}$, $(X^{c,indep}_{w',w''})_{(w',w'')\in \Gamma^c_{w}\cap(\Delta_{j,i+b_n-1})^2}$. All these variables are $\F^{aug}_{w}$-measurable, therefore conditionally on $\F^{aug}_{j,i+b_n-1}$, $F$ and $\Gmodm_{w+}$:
\begin{itemize}
\item  $(X^{a}_{w})$, $(X^{b}_{w,w'})_{w'\in \Delta_{j,i},w\breadth w'}$, $(X^{c}_{w',w})_{w'\in \Delta_{j,i}, w'\breadth w}$  are independent Bernoulli variables, of respective parameters smaller than $\eta^a_n$, $\eta^b_n$ and $\eta^c_n$, by Lemma \ref{lembound1};
\item $(X^{a,indep}_{w})$, $(X^{b,indep}_{w,w'})_{w'\in \Delta_{j,i},w\breadth w'}$, $(X^{c,indep}_{w',w})_{w'\in \Delta_{j,i}, w'\breadth w}$  are independent Bernoulli variables, of respective parameters equal to $\eta^a_n$, $\eta^b_n$ and $\eta^c_n$, by Lemma \ref{lemmecouplagebernoulli}.
\end{itemize}
And therefore Corollary \ref{corindepenfants} holds for $w+$. By applying Corollary \ref{corindepenfants} to the last element of $\Delta_{j,i}$, one obtains:

\begin{cor}
Conditionally on the $\sigma$-algebra $\F_{j,i+b_n-1}$ and the unlabelled graph $\Gmodm_{j,i+b_n}$, the families of random variables $(X^{a,indep})_{w\in\Delta_{j,i+b_n-1}}$, $(X^{b,indep}_{w,w'})_{w\breadth w'\in\Delta_{j,i+b_n-1}}$, $(X^{c,indep}_{w,w'})_{w\breadth w'\in\Delta_{j,i+b_n-1}}$ are independent i.i.d. families of Bernoulli random variables of respective parameters $\eta^a_n$, $\eta^b_n$ and $\eta^c_n$.
\end{cor}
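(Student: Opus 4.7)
The plan is to derive the corollary from Corollary~\ref{corindepenfants} applied to the last vertex of $\Delta_{j,i+b_n-1}$ in the breadth-first order, which we denote by $w^\star$. At the ``$w^\star+$'' variant, the index sets $\Gamma^{a}_{w^\star+}$, $\Gamma^{b}_{w^\star+}$, $\Gamma^{c}_{w^\star+}$ intersected with $\Delta_{j,i+b_n-1}$ (resp.\ $\Delta_{j,i+b_n-1}^{2}$) exhaust the full index sets of the three families $(X^{a,indep}_w)$, $(X^{b,indep}_{w,w'})_{w\breadth w'}$, $(X^{c,indep}_{w,w'})_{w\breadth w'}$, since $w^\star$ is maximal for $\breadth$ in $\Delta_{j,i+b_n-1}$. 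So Corollary~\ref{corindepenfants} immediately gives conditional independence and the correct Bernoulli marginals given $\F^{aug}_{j,i+b_n-1}$ and $\Gmodm_{w^\star+}$.

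Two adjustments remain. First, I want to condition on $\Gmodm_{j,i+b_n}$ rather than $\Gmodm_{w^\star+}$. Between $w^\star$ and the end of the construction of $\B_{j,i+b_n}$ the algorithm only fake-probes the remaining vertices of $S_{j,i+b_n-1}$, namely dummy vertices and elements that are not $(b_n{-}1)$-children of $D_{j,i}$. The extra edges produced by these fake-probings are built from fresh independent Poisson processes $Y_{\cdot}$ and use only $U^{\bullet}$ variables indexed outside of $\Delta_{j,i+b_n-1}$ (resp.\ outside $\Delta_{j,i+b_n-1}^{2}$), hence jointly independent of the three families. Therefore further conditioning on $\Gmodm_{j,i+b_n}$ does not disturb the conditional product Bernoulli law.

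Second, I want to weaken the $\sigma$-algebra from $\F^{aug}_{j,i+b_n-1}$ down to $\F_{j,i+b_n-1}$. By construction, the families $(U^{a}_{w})$, $(U^{b}_{w,w'})$, $(U^{c}_{w,w'})$, $(U^{d}_{w})$ are jointly independent of $\F_\text{end}$, so they are in particular independent of $(\F_{j,i+b_n-1},\Gmodm_{j,i+b_n})$. Averaging the conditional product Bernoulli distribution against their joint law preserves it, yielding the statement.

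The only real work is bookkeeping: verifying that at the ``$w^\star+$'' stage the three index sets are indeed complete, and that none of the subsequent probings or fake-probings used to finish $\B_{j,i+b_n}$ consume any of the $U$-variables entering the definition of $X^{a,indep}_{w}$, $X^{b,indep}_{w,w'}$, $X^{c,indep}_{w,w'}$ for $w,w' \in \Delta_{j,i+b_n-1}$. Both are direct consequences of how $\Gamma^{a}_{\cdot+}$, $\Gamma^{b}_{\cdot+}$, $\Gamma^{c}_{\cdot+}$, $\Gamma^{d}_{\cdot}$ were defined, so no substantial obstacle is expected.
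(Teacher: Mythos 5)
Your proposal is correct and follows the same route as the paper, which simply applies Corollary \ref{corindepenfants} to the last element of $\Delta_{j,i+b_n-1}$ (in its ``$+$'' version) so that the index sets $\Gamma^{a}_{w^\star+}$, $\Gamma^{b}_{w^\star+}$, $\Gamma^{c}_{w^\star+}$ exhaust the three families. The two adjustments you spell out --- passing from $\Gmodm_{w^\star+}$ to $\Gmodm_{j,i+b_n}$ via the independence of the remaining fake-probings, and descending from $\F^{aug}_{j,i+b_n-1}$ to $\F_{j,i+b_n-1}$ because a conditional law that is a.s.\ a fixed product measure remains so under a sub-$\sigma$-algebra --- are exactly the bookkeeping the paper leaves implicit.
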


For any $w\in D_{j,i}$, let $X^{a,parent}_w=1$ if there exists $w'$, a $b_n-1$-child of $w$ such that $X^{a,indep}_{w'}=1$. For any $w_1,w_2\in D_{j,i}$, let $X^{bc,parent}_{w_1,w_2}=1$ if there exists $w_1'$, a $b_n-1$-child of $w$ and $w_2'$ a $b_n-1$-child of $w_2$ such that $X^{b,indep}_{w_1',w_2'}=1$ or $X^{c,indep}_{w_1',w_2'}=1$. Let $GE_{j,i}$ be the random variable equal to $1$ if all the balls of radius $b_n-1$ centered at vertices of $D_{j,i}$ in $\Gmod$ contains at most $x_n$ vertices and $|A^G_{j,i}|\leq z_n$. Otherwise, let $GE_{j,i}=0$. $GE_{j,i}$ is $\F_{j,i+b_n-1}$-measurable, and $GE_{j,i}\geq GE_4$.
\begin{cor}
\label{lemmeparentmaj}
Conditionally on $\F^{aug}_{j,i+b_n-1}$ and $\Gmodm_{j,i+b_n}$, the families of random variables $(X^{a,parent}_w)_{w\in D_{j,i}}$ and $(X^{bc,parent}_{w,w'})_{w,w'\in D_{j,i}}$ are independent. Moreover, 
\begin{eqnarray*}
\E(GE_{j,i}X^{a,parent}_w|\F^{aug}_{j,i+b_n-1},\Gmodm_{j,i+b_n})&\leq &x_n\eta^a_n\\
\E(GE_{j,i}X^{bc,parent}_{w,w'}|\F^{aug}_{j,i+b_n-1},\Gmodm_{j,i+b_n})&\leq & x_n^2(\eta^b_n+\eta^c_n)=:\eta^{bc,parent}_n.
\end{eqnarray*}
\end{cor}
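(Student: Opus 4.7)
The plan is to deduce Corollary~\ref{lemmeparentmaj} directly from the previous Corollary (the conditional i.i.d.\ structure of the families $(X^{a,indep}_w)$, $(X^{b,indep}_{w,w'})$, $(X^{c,indep}_{w,w'})$ indexed by $\Delta_{j,i+b_n-1}$), using the tree structure of $\Gmodm_{j,i+b_n}$ to separate the contributions of distinct ancestors in $D_{j,i}$.

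First, I would observe that once we condition on $\Gmodm_{j,i+b_n}$, the set of $b_n-1$-children of each $w\in D_{j,i}$ is known and, because $\Gmod$ is a planar tree (or two trees), the sets $C(w):=\{w'\in\Delta_{j,i+b_n-1}: w'\text{ is a }b_n-1\text{-child of }w\}$ are pairwise disjoint. Hence $X^{a,parent}_w=\max_{w'\in C(w)}X^{a,indep}_{w'}$ is a measurable function of $(X^{a,indep}_{w'})_{w'\in C(w)}$; similarly $X^{bc,parent}_{w_1,w_2}$ is a measurable function of the variables $(X^{b,indep}_{w_1',w_2'},X^{c,indep}_{w_1',w_2'})$ with $(w_1',w_2')\in C(w_1)\times C(w_2)$. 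These index sets are pairwise disjoint as $(w_1,w_2)$ varies, and the $a$-indices are disjoint from the $(b,c)$-indices. By the previous Corollary, conditionally on $\F^{aug}_{j,i+b_n-1}$ and $\Gmodm_{j,i+b_n}$, the underlying Bernoulli families are independent, so the asserted independence of $(X^{a,parent}_w)_w$ and $(X^{bc,parent}_{w,w'})_{w,w'}$ follows immediately.

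For the expectation bounds, I would use a union bound. For any fixed $w\in D_{j,i}$,
\[
X^{a,parent}_w\leq \sum_{w'\in C(w)} X^{a,indep}_{w'},
\]
so that
\[
\E\!\left(X^{a,parent}_w\,\big|\,\F^{aug}_{j,i+b_n-1},\Gmodm_{j,i+b_n}\right)\leq |C(w)|\,\eta^a_n.
\]
The event $\{GE_{j,i}=1\}$ is $\F_{j,i+b_n-1}$-measurable and forces $|C(w)|\leq x_n$ for every $w\in D_{j,i}$ (since $C(w)$ is contained in the ball of radius $b_n-1$ centred at $w$ in $\Gmod$); multiplying by $GE_{j,i}$ yields $\E(GE_{j,i}X^{a,parent}_w\mid\cdot)\leq x_n\eta^a_n$. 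The same argument with the double union bound
\[
X^{bc,parent}_{w_1,w_2}\leq \sum_{(w_1',w_2')\in C(w_1)\times C(w_2)}\bigl(X^{b,indep}_{w_1',w_2'}+X^{c,indep}_{w_1',w_2'}\bigr)
\]
gives $\E(GE_{j,i}X^{bc,parent}_{w_1,w_2}\mid\cdot)\leq x_n^2(\eta^b_n+\eta^c_n)=\eta^{bc,parent}_n$.

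The only point that needs a little care is verifying that the measurability framework is consistent: the identification of $C(w)$ requires $\Gmodm_{j,i+b_n}$, not merely $\Gmodm_{j,i+b_n-1}$, because we need to know which elements of $\Delta_{j,i+b_n-1}$ sit under which ancestor; but this is precisely what we condition on. There is no genuine obstacle, since the previous Corollary already packages the independence at the level of the underlying indicator families, and tree-disjointness of the children does the rest.
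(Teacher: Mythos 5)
Your proposal is correct and is essentially the argument the paper intends: the paper states this corollary without an explicit proof, treating it as immediate from the preceding corollary via exactly your two observations — the $b_n-1$-children sets $C(w)$ of distinct $w\in D_{j,i}$ are disjoint in the tree $\Gmodm_{j,i+b_n}$ (giving independence of functions of disjoint blocks of conditionally independent Bernoullis), and a union bound with $|C(w)|\le x_n$ on $\{GE_{j,i}=1\}$ gives the stated expectation bounds. Nothing is missing.
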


\begin{lem}
\label{independentisation}
Let $I$ be an ordered finite set, and let $(X_{i,i'})_{i,i'\in I, i\leq i'})$ be a family of independent Bernoulli random variables of parameters smaller than $\eta$. For $i<i'$, let $X_{i',i}=X_{i,i'}$. Let $\F_2$ be a $\sigma$-algebra. Then there exists a family of random variables $(X^{bound}_i)_{i\in I}$  such that:
\begin{itemize}
 \item $(X^{bound}_i)_{i\in I}$ is an i.i.d. family of Bernoulli random variables of parameter $\min(1,2\sqrt{\eta |I|})$.
\item For any $i,\,i'\in I$, $X_{i,i'}\leq X^{bound}_i$.
\end{itemize}
For any $Y$ $\F_2$-measurable, $\E(Y|(X_{i,i'})_{i,i'\in I})=\E(Y|(X_{i,i'})_{i,i'\in I},(X^{bound}_i)_{i\in I})$.
\end{lem}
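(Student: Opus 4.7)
Set $V_i := \max_{i' \in I} X_{i,i'}$, so that the required domination $X_{i,i'} \leq X^{bound}_i$ for every $i,i' \in I$ is equivalent to $X^{bound}_i \geq V_i$. My plan is to couple $(V_i)_{i \in I}$ with an i.i.d.\ family of Bernoulli$(p)$ random variables, $p := \min(1, 2\sqrt{\eta |I|})$, on the probability space enlarged by an auxiliary family $(U_i)_{i \in I}$ of i.i.d.\ uniforms on $[0,1]$ chosen independently of $(X_{i,i'})$ and of $\F_2$; the variables $X^{bound}_i$ will then be defined as the corresponding coordinates of this dominating family.

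The central step is the stochastic domination of $(V_i)$ by $|I|$ i.i.d.\ Bernoulli$(p)$ variables. By Strassen's theorem this reduces to verifying, for every upper set $A \subseteq \{0,1\}^I$, that $\P(V \in A) \leq \P(\mathrm{Bern}(p)^{\otimes |I|} \in A)$. Two families of estimates handle all such $A$. First, from the independence of $(X_{i,j})_{i \leq j}$, for every $S \subseteq I$,
\[
\P(V_i = 0 \text{ for all } i \in S) = \prod_{\{i,j\} \cap S \neq \emptyset}(1 - p_{i,j}) \geq (1-\eta)^{|S|(2|I|-|S|+1)/2},
\]
which compared to $(1-p)^{|S|}$ yields the inequalities on principal decreasing events as soon as $p \gtrsim \eta|I|$. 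Second, a first-moment bound $\E(\sum_i V_i) \leq \eta|I|^2$ together with Markov's inequality gives $\P(\sum_i V_i \geq k) \leq \eta|I|^2/k$, which must be dominated by the binomial tail $\P(\mathrm{Binom}(|I|, p) \geq k)$; the critical case $k = 1$ is what forces $p \gtrsim \sqrt{\eta|I|}$, and the factor $2$ in $2\sqrt{\eta|I|}$ absorbs the lower-order corrections.

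Given the stochastic domination, the monotone coupling produced by Strassen's theorem is realized measurably from $(X_{i,i'})$ and the auxiliary variables $(U_i)$. This yields $X^{bound}_i \geq V_i \geq X_{i,i'}$ almost surely, with $(X^{bound}_i)$ i.i.d.\ Bernoulli$(p)$. The conditional-expectation identity $\E(Y \mid (X_{i,i'}), (X^{bound}_i)) = \E(Y \mid (X_{i,i'}))$ for any $\F_2$-measurable $Y$ is then immediate, because the only additional randomness used in the construction, namely $(U_i)$, was chosen independent of $\F_2$.

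The main obstacle is the precise verification of Strassen's condition at the claimed rate $2\sqrt{\eta|I|}$: the positive correlation $\mathrm{Cov}(V_i, V_j) > 0$ inherited from the shared variable $X_{i,j} = X_{j,i}$ obstructs a one-step union-bound argument, and the square-root rate reflects a trade-off between the product bound on decreasing events (which alone only forces a linear rate $\eta|I|$) and the first-moment bound on the total $\sum_i V_i$ (which forces the stronger $\sqrt{\eta|I|}$). Once this is pinned down, the remainder of the argument is bookkeeping.
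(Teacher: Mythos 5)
There is a genuine gap at the central step. Your whole argument rests on establishing the stochastic domination of $(V_i)_{i\in I}$ by an i.i.d.\ Bernoulli$(p)$ family with $p=\min(1,2\sqrt{\eta|I|})$, and you verify Strassen's condition only on two families of increasing events: complements of $\{V_i=0\ \forall i\in S\}$ (i.e.\ unions $\{\exists i\in S: V_i=1\}$) and level sets $\{\sum_i V_i\geq k\}$. These do not exhaust the upper sets of $\{0,1\}^I$, and the events you omit are exactly the ones that determine the rate. Take the intersection $\{V_i=1\}\cap\{V_j=1\}$ for $i\neq j$: because of the shared variable $X_{i,j}=X_{j,i}$, its probability is at least the parameter of $X_{i,j}$, which can be of order $\eta$, whereas for the i.i.d.\ family it equals $p^2$. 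This single event already forces $p\gtrsim\sqrt{\eta}$, and with $|I|$ indices one is pushed to $\sqrt{\eta|I|}$. By contrast, both families you do check are satisfied at the \emph{linear} rate: your product bound needs only $p\gtrsim\eta|I|$, and your Markov bound at $k=1$ reads $\eta|I|^2\leq\P(\mathrm{Binom}(|I|,p)\geq 1)\approx p|I|$, i.e.\ again $p\gtrsim\eta|I|$ — not $\sqrt{\eta|I|}$ as you claim. So your sketch misidentifies the critical events, and completing it along the proposed lines would not yield the domination; you acknowledge the verification as "the main obstacle," but it is the entire content of the lemma.

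The paper avoids Strassen altogether and builds the monotone coupling directly and sequentially: it sets $X_i=\max_j X_{i,j}$, orders $I$, and defines $X^{bound}_i$ as $1$ if $X_i=1$ or if an auxiliary uniform $U_i$ falls below the top-up threshold $\bigl(2\sqrt{\eta|I|}-\P(X_i=1\mid (X^{bound}_{i'})_{i'<i})\bigr)/\P(X_i=0\mid (X^{bound}_{i'})_{i'<i})$. The substance is the inductive bound $\P(X_i=1\mid (X^{bound}_{i'})_{i'<i})\leq\sqrt{\eta|I|}$, proved by a Bayes-factor argument: conditioning on $X^{bound}_j=1$ for $j<i$ inflates $\P(X_{i,j}=1)$ by at most a factor $1/\sqrt{\eta|I|}$ (since the top-up probability is at least $\sqrt{\eta|I|}$ by induction), turning $\eta$ into $\sqrt{\eta/|I|}$, and summing over the $|I|$ values of $j$ gives $\sqrt{\eta|I|}$. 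That is the true origin of the square root, and it is precisely the positive-correlation effect your approach would have to confront on intersection events. If you want to salvage your route, you would need either to verify Strassen's inequality on all increasing events (there is no short list here), or to reproduce some form of this sequential revealing argument — at which point you are essentially writing the paper's proof.
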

Lemma \ref{independentisation} allows to take an i.i.d.  family of Bernoulli variables indexed by two elements of $I$ and bound it by an i.i.d. indexed family of Bernoulli variables indexed by one element of $I$. The bound $2\sqrt{\eta|I|}$ does not seem to be optimal, but is sufficient for our use. As we need a specific version of Lemma \ref{independentisation}, the proof will be done in our particular case. A proof of Lemma \ref{independentisation} can be found in Section \ref{preuvelemme}.

Let $\eta^{parent}_n=x_n\eta^a_n+2\sqrt{z_n\eta^{bc,parent}_n}=o(1)$. We assume that $n$ is large enough such that $\eta^{parent}_n\leq 1$. As a consequence, $z_n\eta^{bc,parent}_n\leq 1$.

For any $w\in D_{j,i}$, let $X^{parent}_w=1$ if either $GE^{parent}_{j,i}X^{a,parent}_w=1$ or there exists $w'\in D_{j,i}$ such that $GE^{parent}_{j,i}X^{bc,parent}_{w,w'}=1$. Otherwise, let  $X^{parent}_w=0$.
Let us define $(X^{parent,indep}_w)_{w\in D_{j,i}}$ as $X^{parent,indep}_w=1$ if either
\begin{itemize}
\item $X^{parent}_w=1$;
\item $U^d_w\leq \frac{\eta^{parent}_n-\P(X^{parent}_w=1|\F^{aug}_{j,i-1},\Gmodm_{j,i},(X^{parent,indep}_{w'})_{w'\breadth w, w'\in D_{j,i}})}{\P(X^{parent}_w=0|\F^{aug}_{j,i+b_n-1},\Gmodm_{j,i+b_n},(X^{parent,indep}_{w'})_{w'\breadth w, w'\in D_{j,i}})}$.
\end{itemize}
If neither condition happens, let $X^{parent,indep}_w=0$.

This family of variables is defined by induction on $w$. Let $\F^{parent,indep}_w$ be the $\sigma$-algebra generated by $\F^{aug}_{j,i-1}, \Gmodm_{j,i}$ and $(X^{parent,indep}_{w'})_{w'\breadth w, w'\in D_{j,i}}$.

\begin{lem}
\label{independentisationcaspart}
For all $w\in D_{j,i}$, 
$$\P(X^{parent}_w=1|\F_w^{parent,indep})\leq x_n\eta^a_n+\sqrt{z_n\eta^{bc,parent}_n}.$$
\end{lem}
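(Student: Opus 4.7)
The plan is to proceed by induction on $w$ in the breadth-first order on $D_{j,i}$, working throughout conditionally on $\F^{*}:=\sigma(\F^{aug}_{j,i+b_n-1},\Gmodm_{j,i+b_n})$. By Corollary \ref{lemmeparentmaj}, conditionally on $\F^{*}$ the two families $(GE_{j,i}X^{a,parent}_{w})$ and $(GE_{j,i}X^{bc,parent}_{w,w'})$ are mutually independent with marginals bounded by $x_n\eta^a_n$ and $\eta^{bc,parent}_n$ respectively. The inductive hypothesis is that the bound of the lemma has been established for every $w''\breadth w$; applying Lemma~\ref{lemmecouplagebernoulli} to these bounds, $X^{parent,indep}_{w''}$ is Bernoulli$(\eta^{parent}_n)$ conditionally on $\F^{parent,indep}_{w''}$, and hence conditionally on $\F^{*}$. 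The base case (the first $w$ of $D_{j,i}$) reduces to $\F^{parent,indep}_w=\F^{*}$ and follows directly from a union bound together with the inequality $z_n\eta^{bc,parent}_n\le 1$.

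Write $X^{parent}_w=1$ as the disjunction of $\{GE_{j,i}X^{a,parent}_w=1\}$ and $\{\exists w'\in D_{j,i}:\,GE_{j,i}X^{bc,parent}_{w,w'}=1\}$ and estimate each piece conditionally on $\F^{parent,indep}_w$. For the $a$-contribution: $X^{a,parent}_w$ enters only the definition of $X^{parent}_w$ and not of any $X^{parent,indep}_{w''}$ for $w''\breadth w$; so conditionally on $\F^{*}$ it is independent of the extra information in $\F^{parent,indep}_w$, giving a contribution bounded by $x_n\eta^a_n$. For the $bc$-contribution I split the union bound over $D_{j,i}$ according to whether $w'\breadthl w$ or $w'\breadth w$. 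If $w'\geq w$, the variable $X^{bc,parent}_{w,w'}$ appears in $X^{parent}_w$ and possibly $X^{parent}_{w'}$, but never in any $X^{parent,indep}_{w''}$ with $w''\breadth w$, so it is independent of $\F^{parent,indep}_w$ given $\F^{*}$, and this part contributes at most $z_n\eta^{bc,parent}_n$.

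The delicate case is $w'\breadth w$, where by symmetry $X^{bc,parent}_{w,w'}=X^{bc,parent}_{w',w}$ feeds into $X^{parent}_{w'}$ and hence into $X^{parent,indep}_{w'}$, which lies in the conditioning. Here I use the following key observation: if $X^{parent,indep}_{w'}=0$ then $X^{parent}_{w'}=0$ and in particular $GE_{j,i}X^{bc,parent}_{w,w'}=0$, so the conditional probability vanishes outside $\{X^{parent,indep}_{w'}=1\}$. On the latter event, the other coordinates $(X^{parent,indep}_{w''})_{w''\breadth w,\,w''\neq w'}$ are conditionally independent of $X^{bc,parent}_{w,w'}$ given $\F^{*}$ (they involve only entries of the $X^{bc,parent}$-family distinct from $X^{bc,parent}_{w,w'}$), so Bayes' rule gives
\[
\P\!\left(GE_{j,i}X^{bc,parent}_{w,w'}=1\mid\F^{parent,indep}_w\right)\le \frac{\eta^{bc,parent}_n}{\P(X^{parent,indep}_{w'}=1\mid\F^{*})}\le\frac{\eta^{bc,parent}_n}{\eta^{parent}_n}\le\frac{\sqrt{z_n\eta^{bc,parent}_n}}{2z_n},
\]
by the inductive hypothesis and the definition $\eta^{parent}_n\ge 2\sqrt{z_n\eta^{bc,parent}_n}$. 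Summing over at most $|D_{j,i}|\le z_n$ such $w'$ yields the bound $\tfrac12\sqrt{z_n\eta^{bc,parent}_n}$. Combining everything, the total estimate is $x_n\eta^a_n+z_n\eta^{bc,parent}_n+\tfrac12\sqrt{z_n\eta^{bc,parent}_n}$, and for $n$ large enough $\sqrt{z_n\eta^{bc,parent}_n}\le 1/2$, so $z_n\eta^{bc,parent}_n\le\tfrac12\sqrt{z_n\eta^{bc,parent}_n}$ and the two $bc$-terms combine into $\sqrt{z_n\eta^{bc,parent}_n}$.

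The main obstacle is the dependency structure: $X^{bc,parent}$ is a symmetric array, so freezing $(X^{parent,indep}_{w'})_{w'\breadth w}$ really does reveal partial information about the row of $w$, and one has to argue carefully that each individual $X^{bc,parent}_{w,w'}$ is conditionally related to $\F^{parent,indep}_w$ only through $X^{parent,indep}_{w'}$ itself. The rest is bookkeeping with $\sigma$-algebras and the choice of threshold $\eta^{parent}_n$, which was precisely made so that the denominator $\eta^{parent}_n\ge 2\sqrt{z_n\eta^{bc,parent}_n}$ absorbs the factor $|D_{j,i}|\le z_n$ from the union bound.
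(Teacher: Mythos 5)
Your proposal is correct and follows essentially the same route as the paper's proof: induction along the breadth-first order, the split of $X^{parent}_w$ into the $a$-part and the two $bc$-sums according to whether $w'$ precedes or follows $w$, and the Bayes/size-biasing step for $w'\breadth w$ using that $GE_{j,i}X^{bc,parent}_{w,w'}=1$ forces $X^{parent,indep}_{w'}=1$, with the denominator controlled by the inductively guaranteed acceptance probability. The only cosmetic differences are that the paper performs the Bayes step after conditioning on the explicit $\sigma$-algebra $\F^{parent}_{w',w}$ (revealing everything but $X^{bc,parent}_{w',w}$ and $U^d_{w'}$) and bounds the denominator below by $\E(p_{w'}\mid\cdot)\ge\sqrt{z_n\eta^{bc,parent}_n}$, whereas you use the full threshold $\eta^{parent}_n\ge 2\sqrt{z_n\eta^{bc,parent}_n}$ and redistribute the resulting factor $2$ between the two $bc$-sums; both allocations close the induction.
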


\begin{proof}
Let $p_w= \frac{\eta^{parent}_n-\P(X^{parent}_w=1|\F^{parent,indep}_{w})}{\P(X^{parent}_w=0|\F^{parent,indep}_{w})}$. 

Lemma \ref{independentisationcaspart} is proven by induction. We assume that Lemma \ref{independentisationcaspart} holds for every $w'\in D_{j,i}\breadth w$. Therefore for every $w'\breadth w$:
\begin{eqnarray}
\notag p_{w'}&\geq& \frac{\sqrt{z_n\eta^{bc,parent}_n}}{\P(X^{parent}_w=0|\F^{parent,indep}_{w})}\\
&\geq&\sqrt{z_n\eta^{bc,parent}_n}\label{eqnpw}
\end{eqnarray}
\begin{eqnarray}
\notag  X^{parent}_w&\leq& X^{a,parent}_w+\sum_{w'\in D_{j,i},w'\breadth w}X^{bc,parent}_{w',w}+\\
\notag &&+\sum_{w'\in D_{j,i},w'\breadthinv w}X^{bc,parent}_{w,w'}\\
\notag  \E(X^{parent}_w|\F^{parent,indep}_w)&\leq&\E(X^{a,parent}_w|\F^{parent,indep}_w)+\\
\notag &&+\hspace{-0.3cm}\sum_{w'\in D_{j,i},w'\breadth w}\E(X^{bc,parent}_{w',w}|\F^{parent,indep}_w)+\\
 &&+\hspace{-0.3cm}\sum_{w'\in D_{j,i},w'\breadthinv w}\E(X^{bc,parent}_{w,w'}|\F^{parent,indep}_w) \label{decompositioninegalite}
\end{eqnarray}
As $D_{j,i}$ is $\F^{parent,indep}_w$-measurable. Each term of the right-hand-side of the inequality (\ref{decompositioninegalite}) will be bound separately.

\begin{eqnarray}
\notag x_n\eta^a_n&\geq &\E\left(X^{a,parent}_w\Big |\F^{aug}_{j,i},\Gmodm_{j,i}\right)\\
&=&\E\left(X^{a,parent}_w\Bigg |\begin{array}{l}\F^{aug}_{j,i},\Gmodm_{j,i},(X^{a,parent}_{w'})_{w'\breadth w,w'\in D_{j,i}},\\(X^{bc,parent}_{w',w''})_{w'\breadth w'' \in D_{j,i}}\end{array}\right) \notag\\
&= &\E\left(X^{a,parent}_w\Bigg|\begin{array}{c}
\F^{aug}_{j,i},\Gmodm_{j,i},(X^{a,parent}_{w'})_{w'\breadth w\in D_{j,i}},\\ (X^{bc,parent}_{w',w''})_{w'\breadth w'' \in D_{j,i}}, (U^d_{w'})_{w'\in D_{j,i}}\end{array}\right)\label{finaletribu}.
\end{eqnarray}
The first inequality comes from Corollary \ref{lemmeparentmaj}, the second equality comes from  the  conditional independence of the variables $X^{a,parent}_{w}$ and $X^{bc,parent}_{w,w'}$ in Corollary \ref{lemmeparentmaj} and the last equality comes from the conditional independence of $(U^d_w)_{w \in D_{j,i+1}}$.  As $\F^{parent,indep}_w$ is included in the $\sigma$-algebra used in (\ref{finaletribu}), we have:
\begin{equation}\E(X^{a,parent}_w|\F^{parent,indep}_w)\leq x_n\eta^a_n.\label{majorationxaparent}
\end{equation}
 
We are now going to prove that for all $w,w'$:
\begin{equation}
\E(X^{bc,parent}_{w,w'}|\F^{parent,indep}_w)\leq\sqrt{\frac{\eta^{bc,parent}_n}{z_n}}\text{ a.s.}
\label{equationXbcparent}
\end{equation}
Let $w'\breadth w\in D_{j,i}$. Let $\F^{parent}_{w',w}$ be the $\sigma$-algebra generated by $\F^{aug}_{j,i-1}$, $\Gmodm_{j,i}$, $X^{a,parent}_{w^1\in D_{j,i}}$, $(X^{bc,parent}_{w^1,w^2})_{(w^1,w^2)\neq(w',w),\,w^1\breadth w^2\in D_{j,i}}$, $(U^d_{w^1})_{w^1\in D_{j,i} \setminus \{w'\}}$. As $\F^{parent,indep}_w$ is included in the $\sigma$-algebra generated by $\F^{parent}_{w,w'}$ and $X^{parent,indep}_{w'}$, we will prove:
\begin{equation}\E(X^{bc,parent}_{w',w}|\F^{parent}_{w',w},X^{parent,indep}_{w'})\leq \sqrt{\frac{\eta^{bc,parent}_n}{z_n}}\text{ a.s.}\label{equationXbcparentmodif}
\end{equation}
\noindent{}as (\ref{equationXbcparent}) is a consequence of (\ref{equationXbcparentmodif}).	
There are two possibilities. If ${X^{parent,indep}_{w'}=0}$, then $X^{bc,parent}_{w',w}=0$ and (\ref{equationXbcparentmodif}) holds. Otherwise, if $X^{parent,indep}_{w'}=1$:

\begin{eqnarray}
\notag \P(X^{bc,parent}_{w',w}=1|\F^{parent}_{w',w},X^{parent,indep}_{w'}=1)&=&\frac{\P(X^{bc,parent}_{w',w}=1,X^{parent,indep}_{w'}=1|\F^{parent}_{w',w})}{\P(X^{parent,indep}_{w'}=1|\F^{parent}_{w',w})}\\
\notag &=&\frac{\P(X^{bc,parent}_{w',w}=1|\F^{parent}_{w',w})}{\P(X^{parent,indep}_{w'}=1|\F^{parent}_{w',w})}\\
\notag &\leq&\frac{\P(X^{bc,parent}_{w',w}=1|\F^{parent}_{w',w})}{\P(U^d_{w'}\leq p_{w'}|\F^{parent}_{w',w})}\\
&=&\frac{\P(X^{bc,parent}_{w',w}=1|\F^{parent}_{w',w})}{\E( p_{w'}|\F^{parent}_{w',w})}\label{equationXbcparentpart1}
\end{eqnarray}
 By using Corollary \ref{lemmeparentmaj} and the conditional independence of $(U^d_w)_{w \in D_{j,i}}$ we have:
\begin{eqnarray}
\notag \P(X^{bc,parent}_{w',w}=1|\F^{parent}_{w',w})&=&\P(X^{bc,parent}_{w',w}=1|\F^{aug}_{j,i+b_n-1},\Gmodm_{j,i+b_n})\\
&\leq&\eta^{bc,parent}_n\label{equationXbcparentpart2}
\end{eqnarray}
By using (\ref{eqnpw}) and (\ref{equationXbcparentpart2}) in (\ref{equationXbcparentpart1}), the equation (\ref{equationXbcparentmodif}) holds, and therefore equation (\ref{equationXbcparent}) is proven for all $w'<w$.

Let $w'>w$. By Corollary \ref{lemmeparentmaj} and the conditional independence of the $(U^d_{v})_{v\in D_{j,i}}$:
 \begin{eqnarray*}
 \E\left(X^{bc,indep}_{w,w'}\left|\begin{array}{c}\F^{aug}_{j,i+b_n-1},\Gmodm_{j,i+b_n},X^{a,parent}_{w^1\in D_{j,i}},\\ (X^{bc,parent}_{w^1,w^2})_{w^1\breadthl w, w_1\breadthl w_2\in D_{j,i}},\\ (U^d_{w^1})_{w^1\neq w\in D_{j,i} }\end{array}\right.\right)&=& \E(X^{bc,indep}_{w,w'}|\F^{aug}_{j,i+b_n-1},\Gmodm_{j,i+b_n})\\
 &\leq& \eta^{bc,parent}_n\\
&\leq&\sqrt{\frac{\eta^{bc,parent}_n}{z_n}}
 \end{eqnarray*}
as by hypothesis $\eta^{bc,parent}z_n\leq 1$. As $\F^{parent,indep}_w$ is measurable with respect to this $\sigma$-algebra, the bound also holds conditionally on $\F^{parent}_w$, proving inequality (\ref{equationXbcparent}) for $w'>w$.

By summing the bounds obtained in (\ref{majorationxaparent}) and (\ref{equationXbcparent}), we obtain:

\begin{eqnarray*}
 \E(GE_{j,i}X^{parent}_w|\F^{parent,indep}_w)&\leq & x_n\eta^a_n+z_n\sqrt{\frac{\eta^{bc,parent}_n}{z_n}}\\
&=&x_n\eta^a_n+\sqrt{z_n\eta^{bc,parent}_n}.
\end{eqnarray*}
\end{proof}
\begin{cor}
\label{coroindependanceparent}
Conditionally on $\F^{aug}_{j,i+b_n-1}$ and $\Gmodm_{j,i+b_n}$, $(X^{parent,indep}_w)_{w\in D_{j,i+b_n}}$ is an i.i.d. family of Bernoulli variables of parameter $\eta^{parent}_n$.
\end{cor}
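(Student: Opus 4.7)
The plan is to prove Corollary \ref{coroindependanceparent} by an inductive argument along the breadth-first order on $D_{j,i}$, where the construction of each $X^{parent,indep}_w$ is an instance of the general coupling device of Lemma \ref{lemmecouplagebernoulli} and Lemma \ref{independentisationcaspart} supplies precisely the bound needed to apply it.

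\medskip

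First I would fix notation. Write $\G_0$ for the conditioning $\sigma$-algebra $\F^{aug}_{j,i+b_n-1}\vee \sigma(\Gmodm_{j,i+b_n})$, and for $w\in D_{j,i}$ let $\G_w=\G_0 \vee \sigma\bigl((X^{parent,indep}_{w'})_{w'\breadth w,\,w'\in D_{j,i}}\bigr)$, so that $\G_w=\F^{parent,indep}_w$ in the notation of the paper. The target is to show, by induction on $w$ in breadth-first order, that conditionally on $\G_w$, $X^{parent,indep}_w$ is Bernoulli of parameter $\eta^{parent}_n$; iterating this statement yields the corollary because $\G_0\subset \G_w$ for every $w$.

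\medskip

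Next I would check the hypotheses of Lemma \ref{lemmecouplagebernoulli} for each $w$. By construction, $X^{parent,indep}_w$ has the form \emph{either $X^{parent}_w=1$, or $U^d_w\le p_w$}, with
\[
p_w=\frac{\eta^{parent}_n-\P(X^{parent}_w=1\mid \G_w)}{\P(X^{parent}_w=0\mid \G_w)}.
\]
Lemma \ref{independentisationcaspart} gives
\[
\P(X^{parent}_w=1\mid \G_w)\le x_n\eta^a_n+\sqrt{z_n\eta^{bc,parent}_n}\le \eta^{parent}_n,
\]
so $p_w\in[0,1]$. I then need the independence hypothesis: that $U^d_w$ is uniform on $[0,1]$ and independent of $\G_w$. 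The family $(U^d_w)_{w\in \Gmod_\text{end}}$ is, by definition, i.i.d.\ uniform and independent of $\F_\text{end}$; in particular it is independent of $\G_0$, and of the previously defined $(X^{parent,indep}_{w'})_{w'\breadth w}$ since those depend only on $\G_0$ and on $(U^d_{w'})_{w'\breadth w}$. Hence $U^d_w$ is independent of $\G_w$, and Lemma \ref{lemmecouplagebernoulli} (applied with $V=X^{parent}_w$, $U=U^d_w$, $\F=\G_w$, $p=\eta^{parent}_n$) yields that $X^{parent,indep}_w$ is Bernoulli of parameter exactly $\eta^{parent}_n$ and independent of $\G_w$.

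\medskip

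Finally I would assemble the induction. For the first element $w_0$ of $D_{j,i}$ in breadth-first order, $\G_{w_0}=\G_0$ and the argument above gives $X^{parent,indep}_{w_0}\perp\!\!\!\perp \G_0$ with the correct law. At the inductive step, granting the statement for all predecessors of $w$, the conditional law of $X^{parent,indep}_w$ given $\G_w$ is Bernoulli$(\eta^{parent}_n)$ and this variable is independent of $\G_w$; this is exactly the successive-extension condition characterising conditional independence of an i.i.d.\ family. I expect the only subtlety — and the step to state carefully — to be the verification that $U^d_w$ remains independent of the enlarged $\sigma$-algebra $\G_w$ after successive couplings, which follows simply because the coupling only uses $(U^d_{w'})_{w'\breadth w}$ together with $\G_0$-measurable data; no routine calculation is required beyond invoking the joint independence of the $U^d_\cdot$ from $\F_\text{end}$.
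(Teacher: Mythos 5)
Your proposal is correct and follows the same route as the paper, whose entire proof of Corollary \ref{coroindependanceparent} is the one-line remark that it ``uses the construction of Lemma \ref{lemmecouplagebernoulli}'' together with the bound of Lemma \ref{independentisationcaspart}; you have simply made explicit the breadth-first induction and the verification that $U^d_w$ is independent of $\F^{parent,indep}_w$. One small imprecision: $X^{parent,indep}_{w'}$ for $w'\breadth w$ depends not only on $\F^{aug}_{j,i+b_n-1}$, $\Gmodm_{j,i+b_n}$ and $(U^d_{w'})_{w'\breadth w}$ but also on the variables $U^a,U^b,U^c$ through $X^{parent}_{w'}$ --- the required independence of $U^d_w$ from $\F^{parent,indep}_w$ nevertheless holds because the whole family of $U$-variables is jointly independent of $\F_{\text{end}}$ and mutually independent.
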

Corollary \ref{coroindependanceparent} uses the construction of Lemma \ref{lemmecouplagebernoulli} to obtain i.i.d. variables.

\begin{defi}For every $i\in \N$, $j\in\{1,2\}$, $w\in S_{j,i}\setminus D_{j,i}$, let ${X^{parent,indep}_w=1}$ if $U^d_w\leq \eta^{parent}_n$. Otherwise, let  $X^{parent,indep}_w=0$. 
\label{defiindep}
\end{defi}
Definition \ref{defiindep} allows us to extend the family $(X^{parent,indep}_w)_{w\in D_{j,i}}$ to vertices not in a $D_{j,i}$, in such a way that the family is still i.i.d:
\begin{lem}
\label{lemmeperco}
Conditionally on $F^{aug}_{j,i+b_n-1}$ and $\Gmodm_{j,i+b_n}$, $(X^{parent,indep}_w)_{w\in S_{j,i}}$ is an i.i.d. family of Bernoulli variables of parameter $\eta^{parent}_n$.
\end{lem}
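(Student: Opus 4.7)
The plan is to decompose the family indexed by $S_{j,i}$ into its restriction to $D_{j,i}$, which is already controlled by Corollary \ref{coroindependanceparent}, and its restriction to $S_{j,i}\setminus D_{j,i}$, which is defined directly via the fresh uniforms $U^d_w$ of Definition \ref{defiindep}. The goal is then to glue these two sub-families into a single i.i.d. family under the target conditioning.

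First I would observe that both $S_{j,i}$ and $D_{j,i}$ are $\F^{aug}_{j,i+b_n-1}$-measurable (indeed, $S_{j,i}$ is measurable with respect to the unlabelled graph $\Gmodm_{j,i+b_n}$, and $D_{j,i}$ is built deterministically from the probings performed before the $(i{+}b_n{-}1)$-th step). So conditioning on the pair $(\F^{aug}_{j,i+b_n-1},\Gmodm_{j,i+b_n})$ fixes the partition $S_{j,i}=D_{j,i}\sqcup(S_{j,i}\setminus D_{j,i})$, reducing the statement to a joint i.i.d. statement for a known index set.

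Next I would invoke Corollary \ref{coroindependanceparent} to obtain that $(X^{parent,indep}_w)_{w\in D_{j,i}}$ is an i.i.d. Bernoulli$(\eta^{parent}_n)$ family conditionally on $(\F^{aug}_{j,i+b_n-1},\Gmodm_{j,i+b_n})$. For the complementary indices $w\in S_{j,i}\setminus D_{j,i}$, the variable $X^{parent,indep}_w=\mathbf{1}_{U^d_w\leq \eta^{parent}_n}$ is a deterministic function of the single uniform $U^d_w$, so its distribution is Bernoulli$(\eta^{parent}_n)$ and, over $w\in S_{j,i}\setminus D_{j,i}$, these form an i.i.d. family by the i.i.d.\ property of $(U^d_w)_w$.

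The only real step is to justify that the two sub-families are \emph{jointly} independent under the conditioning. For this, I would use that the construction of $(X^{parent,indep}_w)_{w\in D_{j,i}}$ through Lemma \ref{lemmecouplagebernoulli} only consumes the uniforms $(U^d_{w'})_{w'\in D_{j,i}}$ together with $\F^{aug}_{j,i+b_n-1}$-measurable quantities and $\Gmodm_{j,i+b_n}$; meanwhile the $X^{parent,indep}_w$ on $S_{j,i}\setminus D_{j,i}$ depend only on $(U^d_w)_{w\in S_{j,i}\setminus D_{j,i}}$. Because the full family $(U^d_w)_{w\in\Gmod_{\text{end}}}$ is i.i.d.\ uniform on $[0,1]$ conditionally on $\F_{\text{end}}$, the two index sets being disjoint yields conditional independence of the two sub-families given $(\F^{aug}_{j,i+b_n-1},\Gmodm_{j,i+b_n})$. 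Combined with the two marginal i.i.d.\ statements, this delivers the claimed joint i.i.d.\ Bernoulli$(\eta^{parent}_n)$ property on all of $S_{j,i}$; the main (but minor) care is to keep track of which $U^d_w$ have been absorbed into $\F^{aug}_{j,i+b_n-1}$ versus those at generation $i$ that remain fresh.
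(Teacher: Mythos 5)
Your proposal is correct and follows essentially the same route as the paper, which disposes of the lemma in one sentence after Definition \ref{defiindep}: the $D_{j,i}$-part is handled by Corollary \ref{coroindependanceparent}, the $S_{j,i}\setminus D_{j,i}$-part is i.i.d.\ Bernoulli by construction from the fresh uniforms $U^d_w$, and joint independence holds because the two sub-families consume disjoint sets of the conditionally i.i.d.\ uniforms (and indeed only $(U^d_w)_{w\in\B_{j,i-1}}$ has been absorbed into $\F^{aug}_{j,i+b_n-1}$, so the generation-$i$ uniforms remain fresh, exactly as you note).
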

By construction, $(X^{parent,indep}_w)_{w\in S_{j,i}\setminus D_{j,i}}$ is an i.i.d. family of Bernoulli variables of parameter $\eta^{parent}_n$, independent of $(X^{parent,indep}_w)_{w\in D_{j,i}}$  and $\Gmodm_{j,i+b_n}$, conditionally on $F^{aug}_{j,i+b_n-1}$.
\begin{lem}
Conditionally on $\Gmodme$, $(X^{parent,indep}_w)_{w\in \Gmodme}$ is an i.i.d. family of Bernoulli variables of parameter $\eta^{parent}_n$.
\end{lem}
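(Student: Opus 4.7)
The plan is to bootstrap from Lemma~\ref{lemmeperco}, which controls one sphere at a time, up to all of $\Gmodme$ by iterating over the spheres $(S_{j,i})_{j\in\{1,2\},i\in\N}$, which partition the vertex set of $\Gmodme$. Since the claim reduces, by Kolmogorov extension, to checking finite-dimensional marginals, it suffices to prove that for any finite subcollection of spheres, the restricted family is i.i.d.\ Bernoulli$(\eta^{parent}_n)$ conditionally on $\Gmodme$. I would enumerate the spheres in breadth-first order $(S_{j_1,i_1}),(S_{j_2,i_2}),\ldots$ and set $V_m=\bigcup_{k\leq m}S_{j_k,i_k}$, then proceed by induction on $m$.

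For the inductive step, Lemma~\ref{lemmeperco} applied at sphere $(j_m,i_m)$ says that, conditionally on $\F^{aug}_{j_m,i_m+b_n-1}$ and $\Gmodm_{j_m,i_m+b_n}$, the family $(X^{parent,indep}_w)_{w\in S_{j_m,i_m}}$ is i.i.d.\ Bernoulli$(\eta^{parent}_n)$. Because the conditional law does not depend on the value of $\F^{aug}_{j_m,i_m+b_n-1}$, this family is in fact independent of $\F^{aug}_{j_m,i_m+b_n-1}$ given $\Gmodm_{j_m,i_m+b_n}$. The family $(X^{parent,indep}_w)_{w\in V_{m-1}}$ is $\F^{aug}_{j_m,i_m+b_n-1}$-measurable (all $U^a,U^b,U^c,U^d$ variables and corruption events for earlier spheres are recorded there by construction), so the induction yields the product form on $V_m$ given $\Gmodm_{j_m,i_m+b_n}$.

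It then remains to pass from conditioning on $\Gmodm_{j_m,i_m+b_n}$ to conditioning on the full $\Gmodme$. For this, I would argue that $\Gmodme\setminus\Gmodm_{j_m,i_m+b_n}$ is built from fresh independent randomness: the subsequent probings use independent Poisson processes $\mu^{dummy}$, independent edges of $G^\infty_{n,t'}$ with no endpoint in $B_{j_m,i_m+b_n}$, and the grafted i.i.d.\ copies of $T^\infty_{t'}$ used in the finalisation step, all of which are independent of $(U^a,U^b,U^c,U^d)$ indexed within $S_{j_m,i_m}$. Hence the portion of $\Gmodme$ past depth $i_m+b_n$ is conditionally independent of $(X^{parent,indep}_w)_{w\in S_{j_m,i_m}}$ given $\Gmodm_{j_m,i_m+b_n}$, so further conditioning on it preserves the Bernoulli product law. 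Iterating yields the conditional i.i.d.\ property given $\Gmodme$.

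The main obstacle, and the reason the argument cannot simply be declared, is the careful bookkeeping required for the last step: one must verify that none of the $U$-variables indexed by vertices in a sphere $S_{j_m,i_m}$ are re-used in the construction of vertices that appear in later spheres of $\Gmodme$. This is plausible from the construction (the $U$-variables carry sphere-level indices, and each probing/fake-probing consumes fresh randomness), but requires revisiting the definitions of $\Gamma^a,\Gamma^b,\Gamma^c,\Gamma^d$ and the inductive construction of $X^{parent,indep}_w$ to confirm that the $\sigma$-algebra generated by $\Gmodme$ alone adds no coupling between spheres beyond what is encoded in the graph structure itself.
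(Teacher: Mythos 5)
Your proposal is correct and follows essentially the same route as the paper: an induction over the spheres $S_{j,i}$ in breadth-first order, using Lemma~\ref{lemmeperco} to incorporate each new sphere and the fact that the yet-unexplored part of $\Gmod$ is generated by fresh randomness (which the paper isolates as Lemma~\ref{lemmeintermediaireconditionnel} and as the two interleaved claims $C^1_{j,i}$, $C^2_{j,i}$) to upgrade the conditioning from $\Gmodm_{j,i+b_n}$ to the full $\Gmodme$. The bookkeeping you flag at the end is exactly what the paper's $C^1/C^2$ alternation and the definition of the $\sigma$-algebras $\F^{aug}_{j,i}$ are designed to handle.
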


The proof is done by proving by induction the following two claims:

\begin{itemize}
\item $C^1_{j,i}$: Conditionally on $\Gmodm_{j,i+b_n}$, $(X^{parent,indep}_w)_{w\in \B_{j,i}}$ is an i.i.d. family of Bernoulli variables of parameter $\eta^{parent}_n$.

\item $C^2_{j,i}$: Conditionally on $\Gmodm_{j,i+b_n+1}$, $(X^{parent,indep}_w)_{w\in \B_{j,i}}$ is an i.i.d. family of Bernoulli variables of parameter $\eta^{parent}_n$.
\end{itemize}

\begin{lem}
\label{lemmeintermediaireconditionnel}
Conditionally on $\F^{aug}_{j,i+b_n}$, the law of  $\Gmodm_{j,i+b_n+1}$ can be described as follows:
\begin{itemize}
\item Let $(P_w)_{w\in S_{j,i+b_n}}$ be an i.i.d. family of Poisson point processes of intensity $1$ on $[0,t']$.
\item Starting with $\Gmodm$, for each vertex $w\in S_{j,i+b_n}$, for each element $y$ of $P_w$ graft an edge labelled by $y$ between $w$ and a new vertex. The resulting graph is $\Gmodm_{j,i+b_n+1}$.
\end{itemize}
\end{lem}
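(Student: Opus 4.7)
The plan is to identify $\Gmodm_{j,i+b_n+1}$ as the result of grafting, at each $w\in S_{j,i+b_n}$, the new children produced during iteration $i+1$, and to show that the labels of these new edges form an i.i.d. family of Poisson point processes of intensity $1$ on $[0,t']$. First, since $\Gmodm_{j,i+b_n+1}$ is $\F_\text{end}$-measurable, Lemma \ref{auglemma} lets us replace the conditioning on $\F^{aug}_{j,i+b_n}$ by conditioning on $\F_{j,i+b_n}$ throughout. Next, by property 8 of Lemma \ref{basicpropertiesGmod}, $\Gmod$ remains a tree (or union of two trees), so the passage from $\Gmodm_{j,i+b_n}$ to $\Gmodm_{j,i+b_n+1}$ consists exactly in attaching, to each $w\in S_{j,i+b_n}$, the edges and new leaves created when that vertex is probed or fake-probed during iteration $i+1$.

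For each such $w$, let $P_w$ denote the set of labels of edges from $w$ to its children in $\Gmod$ that are created during iteration $i+1$. If $w$ is fake-probed, $P_w=Y_w$ is by construction a Poisson point process of intensity $1$ on $[0,t']$, independent of $\F_w$. If $w$ is probed, property 9 of Lemma \ref{probeproperties} gives that $P_w$ is, conditionally on $\F_w$, a Poisson point process of intensity $1$ on $[0,t']$. In both cases each point $y$ of $P_w$ corresponds to exactly one new child of $w$ in $\Gmod$ (a vertex of $V^G\setminus A^G_w$ attached with label $y$, a dummy obtained from a split multi-edge, or a dummy coming from $\poissondummy_w$), which appears as a fresh vertex in $\Gmodm_{j,i+b_n+1}$.

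To get the joint i.i.d. statement conditionally on $\F_{j,i+b_n}$, proceed by induction on the (breadth-first) order $w^{(1)},w^{(2)},\dots$ in which the vertices of $S_{j,i+b_n}$ are processed. One has $\F_{j,i+b_n}\subseteq \F_{w^{(1)}}$, and more generally
\[
\sigma\bigl(\F_{j,i+b_n},P_{w^{(1)}},\dots,P_{w^{(k-1)}}\bigr)\subseteq \F_{w^{(k)}},
\]
since each $P_{w^{(\ell)}}$ is recorded in $\F$ at the moment $w^{(\ell)}$ is probed or fake-probed. Because the conditional law of $P_{w^{(k)}}$ given $\F_{w^{(k)}}$ is Poisson of intensity $1$ and therefore does not depend on the finer conditioning, the tower property yields that $P_{w^{(k)}}$ is conditionally Poisson of intensity $1$ on $[0,t']$ given the previous $P_{w^{(\ell)}}$ and $\F_{j,i+b_n}$, establishing the i.i.d. structure. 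Combining this with the grafting description from the first paragraph gives the lemma.

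The main obstacle is the independence step: one must check carefully that the sigma-algebra inclusion above is correct, which amounts to verifying that probing or fake-probing $w^{(\ell)}$ adds to $\F$ exactly the information needed to recover $P_{w^{(\ell)}}$, and nothing about the labels of children not yet attached. This is a direct consequence of the description of the probing and fake-probing operations, but requires bookkeeping through Lemma \ref{probeproperties}.
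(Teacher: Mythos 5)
Your proof is correct and follows essentially the same route as the paper's (much terser) argument: the paper likewise reduces to conditioning on $\F_{j,i+b_n}$ via Lemma \ref{auglemma} and then invokes property 9 of Lemma \ref{probeproperties} for each vertex of $S_{j,i+b_n}$ in turn. Your expansion — the breadth-first induction with the $\sigma$-algebra inclusions and the case split between probed and fake-probed vertices — is exactly the bookkeeping the paper leaves implicit.
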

\begin{proof}
 By Lemma \ref{probeproperties}, this construction gives the law of $\Gmod_{j,i+b_n+1}$ conditionally on $\F_{j,i+b_n}$. By Lemma \ref{auglemma}, this construction therefore gives the law of $\Gmod_{j,i+b_n+1}$ conditionally on $\F_{j,i+b_n}$.
\end{proof}
\begin{enumerate}
\item
$C^1_{1,0}$ comes from Lemma \ref{lemmeperco} for $j=1$ and $i=0$.
\item
Let $(j,i)\in\{1,2\}\times \N$. The graph $\Gmodm_{j,i+b_n}$ and the variables $(X^{parent,indep}_w)_{w\in\B_{j,i}}$ are $\F^{aug}_{j,i+b_n}$-measurable. Therefore, Lemma \ref{lemmeintermediaireconditionnel} entails that, $C^1_{j,i}$ implies $C^2_{j,i}$.

\item
For any $j$, $i$, $C^2_{j,i}$ implies $C^1_{j,i+1}$ by Lemma \ref{lemmeperco}. 

\item The graph $\Gmodm_{j,\infty}$ is obtained from $\Gmodm_{j,K_n+b_n}$ by grafting i.i.d. copies of $T^\infty_{t'}$ to every vertex of $S_{j,K_n+b_n}$.  The family $(X^{parent,indep}_w)_{w\in \Gmodm_{j,\infty}\setminus \Gmodm_{j,K_n+b_n}}$ is constructed as an i.i.d. family of Bernoulli variables of parameter $\eta^{parent}_n$ via Definition \ref{defiindep}. Therefore for all $j$, $C^2_{j,K_n}$ implies $C^2_{j,\infty}$.
\item
$C^1_{1,\infty}$ implies $C^1_{2,0}$ from Lemma \ref{lemmeperco} for $j=2$ and $i=0$.
\end{enumerate}

To summarize the result so far:
\begin{itemize}
\item $GE^4$ is Bernoulli variable, such that $\P(GE^4=0)\xrightarrow[n\rightarrow\infty]{}0$.
\item Conditionally on $\Gmodme$, $(X^{parent,indep}_w)_{w\in\Gmodme}$ is an i.i.d. family of Bernoulli variables of parameter $\eta^{parent}_n$.
\item $\eta^{parent}_n\xrightarrow[n\rightarrow\infty]{}0$.
\item If $GE^4=1$, then $w\in D_{j,i}\setminus E_{j,i}$ implies $X^{parent,indep}_w=1$.
\item If $GE^4=1$, no initialisation fails.
\end{itemize}
	
This summary ends the first part of the proof of the construction of the included branching process, dealing with the vertices in $D_{j,i}\setminus E_{j,i}$. We now need to deal with the vertices in $E_{j,i}\setminus \tilde E_{j,i}$, \ie{} the vertices that might reach the forbidden degree between time $t'$ and $t$.
\subsubsection{Extension of the included branching process to the time~$t$}

As $G^\infty_{n,t'}$ and $G^\infty_{n,[t',t]}$ are independent, conditionally on $\F^{aug}_\text{end}$ the sets of labelled edges in $G^\infty_{n,[t',t]}$ is an i.i.d. family of Poisson point processes of intensity $\frac 1 n$ on $[t',t]$. We now remove remaining vertices adjacent to an edge added between $t'$ and $t$, except if this edge is added between two vertices of the $K_n$th generation. 
\begin{itemize}
\item Let $E^{t'-}$ be the set of vertices $w$ of $\cup_{i< K_n:j\in\{1,2\}}D_{j,i}$ such that $X^{parent,indep}_w=0$;
\item let $E^{t',K_n}$ be the set of vertices $w$ of $D_{1,K_n}\cup D_{2,K_n}$ such that $X^{parent}_w=0$;
\item let $E^{t'+}=E^{t'-}\cup E^{t',K_n}$.
\end{itemize}
 \begin{itemize}
 \item For each $w$ in $E^{t'-}$, let $X^{[t',t]}_w=1$ if at least on edge is added to $w$ in $G^\infty_{n,t}$ between $t'$ and $t$. 
 \item For each $w$ in $E^{t',K_n}$, let $X^{[t',t]}_w=1$ if there is at least one edge added between $w$ and an element of $V^G\setminus E^{t',K_n}$. 
\end{itemize} 
 The variable $X^{[t',t]}_w$ is used to know  if  $w$ might be removed because of an edge added to $w$ between time $t'$ and $t$. 

We split the set of edges according to how many endpoints belong to $E^{t'^+}$.
\begin{itemize}
\item For any $(w,w')\in (E^{t'+})^2\setminus (E^{t',K_n})^2$, let $X^{[t',t],a}_{w,w'}=1$ if at least one edge is added between $w$ and $w'$ between $t'$ and $t$. Otherwise, let $X^{[t',t],a}_{w,w'}=0$. 
\item For any $(w,w')\in (E^{t',K_n})^2$ let $X^{[t',t],a}_{w,w'}=0$. 
\item For any $w\in E^{t'+}$, let $X^{[t',t],b}_{w}=1$ if at least one edge is added between $w$ and an element of $V^G\setminus E^{t'+}$ between time $t'$ and $t$. Otherwise, let $X^{[t',t],b}_{w}=0$. 
\end{itemize}
For any $w\in E^{t'+}$, 
$$X^{[t',t]}=\max_{w'\in E^{t'+}}(X^{[t',t],a}_{w,w'},X^{[t',t],b}_w).$$

Conditionally on $\F^{aug}_\text{end}$, $(X^{[t',t],a}_{w,w'})_{(w,w')\in E^{t'+}}$ is a family of independent Bernoulli variables of parameter  $1-exp(-\frac {t-t'}n)\leq\frac {\epsilon_1} n$ and is independent of $(X^{[t',t],b}_w)_{w\in E^{t'+}}$. The set $E^{t'^+}$ is included in $A^G_\text{end}$, therefore if $GE_4=1$, then $|E^{t'+}|\leq z_n$. As $GE_4$ is $\F^{aug}_\text{end}$-measurable, Lemma \ref{independentisation} allows to construct $(X^{[t',t],a}_w)_{w\in E^{t'+}}$ such that, if $GE^4=1$:
\begin{itemize}
\item For all $w,w'$, $X^{[t',t],a}_{w,w'}\leq X^{[t',t],a}_w$;
\item Conditionally on $\F^{aug}_\text{end}$, $(X^{a,[t',t]}_w)_{w\in E^{t'+}}$ is an i.i.d. family of Bernoulli variables of parameter $2\sqrt{\frac{\epsilon_1} n{z_n}}$, independent of $((X^{[t',t],b}_w)_{w\in E^{t'+}},\F^{aug}_\text{end}, G^\infty_{n,t'})$.
\end{itemize}
Conditionally on $\F^{aug}_\text{end}$,  $(X^{[t't],b}_w)_{w\in E^{t'-}}$ is an independent i.i.d. family of Bernoulli variables of parameter smaller than $\epsilon_1\frac{n-|E^{t'+}|}n\leq \epsilon_1$.
Therefore conditionally on $\F^{aug}_\text{end}$, $\max(X^{[t',t],a}_w,X^{[t',t],b}_w)\geq GE_4X^{[t',t]}_w$ is an independent family of Bernoulli variables of parameter smaller than $2\sqrt{\frac{\epsilon_1} n{z_n}}+\epsilon_1=:\eta^{[t',t]}_n$. It should be noticed that $\eta^{[t',t]}_n=\epsilon_1+o(1)$.

Let $(U^e_w)_{w\in\Gmod_\text{end}}$ be a family of random variables such that conditionally on $(\F^{aug}_\text{end}, G^\infty_{n,t}, (X^{[t',t],a}_w)_{w\in E^{t'+}})$, the family $(U^e_w)_{w\in\Gmod}$ is i.i.d. uniform variables on $[0;1]$.
For all $w\in \Gmod_\text{end}\setminus E^{t'+}$, let $X^{[t',t],a}_w=X^{[t',t],b}=0$.

Let $X^{[t',t],indep}_w=1$ if either:
\begin{itemize}
\item  $X^{[t',t],a}_w=1$;
\item  $X^{[t',t],b}_w=1$;
\item $U^e_w\leq \frac{\eta^{[t',t]}_n-p'_w}{1-p'_w}$ with $p'_w=\P(X^{[t',t],a}_w=1 \text{ or }X^{[t',t],b}_w=1|\F^{aug}_\text{end})$.
\end{itemize}

Then conditionally on $\F^{aug}_\text{end}$, $(X^{[t',t],indep}_w)_{w\in\Gmod}$ is an i.i.d. family of Bernoulli variables of parameter $\eta^{[t',t]}_n$. Therefore conditionally on $\Gmodm$, $(X^{parent,indep}_w),(X^{[t',t],indep})$ are two independent families of Bernoulli variables of parameters $\eta^{parent}_n$ and $\eta^{[t',t]}_n$, and therefore their maximum is an i.i.d. family of Bernoulli variables of parameter $\eta^{[0,t]}_n=1-(1-\eta^{[t',t]}_n)(1-\eta^{parent}_n)=\epsilon_1+o(1)$.

\subsubsection{Use of the included branching process}
The subgraph of $\Gmodk$ with only vertices $w$ such that $X^{parent,indep}_w=X^{[t',t],final}_w=0$ has same law as an independent percolation of parameter $1-\eta^{[0;t]}_n$ on two independent copies of $T^k_{t'}$.

For $n$ large enough, $\eta^{[0,t]}_n\leq 2\epsilon_1$. For this reason, in the following part, we will study the percolation of parameter $1-2\epsilon_1$. The resulting graph is also a two-stages multitype branching process.

\begin{lem}
\label{continuiteprocessus}
Let $\Gmodke$ be the two-stages multitype branching process obtained by doing a percolation of parameter $1-2\epsilon_1$ on $\Gmodk$. Let $a^k_{t',\epsilon_1}$ be the probability of survival, and $\rho_{t',\epsilon_1}$ the spectral radius associated to $\Gmodke$. Then:
\begin{equation}
a^k_{t',{\epsilon_1}}\xrightarrow[{\epsilon_1}\rightarrow 0,t'\rightarrow t]{}a^k_t;\label{convergenceproba}
\end{equation}
\begin{equation}
\liminf_{\epsilon_1\rightarrow 0,t'\rightarrow t}\rho_{t',{\epsilon_1}}\geq\rho_t\label{convergencerayonspectral}.
\end{equation}
\end{lem}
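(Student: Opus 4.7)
My plan is to split the proof in two, handling the spectral radius and the survival probability separately, and to exploit a structural simplification provided by the percolation. The key observation is that percolating each edge independently with survival probability $p = 1-2\epsilon_1$ amounts to multiplying the mean offspring kernel by the factor $p$: if $M_{t'}$ denotes the mean offspring kernel of $T^{k+,\cdot}_{t'}$, whose density is obtained from the symmetric densities $g_i$ of Lemma \ref{probaegaldensite} by integrating out all but one coordinate and is continuous and uniformly positive by Lemma \ref{continuiteproba}, then after percolation the kernel becomes $M_{t',\epsilon_1} = p\cdot M_{t'}$. By positive homogeneity of the spectral radius, $\rho_{t',\epsilon_1} = (1-2\epsilon_1)\rho_{t'}$, so the bound \eqref{convergencerayonspectral} reduces to $\liminf_{t'\to t}\rho_{t'} \geq \rho_t$, which complements the upper semi-continuity stated in Lemma \ref{lemmecontinuite1}.

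To establish this lower semi-continuity, I would apply the Krein-Rutman theorem (justified by conditions (C1)--(C2)) to obtain a uniformly positive eigenfunction $\psi_t$ with $M_t\psi_t = \rho_t\psi_t$. Using the pointwise convergence $m_{t'} \to m_t$ together with the uniform lower and upper bounds on $m$ inherited from Lemma \ref{continuiteproba}, dominated convergence yields $M_{t'}\psi \to M_t\psi_t$ uniformly on compact subintervals, where $\psi$ is a suitable truncation of $\psi_t$ to $[0,t']$. A Collatz--Wielandt type bound of the form $\rho_{t'} \geq \inf_x (M_{t'}\psi)(x)/\psi(x)$ then gives $\rho_{t'} \geq \rho_t - o(1)$.

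For the survival probability, the extinction probability function of the percolated process satisfies the perturbed fixed-point equation
$$q_{t',\epsilon_1}(x) = \phi^{t'}_x\bigl(2\epsilon_1 + (1-2\epsilon_1) q_{t',\epsilon_1}\bigr),$$
with $\phi^{t'}_x$ the generating functional of $T^{k+,x}_{t'}$. The right-hand side depends continuously on $(t',\epsilon_1)$ thanks to the continuity of the densities $g_i$ from Lemma \ref{probaegaldensite}, and the uniform positive lower bound on the probability of an empty first generation (at least $e^{-t'}$) forces $q_{t',\epsilon_1} \leq 1-\delta$ for some $\delta > 0$ uniformly near $(t,0)$. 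Any subsequential uniform limit of $q_{t',\epsilon_1}$ is therefore a uniformly positive, uniformly $<1$ solution of $q(x) = \phi^t_x(q)$, hence equal to $q_t$ by the uniqueness part of Lemma \ref{lemmeprobasurvie}. Integrating against the root distribution $\nu_{t'}$, whose densities $h_i(t',\cdot)$ converge continuously to $h_i(t,\cdot)$ by Lemma \ref{probaegaldensite} and Lemma \ref{lemlink}, yields \eqref{convergenceproba}.

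The principal technical obstacle is the lower semi-continuity $\liminf_{t'\to t}\rho_{t'} \geq \rho_t$: the type space $[0,t']$ shrinks with $t'$, so the Perron eigenfunction $\psi_t$ cannot be naively restricted, and for $t' > t$ it must be extended in a controlled way. The uniform strict positivity of $m$ provided by Lemma \ref{continuiteproba}, combined with a careful truncation argument near the endpoint, should handle this delicate boundary issue.
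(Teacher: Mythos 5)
Your factorization $\rho_{t',\epsilon_1}=(1-2\epsilon_1)\rho_{t'}$ is exactly the paper's first step, and reducing \eqref{convergencerayonspectral} to lower semi-continuity of $t'\mapsto\rho_{t'}$ is correct (the paper dispatches this by citing Lemma \ref{lemmecontinuite1}; your Collatz--Wielandt route is a reasonable alternative, though you leave the shrinking-type-space issue as an acknowledged sketch). The genuine gap is in the survival-probability part. You claim that the lower bound $e^{-t'}$ on the probability of an empty first generation ``forces $q_{t',\epsilon_1}\le 1-\delta$ uniformly near $(t,0)$''. This is backwards: that bound shows the extinction probability is at least $e^{-t'}$, i.e.\ it bounds $q_{t',\epsilon_1}$ \emph{from below} away from $0$; it gives no upper bound away from $1$. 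Without a uniform bound $q_{t',\epsilon_1}\le 1-\delta$ your subsequential-limit argument collapses, because the constant function $1$ is always a solution of $q=\phi^t_\cdot(q)$ and the uniqueness statement of Lemma \ref{lemmeprobasurvie} applies only among functions uniformly less than $1$; a priori the percolated, time-perturbed processes could have extinction probabilities creeping up to $1$.

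This missing uniform bound is precisely what the paper's proof is organized around. It builds the explicit supersolution $q^\delta=\delta+(1-\delta)q_t$, which is uniformly $<1$ because $q_t$ is continuous and everywhere $<1$ on the compact $[0,t]$; a convexity and derivative computation in $\delta$ gives $l(0,t,\delta,\cdot)=\phi_{t,0}q^\delta-q^\delta<-\eta\delta$ for small $\delta>0$, uniform continuity of $(\epsilon,s,y)\mapsto l(\epsilon,s,\delta,y)$ (via Lemma \ref{continuiteproba}) upgrades this to $\phi_{s,\epsilon}q^\delta\le q^\delta$ for all $(s,\epsilon)$ near $(t,0)$, and the monotone-iteration Corollary \ref{coromonotonie} then yields $q_{s,\epsilon}\le q^\delta$. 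Letting $\delta\to0$ gives the hard inequality $\limsup q_{s,\epsilon}\le q_t$; the easy converse comes from local-limit continuity of $(s,\epsilon)\mapsto T^{k,\epsilon}_s$ together with monotone approximation of extinction by the events $\{|T|\le n\}$. To close your argument you would need to supply such a supersolution, or some equivalent quantitative uniform-supercriticality estimate, before invoking uniqueness of the fixed point.
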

The proof of Lemma \ref{continuiteprocessus} will be done in Section \ref{preuvecontinuiteprocessus}.

Recall that $K_n=\frac{(1-\epsilon_2)\ln n}{\ln (\rho_{t'})}$. Let $\epsilon>0$. When $\epsilon_1$ and $\epsilon_2$ tends to $0$, $(\rho_{t',{\epsilon_1}})^{\frac{1}{1-\epsilon_2}}$ converges to $\rho_t>(\rho_t)^\frac 2 3$. We assume that $\epsilon_1$ and $\epsilon_2$ are small enough so that $(\rho_{t',\epsilon_1})^{1-\epsilon_2}> (\rho_{t'})^{\frac 2 3}$, and that $a^k_{t',\epsilon_1}\geq a^k_t-\epsilon$.

The graph $\Gmodke$ is a two-stages branching process. Therefore, conditionally on survival, the size of the $i$th generation grows as $(\rho_{t',\epsilon_1})^i$, by Lemma \ref{lemmevitessecroissance}:
$$\frac{|S^{i}(\Gmodke)|}{(\rho_{t',\epsilon_1})^i}\xrightarrow[i\rightarrow\infty]{} W\text{ a.s.}$$
\noindent where $S^i$ denotes the sphere of radius $i$ and $W$ is a random variable, a.s. positive if $\Gmodke$ is infinite. Therefore, for any $1<\alpha<\rho_{t',\epsilon_1}$:
$$\P(|S^{i}(\Gmodke)|\geq \alpha^i)\xrightarrow[i\rightarrow\infty]{}a^k_{t',\epsilon_1}.$$

By choosing $\alpha$ close enough to $\rho_{t',\epsilon_1}$, $n^\frac 2 3=o(\alpha^{K_n})$, and therefore the probability  that the $K_n$th generation of $\Gmodke$ contains at least $n^\frac 2 3$ is larger than $a^k_{t',\epsilon_1}-\epsilon$ .

Conditionally on the first $K_n$ generations of $\Gmodke$, the offspring of each vertex $w$ of the $K_n$th generation of $\Gmodke$, seen as a subgraph of $\Gmodk$  is described by the functions $f_i$. By Lemma \ref{continuiteproba}, for $t'\leq t$, these density functions are bounded from below, let say by $\delta>0$.  Therefore, by the law of large numbers, with high probability when $n$ tends to infinity, if  the $K_n$th generation of $\Gmodke$ contains at least $n^\frac 2 3$ vertices, then at least $\frac \delta 2 n^\frac 2 3$ of them  have no children in $\Gmodk$.  As a consequence, for $n$ large enough and $\epsilon_1$ and $\epsilon_2$ small enough, the following events happen simultaneously with probability larger than $(a^k_t-\epsilon)^2-\epsilon$:
\begin{itemize}
\item The first and second component of $\Gmodke$ are infinite;
\item The $K_n$th generation of these components contains at least $n^\frac 2 3$ vertices.
\item Among these vertices, at least $2n^\frac 3 5$ have no children in $\Gmodk$, and are therefore not saturated in $G^k_{n,t'}$.
\end{itemize}

If these conditions are satisfied, let $H_1$ (resp. $H_2$) be the set of the first $2n^\frac 3 5$ (in the breadth-first order) vertices of the generation $K_n$ of the first (resp. second) component of $\Gmodke$ with no children in $\Gmodk$. By construction, any element $w$ of $H_j$ satisfies the following properties:
\begin{itemize}
\item There is a path from $v_j$ to $w$ in $G^k_{n,t'}$, such that no edge is added to any vertex of this path between $t'$ and $t$, except possibly to $w$.
\item No edge is added between $w$ and an element of $V^G\setminus E^{t'-}$ between $t'$ and $t$.
\end{itemize}
Let $\F^{aug2}$ be the $\sigma$-algebra generated by $\F^{aug}$, $(U^e_{w})_{w\in\Gmod\text{end}}$ and \\$(\poisson{(w,w')}\cap [t',t])_{(w,w')\in (V^G)^2\setminus (E^{t',K_n})^2}$.

 $H_1$ and $H_2$ are $\F^{aug2}_\text{end}$-measurable. Conditionally on $\F^{aug2}_\text{end}$, the sets of labels of edges between elements of $E^{t',K_n}$ with labels in $[t',t]$ are independent Poisson point processes of intensity $\frac 1 n$ on $[t',t]$. Remove any vertex $w$ in $H_1\cup H_2$, such that at least an edge is added between $w$ and element of $E^{t',K_n}\setminus (H_1\cup H_2)$. These events are independent over the vertices $w$, and of probability smaller than $\epsilon_1$. Let $\tilde H_1$ (resp. $\tilde H_2$) be the set of remaining vertices $w$ of $H_1$ (resp. $H_2$). Assuming $\epsilon_1<\frac 1 2$, by the law of large numbers, with high probability $|\tilde H_1|\geq n^\frac 3 5$ and $|\tilde H_2|\geq n^\frac 3 5$.

For any $(w_1,w_2)\in\tilde H_1\times\tilde H_2$, exactly one edge is added between $t'$ and $t$ between $w_1$ and $w_2$ with probability $e^{-\frac{\epsilon_1}n}\frac{\epsilon_1}n	=:a_n$. These events are independent, therefore conditionally on $|\tilde H_1|\geq n^\frac 3 5$, $|\tilde H_2|\geq n^\frac 3 5$, the probability that no simple edge is added between an element of $\tilde H_1$ and an element of $\tilde H_2$ is smaller than:

$$(1-a_n)^{n^\frac 6 5}\xrightarrow[n\rightarrow \infty]{}0.$$

Moreover, the probability that there exists a vertex $w_1\in H_1 \cup H_2$ such that two edges toward two different vertices of $H_1\cup H_2$ are added between $t'$ and $t$ is  smaller than $(4n^\frac 3 5)^3(\frac{\epsilon_1}n)^2=o(1)$ by union bound.

All these results implies that if $|H_1|\geq 2n^\frac 3 5$ and $|H_2|\geq 2n^\frac 3 5$, then with high probability there exist $w_1\in \tilde H_1$ and $w_2\in \tilde H_2$ such that:
\begin{itemize}
\item An edge is added between $w_1$ and $w_2$ between $t'$ and $t$.
\item No other edge is added to either $w_1$ or $w_2$ between $t'$ and $t$.
\end{itemize}

In that case there exists a path from  $v_1$ to $v_2$ in $G^k_{n,t}$, through $w_1$ and $w_2$, and this concludes the proof of Lemma \ref{lemmelien}.

\subsection{Proof of Lemmata \ref{lemmecontinuite1} and \ref{continuiteprocessus}}
\label{preuvecontinuiteprocessus}

For all $s\leq t$ and $\epsilon\geq 0$, let  $T^{k}_{s,\epsilon}$ be the component of the root obtained by taking a percolation of parameter $1-2\epsilon$ on $T^k_s$. $T^{k}_{s,\epsilon}$ is a two-stages multitype branching process, with one offspring law for the root, and another offspring law for all the other vertices. By definition, the spectral radius of a two-stages branching process is  the spectral radius of the associated one-stage branching process. Let  $(T^{k+,y}_{s,\epsilon})_{y\in [0,t']}$ denote the one-stage branching process associated to $T^{k}_{s,\epsilon}$ starting at a non root vertex, where $y$ denotes the type of the first vertex. Let $T^{k+,\epsilon,y}_{s,\epsilon}$ be the branching process obtained by doing a percolation of parameter $1-2\epsilon$ on $T^{k+,y}_{s}$. With the notation of section \ref{resultmultitype}, let $M_{s,\epsilon}$  be the operator $M$ for the branching process $T^{k+,\epsilon,\cdot}_{s,\epsilon}$. If $\epsilon=0$, $M_s$ will be used to denote $M_{s,0}$. As $M$ corresponds to the expected number of vertices of each type, $M_{s,\epsilon}=(1-2\epsilon)M_s$ and
$$\rho_{s,\epsilon}=(1-2\epsilon)\rho_{s}.$$
Therefore Lemma \ref{lemmecontinuite1} implies the limit (\ref{convergencerayonspectral}) in Lemma \ref{continuiteprocessus}.

For $s\leq t$, let $m_s(y,z)$ denote the expected density of the number of children of type $z$ that $T^{k+,y}_s$ has. With the functions $g_i$ introduced in Lemma \ref{probaegaldensite}, for all $y,z\in[0,s]$, :
$$m_s(y,z)=\sum_{i=1}^{k-2}\frac{1}{i-1!}\underbrace{\int_0^s\int_0^s\dots\int_0^s}_{i-1\text{ times}}g_{i}(t,y,z,x_2,x_3,\dots,x_{i})dx_2dx_3\dots dx_{i}.$$

By the boundedness and continuity property of the functions $g_i$ shown in Lemma \ref{continuiteproba}, this means that $s\rightarrow M_{s}$ is a continuous application, and therefore that its spectral radius is a upper semi-continuous function of $s$, by \cite{newburgh}, proving Lemma \ref{lemmecontinuite1}.

For any $s\leq t$ and $\epsilon\geq 0$, let $q_{s,\epsilon}(x)$ be the extinction probability of the one-stage multitype branching process $T^{k+,\epsilon,x}_{s,\epsilon}$ starting with a vertex of type~$x$. As previously, to simplify notations, let $q_{s}=q_{s,0}$. By Lemma \ref{lemmeprobasurvie}, $q_{s,\epsilon}$ is the smaller positive solution of:
$$\phi_{s,\epsilon}(f)=f$$
where $\phi_{s,\epsilon}$ is the operator defined by 
$$\phi_{s,\epsilon}f(y)=\E_y(\prod_{i=1}^{Z^\epsilon}f(X^\epsilon_i))$$
where $(X^\epsilon_1,\dots,X^\epsilon_{Z^\epsilon})$ has the law of the types of the vertices of the first generation of $T^{k+,x}_{s,\epsilon}$. By the percolation properties,  $\phi_{s,\epsilon}$ can be computed as:
$$\phi_{s,\epsilon}f(x)=\E_x(\prod_{i=1}^Z f(X_i)^{B_i})$$
where $(X_1,\dots,X_Z)$ has the law of the types of the vertices in the first generation of $T^{k+,x}_s$, and $(B^i)_{i\in\N}$ is an i.i.d. family of Bernoulli variables equal to $0$ with probability $1-2\epsilon$. Therefore, for $s=t$ and $\epsilon=0$:
\begin{eqnarray}
\phi_{t,0}f(y)&=&g^0(t,y)+\int_{x_1=0}^tg^1(t,y,x_1)f(x_1)dx_1+\dots+\label{phit}\\
&+&\frac{1}{k-2!}\int_{x_1,x_2\dots x_{k-2}=0}^{t}\hspace{-1.7cm}g^{k-2}(t,y,x_1,\dots,x_{k-2})f(x_1)\dots f(x_{k-2})dx_1,\dots dx_{k-2}.\nonumber
\end{eqnarray}

For any , $\delta\in[0,1]$, let $q^\delta=\delta \1{}+(1-\delta)q_t$. For any $\epsilon\geq 0$, $s\in[0,t]$, $\delta\in[0,1]$, $x\in[0,t]$, let 
$$l(\epsilon,s,\delta,x)=(\phi_{s,\epsilon}q^\delta)(x)-q^\delta(x).$$

As $T^k_t$ is supercritical by hypothesis, $q^k_t$ is not uniformly equal to $1$, as we are working in the supercritical case. As all the functions $g$ are positive and continuous by Lemma \ref{continuiteproba}, $q^k_t$ is continuous and not equal to $1$ at any point.  For any $x$, $\delta\rightarrow q^\delta(x)$ is therefore a strictly increasing positive linear function. Therefore for any $i\geq 2$ and any $x_1\dots x_i$, $\delta\rightarrow q^{\delta}(x_1)q^{\delta}(x_2)\dots q^{\delta}(x_i)$ is a strictly convex function of $\delta\in[0,1]$, and therefore so is:
$$\delta\rightarrow h^i(t,y,\delta):=\int_{x_1,x_2\dots x_{i}=0}^{t}g^{i}(t,y,x_1,\dots,x_{i})q^\delta(x_1)\dots q^\delta(x_{i})dx_1,\dots dx_{i}.$$
By equation (\ref{phit}), for a fixed  $y$, $l(0,t,\delta,y)$ is a polynomial in $\delta$, of degree at most $k-2$, and its coefficients can be expressed as integrals of the  functions $q^k_t$ and $g_i$. As these functions are continuous, the coefficient of the polynomial $\delta\rightarrow l(0,t,\delta,y)$ are continuous in $y$ and its derivative $(\delta,y) \rightarrow\frac{\partial l(0,t,\delta,y)}{\partial \delta}$ is a continuous function of $(\delta,y)$ and is negative for $\delta=0$ for all $y$. As $[0,1]\times[0,t]$ is a compact set, there exists $\eta>0$ and $\delta_0>0$ such that, for all $y\in[0,t]$ and all $0\leq \delta\leq \delta_0$, 
\begin{eqnarray*}
\frac{\partial l(0,t,\delta,y)}{\partial \delta}&<&-\eta\\
l(0,t,\delta,y)&<&-\eta\delta\text{\hspace{1cm}as $l(0,t,0,y)=0$}
\end{eqnarray*}

Let $\delta\in(0,\delta_0)$. By Lemma \ref{continuiteproba}, $(\epsilon,s,y)\rightarrow l(\epsilon,s,\delta,y)$ is a continuous function on the compact set $\{(\epsilon,s,\delta,y):\epsilon\in[0,\frac 1 2], s\in[0,t],y\in[0,s]\}$, and therefore uniformly continuous. There exists $\epsilon_{\max}>0$ and $s_{\min}<t$ such that, for all $\epsilon\in[0,\epsilon_{\max}]$, $s\in[s_{\min},t]$, $y\in[0,s]$:
\begin{eqnarray*}
|l(\epsilon,s,\delta,y)-l(0,t,\delta,y)|&\leq&\eta \delta\\
l(\epsilon,s,\delta,y)&\leq&0\\
(\phi_{s,\epsilon}q^{\delta})(y)&\leq &q^{\delta}(y)
\end{eqnarray*}

As this inequality holds for all $y$, by Corollary \ref{coromonotonie}, $q^{\delta}\geq q_{s,\epsilon}$. As the law of the set of labels of the  vertices of the first generation of $T^{k}_{s,\epsilon}$ converges to the law of the set of labels of the vertices of the first generation of $T^k_t$, and as $\delta$ can be chosen arbitrarily small, this inequality implies that
$$\liminf_{\epsilon_\rightarrow 0, s\rightarrow t}\tilde q^k_{t',{\epsilon}}\leq\tilde q^k_{t,0}$$
\noindent where $\tilde q^k_{s,\epsilon}$ is the probability of extinction of $T^{k,\epsilon}_t$. The other side of the limits is a consequence of the following facts:
\begin{itemize}
\item by definition, $\tilde q^k_{s,\epsilon}$ is the limit the non-decreasing sequence $(\P(|T^{k,\epsilon}_s|\leq n))_{n\geq 0}$, 
\item the application $(s,\epsilon)\rightarrow T^{k,\epsilon}_s$ is continuous for the local limit,
\item The event $|T|\leq n$ is continuous for the local limit.
\end{itemize} 
Therefore the equation (\ref{convergenceproba}) of Lemma \ref{continuiteprocessus} holds (the probability of survival $a^k_{s,{\epsilon}}$ is equal to $1-\tilde q^k_{s,\epsilon}$).

\subsection{Proof of Lemma \ref{independentisation}}
\label{preuvelemme}
\begin{proof}
If $2\sqrt{\eta|I|}\geq 1$, Lemma \ref{independentisation} is straightforward, so w.l.o.g. we can assume that $2\sqrt{\eta|I|}\leq 1$. In particular, $\eta |I|\leq 1$.

For any $i$, let $X_i=\max_{j}X_{i,j}$.

Let $(U_i)_{i\in I}$ be an independent i.i.d. family of uniform variables. For any~$i$, let $X^{indep}_i\in\{0,1\}$ be defined by $X^{indep}_i=1$ if and only if either:
\begin{itemize}
\item $X_i=1$,
\item or $U_i\leq \frac{2\sqrt{\eta|I|}-\P(X_i=1|X^{indep}_1,\dots,X^{indep}_{i-1})}{\P(X_i=0|X^{indep}_1,\dots,X^{indep}_{i-1})}$.
\end{itemize}

\begin{lem}
\label{intermediaire}
For any $i\in I$, $P(X_i=1|(X^{indep}_{i'})_{i'<i})\leq \sqrt{\eta |I|}$.
\end{lem}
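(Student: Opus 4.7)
I would prove Lemma \ref{intermediaire} by induction on the position of $i$ in the ordered set $I$, keeping the convention from the start of the proof of Lemma \ref{independentisation} that $2\sqrt{\eta|I|}\leq 1$, which is equivalent to $\eta|I|\leq 1/4$. For the smallest element $i\in I$ the conditioning is empty and a union bound gives $P(X_i=1)\leq |I|\eta\leq\sqrt{\eta|I|}$. For the inductive step, assume the claim holds for every $i'<i$; then $P(X_{i'}=1\mid X^{indep}_{<i'})\leq\sqrt{\eta|I|}\leq 2\sqrt{\eta|I|}$, so the construction of $X^{indep}_{i'}$ is well-defined and the direct calculation in the text gives $P(X^{indep}_{i'}=1\mid X^{indep}_{<i'})=2\sqrt{\eta|I|}$; in particular $X^{indep}_1,\dots,X^{indep}_{i-1}$ are i.i.d.\ Bernoulli$(2\sqrt{\eta|I|})$.

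Writing $E_{i-1}$ for the $\sigma$-algebra generated by $X^{indep}_1,\dots,X^{indep}_{i-1}$, I would use $X_i\leq \sum_{j\in I}X_{i,j}$ and bound $P(X_{i,j}=1\mid E_{i-1})$ separately in the two cases $j\geq i$ and $j<i$. For $j\geq i$, the unordered pair $\{i,j\}$ cannot coincide with any pair $\{l,k\}$ entering an $X_l=\max_k X_{l,k}$ with $l<i$ (this would require $l=i$ or $l=j\geq i$), so $X_{i,j}$ is independent of $E_{i-1}$ and the bound $P(X_{i,j}=1\mid E_{i-1})\leq\eta$ is immediate.

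For $j=i'<i$ I would invoke Bayes' formula. The key remark is that for every $l<i$ with $l\neq i'$ the variable $X_l=\max_k X_{l,k}$ does not involve $X_{i,i'}$ (otherwise $\{l,k\}=\{i,i'\}$ would force $l\in\{i,i'\}$), so $X^{indep}_l$ is, conditionally on $X^{indep}_{<l}$, independent of $X_{i,i'}$; on the other hand $X_{i'}\geq X_{i',i}=X_{i,i'}$, so $\{X_{i,i'}=1\}$ forces $X^{indep}_{i'}=1$. Expanding $P(X_{i,i'}=1,X^{indep}_{<i}=e_{<i})$ and $P(X^{indep}_{<i}=e_{<i})$ as products of successive conditionals and comparing, all factors with $l\neq i'$ cancel and one obtains (the case $e_{i'}=0$ giving $0$ directly)
\[
P(X_{i,i'}=1\mid E_{i-1})\;=\;\frac{P(X_{i,i'}=1)}{P(X^{indep}_{i'}=1\mid X^{indep}_{<i'})}\;\leq\;\frac{\eta}{2\sqrt{\eta|I|}}\;=\;\tfrac12\sqrt{\eta/|I|}.
\]

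Summing the two kinds of contributions gives
\[
P(X_i=1\mid E_{i-1})\;\leq\;|I|\cdot\tfrac12\sqrt{\eta/|I|}+|I|\eta\;=\;\tfrac12\sqrt{\eta|I|}+|I|\eta,
\]
and the inequality $\eta|I|\leq 1/4$ from the WLOG assumption gives $|I|\eta\leq\tfrac12\sqrt{\eta|I|}$, closing the induction. The main obstacle is controlling the conditional independence structure in the Bayes computation: one must verify that the only coupling between $X_{i,i'}$ and the earlier $X^{indep}_l$'s goes through $X^{indep}_{i'}$, which is what makes all the other factors cancel and produces the crucial gain from $\eta$ to $\sqrt{\eta/|I|}$.
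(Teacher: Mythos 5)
Your overall architecture is the same as the paper's: induction on $i$, union bound $X_i\leq\sum_j X_{i,j}$, and a Bayes-type argument for the terms $j=i'<i$, using the fact that $\{X_{i,i'}=1\}$ forces $X^{indep}_{i'}=1$. The base case and the terms $j\geq i$ are handled correctly. The gap is in the cancellation you invoke for $j=i'<i$. You claim that for every $l<i$ with $l\neq i'$ the factor $\P(X^{indep}_l=e_l\mid X^{indep}_{<l}=e_{<l},X_{i,i'}=1)$ equals $\P(X^{indep}_l=e_l\mid X^{indep}_{<l}=e_{<l})$ because $X_l=\max_k X_{l,k}$ does not involve $X_{i,i'}$. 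For $l<i'$ this is fine, but for $i'<l<i$ the needed conditional independence of $X^{indep}_l$ and $X_{i,i'}$ given $X^{indep}_{<l}$ fails: $X_{i'}=\max_k X_{i',k}$ involves both $X_{i',i}=X_{i,i'}$ and $X_{i',l}=X_{l,i'}$, so conditioning on the union event $\{X^{indep}_{i'}=1\}$ makes $X_{i,i'}$ and $X_{l,i'}$ negatively correlated (``explaining away''), and $X_{l,i'}$ feeds into $X_l$, hence into $X^{indep}_l$. As a result the denominator you end up with, $\P(X^{indep}_{i'}=1\mid X^{indep}_{<i'})=2\sqrt{\eta|I|}$, is not justified. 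The only portion of $\{X^{indep}_{i'}=1\}$ that is genuinely independent of the rest of the conditioning information is the external randomization $\{U_{i'}\leq p_{i'}\}$, and the induction hypothesis only gives $p_{i'}\geq 2\sqrt{\eta|I|}-\P(X_{i'}=1\mid X^{indep}_{<i'})\geq\sqrt{\eta|I|}$. This is exactly what the paper does, by conditioning on the $\sigma$-algebra $\F_{i,j}$ generated by everything except $X_{i,j}$ and $U_j$; it yields only $\P(X_{i,i'}=1\mid (X^{indep}_{l})_{l<i})\leq\eta/\sqrt{\eta|I|}=\sqrt{\eta/|I|}$, without your factor $\tfrac12$.

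Losing that factor $\tfrac12$ breaks your final arithmetic, because you over-count: you bound the sum by $|I|$ terms of each kind, getting $\tfrac12\sqrt{\eta|I|}+|I|\eta$, and you need $|I|\eta\leq\tfrac12\sqrt{\eta|I|}$ to close; with the honest per-term bound $\sqrt{\eta/|I|}$ your total becomes $\sqrt{\eta|I|}+|I|\eta>\sqrt{\eta|I|}$. The repair is the paper's bookkeeping: the union bound has only $|I|$ terms in total, and every one of them is at most $\sqrt{\eta/|I|}$ --- the terms $j\geq i$ because $\eta\leq\sqrt{\eta/|I|}$ (equivalent to $\eta|I|\leq 1$, which holds under the standing assumption $2\sqrt{\eta|I|}\leq1$), and the terms $j<i$ by the $\F_{i,j}$ argument above. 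Summing gives exactly $|I|\sqrt{\eta/|I|}=\sqrt{\eta|I|}$, which is the claimed bound.
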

If Lemma \ref{intermediaire} holds,  by Lemma \ref{lemmecouplagebernoulli}, $(X^{indep}_i)_{i\in I}$ is an i.i.d. family of Bernoulli variables of parameter $2\sqrt{\eta|I|}$. 

\begin{proof}

Lemma \ref{intermediaire} is proven by induction. By union bound,
\begin{eqnarray*}
\E(X_i|(X^{indep}_{i'})_{i'<i})&\leq&\sum_{i'\in I}\E(X_{i,i'}|(X^{indep}_{i'})_{i'<i})\\
&=&\sum_{j\in I, j< i}\E(X_{i,j}|(X^{bound}_{i'})_{i'<i})+\sum_{j\in I, j\geq  i}\E(X_{i,j}|(X^{bound}_{i'})_{i'<i})
\end{eqnarray*}
For any $j\in I$, let $\F_{i,j}$ be the $\sigma$-algebra generated by $(X_{i_1,i_2})_{i_1\,i_2\in I,\,(i_1,i_2)\neq (i,j)}$ and $(U_{i_1})_{i_1\in I, i_1\neq j}$. For all $i_1\notin \{i,j\}$, $X^{indep}_{i_1}$ is $\F_{i,j}$-measurable. Conditionally on $\F_{i,j}$, $U_j$ and $X_{i,j}$ are independent, $U_j$ is a uniform variable and $X_{i,j}$ is a  Bernoulli random variable of parameter smaller than $\eta$. If $j<i$:

$$\E(X_{i,j}|(X^{indep}_{i'})_{i'<i})=\E(\E(X_{i,j}|\F_{i,j},X^{indep}_{j})| (X^{indep}_{i'})_{i'<i}).$$

There are two possibilities. If $X^{indep}_{j}=0$, then $X_{i,j}=0$. Otherwise, if $X^{indep}_{j}=1$:
\begin{eqnarray*} 
\E(X_{i,j}|\F_{i,j},X^{indep}_j=1)&=&\frac{\P(X_{i,j}=1,X^{indep}_j=1|\F_{i,j})}{\P(X^{indep}_j=1|\F_{i,j})}\\
  &\leq&\frac{\P(X_{i,j}=1|\F_{i,j})}{\P(U_j\leq p_j|\F_{i,j})}\\
  &=&\frac{\P(X_{i,j}=1|\F_{i,j})}{\E(p_j|\F_{i,j})}
\end{eqnarray*}
\noindent where $p_j=\frac{2\sqrt{\eta|I|}-\P(X_j=1|X^{indep}_1,\dots,X^{indep}_{j-1})}{\P(X_j=0|X^{indep}_1,\dots,X^{indep}_{j-1})}$. By induction, 

\begin{eqnarray*} 
\sqrt{\eta|I|}&\leq &2\sqrt{\eta|I|}-\P(X_j=1|X^{indep}_1,\dots,X^{indep}_{j-1})\\&\leq& p_j\\
\E(X_{i,j}|\F_{i,j},X^{indep}_j=1)&\leq&\frac{\eta}{\sqrt{\eta|I|}}\\
&=&\sqrt{\frac{\eta}{|I|}}
\end{eqnarray*}
Therefore, almost surely $\E(X_{i,j}|\F_{i,j},X^{indep}_j)\leq \sqrt{\frac{\eta}{|I|}}$, and therefore $\E(X_{i,j}|(X^{indep}_{i'})_{i'<i})\leq \sqrt{\frac{\eta}{|I|}}$.

If $j\geq i$, $\E(X_{i,j}|\F_{i,j})\leq\eta\leq \frac{\eta}{\sqrt{\eta |I|}}=\sqrt{\frac{\eta}{|I|}}$, and therefore $\E(X_{i,j}|(X^{indep}_{i'})_{i'<i})\leq \sqrt{\frac{\eta}{|I|}}$, as all the $X^{indep}_{i'}$, for $i'<i$ are $\F_{i,j}$-measurable.

By summing over $I$, one obtains Lemma \ref{independentisation}. 

\end{proof}

\end{proof}
\bibliographystyle{amsalpha}
\bibliography{dynamicERgraphprocess}

\end{document}